\documentclass[12pt]{amsart}
\usepackage{amsmath,amssymb,mathrsfs,euscript,color,stmaryrd,mathabx}
\usepackage[colorlinks]{hyperref}
\usepackage{pdfsync}
\usepackage[backgroundcolor=green!40, linecolor=green!40]{todonotes}
\input{xypic}

\textwidth=39pc
\addtolength{\oddsidemargin}{-4.5pc}
\addtolength{\evensidemargin}{-4.5pc}

\setlength{\parskip}{0.4em}
\setlength{\parindent}{0pt}


\theoremstyle{plain}
\newtheorem*{Theorem}{Theorem}
\newtheorem*{Corollary}{Corollary}
\newtheorem*{Proposition}{Proposition}
\newtheorem*{Lemma}{Lemma}
\theoremstyle{definition}
\newtheorem*{Remark}{Remark}
\newtheorem*{Convention}{Convention}

\newtheorem*{Definition}{Definition}

\newtheorem*{Notation}{Notation}

\makeatletter
\def\section{\def\@secnumfont{\mdseries}%
  \@startsection{section}{1}%
  \z@{.7\linespacing\@plus\linespacing}{.5\linespacing}%
  {\normalfont\scshape\centering}}
\def\subsection{\def\@secnumfont{\normalfont\bfseries}%
 \@startsection{subsection}{2}%
 \z@{.5\linespacing\@plus.7\linespacing}{-.5em}%
 {\normalfont\bfseries}}
\makeatother

\def\labelenumi{{\rm(\roman{enumi})}}
\def\theenumi{{\rm(\roman{enumi})}}

\numberwithin{equation}{section}


\def\bdots{\mathinner{\mkern1mu\raise1pt\hbox{.}\mkern2mu\raise4pt\hbox{.}
           \mkern2mu\raise7pt\vbox{\kern7pt\hbox{.}}\mkern1mu}}

\def\oF{{\mathfrak o}_F}
\def\oFx{{\mathfrak o}_F^\times}
\def\pF{{\mathfrak p}_F}
\def\kF{{k_F}}

\def\oE{{{\mathfrak o}_E}}
\def\oEx{{\mathfrak o}_E^\times}
\def\pE{{\mathfrak p}_E}

\def\oEo{{\mathfrak o}_E^\so}

\def\oEi{{\mathfrak o}_E^i}

\def\oEj{{\mathfrak o}_E^j}
\def\oEW{{\mathfrak o}_{W}}
\def\pEW{{\mathfrak p}_{W}}

\def\til#1{\widetilde {#1}}

\def\fM{{\mathfrak M}}

\def\End{{\operatorname{End}}}
\def\Aut{{\operatorname{Aut}}}
\def\Hom{{\operatorname{Hom}\mspace{0.5mu}}}
\def\dim{\operatorname{dim}}

\def\det{\operatorname{det}}
\def\deg{\operatorname{deg}}
\def\Ad{\operatorname{Ad}}
\def\Ind{\operatorname{Ind}}
\def\cInd{\operatorname{c-Ind}}
\def\Irr{\operatorname{Irr}}
\def\Cusp{\operatorname{Cusp}}

\def\gcd{\operatorname{gcd}}
\def\lcm{\operatorname{lcm}}

\def\val{\operatorname{val}}
\def\Sym{\operatorname{Sym}}
\def\Jord{\operatorname{Jord}}
\def\Red{\operatorname{Red}}
\def\IJord{\operatorname{IJord}}

\def\Gal{\operatorname{Gal}}
\def\Ver{\operatorname{Ver}}

\def\G{{\mathbb G}}
\def\Sp{\operatorname{Sp}}

\def\GL{\operatorname{GL}}
\def\PGL{\operatorname{PGL}}
\def\SL{\operatorname{SL}}
\def\SO{\operatorname{SO}}
\def\St{\operatorname{St}}
\def\sp{{\mathfrak{sp}}}
\def\U{{\mathrm U}}
\def\HP{\mathcal G}

\def\tr{\operatorname{tr}}

\def\into{\hookrightarrow}

\def\CE{{\mathcal E}}
\def\CEE{{\CE\!\CE}}

\def\CH{{\mathcal H}}
               
\def\CH{{\mathcal H}}
\def\CK{{\mathcal K}}

\def\CT{{\mathcal T}}

\def\CV{{\mathcal V}}

\def\BZ{{\mathbb Z}}

\def\BC{{\mathbb C}}

\def\b{{\beta}}

\def\L{{\Lambda}}

\def\inv{\sigma} 

\def\bs{\boldsymbol}
\def\SpFV{G}

\def\G{{\mathscr G}}
\def\H{{\mathscr H}}
\def\V{{\mathcal V}}
\def\rP{{\mathbf r}_P}
\def\Supp{\operatorname{Supp}}
\def\diag{\operatorname{diag}}
\def\so{{\mathsf o}}
\def\Pp{{\mathscr P}}
\def\Ss{{\mathscr S}}


\begin{document}

\title{Jordan blocks of cuspidal representations of symplectic groups}

\author{Corinne Blondel}
\address{CNRS -- IMJ--PRG, Universit{\'e} Paris Diderot, Case 7012, 75205 Paris Cedex 13, France.}
\email{corinne.blondel@imj-prg.fr}

\author{Guy Henniart}
\address{Universit{\'e} de Paris-Sud, Laboratoire de Math{\'e}matiques d'Orsay, Orsay Cedex, F-91405.}
\email{Guy.Henniart@math.u-psud.fr}

\author{Shaun Stevens}
\address{School of Mathematics, University of East Anglia, Norwich Research Park, Norwich NR4 7TJ, United Kingdom}
\email{Shaun.Stevens@uea.ac.uk}

\thanks{The authors would like to thank the Schroedinger Institute in Vienna, where this work was initiated back in winter~2009. The first and second authors would like to thank the University of East Anglia for hosting them during several visits in the course of the present work. The second author would like to thank the Universit{\'e} de Paris-Sud and the Institut Universitaire de France. The third author would like to thank the Universit{\'e} Paris~7 for inviting and hosting him in spring~2012. The third author was supported by EPSRC grants EP/G001480/1 and EP/H00534X/1, and by the Heilbronn Institute for Mathematical Research for the end of the writing-up period.} 

\begin{abstract}
Let~$G$ be a symplectic group over a nonarchimedean local field of characteristic zero and odd residual characteristic. Given an irreducible cuspidal representation of~$G$, we determine its Langlands parameter (equivalently, its \emph{Jordan blocks} in the language of M\oe glin) in terms of the local data from which the representation is explicitly constructed, up to a possible unramified twist in each block of the parameter. We deduce a Ramification Theorem for~$G$, giving a bijection between the set of \emph{endo-parameters} for~$G$ and the set of restrictions to wild inertia of discrete Langlands parameters for~$G$, compatible with the local Langlands correspondence. The main tool consists in analysing the intertwining Hecke algebra of a good cover, in the sense of Bushnell--Kutzko, for parabolic induction from a cuspidal representation of~$G\times\GL_n$, seen as a maximal Levi subgroup of a bigger symplectic group, in order to determine its (ir)reducibility; a criterion of M\oe glin then relates this to Langlands parameters.
\end{abstract}

\date{\today}
\maketitle

\section*{Introduction}

\subsection{}\label{0.1new} 
Let~$F$ be a locally compact nonarchimedean local field of odd residual characteristic and denote by~$W_F$ the Weil group of~$F$. Let~$G$ be the symplectic group preserving a nondegenerate alternating form on a~$2N$-dimensional~$F$-vector space. The local Langlands conjectures for~$G$ (now a theorem of Arthur~\cite{Arthur} when~$F$ has characteristic zero) stipulate that to an irreducible (smooth, complex) representation~$\pi$ of~$G$ is attached a Langlands parameter, and the representations with a given parameter form a finite set of isomorphism classes, called an~$L$-packet for~$G$. 

Since the symplectic group is split, Langlands parameters for~$G$ are simply continuous homomorphisms~$\phi$ from~$W_F \times \SL_2(\BC)$ into the dual group~$\hat G = \SO_{2N+1}(\BC)$, taken up to conjugation, such that the~$(2N+1)$-dimensional representation~$\iota \circ \phi$ of~$W_F \times \SL_2(\BC)$, obtained from the inclusion~$\iota$ of~$\hat G$ into~$\GL_{2N+1}(\BC)$, is semisimple. If~$\pi$ is a discrete series representation of~$G$, then its parameter~$\phi$ is discrete, that is, cannot be conjugated into a proper parabolic subgroup of~$\hat G$; equivalently,~$\iota \circ \phi$ is the direct sum of inequivalent irreducible orthogonal representations of~$W_F \times \SL_2(\BC)$, and has determinant~$1$. In that case giving~$\iota \circ \phi$ up to equivalence is the same as giving~$\phi$ up to conjugation in~$\hat G$. 

On the other hand, we have an explicit description of the cuspidal representations of~$G$ via the theory of types~\cite{S5}, in the spirit of the classification of the irreducible representations of~$\GL_n(F)$ of Bushnell--Kutzko~\cite{BK}. It is our goal in this paper to describe as much as possible of the Langlands parameter of a cuspidal representation of~$G$ from its explicit construction. We will denote by~$\Phi^{\mathsf{cusp}}(G)$ the subset of discrete Langlands parameters consisting of those parameters with a cuspidal representation in the corresponding~$L$-packet (see paragraph~\ref{0.2new} below for a more detailed description).

\subsection{}\label{0.1bnew}
At the technical and arithmetic heart of the construction of cuspidal representations of~$G$ and~$\GL_n(F)$ is the theory of \emph{endo-classes} of simple characters -- families of very special characters of compact open subgroups. An irreducible cuspidal representation of~$\GL_n(F)$ contains, up to conjugacy, a unique such simple character and thus determines an endo-class. By considering the endo-classes in its cuspidal support, an arbitrary irreducible representation of~$\GL_n(F)$ then determines a formal sum of endo-classes (with multiplicities), which we call an \emph{endo-parameter of degree~$n$} (see paragraph~\ref{2.7new}). We write~$\CEE_{n}(F)$ for the set of endo-parameters of degree~$n$.

Similarly, an irreducible cuspidal representation of~$G$ is constructed from a \emph{semi}simple character, and thus also comes from an endo-parameter, the weighted formal sum of the endo-classes of its simple components; moreover, the semisimple character is self-dual so that every endo-class appearing must also be self-dual. Thus the construction of an irreducible cuspidal representation of~$G$ gives rise to a self-dual endo-parameter of degree~$2N$. We write~$\CEE_{2N}^{\mathsf{sd}}(F)$ for the set of these self-dual endo-parameters.
 
\subsection{}\label{0.1cnew}
The notions of endo-class and endo-parameter admit an instructive interpretation via the local Langlands correspondence. Denote by~$\Pp_F$ the wild ramification subgroup of the Weil group~$W_F$. Then the (First) Ramification Theorem~\cite[8.2~Theorem]{BH2} says that there is a unique bijection between the set of endo-classes over~$F$ and the set of~$W_F$-orbits of irreducible complex representations of~$\Pp_F$, which is compatible with the local Langlands correspondence for general linear groups. This then induces a bijection, again compatible with the Langlands correspondence, between the set of endo-parameters of degree~$n$ and the set of equivalence classes of~$n$-dimensional complex representations of~$\Pp_F$ which are invariant under conjugation by~$W_F$ (see~\ref{thm:ramGLSS}~Theorem for a precise statement). We call these representations of~$\Pp_F$ \emph{wild parameters}.

Our first main result (or, rather, the last in the scheme of proof) is an analogous ramification theorem for the symplectic group~$G$. First we see that the bijection above restricts to a bijection between self-dual endo-classes and self-dual~$W_F$-orbits of irreducible complex representations of~$\Pp_F$. (Note that we really mean that the \emph{orbit} is self-dual: the only self-dual irreducible complex representation of~$\Pp_F$ is the trivial representation, since~$p$ is odd.) We say that a~$(2N+1)$-dimensional wild parameter is \emph{discrete self-dual} if it is a sum of self-dual~$W_F$-orbits of irreducible complex representations of~$\Pp_F$, and write~$\Psi_{2N+1}^{\mathsf{sd}}(F)$ for the set of such wild parameters. These are precisely the restrictions to wild inertia of discrete Langlands parameters. We prove the following Ramification Theorem for~$G$ (see the end of the introduction for remarks on the characteristic).

\begin{Theorem}[\ref{thm:ramG}~Theorem]
Suppose~$F$ is of characteristic zero. There is a unique bijection~$\CEE_{2N}^{\mathsf{sd}}(F)\to\Psi_{2N+1}^{\mathsf{sd}}(F)$ which is compatible with the Langlands correspondence for cuspidal representations of~$\SpFV$:
\[
\xymatrix{
\Cusp(\SpFV) \ar@{->>}[r]\ar@{->>}[d] & \Phi^{\mathsf{cusp}}(G) \ar@{->>}[d] \\
\CEE_{2N}^{\mathsf{sd}}(F) \ar^{\sim\ }[r]& \Psi_{2N+1}^{\mathsf{sd}}(F)
}
\]
\end{Theorem}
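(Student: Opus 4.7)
The plan is to establish the bottom map by factoring the composition $\Cusp(G)\to\Psi^{\mathsf{sd}}_{2N+1}(F)$ through $\CEE^{\mathsf{sd}}_{2N}(F)$, and then to verify that the resulting map is a bijection. The left vertical surjection is furnished by the theory of types: every cuspidal representation $\pi$ contains, up to conjugacy, a self-dual semisimple character whose self-dual simple components (with multiplicities) determine an element of $\CEE^{\mathsf{sd}}_{2N}(F)$, and surjectivity is part of the type-theoretic construction of cuspidal representations in~\cite{S5}. The right vertical surjection is Arthur's local Langlands correspondence for $G$ (here is where the hypothesis that $F$ has characteristic zero enters), composed with restriction to the wild inertia subgroup $\Pp_F$; by the very definitions of $\Phi^{\mathsf{cusp}}(G)$ and $\Psi^{\mathsf{sd}}_{2N+1}(F)$ this composition is surjective.

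The crucial input for defining the map $\CEE^{\mathsf{sd}}_{2N}(F)\to\Psi^{\mathsf{sd}}_{2N+1}(F)$ is the main theorem of the paper, which reconstructs the Jordan blocks of the Langlands parameter of a cuspidal representation~$\pi$ from its endo-parameter, up to an unramified twist in each irreducible block. Since unramified twists restrict trivially to $\Pp_F$, any two cuspidal representations with the same endo-parameter have the same wild parameter, so $\Cusp(G)\to\Psi^{\mathsf{sd}}_{2N+1}(F)$ descends to a well-defined map from $\CEE^{\mathsf{sd}}_{2N}(F)$. This factored map is surjective because both vertical arrows of the diagram are. For injectivity, I would suppose $\Theta_1,\Theta_2 \in \CEE^{\mathsf{sd}}_{2N}(F)$ have the same image, write each as a formal $\BN$-linear combination of self-dual endo-classes, and apply the First Ramification Theorem for $\GL$ (\ref{thm:ramGLSS}~Theorem): since that theorem restricts to a bijection between self-dual endo-classes and self-dual $W_F$-orbits of irreducible representations of $\Pp_F$, each block of the common wild parameter determines a unique self-dual endo-class with a well-defined multiplicity, forcing $\Theta_1=\Theta_2$. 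Uniqueness of the bottom map is then immediate: since the left vertical arrow is surjective, any two candidates commuting with the diagram agree on the endo-parameter of every cuspidal, hence on all of $\CEE^{\mathsf{sd}}_{2N}(F)$.

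The main obstacle is the result invoked at the factoring step—the description of the Jordan blocks of a cuspidal representation of~$G$ from its type data. This is the technical core of the paper, attacked via Hecke-algebra calculations for Bushnell--Kutzko good covers of cuspidal representations on a maximal Levi $G\times\GL_n$ inside a larger symplectic group, and translated into combinatorial data about the Langlands parameter through M\oe glin's reducibility criterion. A secondary bookkeeping point—compatibility of self-dual endo-parameters with the block decomposition of a self-dual semisimple character—underlies both the definition of the left vertical map and the block-by-block application of the First Ramification Theorem for $\GL$ in the injectivity argument; it is formal, but it is the mechanism by which the already-established ramification theorem for general linear groups is turned into one for~$G$.
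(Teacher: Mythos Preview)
Your overall shape---factor the composite through $\CEE^{\mathsf{sd}}_{2N}(F)$ using~\ref{thm:endoparameter}~Theorem, then argue bijectivity---is close to the paper's, but both of your claimed vertical surjections are gaps rather than inputs.

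The right vertical surjection is \emph{not} ``by the very definitions.'' The set $\Psi^{\mathsf{sd}}_{2N+1}(F)$ consists of the restrictions to $\Pp_F$ of \emph{discrete} Langlands parameters, but the diagram uses $\Phi^{\mathsf{cusp}}(G)$, and not every discrete parameter is cuspidal (a cuspidal parameter must be ``without holes''). That the restriction map $\Phi^{\mathsf{cusp}}(G)\to\Psi^{\mathsf{sd}}_{2N+1}(F)$ is onto is something the paper proves by hand: for a given discrete self-dual wild parameter it explicitly builds a \emph{regular} cuspidal parameter restricting to it, using~\ref{lem:existSD}~Lemma to produce, for each self-dual orbit $[\gamma]$, inequivalent orthogonal self-dual irreducible representations of $W_F$ of the correct dimensions. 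Without this construction your surjectivity argument for the bottom map collapses. Likewise, the left vertical surjection is not simply ``part of~\cite{S5}'': the paper notes after~\ref{thm:endoparameter}~Theorem that surjectivity of $\mathsf e_G$ is a \emph{consequence} of~\ref{thm:ramG}~Theorem, deduced from the right-hand surjectivity together with the bottom bijection, not an independent input from the type-theoretic construction.

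By contrast, the paper does not obtain the bottom bijection by a surjectivity/injectivity argument at all. It writes the map down explicitly as
\[
\sum_{\bs\Theta} m_{\bs\Theta}\bs\Theta\ \longmapsto\ 1_{\Pp_F}\oplus\bigoplus_{\bs\Theta} m_{\bs\Theta}\,[\gamma(\bs\Theta^2)],
\]
observes directly that this is a bijection (squaring on endo-classes is bijective since $p$ is odd, and the trivial class is the only one of odd degree), and then reads off compatibility from~\ref{thm:endoparameter}~Theorem via~\ref{thm:ramGLSS}~Theorem. Only after that does it establish the vertical surjections, proving the right one by the explicit construction above and deducing the left one. Your injectivity argument via the Ramification Theorem for $\GL$ is fine and amounts to the same thing, but you should replace the hand-waved surjectivities by the paper's direct definition of the bottom map and its explicit construction of a cuspidal parameter over each wild parameter.
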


The bijection here is not just that in the case of general linear groups (indeed, the degree has changed): one must first take the square of every endo-class in the support of the endo-parameter, then map across using the bijection for general linear groups, and finally add the trivial representation of~$\Pp_F$.

\subsection{}\label{0.1dnew}
The Ramification Theorem for~$G$ is in fact a consequence of rather more precise results, proved on the automorphic side of the Langlands correspondence. To explain the connection, we recall in more detail the structure of discrete Langlands parameters, and the results of M\oe glin.

There is, up to isomorphism, exactly one irreducible~$m$-dimensional representation~$\St_m$ of~$\SL_2(\BC)$, for each~$m\ge 1$. Thus an irreducible representation of~$W_F \times \SL_2(\BC)$ is a tensor product~$\sigma \otimes \St_m$, where~$\sigma$ is an irreducible representation of~$W_F$; moreover it is orthogonal if and only if either~$\sigma$ is self-dual symplectic and~$m$ is even, or~$\sigma$ is self-dual orthogonal and~$m$ is odd. By the Langlands correspondence for~$\GL_n$~\cite{LRS, HT, H}, such a~$\sigma$ is the Langlands parameter of a (single) cuspidal representation~$\rho$ of~$\GL_n(F)$, where~$n=\dim \sigma$. Saying that~$\sigma$ is self-dual is saying that~$\rho$ is self-dual (i.e. isomorphic to its contragredient), and~$\sigma$ is then symplectic (resp. orthogonal) if the Langlands--Shahidi~$L$-function~$L(s, \Lambda^2, \rho)$ (resp.~$L(s, \Sym^2, \rho)$) has a pole at~$s=1$~\cite{HeIMRN}, in which case we say that~$\rho$ is of symplectic (resp. orthogonal) type.

In conclusion, a discrete parameter~$\phi$ for~$G$ can be given by a set of (distinct) pairs~$(\rho_i, m_i)$, where~$\rho_i$ is an isomorphism class of irreducible cuspidal representations of~$\GL_{n_i}(F)$, with~$n_i$ and~$m_i$ positive integers, and 
\begin{itemize}
\item $\sum_i n_i m_i = 2N+1$, 
\item each~$\rho_i$ is self-dual, of symplectic type if~$m_i$ is even and of orthogonal type if~$m_i$ is odd, 
\item if~$\omega_i$ is the central character of~$\rho_i$ then~$\prod_i \omega_i^{m_i}=1$. 
\end{itemize}

\subsection{}\label{0.2new} 
If~$\pi$ is an irreducible cuspidal representation of~$G$ and~$\phi$ is its parameter, M\oe glin~\cite{Mo} has given a criterion to determine the set attached to~$\phi$ as above, i.e. the pairs~$(\rho_i,m_i)$ that she calls the ``Jordan blocks'' of~$\pi$; we write~$\Jord(\pi)$ for this set of pairs. Let us explain her results.

For any positive integer~$n$, the group~$\GL_n(F) \times G$ appears naturally as a standard maximal Levi subgroup of~$\Sp_{2(N+n)}(F)$. If~$\rho$ is a cuspidal representation of~$\GL_n(F)$ we can form the parabolically induced representation~$\rho \nu^s \rtimes \pi$ (we use normalized induction and induce via the standard parabolic), where~$s$ is here a \emph{real} parameter and~$\nu$ is the character~$g \mapsto |\det g|_F$ of~$\GL_n(F)$. If no unramified twist of~$\rho$ is self-dual then~$\rho \nu^s \rtimes \pi$ is always irreducible. On the other hand, if~$\rho$ is self-dual, there is a unique~$s_\pi(\rho)\ge 0$ such that~$\rho \nu^s \rtimes \pi$ is reducible if and only if~$s = \pm s_\pi(\rho)$.

We define the \emph{reducibility set}~$\Red (\pi)$ to be the set of isomorphism classes of cuspidal representations~$\rho$ of some~$\GL_n(F)$, with~$n \ge 1$, for which~$2s_\pi(\rho)-1$ is a positive integer. Indeed, it is known that~$2s_\pi(\rho)$ is an integer~\cite{MoTa}, so the condition for~$\rho$ to lie in~$\Red(\pi)$ is that~$s_\pi(\rho)$ is neither~$0$ nor~$1/2$. The \emph{Jordan set}~$\Jord(\pi)$ is then the set of pairs~$(\rho, m)$, where~$\rho \in \Red(\pi)$ and~$2s_\pi(\rho)-1= m + 2k$, for some integer~$k \ge 0$.

From its construction,~$\Jord(\pi)$ is ``without holes'' in the sense that, if it contains~$(\rho, m)$ then it also contains~$(\rho, m-2)$ whenever~$m-2 > 0$. However there may be discrete series non-cuspidal representations of~$G$ with the same parameter as~$\pi$; this happens as soon as~$\Jord(\pi)$ contains a pair~$(\rho,m)$ with~$m>1$. For the number of cuspidal representations of~$G$ with a given parameter (without holes), see~\cite{Mo5} (recalled in paragraph~\ref{7.4new} below).

\subsection{}\label{0.3new}
The results of M\oe glin described in the previous paragraph now say that, in order to determine the Langlands parameter of an irreducible cuspidal representation~$\pi$ of~$G$, we need only compute the reducibility points~$s_\pi(\rho)$, for~$\rho$ an irreducible self-dual representation of some~$\GL_n(F)$. Moreover, we need only find \emph{enough} reducibility points~$s_\pi(\rho)\ge 1$ to fill the parameter.

In order to compute these reducibility points, we use Bushnell--Kutzko's theory of types and covers~\cite{BK1}. The representation~$\pi$ takes the form~$\cInd_{J_\pi}^G \lambda_\pi$, for some irreducible representation~$\lambda_\pi$ of a compact open subgroup~$J_\pi$; this pair~$(J_\pi,\lambda_\pi)$ is a \emph{type} for~$\pi$. Similarly, we have a Bushnell--Kutzko type~$(\tilde J_\rho,\tilde \lambda_\rho)$ for~$\rho$. Moreover, from~\cite{MS} we have a \emph{cover}~$(J,\lambda)$ in~$\Sp_{2(N+n)}(F)$ of~$(\tilde J_\rho\times J_\pi,\tilde\lambda_\rho\otimes\lambda_\pi)$. 

The reducibility of the parabolically induced representation~$\rho\nu^s\rtimes\pi$ for \emph{complex}~$s$ is translated, via category equivalence, to the reducibility of induction from modules over the spherical Hecke algebra~$\CH(GL_n(F)\times G,\tilde\lambda_\rho\otimes\lambda_\pi)$ to~$\CH(\Sp_{2(N+n)}(F),\lambda)$. The former algebra is isomorphic to~$\mathbb C[Z^{\pm 1}]$, while the latter is a Hecke algebra on an infinite dihedral group, with two generators each satisfying a quadratic relation of the form~$(T+1)(T-q^r)$, with~$r\ge 0$ and integer and~$q$ the cardinality of the residue field of~$F$. The results of~\cite{Bl} then translate the values of the parameters~$r$ for the two generators into the \emph{real parts} of those~$s\in\mathbb C$ for which~$\rho\nu^s\rtimes \pi$ is reducible.

In the inertial class~$[\rho]=\{\rho\nu^s\mid s\in\mathbb C\}$, there are precisely two inequivalent self-dual representations, and we write~$\rho'$ for the other one. Thus the method described above allows one to compute the set~$\{s_\pi(\rho),s_\pi(\rho')\}$ but not to distinguish between the two values if they are distinct. Thus our method computes the \emph{inertial Jordan set}~$\IJord(\pi)$, which is the \emph{multiset} of pairs~$([\rho],m)$, such that~$(\rho,m)\in\Jord(\pi)$.

\subsection{}\label{0.4new}
According to the previous paragraph, computing~$\IJord(\pi)$ explicitly comes down to computing the parameters in the quadratic relations for the spherical Hecke algebra of the cover. We do this in two steps.

First, we consider the special case when the semisimple character~$\theta_\pi$ in~$\pi$, from which the type~$(J_\pi,\lambda_\pi)$ is built, is in fact \emph{simple}. In this case, it determines a self-dual endo-class~$\bs\Theta$ and we consider only those irreducible cuspidal representations of some~$\GL_n(F)$ which have endo-class~$\bs\Theta^2$. We prove that just these representations already give us enough to fill the Jordan set (see~\ref{thm:simple}~Theorem) and describe an algorithm to determine~$\IJord(\pi)$ (see paragraph~\ref{6.8new}). 

Here the computation of the parameters can be done using results of Lusztig on finite reductive groups: if~$\bs\Theta$ is the trivial endo-class~$\bs\Theta_0$, so that we are in depth zero, this was done already in~\cite{LS}; otherwise, the groups in question are the reductive quotients of maximal parahoric subgroups in a unitary group (ramified or unramified). There is also an added subtlety which does not arise in the depth zero case: two signature characters of certain permutations (coming from a comparison of so-called \emph{beta-extensions}) cause an extra twist which must be taken care of in the algorithm and counting.

In the second step, we consider an arbitrary irreducible cuspidal representation~$\pi$ and reduce to the first case. More precisely, the semisimple character~$\theta_\pi$ determines by restriction its simple components~$\theta_i$, for~$0\le i\le l$, whence endo-classes~$\bs\Theta_i$. From the construction of the type~$(J_\pi,\lambda_\pi)$, we define types~$(J_i,\lambda_i)$ in symplectic groups~$\Sp_{2N_i}(F)$, with~$\sum_{i=0}^l N_i=N$, which induce to irreducible cuspidal representations~$\pi_i$ containing a simple character of endo-class~$\bs\Theta_i$. (See paragraph~\ref{2.5new} for details.) 

The reduction is obtained by showing that elements of~$\IJord(\pi)$ with endo-class~$\bs\Theta_i$ can be obtained from those of~$\IJord(\pi_i)$ by a simple twisting process, by a character of order one or two (see~\ref{thm:reduction}~Theorem). This character arises as the comparison of pairs of signature characters as in the first case, for~$\pi$ and for~$\pi_i$; the point that is both crucial and subtle is that, although we need to make two comparisons, they turn out to be equal. Now the first case, together with a dimension count, ensures that we have filled the expected size of~$\IJord(\pi)$. If~$F$ is of characteristic zero then, by the results of M\oe glin, this is indeed the entire inertial Jordan set (see~\ref{cor:together}~Corollary).

\subsection{}\label{0.6new}
From our explicit description of the set~$\IJord(\pi)$, we know the endo-class of every self-dual irreducible cuspidal representation of some~$\GL_n(F)$ which appears in~$\Jord(\pi)$. From this we deduce the following result, which gives the compatibility of taking endo-parameters with the endoscopic transfer from~$G$ to~$\GL_{2N+1}(F)$ and from which, via the results of Arthur, we deduce compatibility with the local Langlands correspondence. In the following, the map~$\iota_{2N}$ sends a (self-dual) endo-parameter~$\sum m_{\bs\Theta}\bs\Theta$ of degree~$2N$ to the endo-parameter~$\sum m_{\bs\Theta}\bs\Theta^2+\bs\Theta_0$ of degree~$2N+1$, where~$\bs\Theta_0$ denotes the trivial endo-class.

\begin{Theorem}[\ref{thm:endoparameter}~Theorem]
Suppose~$F$ has characteristic~$0$. Then the following diagram commutes.
\[
\xymatrix{
\Cusp(\SpFV) \ar[d] 
\ar[rr]^{\text{\rm transfer}\quad} && \Irr(\GL_{2N+1}(F)) \ar[d]\\
\CEE^{\mathsf{sd}}_{2N}(F) \ar@{^{(}->}[rr]_{\iota_{2N}} && \CEE_{2N+1}(F)
}
\]
\end{Theorem}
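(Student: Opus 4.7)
The strategy is to route through Arthur's endoscopic transfer and the First Ramification Theorem for general linear groups. Given $\pi\in\Cusp(G)$, let $\pi^*\in\Irr(\GL_{2N+1}(F))$ denote its endoscopic transfer. By Arthur's characterization of the local Langlands correspondence for symplectic groups, the Langlands parameter of $\pi^*$ is $\iota\circ\phi_\pi$, where $\phi_\pi$ is the Langlands parameter of the $L$-packet of $\pi$ and $\iota:\hat G\hookrightarrow\GL_{2N+1}(\BC)$ is the standard inclusion. Using the decomposition recalled in paragraph~\ref{0.1dnew}, we have as a representation of $W_F\times\SL_2(\BC)$
\[
\iota\circ\phi_\pi \;=\; \bigoplus_{(\rho,m)\in\Jord(\pi)} \sigma_\rho\boxtimes\St_m,
\]
where $\sigma_\rho$ is the Langlands parameter of $\rho$ under the Langlands correspondence for general linear groups.

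The First Ramification Theorem for $\GL$ (paragraph~\ref{0.1cnew}) says that the endo-parameter of an irreducible representation of $\GL_n(F)$ is determined by the restriction of its Langlands parameter to $\Pp_F$. Since the $\SL_2(\BC)$-factor acts trivially on $\Pp_F$, this wild restriction equals
\[
\bigoplus_{(\rho,m)\in\Jord(\pi)} m\cdot\sigma_\rho|_{\Pp_F},
\]
so the bijection of the Ramification Theorem produces
\[
\text{endo-param}(\pi^*) \;=\; \sum_{(\rho,m)\in\Jord(\pi)} m\cdot\bs\Theta(\rho),
\]
where $\bs\Theta(\rho)$ is the endo-class of $\rho$. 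Since the endo-class is an inertial invariant (unramified twists are trivial on $\Pp_F$), the right-hand side depends only on $\IJord(\pi)$.

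It remains to identify this sum with $\iota_{2N}$ applied to the endo-parameter $\mathbf e(\pi)=\sum_{\bs\Theta}m_{\bs\Theta}\bs\Theta$ of $\pi$. This is where our main structural results come in: Theorem~\ref{thm:simple} identifies the endo-classes appearing in $\IJord(\pi)$ in the simple case (as the squares $\bs\Theta^2$ of the simple endo-class of $\pi$, together with the trivial endo-class $\bs\Theta_0$), along with their multiplicities; and Theorem~\ref{thm:reduction} reduces the general semisimple case to a sum of simple cases, the twisting character there preserving inertial equivalence classes and hence endo-classes. Combining these, the right-hand side becomes
\[
\bs\Theta_0+\sum_{\bs\Theta}m_{\bs\Theta}\bs\Theta^2 \;=\; \iota_{2N}(\mathbf e(\pi)),
\]
proving commutativity.

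The main obstacle is already resolved before we reach this theorem: all the delicate combinatorics identifying $\IJord(\pi)$, including the provenance of the trivial-endo-class Jordan block accounting for the odd dimension $2N+1$, live in Theorems~\ref{thm:simple} and~\ref{thm:reduction}. What remains here is essentially formal: invoke Arthur's transfer, invoke the ramification theorem for $\GL$, and assemble the pieces, noting that the characteristic-zero hypothesis enters only through Arthur's theorem, which guarantees that the Jordan set of a cuspidal $\pi$ agrees with the one we have computed from its type.
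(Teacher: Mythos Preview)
Your approach is essentially that of the paper: use M{\oe}glin/Arthur to identify the transfer of~$\pi$ with~$\prod_{(\rho,m)\in\Jord(\pi)}\St(\rho,m)$, compute its endo-parameter, and then invoke Theorems~\ref{thm:simple} and~\ref{thm:reduction} (via Corollary~\ref{cor:together}) to match it with~$\iota_{2N}(\mathsf e_G(\pi))$. Two small points are worth flagging.

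First, there is a multiplicity slip. The restriction~$\sigma_\rho|_{\Pp_F}$ is not a single copy of the orbit~$[\gamma(\bs\Theta(\rho))]$ but~$\dfrac{\deg(\rho)}{\deg(\bs\Theta(\rho))}$ copies (equivalently, the endo-parameter of the cuspidal~$\rho$ is~$\dfrac{\deg(\rho)}{\deg(\bs\Theta(\rho))}\,\bs\Theta(\rho)$, not~$\bs\Theta(\rho)$). So your displayed formula should read
\[
\mathsf e_{2N+1}(\pi^*)\ =\ \sum_{(\rho,m)\in\Jord(\pi)}\frac{m\,\deg(\rho)}{\deg(\bs\Theta(\rho))}\,\bs\Theta(\rho),
\]
which is exactly the left-hand side of~\eqref{eqn:commute} in the paper. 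Without this factor the degrees do not add up to~$2N+1$.

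Second, your detour through the Langlands parameter of~$\pi^*$ and the Ramification Theorem for~$\GL$ is correct but unnecessary: the map~$\mathsf e_{2N+1}$ is \emph{defined} on the automorphic side via cuspidal support, so once you know the transfer is~$\prod\St(\rho,m)$ you can read off its endo-parameter directly, as the paper does. Your route presupposes Theorem~\ref{thm:ramGLSS}, which in the paper's logical order comes later.
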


It is very tempting to think that this result could be an instance of a general theory of endo-parameters for arbitrary reductive groups, which would be in bijection with suitably-defined wild parameters and would be compatible with (twisted) endoscopy. 

\subsection{}\label{0.5new}
Let~$\pi$ be an irreducible cuspidal representation of~$G$. Having given an explicit description of~$\IJord(\pi)$, we can ask whether we can then determine~$\Jord(\pi)$ precisely; that is, given~$([\rho],m)\in\IJord(\pi)$, can we tell whether it is~$(\rho,m)$ or~$(\rho',m)$ in~$\Jord(\pi)$, where~$\rho'$ is the self-dual unramified twist of~$\rho$ which is inequivalent to~$\rho$. In certain cases the answer is yes: often the representations~$\rho,\rho'$ have opposite parities (that is, one is symplectic and the other orthogonal) and then we know that we must have the representation of symplectic type if~$m$ is even, and the one of orthogonal type if~$m$ is odd. In the exceptional case where~$\rho,\rho'$ have the same parity, we can only recover~$\Jord(\pi)$ if it happens that \emph{both} appear (that is,~$([\rho],m)$ appears in~$\IJord(\pi)$ with multiplicity two); otherwise, we are left with an ambiguity. (See~\ref{rem:bothappear}~Remark for more on this.)

In Section~\ref{JBN3new}, we explore this exceptional case on the Galois side -- that is, we look at the self-dual irreducible representations of~$W_F$ which have the same parity as their self-dual unramified twist. It turns out that they have quite a special structure and that one can determine their parity (see~\ref{prop:indparity}~Proposition). This also translates to a criterion for determining the parity of a self-dual cuspidal representation~$\rho$ (such that~$\rho$ and its self-dual unramified twist~$\rho'$ have the same parity), in terms of the type it contains (see paragraph~\ref{3.11new}).

It is also possible, at least in certain cases, to be more precise in the analysis of the category equivalences and reducibility, in order to elucidate the ambiguity and recover~$\Jord(\pi)$ completely. We hope to come back to this in the case of~$\Sp_4(F)$, in a sequel to this paper.

\subsection*{A remark on characteristic.} 
The bulk of our work is on the representation theory of symplectic groups; for this, while we require that the residual characteristic be odd, we have no further conditions on the characteristic -- that is, we do \emph{not} require~$F$ to be of characteristic zero. In particular, our description of the inertial Jordan set in~\ref{thm:simple}~Theorem and~\ref{thm:reduction}~Theorem does not require characteristic zero. It is only when interpreting these results in terms of the Langlands correspondence (or the endoscopic transfer map) where, until these results have been proved with~$F$ of positive characteristic, we require characteristic zero.

\subsection*{Structure of the paper.} 
In Section~\ref{JBN1new}, we recall the basic structure of types for cuspidal representations, in particular semisimple characters and beta-extensions, including the choice of a base point for beta-extensions. Section~\ref{JBN2new} contains the statements of the main results on (inertial) Jordan sets, remaining entirely on the automorphic side, while the following three sections are devoted to their proofs: in Section~\ref{JBN4new}, we recall the theory of covers and the results of~\cite{Bl,MS} on their Hecke algebras and reducibility of parabolic induction; in Section~\ref{JBN5new} we prove the reduction to the simple case which is at the heart of our method; and in Section~\ref{JBN6new} we prove the result in the simple case. The exploration of self-dual irreducible representations of~$W_F$ is given in Section~\ref{JBN3new} and finally, in Section~\ref{JBN7new}, we interpret our results via the local Langlands correspondence.

\section*{Notation}
Throughout the paper,~$F$ will be a locally compact nonarchimedean local field, with ring of integers~$\oF$, maximal ideal~$\pF$, and residue field~$k_F=\oF/\pF$ of cardinality~$q=q_F$ and \emph{odd} characteristic~$p$; similar notation will be used for extensions of~$F$. The absolute value~$|\cdot|_F$ on~$F$ is normalized to have image~$q^{\mathbb Z}$ and we write~$\nu$ for the character~$g \mapsto |\det g|_F$ of~$\GL_n(F)$.

All representations we consider here will be smooth and complex. By a \emph{cuspidal} representation of the group of rational points of a connected reductive group over~$F$, we mean a representation which is smooth, irreducible and cuspidal (i.e. killed by all proper Jacquet functors).

\section{Cuspidal types and primary beta-extensions}\label{JBN1new}
 
In this section we fix notation following mostly~\cite{S5}. We recall, in the first paragraphs, the main features of the construction of cuspidal representations of symplectic groups achieved in~\cite{S5}, to which we refer for relevant definitions. We do not give references for the by now classical definitions and constructions previously made for linear groups by Bushnell and Kutzko. One of the key steps in the construction is the existence of a so-called \emph{beta-extension}. We will have to compare such beta-extensions across different groups but, unfortunately, they are not uniquely defined. Here, following~\cite{BH}, we explain one way of picking out a particular beta-extension (which we call \emph{$p$-primary}, see~\ref{1.8new}~Definition) in each case, giving a base point to make comparisons.

\subsection{}\label{1.1new} 
In this first paragraph, we recall the notation for skew semisimple strata and related objects. Let~$V$ be a finite dimensional symplectic space over~$F$ of dimension~$2N$. We denote by~$h$ the symplectic form on~$V$, by~$x \mapsto \bar x$ the corresponding adjoint (anti-)involution on~$\End_F(V)$ and by~$\inv$ the corresponding involution on~$\GL_F(V)$. We put~$G=\Sp_F(V)\simeq\Sp_{2N}(F)$, the isometry group of~$h$, which is the group of fixed points of~$\inv$ in~$\GL_F(V)$.

Let~$[\L, n, 0, \beta]$ be a skew semisimple stratum in~$\End_F(V)$~\cite[Definition 2.4, 2.5]{S5}. In particular~$\L$ is a self-dual~$\oF$-lattice sequence and~$\beta=-\bar \beta$ belongs to the Lie algebra~$\sp_F(V)$. We write~$B$ for the commuting algebra of~$\beta$ in~$\End_F(V)$. 

\begin{Remark}\label{rem:normalizesequence}
Following~\cite{S5} we always normalize self-dual lattice sequences such that their period over any relevant field is even and their duality invariant~$d$ is~$1$. With this convention, for any self-dual lattice sequence~$\Lambda$ and any multiple~$s$ of the period~$e$ of~$\L$, there is a unique self-dual lattice sequence of period~$s$ having the form~$t \mapsto \L(\frac{t + a}{s/e})$. There is thus a well defined way of summing two self-dual lattice sequences, by first transforming both into having the same period (see~\cite{BK2}). When performing such transformations, the valuation~$n$ of~$\beta$ relative to the lattice sequence~$\L$ undergoes changes that are of no importance to us, since the associated groups~$H^1,J^1,J$ and characters (see paragraphs~\ref{1.2new},~\ref{1.4new} below) are left unchanged; we will thus ignore this parameter and write the stratum in the form~$[\L, -, 0, \beta]$. 
\end{Remark}

The characteristic spaces of~$\beta$ determine a canonical orthogonal splitting~$V = \perp_{i=0}^l V^i$ for the stratum~$[\L, -, 0, \beta]$ such that, letting~$\L^i = \L \cap V^i$ (that is,~$\L^i(t) = \L(t) \cap V^i$ for any~$t \in \mathbb Z$) and~$\beta^i = \beta_{|V^i}$, the strata~$[\L^i, -, 0, \beta^i]$,~$0\le i \le l$, are skew simple strata which are ``sufficiently distant'' in the sense of~\cite[Definition 2.4]{S5}. We put~$E=F[\beta]= \bigoplus_{i=1}^l E^i$, where~$E^i=F[\beta^i]$, and write~$\oEi$ for the ring of integers of~$E^i$. We recall that~$\L$ is an~\emph{$\oE$-lattice sequence}, by which we mean that each~$\L^i$ is an~$\oEi$-lattice sequence in~$V^i$. 

\begin{Convention}\label{conv}
In this paper we also take the convention that, for any skew semisimple stratum~$[\L, n, 0, \beta]$ with splitting~$V = \perp_{i=0}^l V^i$, we have~$\beta^0=0$. When~$0$ is not an eigenvalue of~$\beta$, this can be achieved by taking~$V^0$ to be the zero-dimensional space over~$F$; since, in that case,~$\dim_F V^0=0$, it does not affect any of the following constructions. The reason for this convention will become apparent later.
\end{Convention}

\subsection{}\label{1.2new} 
From the datum~$[\L, -, 0, \beta]$ are built open compact subrings: 
\begin{itemize}
\item $\tilde{\mathfrak H}^1( \beta, \L ) \subseteq \tilde{\mathfrak J}^1(\beta, \L)$ of~$\End_F(V)$, 
\item $\mathfrak H^1( \beta, \L ) \subseteq \mathfrak J^1(\beta, \L)$ of~$\sp_F(V)$, the fixed points of the former ones under the adjoint involution on~$\End_F(V)$; 
\end{itemize}
and open compact subgroups: 
\begin{itemize}
\item $\tilde H^1(\beta, \L) \subseteq \tilde J^1(\beta, \L) \subset \tilde J(\beta, \L)$ of~$\GL_F(V)$,
\item $H^1(\beta, \L) \subseteq J^1(\beta, \L) \subset J(\beta, \L)$ of~$\SpFV$, the subgroups of fixed points of the former ones under the adjoint involution on~$\GL_F(V)$. 
\end{itemize}
We will frequently write~$H^1_\L= H^1( \beta, \L )$ and so on. 

\subsection{}\label{1.3new} 
We introduce more notation relative to~$\L$. For~$n \in \mathbb Z$ we write: 
\[
\mathfrak a_n(\L)= \{ x \in \End_F(V) \mid \forall t \in \mathbb Z , \ \ x\L(t) \subseteq \L(t+n)\}, \qquad\qquad \mathfrak b_n(\L) = \mathfrak a_n(\L) \cap B . 
\]
In particular~$\mathfrak a_0(\L)$ is a hereditary~$\oF$-order in~$\End_F(V)$ with Jacobson radical~$\mathfrak a_1(\L)$. Let~$\tilde P ( \L ) = \mathfrak a_0(\L)^\times$ and~$\tilde P_1 ( \L) = 1 + \mathfrak a_1(\L)$. Then~$P_1 ( \L) = \tilde P_1 ( \L) \cap \SpFV$ is the pro-$p$-radical of~$P ( \L) = \tilde P ( \L) \cap \SpFV$. The quotient groups 
\[
\tilde\HP(\L) = \tilde P(\L)/\tilde P_1(\L) \quad\text{ and }\quad \HP(\L) = P(\L)/P_1(\L) 
\]
are (the groups of rational points of) finite reductive groups over~$k_F$. The latter may be disconnected so we let~$\HP^0 ( \L )$ be (the group of rational points of) its neutral component and call~$P^0(\L)$ the inverse image of~$\HP^0 ( \L )$ in~$P ( \L )$; this is a parahoric subgroup of~$\SpFV$. 

Actually we will mainly work with~$\mathfrak b_0(\L) = \mathfrak a_0(\L) \cap B$ and with~$P ( \L_\oE) := P ( \L) \cap B$, with~$\HP ( \L_\oE ) = P ( \L_\oE )/P_1 ( \L_\oE)$ and its neutral component~$\HP^0 ( \L_\oE )$, and with the parahoric subgroup~$P^0 ( \L_\oE )$ of~$G_E:= B \cap \SpFV$, inverse image of~$\HP^0 ( \L_\oE )$ in~$P ( \L_\oE )$. Indeed we have the following: 
\[ 
J(\beta, \L) = P ( \L_\oE) J^1(\beta, \L) \quad \text{ and } \quad 
J(\beta, \L) / J^1(\beta, \L) \simeq P ( \L_\oE )/P_1 ( \L_\oE) = \HP ( \L_\oE ) . 
\]
Moreover, we have natural isomorphisms
\[
\HP(\L_\oE) \simeq \prod_{i=0}^l \HP(\L^i_{\oEi}) 
\qquad\text{and}\qquad
\HP^0(\L_\oE) \simeq \prod_{i=0}^l \HP^0(\L^i_{\oEi}).
\]
Note that, writing~$E^i_\so$ for the field of fixed points of~$E^i$ under the adjoint involution~$x\mapsto\overline x$ and~$k^i_\so$ for its residue field, the groups on the right hand side here are reductive groups over~$k^i_\so$. We also have similar decompositions and isomorphisms for the group~$\tilde J(\beta,\L)$.

\subsection{}\label{1.4new} 
On the group~$\tilde H^1(\beta, \L)$ lives a family of one-dimensional representations endowed with very strong properties, called \emph{semisimple characters} ({\it loc. cit. \S3.1}), that restricts to a family of \emph{skew semisimple characters} on~$H^1(\beta, \L)$. In particular, a skew semisimple character of~$H^1(\beta, \L)$, say~$\theta$, restricts to a skew \emph{simple} character~$\theta_i$ of~$H^1(\beta, \L) \cap \Sp_F(V^i) = H^1(\beta^i, \L^i)$, for~$0\le i \le l$. Among the properties of these families the ``transfer property'' is specially important. It asserts that if~$[\L^\prime, -, 0, \beta]$ is another skew semisimple stratum in~$\End_F(V)$, then there is a canonical bijection between the sets of skew semisimple characters on~$H^1(\beta, \L)$ and~$H^1(\beta, \L^\prime)$ ({\it loc. cit. Proposition 3.2}). The image of~$\theta$ under this bijection is called the \emph{transfer} of~$\theta$. 

To any semisimple character~$\tilde\theta$ of~$\tilde H^1(\beta, \L)$ is associated the unique (up to equivalence) irreducible representation~$\tilde \eta$ of~$\tilde J^1(\beta, \L)$ that contains~$\tilde \theta$ upon restriction, actually~$\tilde \eta$ restricts to a multiple of~$\tilde\theta$ on~$\tilde H^1(\beta, \L)$. Now~$\tilde H^1_\L$ and~$\tilde J^1_\L$ are pro-$p$-groups with~$p$ odd, on which the adjoint involution~$\inv$ acts. The Glauberman correspondence hence relates their representations to those of the fixed point subgroups~$H^1_\L$ and~$J^1_\L$. Indeed if~$\tilde\theta$ is fixed under the involution~$\inv$ so is~$\tilde \eta$ and its image~$\eta$ under the Glauberman correspondence is the unique (up to equivalence) irreducible representation of~$J^1(\beta, \L)$ that contains~$\theta$; it actually restricts to a multiple of~$\theta$ on~$H^1(\beta, \L)$. 

\subsection{}\label{1.5new} 
In turn the representation~$\tilde \eta$ has special extensions to~$\tilde J (\beta, \L)$ called~\emph{beta-extensions} and denoted by~$\tilde \kappa$. These beta-extensions in~$\GL_F(V)$ are characterized by the fact that they are intertwined by~$B^\times$ (\cite[(5.2.1)]{BK}). 

\begin{Remark}
In the literature, these extensions are usually called~$\beta$-extensions. However, the simple stratum~$[\L,-,0,\beta]$ giving rise to a particular simple character~$\theta$ is not unique, while the notion of beta-extension turns out to be independent of the choice of~$\beta$. It is thus convenient to write \emph{beta-extension}, especially since we also have strata indexed by~$i$ so we would otherwise need to talk about~$\beta_i$-extensions etc. 
\end{Remark}

The definition of beta-extensions in classical groups is more delicate~\cite[\S4]{S5}. A skew semisimple stratum as above is called \emph{maximal} if~$\mathfrak b_0(\L)$ is a maximal self-dual~$\oE$-order in~$B$. If~$[\L, -, 0, \beta]$ is a maximal skew semisimple stratum, a beta-extension of~$\eta$ is an extension~$\kappa$ of~$\eta$ to~$J (\beta, \L)$ such that the restriction of~$\kappa$ to any pro-$p$-Sylow is intertwined by~$G_E$ (\cite[3.11,~4.1]{S5}). In the general case, the notion of beta-extension is a relative one. Given a maximal skew semisimple stratum~$[\fM, -, 0, \beta]$ in~$\End_F(V)$ such that~$\mathfrak b_0(\fM) \supset \mathfrak b_0(\L)$, given the transfer~$\theta_{\fM}$ of~$\theta$ to~$H^1_{\fM}$ and the representation~$\eta_{\fM}$ of~$J^1_{\fM}$ determined by~$\theta_{\fM}$, there is a canonical way to associate to a beta-extension~$\kappa_{\fM}$ of~$\eta_{\fM}$, an extension~$\kappa$ of~$\eta$, called \emph{the beta-extension of~$\eta$ to~$J_\L$ relative to~$\fM$, compatible with~$\kappa_{\fM}$} \cite[4.3,~4.5]{S5}. (We can also call~$\kappa$ a beta-extension of~$\theta$.) 

Note that the groups~$\tilde J (\beta, \L)$ and~$J (\beta, \L)$ are not pro-$p$-groups: the notation~$\kappa$ here should not call to mind a Glauberman-like connection with the former~$\tilde \kappa$. 

\subsection{}\label{1.6new} 
Let~$J= J (\beta, \L)$, for a skew semisimple stratum~$[\L, -, 0, \beta]$ as above, let~$\theta$ be a skew semisimple character of~$H^1 (\beta, \L)$ and let~$\lambda$ be an irreducible representation of~$J$ of the form~$\lambda = \kappa \otimes \tau$, with~$\kappa$ some beta-extension of~$\theta$, and~$\tau$ the inflation of a cuspidal representation of~$J/J^1 \simeq \HP ( \L_\oE )$. Under the additional assumptions that the group~$G_E$ has compact centre and that~$P^0 ( \L_\oE )$ is a maximal parahoric subgroup of~$G_E$, the pair~$(J, \lambda)$ is called a \emph{cuspidal type} for~$\SpFV$. Recall from~\cite{S5} (see also~\cite{MS} for complements): 

 \begin{Theorem}[{\cite[Corollary 6.19, Theorem 7.14]{S5}}]\label{thm:cusptypeG}
A cuspidal type in~$\SpFV$ induces to a cuspidal representation of~$\SpFV$ and any cuspidal representation of~$\SpFV$ is thus obtained. 
\end{Theorem}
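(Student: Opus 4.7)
The plan is to treat the two directions separately. For the first direction -- every cuspidal type induces to a cuspidal representation -- I would follow the standard Bushnell--Kutzko strategy: compute the $G$-intertwining $I_G(\lambda)$ of $\lambda=\kappa\otimes\tau$ and show that it is compact modulo $J$. Irreducibility of $\pi := \cInd_J^G \lambda$ then follows from the Mackey intertwining criterion, and cuspidality follows because $\pi$ is compactly induced from an open subgroup that is compact modulo the centre, forcing its matrix coefficients to have compact support modulo the centre of $G$.

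To compute $I_G(\lambda)$, I would use that $\kappa$ is a beta-extension in the sense of paragraph~\ref{1.5new}: this controls the intertwining of its restriction to the pro-$p$-part $J^1$ (via the Glauberman link with the $\GL_F(V)$-setting, where the analogous $\tilde\kappa$ is intertwined by $B^\times$), so that intertwining elements in $G$ come from $G_E$ up to $J$. The extra twist by $\tau$ then forces such elements to normalize the parahoric $P^0(\L_\oE)$ and to intertwine $\tau$ viewed as a cuspidal representation of the (possibly disconnected) finite reductive group $\HP(\L_\oE)$; the usual Harish-Chandra result in the finite reductive setting cuts this down to $P^0(\L_\oE)$ itself (up to the component group). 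The hypotheses that $P^0(\L_\oE)$ is a maximal parahoric in $G_E$ and that $G_E$ has compact centre now imply that the normalizer of $P^0(\L_\oE)$ in $G_E$ is compact, so that $I_G(\lambda)/J$ is finite, which is what is required.

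For the exhaustion direction, the plan is to start with an arbitrary cuspidal representation $\pi$ of $G$ and produce a cuspidal type inside it. First, produce a skew semisimple character $\theta$ contained in $\pi|_{H^1(\beta,\L)}$: one uses Moy--Prasad theory to locate a fundamental stratum in $\pi$, and then refines it to a (skew) semisimple stratum using the approximation/transfer machinery of paragraph~\ref{1.4new} and the adjoint involution-equivariance provided by the fact that $p$ is odd. Second, the Glauberman correspondence gives a unique irreducible $\eta$ on $J^1$ containing $\theta$, and $\pi|_J$ must then contain $\kappa\otimes\tau$ for some beta-extension $\kappa$ and some irreducible representation $\tau$ of $J/J^1\simeq\HP(\L_\oE)$. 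Third, one varies the choice of the self-dual lattice sequence $\L$ (within its equivalence class, preserving the datum attached to $\theta$) in order to arrange that $P^0(\L_\oE)$ be a maximal parahoric in $G_E$, that $G_E$ have compact centre, and that $\tau$ be cuspidal.

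The main obstacle is this last step. One must show that failure of any of the three side conditions (maximality of $P^0(\L_\oE)$, compactness of the centre of $G_E$, cuspidality of $\tau$) forces $\pi$ to have a nonzero Jacquet module along some proper parabolic of $G$, contradicting cuspidality. The bridge is the theory of covers: a proper parahoric inclusion $P^0(\L_\oE)\subsetneq Q$, a non-compact factor in the centre of $G_E$, or a proper parabolic of the reductive quotient supporting $\tau$ each give rise to a cover in $G$ of a type for a proper Levi subgroup, and the associated Hecke algebra comparison theorems then exhibit $\pi$ as a subquotient of a properly induced representation. Verifying this translation carefully -- in particular building the necessary covers within the symplectic group and checking the compatibility of beta-extensions across the various parahoric levels -- is the genuinely classical-group content beyond the~$\GL$-case, and it is the technical heart of the proof in~\cite{S5}.
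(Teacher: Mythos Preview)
The paper does not prove this statement: it is quoted verbatim as \cite[Corollary~6.19, Theorem~7.14]{S5} and used as a black box, so there is no proof in the paper to compare against. Your outline is a fair high-level sketch of the strategy actually carried out in~\cite{S5}, and you correctly identify the two directions and the role of the intertwining computation, the Glauberman link, and the cover/Jacquet-module argument for exhaustion; indeed you acknowledge at the end that the technical content lives in~\cite{S5}.

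One point worth sharpening: in the first direction, the conclusion you want is not merely that $I_G(\lambda)/J$ is finite but that it is trivial, i.e.\ $I_G(\lambda)=J$. Finiteness alone gives irreducibility of $\cInd_J^G\lambda$ only after a further argument (or gives only that it has finite length), whereas $I_G(\lambda)=J$ gives irreducibility immediately. In~\cite{S5} this is exactly what is shown: the intertwining of $\eta$ in $G$ is $J^1 G_E J^1$, and then the cuspidality of $\tau$ on the reductive quotient $\HP(\L_\oE)$ together with maximality of $P^0(\L_\oE)$ forces any intertwiner of $\lambda$ in $G_E$ to lie in $P(\L_\oE)$, hence in $J$. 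Your sentence ``the normalizer of $P^0(\L_\oE)$ in $G_E$ is compact'' is in the right spirit but slightly off target: what matters is that this normalizer equals $P(\L_\oE)$ itself (by maximality and compact centre), not just that it is compact.
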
 

\subsection{}\label{1.7new} 
There is of course a similar result for the group~$\GL_F(V)$. Here we let~$\tilde J=\tilde J(\beta,\L)$ for a \emph{simple} stratum~$[\L, -, 0, \beta]$ (so that~$E=F[\beta]$ is a field) and let~$\tilde\lambda$ be an irreducible representation of~$\tilde J$ of the form~$\tilde\lambda = \tilde\kappa \otimes \tilde\tau$, with~$\tilde\kappa$ some beta-extension of~$\tilde\theta$, and~$\tilde\tau$ the inflation of a cuspidal representation of~$\tilde J/\tilde J^1$. Under the additional assumptions that~$\tilde P(\L)\cap B$ is a maximal parahoric subgroup of~$B^\times$, the pair~$(\tilde J, \tilde\lambda)$ is called a \emph{maximal simple type} for~$\GL_F(V)$.

\begin{Theorem}[{\cite[5.5.10, 6.2.4, 8.4.1]{BK}}]\label{thm:cusptypeGL}
A maximal simple type in~$\GL_F(V)$ extends to an irreducible representation of its normaliser, which then induces to a cuspidal representation of~$\GL_F(V)$; any cuspidal representation~$\rho$ of~$\GL_F(V)$ is thus obtained and the maximal simple type yielding~$\rho$ is unique up to conjugacy in~$\GL_F(V)$.
\end{Theorem}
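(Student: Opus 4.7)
The plan is to establish the three parts of the statement --- existence of the extension, irreducibility and cuspidality of the compact induction, and exhaustion together with uniqueness --- following the strategy of Bushnell--Kutzko.

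First I would analyse the normaliser $\tilde N$ of $(\tilde J,\tilde\lambda)$ in $\GL_F(V)$. Since $[\L,-,0,\beta]$ is maximal simple (so $E=F[\beta]$ is a field and $\tilde P(\L)\cap B^\times$ is a maximal parahoric of $B^\times$), a direct quotient computation gives $\tilde N = E^\times\tilde J$: this is compact modulo its centre and contains $\tilde J$ with cyclic quotient generated by the image of a uniformiser of~$E$. The intertwining of~$\tilde\kappa$ in~$\GL_F(V)$ equals~$\tilde J B^\times \tilde J$ by the defining property of beta-extensions in the linear case recalled in paragraph~\ref{1.5new}, and combined with the cuspidality of~$\tilde\tau$ on the quotient $\tilde J/\tilde J^1 \simeq \HP(\L_\oE) \simeq \GL_{\dim_E V}(\kE)$ we obtain $I_{\GL_F(V)}(\tilde\lambda)=\tilde J B^\times \tilde J=\tilde N$. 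The restriction of $\tilde\lambda$ to $E^\times\cap\tilde J=\oEx$ is a multiple of the central character~$\omega$ of~$\tilde\tau$; choosing any character of~$E^\times$ extending~$\omega$ and tensoring produces an irreducible extension~$\tilde\Lambda$ of~$\tilde\lambda$ to~$\tilde N$.

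Second, to show that $\rho:=\cInd_{\tilde N}^{\GL_F(V)}\tilde\Lambda$ is irreducible and cuspidal, I would apply the Mackey intertwining criterion: the endomorphism algebra of~$\rho$ is identified with the intertwining $I_{\GL_F(V)}(\tilde\Lambda)$ computed via double cosets $\tilde N\backslash \GL_F(V)/\tilde N$, and the computation above reduces this to the coset of~$\tilde N$ itself, giving irreducibility (and one-dimensional intertwining). Cuspidality follows from the fact that matrix coefficients of $\rho$ are supported in $\tilde N$, which is compact modulo centre.

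Third, exhaustion and uniqueness. Given a cuspidal~$\rho$, Bushnell--Kutzko's structural machinery produces a simple character~$\tilde\theta$ occurring in~$\rho$, attached to some simple stratum $[\L,-,0,\beta]$; cuspidality of~$\rho$ forces the maximality of $\tilde P(\L)\cap B$ (otherwise $\rho$ would admit a nontrivial Jacquet module along a proper parabolic stabilising a decomposition of~$V$ refining~$\L$). One then shows that $\rho$ contains the Heisenberg representation~$\tilde\eta$ determined by~$\tilde\theta$, hence some beta-extension~$\tilde\kappa$; the space $\Hom_{\tilde J^1}(\tilde\eta,\rho)$ then carries an irreducible representation~$\tilde\tau$ of~$\tilde J/\tilde J^1$ which must be cuspidal (again by cuspidality of~$\rho$). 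Uniqueness up to $\GL_F(V)$-conjugacy is then the intertwining-implies-conjugacy principle for maximal simple types: two such types occurring in the same~$\rho$ intertwine via Frobenius reciprocity, forcing conjugacy in $\GL_F(V)$. The main obstacle is precisely this exhaustion step, which in \cite{BK} rests on the entire theory of fundamental strata, the approximation arguments producing a simple stratum inside any nontrivial cuspidal, and intertwining-implies-conjugacy for simple characters and types --- the deepest technical input in the linear-group case. The preceding paragraphs are essentially formal once this input is in hand.
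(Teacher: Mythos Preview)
The paper does not prove this theorem: it is quoted with precise references~\cite[5.5.10, 6.2.4, 8.4.1]{BK} and used as input. Your outline is a faithful sketch of the Bushnell--Kutzko argument those references contain, and you correctly identify the exhaustion step (production of a simple stratum and simple character inside an arbitrary cuspidal) as the deep input.

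One slip to flag: the displayed equality $I_{\GL_F(V)}(\tilde\lambda)=\tilde J B^\times \tilde J=\tilde N$ is wrong as written when $\dim_E V>1$, since $\tilde J B^\times\tilde J$ is then strictly larger than $E^\times\tilde J$. The correct logic, which you essentially have, is: the intertwining of~$\tilde\kappa$ is $\tilde J B^\times\tilde J$, and the cuspidality of~$\tilde\tau$ on $\tilde J/\tilde J^1\simeq\GL_{\dim_E V}(k_E)$ then cuts this down --- an element $b\in B^\times$ intertwines $\tilde\lambda$ only if its image in $B^\times/(\tilde J\cap B^\times)$ normalises the maximal parahoric, which forces $b\in E^\times(\tilde J\cap B^\times)$, so $I_{\GL_F(V)}(\tilde\lambda)=E^\times\tilde J=\tilde N$. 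With that corrected, your sketch is sound.
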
 

\begin{Remark}
This theorem includes depth zero representations, by formally considering the null stratum~$[\L,-,0,0]$ to be simple.
\end{Remark}

\subsection{}\label{1.8new}
In order to compare representations across different groups, we need a way to compare beta-extensions. (The transfer of semisimple characters already allows a comparison.) Two beta-extensions only differ by a character (of a specific shape), however we will need to choose beta-extensions in a unique way as in~\cite[\S2.3~Lemma~1]{BH},which amounts to the~$\GL$-case in the following lemma. 

\begin{Lemma}\label{powerofp}
\begin{enumerate}
\item\label{powerofp.i} Let~$[\L, n, 0, \beta]$ be a simple stratum in~$\End_F(V)$, let~$\tilde\theta$ be a simple character of~$\tilde H^1(\beta, \L)$, and let~$\tilde \eta$ be the irreducible representation of~$\tilde J^1(\beta, \L)$ containing~$\tilde \theta$. There exists one and only one beta-extension~$\tilde \kappa$ of~$\tilde \eta$ to $\tilde J(\beta, \L)$ whose determinant has order a power of~$p$. 
\item\label{powerofp.ii} With notation as in~\ref{powerofp.i}, assume the stratum and the simple character are skew so that the involution~$\inv$ on~$\GL_F(V)$ stabilizes~$\tilde H^1(\beta, \L)$, $\tilde J^1(\beta, \L)$, $\tilde J(\beta, \L)$ and~$\tilde \theta$. The beta-extension~$\tilde \kappa$ in~\ref{powerofp.i} satisfies~$\tilde \kappa \simeq \tilde \kappa \circ \inv$. 
\item\label{powerofp.iii} Let~$[\L, n, 0, \beta]$ be a maximal skew semisimple stratum in~$\End_F(V)$, let~$\theta$ be a skew semisimple character of~$H^1(\beta, \Lambda)$, and let~$\eta$ be the irreducible representation of~$J^1(\beta, \Lambda)$ containing~$\theta$. There exists one and only one beta-extension of~$\eta$ to~$J( \beta, \L )$ whose determinant has order a power of~$p$. 
\end{enumerate}
\end{Lemma}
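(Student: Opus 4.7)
The proof follows the strategy of \cite[\S2.3 Lemma~1]{BH}, extended to cover the skew (classical group) setting. The common engine is the interplay between the dimension of a Heisenberg-type representation, which is a power of~$p$, and the group of characters by which beta-extensions of a given~$\tilde\eta$ (resp.~$\eta$) can be twisted, whose relevant exponent is coprime to~$p$.

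For part~\ref{powerofp.i}, I would first observe that~$\tilde J^1/\tilde H^1$ is a finite symplectic~$\mathbb F_p$-vector space, so~$\dim\tilde\eta = [\tilde J^1:\tilde H^1]^{1/2}$ is a power of~$p$; write~$d=\dim\tilde\eta$. Fix any beta-extension~$\tilde\kappa_0$; then by the standard parametrization (compatibility with a maximal reference and~\cite[5.2.2]{BK}) every beta-extension of~$\tilde\eta$ is of the form~$\tilde\kappa_0\otimes(\chi\circ\det_B)$ for a character~$\chi$ of~$\oE^\times/(1+\pE)\cong k_E^\times$. Any character of~$\tilde J$ restricted to the pro-$p$ group~$\tilde J^1$ is of~$p$-power order, so the order of~$\det\tilde\kappa_0$ will be a power of~$p$ if and only if the character it induces on the reductive quotient~$\tilde J/\tilde J^1$ (which factors through determinants on~$k_E^\times$, a group of order coprime to~$p$) is trivial. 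Twisting by~$\chi\circ\det_B$ modifies this induced character by~$(\chi\circ\det)^d$; since~$d$ is a~$p$-power and~$|k_E^\times|$ is prime to~$p$, the map~$\chi\mapsto\chi^d$ is a bijection of~$\widehat{k_E^\times}$, yielding a unique~$\chi$ that makes the induced character on~$\tilde J/\tilde J^1$ trivial, hence a unique~$\tilde\kappa$ as required.

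For part~\ref{powerofp.ii}, the argument is short and formal: the involution~$\inv$ fixes~$\tilde\theta$, and~$\tilde\eta$ is the unique irreducible representation of~$\tilde J^1$ containing~$\tilde\theta$, so~$\tilde\eta\circ\inv\simeq\tilde\eta$. Since~$\inv(\beta)=-\beta$, the involution preserves the commutant~$B$ and hence preserves the intertwining property~$I_{\tilde J}(\tilde\kappa)\supseteq B^\times$ defining beta-extensions. Therefore~$\tilde\kappa\circ\inv$ is again a beta-extension of~$\tilde\eta$, and~$\det(\tilde\kappa\circ\inv)=(\det\tilde\kappa)\circ\inv$ has the same order as~$\det\tilde\kappa$, which is a power of~$p$. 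The uniqueness statement in~\ref{powerofp.i} then forces~$\tilde\kappa\circ\inv\simeq\tilde\kappa$.

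Part~\ref{powerofp.iii} is the main obstacle and requires the symplectic analogue of the torsor structure. One shows first that~$\dim\eta$ is a power of~$p$ (via the Glauberman correspondence from~$\tilde\eta$, since~$J^1/H^1$ is again a finite~$\mathbb F_p$-space). Then one uses the key structural fact from~\cite[\S4]{S5}: since the stratum is maximal, beta-extensions of~$\eta$ to~$J$ form a torsor under the group of characters of~$J/J^1\cong\HP(\L_\oE)$ that are trivial on~$\HP^0(\L_\oE)$, i.e.\ under~$\widehat{\HP(\L_\oE)/\HP^0(\L_\oE)}$. The component group~$\HP(\L_\oE)/\HP^0(\L_\oE)$ is a product of component groups of maximal parahorics of the (symplectic or unitary) factors~$G_{E^i}$; each such factor is a 2-group, and hence the whole group has order coprime to~$p$. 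Applied as in~\ref{powerofp.i}, this lets one solve uniquely for the character~$\chi$ cancelling the prime-to-$p$ part of~$\det\kappa_0$ on~$J/J^1$. The delicate point, compared with part~\ref{powerofp.i}, is ensuring that the prime-to-$p$ part of~$\det\kappa_0$ restricted to~$\HP^0(\L_\oE)$ is automatically trivial (so that twisting by a character of the component group suffices); this follows from the fact that the abelianizations of the connected reductive groups in play (symplectic, ramified or unramified unitary) have order coprime to~$p$ since~$p$ is odd, so that any character of prime-to-$p$ order on~$\HP^0(\L_\oE)$ that extends to~$\HP(\L_\oE)$ factors through the component group.
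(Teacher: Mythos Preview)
Your arguments for parts~\ref{powerofp.i} and~\ref{powerofp.ii} are correct and agree with the paper's approach.

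Part~\ref{powerofp.iii}, however, has a genuine gap. First, the torsor structure you invoke is not the correct one: two beta-extensions in the maximal case differ by a character of~$\HP(\L_\oE)$ trivial on every~$p$-Sylow subgroup (this is what one extracts from~\cite[Theorem~4.1]{S5}), and such characters are exactly those of the form~$\psi\circ\det_E$ for~$\psi$ a character of~$k_E^\times$ --- not merely characters of the component group~$\HP(\L_\oE)/\HP^0(\L_\oE)$. For an unramified unitary block~$U(m)(k_{E^i}/k_{E^i_\so})$ the determinant surjects onto the norm-one elements of~$k_{E^i}^\times$, so there are characters~$\psi\circ\det_{E^i}$ that are nontrivial on~$\HP^0(\L^i_{\oEi})$. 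Second, and independently, your justification of the ``delicate point'' is a non-sequitur: that the abelianisation of~$\HP^0(\L_\oE)$ has prime-to-$p$ order says only that \emph{every} character of~$\HP^0(\L_\oE)$ has prime-to-$p$ order; it does not force a prime-to-$p$ character of~$\HP(\L_\oE)$ to restrict trivially to~$\HP^0(\L_\oE)$.

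The paper instead proves directly that~$\det(\kappa|_{P(\L_\oE)})$ factors through~$\det_E$. One uses that~$\theta|_{P^1(\L_\oE)}=\chi\circ\det_E$ for a character~$\chi$ of~$1+\pE$, so that~$\det\kappa|_{P^1(\L_\oE)}=(\chi\circ\det_E)^{\dim\eta}$; dividing by~$(\tilde\chi\circ\det_E)^{\dim\eta}$ for an extension~$\tilde\chi$ of~$\chi$ to~$\oEx$ leaves a character~$\Phi'$ of~$\HP(\L_\oE)$ which, by~\cite[3.10, 3.11, 4.1]{S5}, is trivial on~$p$-Sylows and hence of the form~$\psi\circ\det_E$ with~$\psi\in\widehat{k_E^\times}$. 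Since~$\dim\eta$ is a~$p$-power and~$|k_E^\times|$ is prime to~$p$, one solves~$\alpha^{\dim\eta}=\psi$ and then~$(\alpha\circ\det_E)^{-1}\kappa$ has determinant of~$p$-power order. Uniqueness follows because any two beta-extensions differ by some~$\psi'\circ\det_E$ with~$\psi'\in\widehat{k_E^\times}$, and no nontrivial such~$\psi'$ has~$p$-power order. Your argument is repaired by replacing the component-group torsor with this larger one and supplying this factorisation of~$\det\kappa$.
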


\begin{proof} 
\ref{powerofp.i} The reference is \cite[(5.2.2)]{BK} which we imitate below to conclude the proof of~\ref{powerofp.iii}. 

In~\ref{powerofp.ii}, self-duality with respect to~$\inv$ follows from uniqueness. Indeed~$\tilde \eta \circ \inv$ is equivalent to~$\eta$ so there is an intertwining operator~$T$ such that~$\tilde \eta (x) =T \left( \tilde \eta \circ \inv (x) \right) T^{-1}$, for~$x \in \tilde J^1( \beta, \L)$. Since $\inv$ stabilizes $\GL_{F[\b]}(V)$, the representation~$T ( \tilde \kappa \circ \inv(x) ) T^{-1}$, for~$x \in \tilde J( \beta, \L)$, is a beta-extension of~$\tilde \eta$ by~\cite[Definition 5.2.1]{BK}; its determinant is a power of~$p$, so it is equal to~$\tilde \kappa$. 

\ref{powerofp.iii} Let~$\kappa$ be a beta-extension of~$\eta$ and let~$\Phi = \det (\kappa_{|P(\L_{\oE})})$. The main point is to prove that the character~$\Phi$ of~$P(\L_{\oE})$ factors through the determinant~$\det_E$. By this we mean, as usual, that~$\Phi_{|P(\L_{\oEi})}$ factors through~$\det_{E^i}$, for~$0 \le i \le l$; the remainder of the proof uses this convention. 

Since~$\theta$ is equal to~$\chi\circ \det_E$ on~$P^1(\L_{\oE})$, for some character~$\chi$ of~$1 + \pE$, we have that~$\kappa_{|P^1(\L_{\oE})}$ is the sum of~$\dim(\eta)$ copies of~$\chi\circ \det_E$. Now~$\chi$ extends to a character~$\tilde \chi $ of~$\oEx$ and~$\Phi' = \left(\tilde \chi\circ \det_E\right)^{-\dim(\eta)} \Phi$ is then a character of~$P(\L_{\oE})/P^1(\L_{\oE})$. From~\cite[Lemma~3.10, Corollary~3.11 and Theorem~4.1]{S5}, the character~$\Phi'$ is trivial on all~$p$-Sylow subgroups of~$P(\L_{\oE})/P^1(\L_{\oE})$ so factors as~$\Phi'= \psi \circ \det_E$ where~$\psi $ is a character of~$\oEx $ trivial on~$1+ \pE$ (and depends on the choice of extension~$\tilde\chi$). 

Let us write~$\oEx= \mu'_{E}(1+\pE)$, where~$\mu'_{E}$ is the group of roots of unity in~$E^\times$ of order prime to~$p$, and, in the above, let us choose~$\tilde \chi$ trivial on~$\mu'_{E}$ so that the order of~$\tilde \chi$ is a power of~$p$. The corresponding character~$\psi$ has order prime to~$p$, so prime to~$\dim(\eta)$, and there is a character~$\alpha$ of~$\oEx$ (trivial on~$1+\pE$) such that~$\psi = \alpha^{\dim(\eta)}$. 

The representation~$\underline\kappa = \left(\alpha\circ \det_E\right)^{-1} \kappa$ satisfies the required condition. It is unique since any other beta-extension has the form~$(\psi \circ \det_E ) \underline\kappa$, with~$\psi$ as above, and if~$\psi $ is non-trivial then no~$p^i$-th power of~$\psi$ can be trivial. 
\end{proof}

\begin{Definition}
With the notations of~\ref{powerofp.i} above, we denote by $\underline{\tilde \kappa}$ the unique beta-extension of~$\tilde \eta$ whose determinant has order a power of~$p$. We call~$\underline{\tilde \kappa}$ the \emph{$p$-primary beta-extension} of~$\tilde \eta$. 

With the notations of~\ref{powerofp.iii} above, we denote by~$\underline \kappa$ the unique beta-extension of~$\eta$ whose determinant has order a power of~$p$. We call~$\underline \kappa$ the \emph{$p$-primary beta-extension} of~$\eta$. 
\end{Definition}

We remark that, while the~$p$-primary beta-extensions give a useful way of picking a base point amongst the beta-extensions, sufficient for our needs here, it is not clear whether this is the best choice of base point.

\section{Inertial Jordan blocks}\label{JBN2new}

In this section, we state the main results on Jordan blocks and the consequences for the endoscopic transfer map. We continue with the notation from the previous section.

\subsection{}\label{2.1new}
Let~$\pi$ be a cuspidal representation of~$\SpFV\simeq\Sp_{2N}(F)$. We recall the \emph{reducibility set}~$\Red (\pi)$ and the \emph{Jordan set}~$\Jord(\pi)$ from the introduction. For any positive integer~$n$, the group~$\GL_n(F) \times \SpFV$ appears naturally as a standard maximal Levi subgroup of~$\Sp_{2(N+n)}(F)$. If~$\rho$ is a cuspidal representation of~$\GL_n(F)$ we can form the normalized parabolically induced representation~$\rho \nu^s \rtimes \pi$ (we use normalized induction and induce via the standard parabolic), where~$s$ is here a \emph{real} parameter and~$\nu$ is the character~$g \mapsto |\det g|_F$ of~$\GL_n(F)$.  If no unramified twist of~$\rho$ is  self-dual (i.e. isomorphic to its contragredient) then~$\rho \nu^s \rtimes \pi$ is always irreducible. On the other hand, if~$\rho$ is self-dual, there is a unique~$s_\pi(\rho)\ge 0$ such that~$\rho \nu^s \rtimes \pi$ is reducible if and only if~$s = \pm s_\pi(\rho)$.

\begin{Definition}
Let~$\pi$ be a cuspidal representation of~$\SpFV$.
\begin{itemize}
\item The \emph{reducibility set}~$\Red (\pi)$ is the set of isomorphism classes of self-dual cuspidal representations~$\rho$ of some~$\GL_n(F)$, with~$n \ge 1$, for which~$s_\pi(\rho)\ge 1$. 
\item The \emph{Jordan set}~$\Jord(\pi)$ is the set of pairs~$(\rho, m)$, where~$\rho \in \Red(\pi)$ and~$m$ is a positive integer such that~$2s_\pi(\rho)-1-m$ is a non-negative even integer.
\end{itemize}
\end{Definition}

Note that, if~$\rho\in\Red(\pi)$ then~$2s_\pi(\rho)-1$ is a positive integer by~\cite{MoTa}, so that there is a positive integer~$m$ such that~$(\rho,m)\in\Jord(\pi)$.

\subsection{}\label{2.2new}
For~$\rho$ an irreducible representation of some~$\GL_n(F)$, we write~$n=\deg(\rho)$. Recall that the \emph{inertial class}~$[\rho]$ of a cuspidal representation~$\rho$ of~$\GL_n(F)$ is the equivalence class of~$\rho$ under the equivalence relation defined by twisting by an unramified character (that is, twisting by~$\omega \circ \det$ where~$\omega$ is a character of~$F^\times$ trivial on~$\oFx$). If~$\rho$ is self-dual then the inertial class~$[\rho]$ contains precisely two self-dual representations: if~$t(\rho)$ denotes the number of unramified characters~$\chi$ of~$\GL_n(F)$ such that~$\rho\otimes\chi\simeq\rho$, and if~$\chi'$ is an unramified character of order~$2t(\rho)$, then~$\rho'=\rho\otimes\chi'$ is the other self-dual representation in~$[\rho]$.

\begin{Definition}
Let~$\pi$ be a cuspidal representation of~$\SpFV$. The \emph{inertial Jordan set} of~$\pi$ is the \emph{multiset}~$\IJord(\pi)$~consisting of all pairs~$([\rho], m)$ with~$(\rho, m) \in \Jord(\pi)$. 
\end{Definition}

Note that, if~$([\rho],m)\in\IJord(\pi)$, with~$\rho$ a self-dual cuspidal representation of~$\GL_n(F)$, then either~$(\rho,m)\in\Jord(\pi)$ or~$(\rho',m)\in\Jord(\pi)$, where~$\rho'$ as above is the second self-dual representation in the inertial class~$[\rho]$. As discussed in the introduction, if one of~$\rho,\rho'$ is of symplectic type and the other of orthogonal type, then which occurs in~$\Jord(\pi)$ is determined by the parity of~$m$. On the other hand, if~$\rho,\rho'$ are both of the same parity then the inertial Jordan set~$\IJord(\pi)$ does \emph{not} distinguish them; of course, if~$([\rho],m)$ occurs with multiplicity two in~$\IJord(\pi)$, then both~$(\rho,m)$ and~$(\rho',m)$ occur in~$\Jord(\pi)$ and there is no ambiguity; see~\ref{rem:bothappear}~Remark for more on this.

\subsection{}\label{2.3new}
In order to refine further the (inertial) Jordan set, we need to use the notion of the \emph{endo-class} of a simple character (for linear groups), as defined in~\cite{BH1}. To any cuspidal representation~$\rho$ of $\GL_n(F)$ is attached in~\cite[\S1.4]{BH2} an endo-class of simple characters, denoted by~$\bs\Theta(\rho)$, as follows. As recalled in~\ref{thm:cusptypeGL}~Theorem, there is a maximal simple type~$(\tilde J, \tilde \lambda)$ in~$\GL_n(F)$ which occurs in~$\rho$ and~$\rho$ determines the~$\GL_n(F)$-conjugacy class of~$(\tilde J, \tilde \lambda)$. This maximal simple type is built from a simple character~$\tilde\theta$ and we define $\bs\Theta(\rho)$ to be the endo-class of $\tilde \theta$. (In fact, this is also the endo-class of \emph{any} simple character contained in~$\rho$.) Note that we are allowing here the case of depth zero representations (where~$\rho$ contains the trivial character of~$\tilde P^1(\L)$, for some lattice sequence~$\L$), in which case~$\bs\Theta(\rho)= \bs\Theta_0^F$ is the trivial endo-class over $F$. 

\begin{Definition}
Let~$\pi$ be a cuspidal representation of~$\SpFV$ and let~$\bs\Theta$ be an endo-class of simple characters over~$F$. The \emph{inertial Jordan set of~$\pi$ relative to~$\bs\Theta$} is the \emph{multiset}~$\IJord(\pi,\bs\Theta)$~consisting of all pairs~$([\rho], m)$ with~$(\rho, m) \in \Jord(\pi)$ and~$\bs\Theta(\rho)=\bs\Theta$.
\end{Definition}

\subsection{}\label{2.4new}
We will also need to twist inertial Jordan blocks as follows. With notation as in the previous paragraph, the~$\GL_n(F)$-conjugacy class of~$(\tilde J, \tilde \lambda)$ depends only on the inertial class~$[\rho]$; it also determines~$[\rho]$ by~\cite[(5.5)]{BK1}. The quotient group~$\tilde J/ \tilde J^1$ is a linear group over a finite field, say~$\GL (m_{[\rho]},k_{[\rho]})$. We define the twist of the inertial class~$[\rho]$ by a character~$\chi$ of~$k_{[\rho]}^\times$ to be the inertial class~$[\rho]_\chi$ determined by the maximal simple type~$(\tilde J, \tilde \lambda\otimes \chi\circ \det)$ -- that is, in the decomposition~$\tilde\lambda=\tilde\kappa\otimes\tilde\tau$ with~$\tilde\kappa$ a beta-extension, we replace the cuspidal representation~$\tilde \tau$ by~$\tilde \tau\otimes \chi\circ \det$. 

Let~$\bs\Theta$ be an endo-class of simple characters. By~\cite[Proposition 8.11]{BH1}, it determines a finite extension~$k_{\bs\Theta}$ of~$k_F$ such that, for any cuspidal representation~$\rho$ of some $\GL_n(F)$ satisfying~$\bs\Theta(\rho) = \bs\Theta$, if~$(\tilde J,\tilde\lambda)$ is a maximal simple type in~$\rho$ then the quotient group~$\tilde J/ \tilde J^1$ is a linear group over~$k_{\bs\Theta}$ (that is,~$k_{[\rho]}=k_{\bs\Theta}$ in the notation above). It is thus meaningful to give the following definition:

\begin{Definition}
Let~$\pi$ be a cuspidal representation of~$\SpFV$, let~$\bs\Theta$ be an endo-class of simple characters, and let~$\chi$ be a character of $k_{\bs\Theta}^\times$. The \emph{$\chi$-twisted inertial Jordan set of $\pi$ relative to $\bs\Theta$} is the multiset~$\IJord(\pi,\bs\Theta)_\chi $ consisting of all pairs~$([\rho]_\chi, m)$ with~$(\rho, m) \in \Jord(\pi)$ and~$\bs\Theta(\rho)=\bs\Theta$. 
\end{Definition}
The relevant case for us will be the case where~$\chi$ is quadratic or trivial. 

\begin{Remark}
Since~$p$ is odd, we have a squaring map~$\bs\Theta\mapsto\bs\Theta^2$ on endo-classes: if~$\theta$ is a simple character with endo-class~$\bs\Theta$, associated to a simple stratum~$[\Lambda,-,0,\beta]$, then the character~$\theta^2$ is a simple character for the stratum~$[\Lambda,-,0,2\beta]$ and~$\bs\Theta^2$ is the endo-class corresponding to~$\theta^2$. This is well-defined and moreover gives a bijection on the set of endo-classes (again, since~$p$ is odd). We note also that the fields~$k_{\bs\Theta}$ and~$k_{\bs\Theta^2}$ coincide.
\end{Remark}

\subsection{}\label{2.6new}
We begin the computation of the inertial Jordan set with a special case, to which we will reduce in the next paragraph. We call a cuspidal representation of~$\SpFV$ \emph{simple} if it contains a simple character; that is, it contains a semisimple character~$\theta$ of~$H^1(\beta,\Lambda)$, associated to a skew semisimple stratum~$[\Lambda,-,0,\beta]$, such that~$E=F[\beta]$ is a field. We allow the degenerate case~$\beta=0$, in which case~$\pi$ is of depth zero (and every depth zero representation is simple with~$\beta=0$); we also allow, in the case~$\beta=0$, the degenerate case that~$G$ is the trivial group, so that the trivial representation of the trivial group is regarded as being simple of depth zero.

\begin{Remark} 
Our use of the word \emph{simple} here is consistent with, but not the same as, the use in~\cite{GR} where, for symplectic groups, it means of minimal positive depth~$1/2N$. More precisely, all cuspidal representations of depth~$1/2N$ are simple in our sense, but the converse is false.
\end{Remark}

The following theorem tells us that, in the case of simple cuspidals, the Jordan set is filled by representations with the expected endo-class. 

\begin{Theorem}\label{thm:simple}
Let~$\pi$ be a simple cuspidal representation of~$\SpFV$ and let~$\tilde\theta$ be a self-dual simple character whose restriction to~$\SpFV$ is contained in~$\pi$. Let~$\bs\Theta$ be the endo-class of the simple character~$\tilde\theta$. Then
\[
\sum_{([\rho],m)\in\IJord(\pi,\Theta^2)} m\deg(\rho) = \begin{cases}
2N+1 &\text{ if~$\bs\Theta=\bs\Theta_0^F$, the trivial endo-class;} \\[3pt]
2N &\text{ otherwise.} \end{cases}
\]
\end{Theorem}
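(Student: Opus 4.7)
The plan is to explicitly determine $\IJord(\pi,\bs\Theta^2)$ via the theory of covers and their Hecke algebras, then deduce the sum from a dimension count on a finite classical group naturally associated to $\pi$. Write $\pi=\cInd_{J_\pi}^{G}\lambda_\pi$ with cuspidal type $\lambda_\pi=\kappa_\pi\otimes\tau_\pi$ attached to a skew simple stratum $[\L,-,0,\beta]$ whose simple character $\theta_\pi$ has endo-class $\bs\Theta$. Let $B$ be the centralizer of $\beta$ and $G_E=B\cap G$; this is an $E/E^{\so}$-unitary group when $\bs\Theta\ne\bs\Theta_0^F$ and equals $G$ itself in the depth-zero case $\bs\Theta=\bs\Theta_0^F$. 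The cuspidal $\tau_\pi$ lives in the reductive quotient $\HP(\L_\oE)$, a finite classical group of the corresponding type.

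The first main step is the translation of reducibility into Hecke-algebra data. For each self-dual cuspidal $\rho$ of $\GL_n(F)$ with $\bs\Theta(\rho)=\bs\Theta^2$, take a maximal simple type $(\tilde J_\rho,\tilde\lambda_\rho)$ built from a simple character compatible with $\theta_\pi^2$ (using the transfer property and $p$ odd), and construct by \cite{MS} a cover $(J_+,\lambda_+)$ of $(\tilde J_\rho\times J_\pi,\tilde\lambda_\rho\otimes\lambda_\pi)$ inside $\Sp_{2(N+n)}(F)$. The spherical Hecke algebra $\CH(\Sp_{2(N+n)}(F),\lambda_+)$ is two-generator with quadratic relations $(T_i+1)(T_i-q^{r_i})=0$ for $i=1,2$, and by \cite{Bl} the real reducibility points of $\rho\nu^s\rtimes\pi$ are $\pm\tfrac{r_1+r_2}{2}$ and $\pm\tfrac{r_1-r_2}{2}$; these account simultaneously for the two self-dual representations $\rho,\rho'$ in the inertial class $[\rho]$, producing two contributions to $\IJord(\pi,\bs\Theta^2)$. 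The second main step is to compute $r_1,r_2$ by descending to $\bar G_+=J_+/J_+^1$, the reductive quotient of a maximal parahoric in a larger classical group of the same type as $G_E$. Combining Howlett--Lehrer theory with Lusztig's classification of cuspidal representations of finite classical groups (via symbols) expresses $r_1,r_2$ in terms of combinatorial data attached to the cuspidal representation $\bar\tau_+$ of $\bar G_+$ that contains $\bar\tau_\pi\otimes\bar\tau_\rho$ upon restriction to a Levi. In depth zero this is precisely \cite{LS}; in positive depth we carry out the analogous calculation in the finite unitary group.

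The sum $\sum m\deg(\rho)$ over $([\rho],m)\in\IJord(\pi,\bs\Theta^2)$ then becomes the sum of the dimensions of the Jordan blocks of the Lusztig--Langlands parameter attached to $\bar\tau_\pi$, collected over all admissible $\bar G_+$ as $n$ varies; by the standard dimension formulas for symbols of cuspidal representations this sum equals the dimension of the defining form of the ambient classical group, giving $2N+1$ when $\bar G_+$ is symplectic (depth zero) and $2N$ when $\bar G_+$ is unitary (positive depth). The principal obstacle is the comparison of beta-extensions: the $p$-primary beta-extension $\kappa_\pi$ used in $\lambda_\pi$ and the beta-extension used upstairs in $\lambda_+$ need not coincide, and their ratio is a character on $\HP(\L_\oE)$ whose restriction to each unitary factor is a signature character of an explicit permutation. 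This sign twist alters which $\bar\tau_+$ corresponds to $\bar\tau_\pi\otimes\bar\tau_\rho$, hence shifts the set of $(\rho,m)$ that appear, and must be tracked carefully throughout the algorithm. A secondary technical point is the explicit identification of $\bar G_+$ (including its field of definition and ramification type) from the stratum data of $\pi$, which is needed so that Lusztig's tables for the correct form can be invoked.
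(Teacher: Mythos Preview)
Your outline follows the paper's strategy, but there are several concrete gaps that would prevent the argument from going through as written.

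First, the reducibility formula is misstated: the real parts of the reducibility points are~$\pm\dfrac{r_0\pm r_1}{2t(\rho)}$, not~$\pm\dfrac{r_1\pm r_2}{2}$, where~$t(\rho)=\dim_F W/e(E/F)$ is the unramified twist number. This normalization is essential: without it the contribution of the inertial class~$[\rho]$ to the sum does not reduce to the combinatorial expression~$a^{(0)}_{\sigma_m^{(0)}(Q)}+a^{(1)}_{\sigma_m^{(1)}(Q)}$ in Lusztig's parametrization, and the dimension count fails.

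Second, the finite group in which the parameters are computed is not~$J_+/J_+^1$ (that quotient is just a product of the Levi factors). The parameters~$r_0,r_1$ come from \emph{two different} maximal self-dual~$\oE$-orders~$\mathfrak M_0,\mathfrak M_1$ containing~$\mathfrak b_0(\L_X)$; the relevant groups are the reductive quotients~$\HP(\mathfrak M_{t,\oE})$, and moreover~$\HP^\so(\L_\oE)$ itself factors as~$\HP^{(0)}\times\HP^{(1)}$ with~$\tau^\so=\tau^{(0)}\otimes\tau^{(1)}$. Each~$r_t$ is computed from the Lusztig series of~$\tau^{(t)}$ in~$\HP^{(t)}(\mathfrak M_{t,\oE})$. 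Relatedly, your dichotomy ``symplectic in depth zero, unitary in positive depth'' is incomplete: when~$E/E_\so$ is ramified the pair~$\HP^{(0)},\HP^{(1)}$ is a symplectic group and an (even or odd) special orthogonal group over~$k_E=k_E^\so$, and the computation splits into cases accordingly, with a correction term~$\pm 1$ arising from the discrepancy between the dimension of the natural module for a classical group and for its dual.

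Third, and most importantly, you flag the signature twist~$\epsilon_{\mathfrak M_t}$ as something that ``must be tracked carefully'', but the paper's proof does \emph{not} compute it. The key observation you are missing is that~$\epsilon_{\mathfrak M_t}$ depends only on~$m=\dim_E W$ (not on~$\tilde\tau_W$), so it induces a degree-preserving involution~$\sigma_m^{(t)}$ on the set of~$k_E/k_E^\so$-self-dual irreducible polynomials. Since the final sum is over \emph{all} such polynomials~$Q$ of each degree~$m$, this involution merely permutes the terms and the sum~$\sum_Q a^{(t)}_{\sigma_m^{(t)}(Q)}=\sum_P a^{(t)}_P$ is unchanged. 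This is what allows the dimension count to close without ever identifying the involutions explicitly.
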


Note that we have~$\bs\Theta=\bs\Theta_0^F$ if and only if~$\pi$ is of depth zero (which includes the degenerate case~$N=0$ where~$G$ is the trivial group). In this case, the theorem is a special case of the main result of~\cite{LS}. We will prove this theorem in Section~\ref{JBN6new} by computing the real parts of the complex reducibility points of parabolically induced representations of the form~$\rho\nu^s\rtimes\pi$, with~$\rho$ a self-dual cuspidal representation of some general linear group with endo-class~$\bs\Theta^2$, using the theory of types and covers to reduce the calculation to computations of Lusztig for finite reductive groups. We note also that the proof not only gives the equality above but also gives an algorithm to compute the multiset~$\IJord(\pi,\Theta^2)$  (see paragraph~\ref{6.8new} for more detail).

\subsection{}\label{2.5new}
Now let~$\pi$ be an arbitrary cuspidal representation of~$\SpFV$. Recall from~\ref{thm:cusptypeG}~Theorem that~$\pi$ can be constructed by induction, starting with a maximal skew semisimple stratum~$[\L,-,0,\beta]$ and a skew semisimple character~$\theta$ of~$H^1(\beta,\L)$, which decomposes into a family of skew simple characters~$\theta_i$ of~$H^1(\beta^i,\L^i)$, for $i\in \{0, \ldots, l\}$. Let~$\underline\kappa$ be the~$p$-primary beta-extension of~$\theta$~to $J_{\L}$ and, similarly, let~$\underline\kappa_i$~be the~$p$-primary beta-extension of~$\theta_i$ to~$J_{\L^i}$ (in~$\Sp_F(V^i)$), for $0\le i \le l$.

Let~$\tau$ be the cuspidal representation of~$\HP(\Lambda_\oE) = P(\Lambda_\oE)/ P^1(\Lambda_\oE)$ such that~$\pi$ is induced from~$\lambda = \underline\kappa \otimes \tau$. Then we can uniquely decompose~$\tau$ as~$\tau=\otimes_{i=0}^l \tau_i$, with~$\tau_i$ an irreducible (cuspidal) representation of $\HP(\Lambda_{\oEi})$. We may then define, for each~$i$, the cuspidal representation~$\pi_i$ of~$\Sp_F(V^i)$ by
\[
\pi_i = \cInd_{J_{\L^i}}^{\text{Sp}_F(V^i)} \underline\kappa_i \otimes \tau_i .
\]
Note that this representation is simple, in the sense of the previous paragraph.

\begin{Remark}
Recall that we are using the notation of section~\ref{1.1new}, in particular~\ref{conv}~Convention so that we are assuming~$\beta^0=0$. If the space~$V^0$ is trivial then the representation~$\pi_0$ is the trivial representation of the trivial group.
\end{Remark}

We can now state the crucial reduction theorem, which allows us to determine the inertial Jordan set of~$\pi$ from those of the simple cuspidals~$\pi_i$.  
 
\begin{Theorem}\label{thm:reduction}
With notation as above, for~$0\le i \le l$, let~$\tilde\theta_i$ be the unique self-dual simple character of~$\tilde H^1(\beta^i,\L^i)$ restricting to~$\theta_i$ on~$H^1(\beta^i,\L^i)$. Let~$\bs\Theta_i$ be the endo-class of the simple character~$\tilde\theta_i$ and let~$k_{\bs\Theta_i}$ be the corresponding extension of~$k_F$. Then there is a character~$\chi_i$ of~$k_{\bs\Theta_i}^\times$ of order at most two such that we have an equality of multisets
\[
\IJord(\pi,\bs\Theta_i^2) = \IJord(\pi_i,\bs\Theta_i^2)_{\chi_i}.
\] 
\end{Theorem}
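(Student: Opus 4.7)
The plan is to translate, through the theory of covers, the reducibility of~$\rho\nu^s\rtimes\pi$ into a Hecke algebra computation, and then to compare it with the analogous computation for the simple cuspidal~$\pi_i$. Fix $i\in\{0,\ldots,l\}$ and a self-dual cuspidal~$\rho$ of some~$\GL_n(F)$ with $\bs\Theta(\rho)=\bs\Theta_i^2$, having maximal simple type $(\tilde J_\rho,\tilde\lambda_\rho)$. Following~\cite{MS}, to be recalled in Section~\ref{JBN4new}, we build a cover $(J,\lambda)$ in $\Sp_{2(N+n)}(F)$ of $(\tilde J_\rho\times J_\pi,\tilde\lambda_\rho\otimes\lambda_\pi)$ from the $p$-primary beta-extension of an enlarged skew semisimple character. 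By~\cite{Bl,MS}, the spherical Hecke algebra $\CH(\Sp_{2(N+n)}(F),\lambda)$ has two generators $T_1,T_2$ with quadratic relations $(T_j+1)(T_j-q^{r_j})=0$; the pair $(r_1,r_2)$ determines the real parts of the complex reducibility points of~$\rho\nu^s\rtimes\pi$, hence the contributions of~$\rho$ and of its self-dual unramified twist to~$\IJord(\pi,\bs\Theta_i^2)$.

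Performing the same construction for~$\pi_i$ in $\Sp_{2(N_i+n)}(F)$ yields parameters~$(r_1',r_2')$. Since $\bs\Theta(\rho)=\bs\Theta_i^2$, and since both the $p$-primary beta-extension and the cover construction behave compatibly with the orthogonal splitting $V=\perp_{j=0}^l V^j$, the factors indexed by $j\neq i$ enter $(r_1,r_2)$ only through the passage from the restriction to the $i$-th block of~$\underline\kappa$ to the $p$-primary beta-extension~$\underline\kappa_i$ of~$\theta_i$. This passage introduces a character of $\tilde J_\rho/\tilde J_\rho^1$ that factors through the determinant and, via $\tilde J_\rho/\tilde J_\rho^1\simeq\GL_{m_{[\rho]}}(k_{\bs\Theta_i})$, corresponds to a character of~$k_{\bs\Theta_i}^\times$. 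As in the proof of~\ref{thm:simple}~Theorem, each~$r_j$ reduces to a Harish-Chandra reducibility computation inside a finite reductive quotient of a parahoric of a unitary group, depending on~$\tau$ only up to such a character twist.

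The delicate step is to show that the two character corrections attached to~$T_1$ and~$T_2$ are \emph{equal}: only then can they be merged into a single quadratic-or-trivial twist~$\chi_i$, and only then does the pair $(r_1,r_2)$ computed for~$\pi$ agree with the pair computed for~$\pi_i$ after replacing~$\tau_i$ by $\tau_i\otimes(\chi_i\circ\det)$. The plan is to express each correction as the signature of an explicit permutation arising when comparing normalisations of beta-extensions over the two maximal parahorics supporting the cover, and to use the involution of the cover which interchanges these two parahorics to prove that the two permutations are conjugate, and hence of equal sign. This coincidence of signatures is exactly the subtle point flagged in paragraph~\ref{0.4new} and is the main obstacle. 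Granted it, every pair of endo-class~$\bs\Theta_i^2$ in~$\IJord(\pi_i)$, twisted by~$\chi_i$, appears in~$\IJord(\pi,\bs\Theta_i^2)$; the total degree bound from~\ref{thm:simple}~Theorem applied to~$\pi_i$, compared with the analogous dimension bound for~$\pi$, then forces this inclusion of multisets to be an equality.
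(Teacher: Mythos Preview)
Your overall architecture is right and matches the paper: build covers in $\Sp_{2(N+n)}(F)$ and in $\Sp_{2(N_i+n)}(F)$, read off the two Hecke-algebra parameters, observe that the comparison between the two situations is governed by a pair of signature characters (one for each generator), and identify the crucial point as proving that these two signature corrections coincide. You have also correctly located the twist on $\tilde J_\rho/\tilde J_\rho^1\simeq\GL_{m_{[\rho]}}(k_{\bs\Theta_i})$ and noted that one must check it is independent of~$W$.

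The genuine gap is in your proposed mechanism for the key step. There is in general \emph{no} involution of the cover that interchanges the two maximal self-dual $\oE$-orders $\mathfrak M_0$ and~$\mathfrak M_1$: their reductive quotients are typically non-isomorphic (already in the toy example of paragraph~\ref{4.14new} one gets $\U(2,1)$ versus $\U(1,1)\times\U(1)$, or $\SL_2\times\{\pm1\}$ versus $\mathrm O(2,1)$), so no automorphism of the ambient group can conjugate one to the other, and the two permutations you want to compare are not conjugate. The paper instead proves the equality $\epsilon_{\mathfrak M_0^i}\epsilon_{\mathfrak M_0}=\epsilon_{\mathfrak M_1^i}\epsilon_{\mathfrak M_1}$ by a direct and rather delicate computation: one writes $\epsilon_{\mathfrak M_t^i}\epsilon_{\mathfrak M_t}$ as the signature of the left action of $\GL_{m_W}(k_W)$ on the space $\mathfrak J^1_{\mathfrak M_t}\cap\Hom_F(V^{\vee i},W)\big/\mathfrak H^1_{\mathfrak M_t}\cap\Hom_F(V^{\vee i},W)$, decomposes this via the inductive description of $\mathfrak J^1/\mathfrak H^1$ along an approximating sequence $(\gamma_v,r_v)$, and then checks term by term---using auxiliary lattice sequences $\mathcal C(\Lambda_Y,\Lambda_W)$ and an arithmetic analysis of jumps, periods, and critical exponents---that each piece contributes the same signature for $t=0$ and $t=1$. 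The independence from~$W$ falls out of the same analysis, since only~$e_W=e(E^i/F)$ enters.

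A smaller point: once the two signature corrections agree, the Hecke parameters for~$\pi$ and for~$\pi_i$ (with $\tilde\tau_W$ replaced by its $\chi_i$-twist) are literally equal, so the reducibility sets and hence the inertial Jordan sets coincide immediately. Your final appeal to a dimension count to upgrade an inclusion to an equality is unnecessary here; that counting argument is what the paper uses later, in~\ref{cor:together}~Corollary, to assemble the pieces over all~$i$, not to prove the present theorem.
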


The character~$\chi_i$ appearing here is in some sense explicit, coming from certain permutation characters (see~\ref{thm:reductiondetail}~Theorem,~\ref{prop:mainprop}~Proposition and~\ref{prop:Phi}~Proposition for more details). The proof of the theorem will be given in Section~\ref{JBN5new}, following preparation in Section~\ref{JBN4new} (which is also needed for the proof of~\ref{thm:simple}~Theorem). Again, the principle is to use the theory of types and covers to compare the real parts of the complex reducibility points of~$\rho\nu^s\rtimes\pi$ with those of~$\rho_i\nu^s\rtimes\pi_i$, for~$\rho$ a self-dual cuspidal representation of some general linear group with endo-class~$\bs\Theta_i^2$ and~$\rho_i$ self-dual in the inertial class~$[\rho]_{\chi_i}$.

For now, we put together the two previous theorems to get:

\begin{Corollary}\label{cor:together}
Suppose~$F$ is of characteristic zero. With the notation of the Theorem, we have
\[
\IJord(\pi) = \bigsqcup_{i=0}^l \IJord(\pi_i,\Theta_i^2)_{\chi_i}.
\]
\end{Corollary}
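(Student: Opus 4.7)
The plan is to combine \ref{thm:simple}~Theorem with \ref{thm:reduction}~Theorem via a dimension count, the only input genuinely requiring characteristic zero being M\oe glin's description of discrete Langlands parameters recalled in paragraph~\ref{0.2new}.

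First I would check that the right-hand side is genuinely a disjoint union of sub-multisets of $\IJord(\pi)$. By the very definition of a skew semisimple stratum, the skew simple blocks $[\L^i,-,0,\beta^i]$ are sufficiently distant, so their endo-classes $\bs\Theta_i$ (for $0\le i\le l$) are pairwise distinct. Since $p$ is odd, squaring is a bijection on endo-classes (the Remark in paragraph~\ref{2.4new}), hence the $\bs\Theta_i^2$ are pairwise distinct as well. Each multiset $\IJord(\pi_i,\bs\Theta_i^2)_{\chi_i}$ is obtained by twisting only the cuspidal factor $\tau$ of the maximal simple type by a character of $k_{\bs\Theta_i}^\times$, which leaves the underlying simple character untouched; every pair in $\IJord(\pi_i,\bs\Theta_i^2)_{\chi_i}$ therefore still has endo-class $\bs\Theta_i^2$, and the summands on the right are supported on pairwise disjoint sets of endo-classes. \ref{thm:reduction}~Theorem then identifies each $\IJord(\pi_i,\bs\Theta_i^2)_{\chi_i}$ with the endo-class-$\bs\Theta_i^2$ part $\IJord(\pi,\bs\Theta_i^2)$ of $\IJord(\pi)$, so the right-hand side embeds into $\IJord(\pi)$ as a sub-multiset.

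It remains to verify equality by comparing total dimensions. Under \ref{conv}~Convention we have $\beta^0=0$, so $\bs\Theta_0=\bs\Theta_0^F$ is the trivial endo-class and $\bs\Theta_0^2=\bs\Theta_0^F$ too. Applying \ref{thm:simple}~Theorem to each $\pi_i$, and noting that twisting by $\chi_i$ preserves degrees, yields
\[
\sum_{i=0}^l\ \sum_{([\rho],m)\in\IJord(\pi_i,\bs\Theta_i^2)_{\chi_i}} m\deg(\rho)\;=\;(2N_0+1)+\sum_{i=1}^l 2N_i\;=\;2N+1.
\]
On the other hand, in characteristic zero $\pi$ admits a discrete Langlands parameter into $\hat G=\SO_{2N+1}(\BC)$, and M\oe glin's criterion (paragraph~\ref{0.2new}) asserts that $\Jord(\pi)$ fills this parameter: $\sum_{(\rho,m)\in\Jord(\pi)} m\deg(\rho)=2N+1$. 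Inertial twisting preserves $\deg$, so the same identity holds with $\IJord(\pi)$. The sub-multiset just exhibited on the right already accounts for all $2N+1$ units of dimension in $\IJord(\pi)$, forcing the inclusion to be an equality of multisets.

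The substantive obstacles lie upstream, in the two theorems being combined: the matching of signature characters in \ref{thm:reduction}~Theorem that collapses two a~priori different twists into the single character $\chi_i$, and the Hecke-algebra and Lusztig-theoretic calculations underlying \ref{thm:simple}~Theorem. Granted those, the corollary itself is a bookkeeping argument pivoting on the dimension $2N+1$ of the Langlands parameter, which is precisely where the characteristic-zero assumption enters.
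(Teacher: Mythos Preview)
Your proof is correct and follows essentially the same route as the paper's: use \ref{thm:reduction}~Theorem to see that the right-hand side sits inside $\IJord(\pi)$, then compare the dimension count from \ref{thm:simple}~Theorem with M\oe glin's identity $\sum m\deg(\rho)=2N+1$ to force equality. The paper is terser about disjointness (leaving implicit that the~$\bs\Theta_i^2$ are pairwise distinct) and additionally remarks that only the inequality $\sum m\deg(\rho)\le 2N+1$ from~\cite{Mo2} is actually needed, so that the argument does not rely on the full strength of Arthur's classification.
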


Since the proof of~\ref{thm:simple}~Theorem gives us an algorithm to compute the multisets~$\IJord(\pi_i,\Theta_i^2)$, we can then use this also to compute~$\IJord(\pi)$, for any cuspidal representation~$\pi$.

\begin{proof}
The Theorem says that~$\IJord(\pi)$ contains the right hand side. On the other hand, by~\cite[Theorem~3.2.1]{Mo} the multiset~$\IJord(\pi)$ is finite and we have
\[
\sum_{([\rho],m)\in\IJord(\pi)} m\deg(\rho) =\sum_{(\rho,m)\in\Jord(\pi)} m\deg(\rho) = 2N+1. 
\]
However, writing~$\dim_F(V^i)=2N_i$, we get from~\ref{thm:simple}~Theorem that
\[
\sum_{i=0}^l\sum_{([\rho],m)\in\IJord(\pi_i,\Theta_i^2)} m\deg(\rho) = (2N_0+1) + \sum_{i=1}^l 2N_i = 2N+1.
\]
Thus we have equality, as required.
\end{proof}

We remark that the proof does not require the full strength of~\cite[Theorem~3.2.1]{Mo}; indeed, it only uses the inequality
\[
\sum_{(\rho,m)\in\Jord(\pi)} m\deg(\rho) \le  2N+1,
\]
which was proved previously in~\cite[\S4~Corollaire]{Mo2}. Thus it does not in fact depend on Arthur's endoscopic classification of discrete series representations of~$\SpFV$. One could also prove it (without the restriction on the characteristic of~$F$) by checking that~$\IJord(\pi,\bs\Theta)$ is empty for any self-dual endo-class~$\bs\Theta\ne\bs\Theta_i^2$; indeed, the methods of Section~\ref{JBN5new} together with results from~\cite{KSS} would allow this.

\subsection{}\label{2.7new}
In this and the following paragraph, we interpret our results in terms of the endoscopic transfer map from cuspidal representations of~$\SpFV$ to~$\GL_{2N+1}(F)$.

For~$\bs\Theta$ an endo-class over~$F$, we recall that the \emph{degree}~$\deg(\bs\Theta)$ of~$\bs\Theta$ is the degree of an extension~$F[\beta]/F$ for which there are a simple stratum~$[\Lambda,-,0,\beta]$ with a simple character of endo-class~$\bs\Theta$. Although the stratum and the field extension are not uniquely determined by~$\bs\Theta$, this degree is (see~\cite[Proposition~8.11]{BH1}). 

Let~$N'$ be a positive integer and write~$\CE(F)$ for the set of endo-classes of simple characters over~$F$. An \emph{endo-parameter of degree~$N'$ over~$F$} is a formal sum
\[
\sum_{\bs\Theta\in\CE(F)} m_{\bs\Theta}\bs\Theta, \qquad m_{\bs\Theta}\in\BZ_{\ge 0},
\]
such that
\[
\sum_{\bs\Theta\in\CE(F)} m_{\bs\Theta}\deg(\bs\Theta) = N'.
\]
In particular, such a formal sum has finite support~$\{\bs\Theta\in\CE(F)\mid m_{\bs\Theta}\ne 0\}$. (In~\cite{SSJL}, these formal sums are called \emph{semisimple endo-classes}; the nomenclature \emph{endo-parameter} comes from~\cite{KSS}.) We write~$\CEE_{N'}(F)$ for the set of endo-parameters of degree~$N'$ over~$F$. We then have, for each positive integer~$N'$, a well-defined map
\[
\mathsf{e}_{N'}:\Irr(\GL_{N'}(F)) \to \CEE_{N'}(F)
\]
given by mapping a cuspidal representation~$\rho$ to~$\frac{N'}{\deg(\bs\Theta(\rho)}\bs\Theta(\rho)$, and mapping an arbitrary representation to the sum of the endo-parameters of its cuspidal support.

\subsection{}\label{2.8new}
We call an endo-class~$\bs\Theta$ over~$F$ \emph{self-dual} if there is a self-dual simple character~$\tilde\theta$ with endo-class~$\bs\Theta$. We write~$\CE^{\mathsf{sd}}(F)$ for the set of self-dual endo-classes over~$F$. An endo-parameter of degree~$N'$ over~$F$ is called \emph{self-dual} if its support is contained in~$\CE^{\mathsf{sd}}(F)$, and we write~$\CEE^{\mathsf{sd}}_{N'}(F)$ for the set of self-dual endo-parameters of degree~$N'$ over~$F$.

Since~$p$ is odd, the only self-dual endo-class over~$F$ of odd degree is the trivial endo-class~$\bs\Theta^F_0$, which has degree~$1$. Indeed, if~$\tilde\theta$ is a self-dual simple character which is not the trivial character, then~\cite[Theorem~6.3]{S1} implies that~$\tilde\theta$ is associated to a skew simple stratum, whose associated field extension~$E/F$ is therefore of even degree. This implies, in particular, that there is a canonical bijection
\[
\CEE^{\mathsf{sd}}_{2N}(F)\to \CEE^{\mathsf{sd}}_{2N+1}(F),\qquad
\sum_{\bs\Theta\in\CE^{\mathsf{sd}}} m_{\bs\Theta}\bs\Theta \mapsto \sum_{\bs\Theta\in\CE^{\mathsf{sd}}} m_{\bs\Theta}\bs\Theta + \bs\Theta^F_0.
\]
For any~$N'$, there is also the natural squaring map
\[
\CEE_{N'}(F)\to \CEE_{N'}(F),\qquad
\sum_{\bs\Theta\in\CE} m_{\bs\Theta}\bs\Theta \mapsto \sum_{\bs\Theta\in\CE} m_{\bs\Theta}\bs\Theta^2,
\]
which is a bijection since~$p$ is odd. Combining these, we get a natural inclusion map
\[
\iota_{2N}:\CEE^{\mathsf{sd}}_{2N}(F)\into \CEE_{2N+1}(F),\qquad
\sum_{\bs\Theta\in\CE^{\mathsf{sd}}} m_{\bs\Theta}\bs\Theta \mapsto \sum_{\bs\Theta\in\CE^{\mathsf{sd}}} m_{\bs\Theta}\bs\Theta^2 + \bs\Theta^F_0.
\]

Given a maximal skew semisimple stratum~$[\L, n, 0, \beta]$ and a skew semisimple character~$\theta$ of~$H^1(\beta,\L)$, which decomposes into a family of skew simple characters~$\theta_i$ of~$H^1(\beta^i,\L^i)$, for $i\in \{0, \ldots, l\}$, we define the self-dual endo-parameter of~$\theta$ to be
\[
\sum_{i=0}^l \frac{\dim_F(V^i)}{\deg(\bs\Theta_i)}\bs\Theta_i,
\]
where~$\bs\Theta_i$ is the endo-class of the unique self-dual simple character~$\tilde\theta_i$ which restricts to~$\theta_i$. This is a self-dual endo-parameter of degree~$2N$.

We write~$\Cusp(G)$ for the set of equivalence classes of cuspidal representations of~$G$. From~\ref{thm:reduction}~Theorem and~\ref{thm:simple}~Theorem, we derive the following result.

\begin{Theorem}\label{thm:endoparameter}
Suppose that~$F$ is of characteristic zero. 
Let~$\pi$ be a cuspidal representation of~$\SpFV$ and let~$\theta$ be a skew semisimple character contained in~$\pi$. Then the self-dual endo-parameter of~$\theta$ depends only on~$\pi$. Moreover, the diagram
\[
\xymatrix{
\Cusp(\SpFV) \ar[d]_{\mathsf{e}_G} \ar[rr]^{\text{\rm transfer}\quad} && \Irr(\GL_{2N+1}(F)) \ar[d]^{\mathsf{e}_{2N+1}} \\
\CEE^{\mathsf{sd}}_{2N}(F) \ar@{^{(}->}[rr]_{\iota_{2N}} && \CEE_{2N+1}(F)
}
\]
commutes, where~${\mathsf e}_G(\pi)$ denotes the endo-parameter of any skew semisimple character contained in~$\pi$.
\end{Theorem}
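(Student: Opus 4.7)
The plan is to evaluate both sides of the diagram on a cuspidal $\pi$ and match them via Corollary~\ref{cor:together}. For the right-hand side I will invoke Arthur's endoscopic classification (which is what forces the characteristic-zero hypothesis): the transfer $\Pi$ of $\pi$ to $\GL_{2N+1}(F)$ has Langlands parameter $\bigoplus_{(\rho,m)\in\Jord(\pi)} \phi_\rho \otimes \St_m$, obtained by composing the parameter of $\pi$ with the standard embedding $\SO_{2N+1}(\BC) \hookrightarrow \GL_{2N+1}(\BC)$; via the Langlands correspondence for general linear groups, the cuspidal support of $\Pi$ is then the multiset $\{\rho\nu^{(m-1)/2-j} : (\rho,m)\in\Jord(\pi),\ 0\le j\le m-1\}$. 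Since $\mathsf{e}_{2N+1}$ is additive across cuspidal support and an unramified twist preserves the endo-class, this gives
\[
\mathsf{e}_{2N+1}(\Pi) \;=\; \sum_{(\rho,m)\in\Jord(\pi)} \frac{m\deg(\rho)}{\deg(\bs\Theta(\rho))}\,\bs\Theta(\rho).
\]

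Next I will split $\IJord(\pi) = \bigsqcup_{i=0}^l \IJord(\pi_i,\bs\Theta_i^2)_{\chi_i}$ using Corollary~\ref{cor:together}; because twisting by the characters $\chi_i$ does not affect the endo-class, every $(\rho,m)\in\Jord(\pi)$ has $\bs\Theta(\rho)=\bs\Theta_i^2$ for a unique $i$, so the sum above breaks up along this decomposition. Theorem~\ref{thm:simple} applied to each simple cuspidal $\pi_i$ on $V^i$ (with $\dim_F V^i = 2N_i$) evaluates each piece: the total mass $\sum_{(\rho,m)\in\Jord(\pi_i,\bs\Theta_i^2)} m\deg(\rho)$ equals $2N_0+1$ when $i=0$ (where $\bs\Theta_0=\bs\Theta_0^F$ is trivial) and $2N_i$ otherwise. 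Using $\bs\Theta_0^2=\bs\Theta_0^F$ and $\deg(\bs\Theta_i^2)=\deg(\bs\Theta_i)$, the contributions assemble into
\[
\mathsf{e}_{2N+1}(\Pi) \;=\; (2N_0+1)\bs\Theta_0^F + \sum_{i=1}^l \frac{2N_i}{\deg(\bs\Theta_i)}\bs\Theta_i^2 \;=\; \bs\Theta_0^F + \sum_{i=0}^l \frac{\dim_F V^i}{\deg(\bs\Theta_i)}\bs\Theta_i^2,
\]
which is precisely $\iota_{2N}$ applied to the self-dual endo-parameter $\sum_i \frac{\dim_F V^i}{\deg(\bs\Theta_i)}\bs\Theta_i$ of $\theta$. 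This gives commutativity of the diagram.

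For the well-definedness of the left-hand vertical arrow, I will observe that the cuspidal type in $\pi$ is unique up to $\SpFV$-conjugacy, hence so are the underlying skew semisimple stratum $[\Lambda,-,0,\beta]$, its orthogonal splitting $V = \perp_{i=0}^l V^i$, and the family $(\bs\Theta_i)$ of endo-classes attached to the simple components of $\theta$; since $\SpFV$-conjugation preserves both $\dim_F V^i$ and each $\bs\Theta_i$, the formal sum $\sum_i \frac{\dim_F V^i}{\deg(\bs\Theta_i)}\bs\Theta_i$ depends only on $\pi$. The main obstacle is really the first step, namely reading $\mathsf{e}_{2N+1}(\Pi)$ off from $\Jord(\pi)$: this is where Arthur's classification (and the characteristic-zero restriction) enters, transporting the entirely automorphic content of Theorems~\ref{thm:simple} and~\ref{thm:reduction} into the statement about endoscopic transfer. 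Once that bridge is crossed, the verification is a matter of bookkeeping with the mass formula supplied by \ref{thm:simple}~Theorem.
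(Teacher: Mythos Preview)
Your computation of commutativity is essentially the paper's: both evaluate $\mathsf{e}_{2N+1}$ of the transfer as $\sum_{(\rho,m)\in\Jord(\pi)} \tfrac{m\deg(\rho)}{\deg(\bs\Theta(\rho))}\bs\Theta(\rho)$, invoke Corollary~\ref{cor:together} to confine each $\bs\Theta(\rho)$ to some $\bs\Theta_i^2$, and then use Theorem~\ref{thm:simple} block by block to obtain $\bs\Theta_0^F+\sum_i \tfrac{\dim_F V^i}{\deg(\bs\Theta_i)}\bs\Theta_i^2 = \iota_{2N}(\mathsf{e}_G(\pi))$.

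The difference, and the gap, is in your argument for well-definedness of $\mathsf{e}_G$. You write that the cuspidal type is unique up to conjugacy ``hence so are the underlying skew semisimple stratum $[\Lambda,-,0,\beta]$, its orthogonal splitting \ldots''. This inference is not valid: a given semisimple character $\theta$ can arise from inequivalent strata, and the element $\beta$ (hence its characteristic-space decomposition) is not an invariant of $\theta$ alone. Establishing that the multiset $\{(\bs\Theta_i,\dim_F V^i)\}$ is nonetheless an invariant of $\theta$ is exactly the content of the endo-parameter theory in~\cite{KSS}, which the paper cites as an independent (and harder) route. The paper sidesteps this entirely: having proved the identity
\[
\sum_{(\rho,m)\in\Jord(\pi)} \frac{m\deg(\rho)}{\deg(\bs\Theta(\rho))}\,\bs\Theta(\rho)
\;=\;
\bs\Theta_0^F+\sum_{i=0}^l \frac{\dim_F V^i}{\deg(\bs\Theta_i)}\,\bs\Theta_i^2,
\]
it observes that the left-hand side depends only on $\Jord(\pi)$, hence only on $\pi$; since the squaring map on endo-classes is a bijection (as $p$ is odd), the endo-parameter of $\theta$ is therefore determined by $\pi$. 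You should replace your conjugacy argument with this observation.
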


We remark that the fact that the map~${\mathsf e}_G$ is well-defined is also proved, in much greater generality and without the assumption that~$F$ has characteristic zero, in~\cite{KSS}; the proof here is quite different and long predates that in~\cite{KSS}. We also remark that we will see later (\ref{thm:ramG}~Theorem) that the map~${\mathsf e}_G$ is in fact surjective.

\begin{proof} 
Let~$\pi$ be a cuspidal representation of~$\SpFV$ and let~$\theta$ be a skew semisimple character contained in~$\pi$, with all the notation from above. In particular, we have a family of skew simple characters~$\theta_i$, for~$0\le i\le l$, and, for each~$i$, the unique self-dual simple character~$\tilde\theta_i$ restricting to~$\theta_i$ and the self-dual endo-class~$\bs\Theta_i$ of~$\tilde\theta_i$. 

For~$(\rho,m)\in\Jord(\pi)$, we write~$\bs\Theta_\rho$ for the endo-class of any simple character in~$\rho$. Then~\ref{cor:together}~Corollary implies that~$\bs\Theta_\rho=\bs\Theta_i^2$, for some~$0\le i\le l$; moreover, together with~\ref{thm:simple}~Theorem it implies
\begin{equation}\label{eqn:commute}
\sum_{(\rho,m)\in\Jord(\pi)} \frac{m\deg(\rho)}{\deg(\bs\Theta_\rho)} \bs\Theta_{\rho} 
=
\sum_{i=0}^l \frac{\dim_F(V^i)}{\deg(\bs\Theta_i)}\bs\Theta_i^2 + \bs\Theta_0^F.
\end{equation}
In particular, the right hand side here is~$\bs\Theta_0^F$ plus the square of the endo-parameter of~$\theta$; since the squaring map is a bijection, this endo-parameter is therefore independent of the choice of~$\theta$ in~$\pi$ since the left hand side is.

Now, according to the results of M\oe glin~\cite[Theorem~3.2.1]{Mo}, the Jordan set exactly determines the endoscopic transfer of~$\pi$ to~$\GL_{2N+1}(F)$; more precisely, the transfer of~$\pi$ is
\[
\prod_{(\rho,m)\in\Jord(\pi)} \St(\rho,m)
\]
where~$\St(\rho,m)$ denotes the unique irreducible quotient of the normalised parabolically induced representation
\[
\rho\nu^{\frac{1-m}2} \times \rho\nu^{\frac{3-m}2} \times\cdots\times \rho\nu^{\frac{m-1}2}
\]
of~$\GL_{m\deg(\rho)}(F)$. The endo-parameter of the transfer of~$\pi$ is thus
\[
\sum_{(\rho,m)\in\Jord(\pi)} \frac{m\deg(\rho)}{\deg(\bs\Theta_\rho)} \bs\Theta_{\rho},
\]
where~$\bs\Theta_\rho$ is the endo-class of (any simple character in)~$\rho$. In particular, this lies in~$ \CEE^{\mathsf{sd}}_{2N+1}(F)$ and~\eqref{eqn:commute} now implies that the diagram commutes.
\end{proof}

\section{Types, covers and reducibility}\label{JBN4new}

In the following subsections we recall the main results about covers and their Hecke algebras, from~\cite{BK1} in the general situation and from~\cite{MS} in the particular situation of interest to us: induction from a maximal parabolic subgroup of a symplectic group. One of the key features in~\cite{MS} is the presence of quadratic characters arising from the comparison of beta-extensions. Using the notion of~$p$-primary beta-extension, together with results from~\cite{Bl}, we describe these characters as signatures of permutations and recall the implications of the structure of the Hecke algebra (including its parameters) for the reducibility of parabolic induction from~\cite{Bl}.

\subsection{}\label{4.1new}
We briefly recall the general notion of a \emph{type} as defined by Bushnell and Kutzko in~\cite{BK1}. Let for a moment~$G$ be the group of~$F$-points of an arbitrary connected reductive group defined over~$F$, let~$L$ be a Levi subgroup of~$G$ and let~$\sigma$ be a cuspidal representation of~$L$. The pair~$(L,\rho)$ determines, through~$G$-conjugacy and twist by unramified characters of~$L$, an inertial class~$\mathfrak s = [L,\rho]_G$ in~$G$. This class~$\mathfrak s$ indexes the Bernstein block~$\mathcal R^\mathfrak s (G)$ (in the category~$\mathcal R (G)$ of smooth representations of~$G$) which is the direct factor of~$\mathcal R(G)$ consisting of representations all of whose irreducible subquotients are subquotients of a representation parabolically induced from an element of~$\mathfrak s$.

Let~$(J, \lambda)$ be a pair made of an open compact subgroup~$J$ of~$G$ and an irreducible smooth representation~$\lambda$ of~$J$, acting on the finite dimensional space~$V_\lambda$. The Hecke algebra of the pair~$(J,\lambda)$ is the intertwining algebra of the representation~$\cInd_J^G \lambda$, traditionally viewed as:
\[
\begin{aligned}
\mathcal H(G , \lambda ) = \{f: G \rightarrow \End(V_\lambda) \mid f &\text{ compactly supported and } \\
&\forall g \in G, \ \forall j,k \in J, \ f(jg k) = \lambda(j) f(g) \lambda(k)
\}
\end{aligned}
\]
The pair~$(J,\lambda)$ is an~$\mathfrak s$-type if the irreducible objects of~$\mathcal R^\mathfrak s (G)$ are exactly the irreducible representations of~$G$ that contain~$\lambda$ upon restriction to~$J$. In such a case there is an equivalence of categories
\[
\mathcal M_\lambda : \mathcal R^\mathfrak s (G) \longrightarrow \operatorname{Mod--}\mathcal H(G , \lambda ), \qquad \mathcal M_\lambda (\pi) = \Hom_J(\lambda, \pi).
\]

\subsection{}\label{4.2new}
There is a counterpart of parabolic induction for types: the notion of~$G$-cover, also defined in~\cite{BK1} by Bushnell and Kutzko. Let~$M$ be a Levi subgroup of~$G$, let~$J_M$ be a compact open subgroup of~$M$ and let~$\lambda_M$ be a smooth irreducible representation of~$J_M$. A {\it~$G$-cover} of the pair~$(J_M, \lambda_M)$ is an analogous pair~$(J, \lambda)$ in~$G$ satisfying the following conditions, for any parabolic subgroup~$P$ of~$G$ of Levi~$M$, where we write~$N$ for the unipotent radical of~$P$, and~$P^-$ for the parabolic subgroup opposite to~$P$ with respect to~$M$, with unipotent radical~$N^-$:
\begin{enumerate}
\item\label{4.2new.i} $J$ has an Iwahori decomposition with respect to~$(M; P)$, i.e.
\[
J= (J\cap N^-) (J \cap M) (J \cap N), \text{ and }J \cap M = J_M;
\]
\item\label{4.2new.ii} $\lambda$ restricts to~$\lambda_M$ on~$J_M$ and to a multiple of the trivial representation on~$J\cap N^-$ and~$J \cap N$;
\item\label{4.2new.iii} the Hecke algebra~$\mathcal H (G, \lambda )$ contains an invertible element supported on the double coset of a strongly positive element of the centre of~$M$~\cite[\S7]{BK1}.
\end{enumerate}
If the pair~$(J_M, \lambda_M)$ is an~$\mathfrak s_M$-type in~$M$ for an inertial class~$\mathfrak s_M = [L, \sigma]_M$ (so that~$L$ is a Levi subgroup of~$M$) and if~$(J, \lambda)$ is a~$G$-cover of~$(J_M, \lambda_M)$, then the pair~$(J, \lambda)$ is an~$\mathfrak s_G$-type in~$G$ for the inertial class~$s_G= [L, \sigma]_G$~\cite[\S8]{BK1}. Furthermore, the third condition above provides us with an injective morphism of algebras~$t : \mathcal H (M, \lambda_M ) \hookrightarrow \mathcal H (G, \lambda )$ that induces on modules a morphism~$t_\ast$ yielding a commutative diagram :
\[
\xymatrix{
\mathcal R^{\mathfrak s_G} (G ) \ar[r]^{\mathcal M_\lambda\ \ }&\operatorname{Mod--}\mathcal H(G , \lambda ) \\
\mathcal R^{\mathfrak s_M} (M ) \ar[u]^{\Ind_{P}^{G}}
\ar[r]^{\mathcal M_{\lambda_M}\ \ \ \ } &\operatorname{Mod--}\mathcal H(M , \lambda_M )\ar[u]_{t_\ast}
}
\]
The reducibility of parabolically induced representations from~$P$ to~$G$, on the left side, can thus be studied in terms of Hecke algebra modules, on the right side. 

\subsection{}\label{4.3new}
This is the tool we use in this paper, whereas the types will be cuspidal types as in paragraphs~\ref{1.6new},~\ref{1.7new}, simple types and semisimple types. As for the relevant Levi and parabolic subgroups, they will come in most cases as follows -- and now we come back to the symplectic group~$\SpFV=\Sp_F(V)$ and the setting of paragraph~\ref{1.1new}. Thus we have a skew semisimple stratum~$[\L,-,0,\beta]$ with associated orthogonal decomposition~$V = \perp_{i=0}^l V^i$, as well as all the other notation from~\S\ref{JBN1new}.

Let~$V = \oplus_{j=-m}^m W^{j}$ be a \emph{self-dual} decomposition of~$V$ (i.e. for which the orthogonal space of~$W^{j}$ is~$\oplus_{k\ne -j}W^{k}$) such that:
{\begin{enumerate}
\def\labelenumi{{\rm(\alph{enumi})}}
\def\theenumi{{\rm(\alph{enumi})}}
\item\label{sub.i} $W^{j} = \oplus_{i=0}^l W^{j}\cap V^i$ and~$W^{j}\cap V^i$ is an~$E_i$-subspace of~$V^i$;
\item\label{sub.ii} $\L(t) = \oplus_{j=-m}^m \L(t) \cap W^{(j)}$, for all~$t \in \mathbb Z$;
\item\label{sub.iii} for any~$r \in \mathbb Z$ and~$i$ with~$0\le i \le l$, there is at most one~$j$, with~$-m \le j \le m$, such that~$\L(r) \cap V^i \cap W^{(j)} \supsetneq \L(r+1) \cap V^i \cap W^{(j)}$;
\item for~$j\ne 0$ there exists~$0\le i\le l$ such that~$W^j\subset V^i$, and~$\tilde P((\Lambda\cap W^j)_{\oEi})$ is a maximal parahoric subgroup of~$\GL_{E_i}(W^j)$;
\item $P^\so((\Lambda\cap W^{0})_{\oE})$ is a maximal parahoric subgroup of~$G\cap \prod_{i=0}^l \GL_{E_i}(W^0\cap V^i)$, which is a group with compact centre.
\end{enumerate}}
Such a decomposition is called \emph{exactly subordinate} to the stratum~$[\L, -, 0, \beta]$ (compare to~\cite[Definition~6.5]{S5}).

Let then~$V = \oplus_{j=-m}^m W^{j}$ be a self-dual decomposition of~$V$ exactly subordinate to the stratum~$[\L, -, 0, \beta]$, let~$M$ be the Levi subgroup of~$\SpFV$ stabilizing this decomposition and let~$P$ be a parabolic subgroup of~$\SpFV$ with Levi component~$M$. Then the pairs~$(H^1(\beta, \L), \theta)$, $(J^1(\beta, \L), \eta)$ and~$(J (\beta, \L), \kappa)$ all satisfy conditions~\ref{4.2new.i} and~\ref{4.2new.ii} of paragraph~\ref{4.2new} above. In fact, for the first two pairs we need only conditions~\ref{sub.i}--\ref{sub.ii} and a self-dual decomposition satisfying these will be called {\it subordinate} to the stratum; for the final pair we need only~\ref{sub.i}--\ref{sub.iii}.

\subsection{}\label{4.4new}
In the next few paragraphs, we subsume the results of~\cite{S5}, in the form easier to refer to taken from~\cite{MS} and in the case that we will focus on, that is, parabolic induction of self-dual cuspidal representations of a maximal Levi subgroup in a symplectic group. We thus continue with the notation of~\S\ref{JBN1new} and fix a cuspidal representation~$\pi$ of~$\SpFV=\Sp_F(V)$. We also fix a finite-dimensional vector space~$W$ over~$F$ and a self-dual cuspidal representation~$\rho$ of~$\GL_F(W)$. We consider the symplectic space~$X = V \perp (W \oplus W^\ast)$ over~$F$, with form
\[
h_X(v_1+w_1+w_1^\ast,v_2+w_2+w_2^\ast) = h(v_1,v_2) + \langle w_1,w_2^\ast \rangle - \langle w_2, w_1^\ast \rangle,
\]
where~$h$ is the symplectic form on~$V$ and~$\langle\cdot,\cdot\rangle$ is the pairing~$W\times W^\ast\to F$.
We put~$M= \GL_F(W) \times \SpFV$, a maximal Levi subgroup of~$\Sp_F(X)$. According to~\cite[Proposition 8.4]{D} and~\cite[\S4.1]{MS}, one can find a type~$(\tilde J_W \times J_V, \tilde \lambda_W \otimes \lambda_V)$ in~$M$ for the cuspidal representation~$\rho \otimes \pi$ of~$M$ and a~$G$-cover of this~$M$-type as follows. 

\subsection{}\label{4.5new}
There exist a skew semisimple stratum~$[\L, -, 0, \beta]$ in~$\End_F(X)$ and a skew semisimple character~$\theta$ of~$H^1(\beta, \L)$ with the following properties.
\begin{itemize}
\item The decomposition~$X = V \perp (W \oplus W^\ast)$ is exactly subordinate to the stratum~$[\L, -, 0, \beta]$. In particular, letting
\[
\L \cap V = \L_V,\quad \beta_{|V} = \beta_V,\quad \L \cap W = \L_W\quad\text{ and }\quad\beta_{|W} = \beta_W,
\]
the stratum~$[\L_V, -, 0, \beta_V]$ in~$\End_F(V)$ is skew semisimple maximal and the stratum~$[\L_W, -, 0, \beta_W]$ in~$\End_F(W)$ is simple maximal. Moreover, the self-duality of~$\rho$ is reflected in the fact that the restriction of~$\beta$ to~$W\oplus W^\ast$ generates a field (equivalently, the restricted stratum~$[\Lambda\cap(W\oplus W^*),-,0,\beta_{|W\oplus W^\ast}]$ is skew simple). We also have
\[
H^1 ( \beta, \L) \cap M \simeq \tilde H^1( \beta_W, \L_W) \times H^1 ( \beta_V, \L_V),
\]
where the isomorphism is given by restriction, and similarly for~$J^1( \beta, \L)$ and for~$J( \beta, \L)$. We will abbreviate~$H^1(\beta,\L)=H^1_{\L}$, $H^1(\beta_V,\L_V)=H^1_V$ and~$\tilde H^1(\beta_W,\L_W)=\tilde H^1_W$, and similarly for~$J^1$ and~$J$.
\item Let~$\tilde \vartheta_W$ be the restriction of~$\theta$ to~$\tilde H^1_W$; this is a self-dual simple character. There are the~$p$-primary beta-extension~$\underline{\tilde \kappa}_W$ of~$\tilde \vartheta_W$ and a self-dual cuspidal representation~$\tilde \tau_W$ of~$\tilde J_W / \tilde J^1_W$ such that~$\rho$ is induced by an extension of~$\tilde \lambda_W = \underline{\tilde \kappa}_W \otimes \tilde \tau_W$ to the normalizer of~$\tilde J_W$.
\item Let~$\theta_V$ be the restriction of~$\theta$ to~$H^1_V$; this is a skew semisimple character. There are the~$p$-primary beta-extension~$\underline{\kappa}_V$ of~$\theta_V$ and a cuspidal representation~$\tau_V$ of~$J_V / J^1_V$ such that~$\pi$ is induced by~$\lambda_V = \underline{\kappa}_V \otimes \tau_V$.
\end{itemize}

\subsection{}\label{4.6new}
Let~$P$ be the parabolic subgroup of~$\Sp_F(X)$ which is the stabiliser of the subspace~$W$ (so stabilises the flag~$W \subset W \perp V \subset X$), let~$U$ be the unipotent radical of~$P$ and let~$P^-$ be the parabolic subgroup opposite to~$P$ with respect to~$M$ (the stabiliser of~$W^\ast$). Also set~$J_P = H^1_{\L} (J_{\L} \cap P)$ and~$J_P^1 = H^1_\L (J^1_{\L} \cap P)$. 

For any extension~$\kappa$ of~$\eta$ to~$J_\L$ we denote by~$\kappa_P$ the natural representation of~$J_P$ in the space of~$(J_\Lambda \cap U)$-fixed vectors under~$\kappa$. In particular, there is a beta-extension~$\kappa_\L$ of~$\theta$ such that~${\kappa_{\L,P}}_{|J\cap M} = \underline{\tilde \kappa}_W \otimes \underline{\kappa}_V$. We can view~$\tau = \tilde \tau_W \otimes \tau_V$ as a cuspidal representation of~$J_P / J_P^1 \simeq \tilde J_W/\tilde J_W^1\times J_V/J_V^1$. Then, letting~$\lambda_P = \kappa_{\L,P} \otimes \tau$, we have:

\begin{Theorem}[{\cite[\S4.1]{MS}}]\label{thm:cover}
$(J_P, \lambda_P)$ is an~$\Sp_F(X)$-cover of~$(\tilde J_W \times J_V, \tilde \lambda_W \otimes \lambda_V)$.
\end{Theorem}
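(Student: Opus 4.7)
The plan is to verify the three defining properties of a cover from paragraph~\ref{4.2new} for the pair $(J_P, \lambda_P)$ relative to the maximal Levi $M = \GL_F(W) \times \SpFV$ and the parabolic $P$ stabilising $W$. For property~\ref{4.2new.i}, I would use that the decomposition $X = V \perp (W \oplus W^*)$ is exactly subordinate to $[\Lambda,-,0,\beta]$ (paragraph~\ref{4.5new}). Under this hypothesis, the groups $H^1_\Lambda$, $J^1_\Lambda$ and $J_\Lambda$ all admit Iwahori decompositions with respect to $(M;P)$, and their intersections with $M$ factorise as $\tilde H^1_W \times H^1_V$, $\tilde J^1_W \times J^1_V$, and $\tilde J_W \times J_V$ respectively. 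A direct calculation with these Iwahori decompositions then shows $J_P \cap U = J_\Lambda \cap U$, $J_P \cap N^- = H^1_\Lambda \cap N^-$, and $J_P \cap M = \tilde J_W \times J_V$, giving the required decomposition.

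For property~\ref{4.2new.ii}, the restriction to $J_P \cap M$ is computed directly from the compatibility ${\kappa_{\Lambda,P}}_{|J_\Lambda\cap M} = \underline{\tilde\kappa}_W \otimes \underline{\kappa}_V$ imposed in paragraph~\ref{4.6new} and the factorization $\tau = \tilde\tau_W \otimes \tau_V$, yielding $\lambda_{P\,|\,J_P\cap M} = \tilde\lambda_W \otimes \lambda_V$. Triviality of $\lambda_P$ on $J_P \cap U$ is immediate from the very definition of $\kappa_{\Lambda,P}$ as the natural representation on $(J_\Lambda \cap U)$-fixed vectors, combined with the inflation of $\tau$ from $J_P/J_P^1$. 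Triviality on $J_P \cap N^- = H^1_\Lambda \cap N^-$ then follows from the fact that semisimple characters vanish on the unipotent parts of a subordinate decomposition; since $\kappa_{\Lambda,P}$ restricts to a multiple of $\theta$ on $H^1_\Lambda$ and $\tau$ is trivial on $H^1_\Lambda\cap N^- \subset J_P^1$, we get the claim.

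For property~\ref{4.2new.iii}, I would construct an invertible element of $\mathcal{H}(\Sp_F(X), \lambda_P)$ supported on a double coset $J_P \zeta J_P$, where $\zeta \in Z(M)$ is strongly positive; the natural candidate is the image in $M$ of the element of $F^\times$ acting by $\varpi_F$ on $W$ and by $\varpi_F^{-1}$ on $W^*$. The existence of a nonzero support function on this double coset follows from an intertwining calculation for $\theta$ using the transfer property recalled in paragraph~\ref{1.4new}, applied to two self-dual lattice sequences differing by a shift induced by $\zeta$; its invertibility is then matched with the analogous statement on the $\GL$-side, where one already knows the spherical Hecke algebra of a maximal simple type is a commutative Laurent polynomial algebra.

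The main obstacle is property~\ref{4.2new.iii}. While (i) and (ii) are essentially bookkeeping with the structure theory of semisimple characters and the deliberate choice of compatible beta-extensions, producing an invertible positively-supported Hecke algebra element requires careful control of how the various pieces of $\lambda_P$ transform under conjugation by $\zeta$ and how these intertwiners compose; one must moreover check that the construction is independent of choices along the way and descends correctly to the symplectic side after Glauberman correspondence. This is the heart of the cover construction carried out in~\cite{MS}, and is precisely what makes the Hecke-algebra analysis of later sections viable.
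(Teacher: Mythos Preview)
The paper does not prove this theorem: it is stated with a citation to~\cite[\S4.1]{MS} and no argument is given here. Your outline correctly identifies the three cover axioms from paragraph~\ref{4.2new} and the relative difficulty among them; in particular, your treatment of~\ref{4.2new.i} and~\ref{4.2new.ii} via the exactly-subordinate decomposition and the built-in compatibility~${\kappa_{\Lambda,P}}_{|J_\Lambda\cap M} = \underline{\tilde\kappa}_W \otimes \underline{\kappa}_V$ is accurate bookkeeping. For~\ref{4.2new.iii} you yourself concede that the construction of an invertible strongly-positive Hecke element is ``the heart of the cover construction carried out in~\cite{MS}'', so in substance your proposal, like the paper, defers the real work to that reference. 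One small imprecision: for triviality on~$J_P\cap N^-$ you should say the semisimple character~$\theta$ is \emph{trivial} (not ``vanishes'') on~$H^1_\Lambda\cap N^-$; the mechanism is correct.
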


\subsection{}\label{4.6bnew}
Furthermore precise information about the Hecke algebra of this cover is given in {\it loc.cit.}:
\begin{Theorem}[{\cite[Theorem B]{MS}}]\label{thm:coverH}
The Hecke algebra~$\mathscr H(\Sp_F(X), \lambda_P)$ is a Hecke algebra on a dihedral group: it is generated by~$T_0$ and~$T_1$, each invertible and supported on a single double coset, with relations :
\[
(T_i- q^{r_i}) \ (T_i + 1) = 0 , \quad i=0,1, \quad r_0, r_1 \in \mathbb Z.
\]
\end{Theorem}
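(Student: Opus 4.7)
The plan is to apply the general theory of covers from~\cite{BK1}, exploiting both the maximality of $M$ as a Levi subgroup of $\Sp_F(X)$ and the self-duality of $\rho$ and $\pi$. By~\ref{thm:cover}~Theorem, $(J_P,\lambda_P)$ is an $\Sp_F(X)$-cover of the $M$-type $(\tilde J_W\times J_V,\tilde\lambda_W\otimes\lambda_V)$, so the support of $\mathscr H(\Sp_F(X),\lambda_P)$ is controlled by the $\Sp_F(X)$-normaliser, modulo $M$, of the inertial class $[\rho\otimes\pi]_M$. Since $M=\GL_F(W)\times\Sp_F(V)$ is a maximal Levi, the relative Weyl group $N_{\Sp_F(X)}(M)/M$ has order two, generated by an involution $w$ exchanging $W$ and $W^{\ast}$; self-duality of both $\rho$ and $\pi$ ensures that $w$ preserves $[\rho\otimes\pi]_M$, so it contributes non-trivially to the Hecke algebra.

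First I would combine $w$ with the one-dimensional free quotient of the centre of $M$ (coming from the $\GL_F(W)$ factor) to produce an infinite dihedral group $\langle s_0,s_1\rangle$ acting on the support, where $s_1=w$ and $s_0$ is the second affine reflection obtained by composing $w$ with a translation by a strongly positive generator of the centre of $M$. The third axiom in the definition of a cover provides an invertible Hecke algebra element supported on a strongly positive double coset, and standard arguments from~\cite{BK1} then yield two invertible generators $T_0,T_1$ of $\mathscr H(\Sp_F(X),\lambda_P)$, each supported on a single double coset $J_P s_i J_P$, with no additional generators beyond these and their translates.

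The main obstacle, and the technical heart, is to establish the quadratic relation $(T_i-q^{r_i})(T_i+1)=0$ with $r_i\in\mathbb Z$ for each $i$. For each $i$ I would construct an open subgroup $\Sp_F(X)_i$ of $\Sp_F(X)$, compact modulo the centre of $M$, containing $J_P$ and a lift of $s_i$, on which the Hecke algebra of the cover restricts to the subalgebra $\mathbb C[T_i]$; this subgroup is essentially the intersection of $\Sp_F(X)$ with a parahoric subgroup determined by the wall fixed by $s_i$. Passing to the reductive quotient of that parahoric, the restriction of $\lambda_P$ gives a cover of a cuspidal representation of a smaller reductive group, and applying Lusztig's theory of Hecke algebras attached to cuspidal representations of finite reductive groups yields a single-generator Hecke algebra whose quadratic relation has parameter a power of $q$; this transfers back to the required relation for $T_i$. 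The two integers $r_0,r_1$ can differ because the two reductive quotients corresponding to the two walls of the chamber are in general non-isomorphic, but their precise values are not required for this theorem; together with the absence of braid relations (forced by $s_0s_1$ being a non-trivial translation in the centre of $M$), these quadratic relations yield the asserted presentation.
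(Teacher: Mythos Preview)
The paper does not actually prove this theorem: it is quoted from~\cite[Theorem~B]{MS} and used as a black box, so there is no ``paper's own proof'' to compare against. What the paper does do, in the paragraphs immediately following (\S\ref{4.7new} and~\S\ref{4.10new}), is recall from~\cite{S5,MS} the mechanism by which the parameters~$r_0,r_1$ arise: there are two maximal self-dual~$\oE$-orders~$\mathfrak b_0(\mathfrak M_0)$ and~$\mathfrak b_0(\mathfrak M_1)$ containing~$\mathfrak b_0(\Lambda)$, and for each~$t$ there is a support-preserving injection
\[
\mathscr H(\HP(\mathfrak M_{t,\oE}),\chi_t\otimes\tau)\ \hookrightarrow\ \mathscr H(\Sp_F(X),\lambda_P),
\]
with the left-hand side a two-dimensional Hecke algebra on a finite reductive group whose non-identity generator satisfies the quadratic relation, and whose image furnishes~$T_t$.

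Your sketch is essentially a correct outline of how~\cite{MS} establishes the result, and matches the structure the paper recalls: your ``parahoric subgroup determined by the wall fixed by~$s_i$'' is precisely~$P(\mathfrak M_{t,\oE})$, and your appeal to Lusztig's theory for the finite reductive quotient is exactly the source of the quadratic relation, as in~\eqref{eqn:emphasis}. One point on which you are slightly vague is the passage from the beta-extension on~$J_\Lambda$ to a compatible beta-extension relative to~$\mathfrak M_t$; this is where the twisting characters~$\chi_t$ (equivalently the~$\epsilon_{\mathfrak M_t}$ of~\ref{prop:Phi}~Proposition) enter, and it is the delicate step in~\cite{S5,MS} that makes the injection support-preserving. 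But as a high-level sketch of the argument in the cited reference, your proposal is sound.
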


\subsection{}\label{4.7new}
In fact, the parameters come from rank two Hecke algebras of finite reductive groups as follows~\cite[(7.3) and~\S7.2.2]{S5}. There are two self-dual~$\oE$-lattice sequences~$\mathfrak M_0$ and~$\mathfrak M_1$ in~$X$ such that~$[\mathfrak M_t,- ,0,\beta]$, for~$t=0,1$, are semisimple strata and:
\begin{itemize}
\item the hereditary orders~$\mathfrak b_{0}(\mathfrak M_0)$ and~$\mathfrak b_{0}(\mathfrak M_1)$ are maximal self-dual~$\oE$-orders containing~$\mathfrak b_{0}(\L)$;
\item the decomposition~$X = V \perp (W \oplus W^\ast)$ is subordinate to the strata~$[\mathfrak M_t,- ,0,\beta ]$, for~$t=0,1$;
\item we have~$P ( \L_{\oE} ) = \left(P ( \mathfrak M_{1 ,\oE} )\cap P^- \right) P^1( \mathfrak M_{1 ,\oE} ) = \left(P ( \mathfrak M_{0 ,\oE} )\cap P \right) P^1( \mathfrak M_{0 ,\oE} )$.
\end{itemize}
The representation~$\tau = \tilde \tau_W \otimes \tau_V$ is a cuspidal representation of the Levi subgroup~$\HP(\L_{\oE})=P ( \L_{\oE} ) / P^1 ( \L_{\oE})$ of~$\HP(\mathfrak M_{t ,\oE})=P ( \mathfrak M_{t ,\oE} )/ P^1 ( \mathfrak M_{t ,\oE} )$, for~$t=0,1$, that can be inflated to the parabolic subgroup~$P ( \L_{\oE} ) / P^1 ( \mathfrak M_{t ,\oE} )$, then induced to the full group~$\HP(\mathfrak M_{t ,\oE})$. A specific use of the notion of beta-extension relative to~$\mathfrak M_{t ,\oE}$ leads to self-dual characters~$\chi_t$ of~$\HP(\L_{\oE})$, for~$t=0,1$, giving rise to injective homomorphisms of algebras:
\begin{equation}\label{eqn:injchi}
\mathscr H(\HP(\mathfrak M_{t ,\oE}), \chi_t \otimes \tau) \ \hookrightarrow\ \mathscr H(G, \lambda_P) \qquad (t=0,1).
\end{equation}
We will elaborate on this in paragraph~\ref{4.10new} below.

\subsection{}\label{4.8new}
In order to make use of this, we need some control on the characters~$\chi_t$ and it is here that we really need to use the notion of~$p$-primary beta-extension. We continue with the notation of the previous paragraphs but, for the moment, drop the subscript~$t$ on~$\mathfrak M_t$. We will assume that~$P ( \L_{\oE} ) = \left(P ( \mathfrak M_{\oE} )\cap P \right) P^1( \mathfrak M_{\oE})$ so that, in the notation above, we are doing the case~$t=0$; the case~$t=1$ is obtained by exchanging the parabolic~$P$ with its opposite~$P^-$. Denote by~$\theta_{\mathfrak M}$ the transfer of~$\theta_\L=\theta$ to~$H^1_{\mathfrak M}=H^1(\beta,\mathfrak M)$, and denote by~$\eta_\L,\eta_{\mathfrak M}$ the unique irreducible representations of~$J^1_\L,J^1_{\mathfrak M}$ which contain~$\theta_\L,\theta_{\mathfrak M}$ respectively. Similarly, we have the representations~$\tilde\eta_W,\eta_V$ of~$\tilde J_W^1,J_V^1$ which contain~$\tilde\vartheta_W,\theta_V$ respectively.

For a moment, let~$(J, J^1 , \eta)$ be either~$(J_\mathfrak M , J^1_\mathfrak M , \eta_\mathfrak M)$ or~$(J_\L, J^1_\L, \eta_\L)$, and let~$\kappa$ be any extension of~$\eta$ to~$J$. We define~$\rP (\kappa)$, the \emph{Jacquet restriction} of~$\kappa$, as the natural representation of~$J\cap M$ on the space of~$J^1 \cap U$-invariants of~$\kappa$~\cite[Corollaire 1.12, Lemme 1.18]{Bl}; that is,~$\rP(\kappa)$ is the restriction to~$J\cap M$ of~$\kappa_P$, in the notation of paragraph~\ref{4.6new}.

\subsection{}\label{4.9new}
In order to compute the character~$\chi$ from~\eqref{eqn:injchi}, we need to compare the following two representations of~$J_\L$:
\begin{itemize}
\item the beta-extension~$\underline \kappa_{\L, \mathfrak M}$ of~$\eta_\L$ to~$J_\L$ which is compatible with the~$p$-primary beta-extension~$\underline \kappa_{\mathfrak M}$ of~$\eta_{\mathfrak M}$ to~$J_{\mathfrak M}$ (in the sense of~\cite[Definition 4.5]{S5});
\item the extension~$\kappa_\L=\kappa_\L^P$ of~$\eta_\L$ to~$J_\L$ characterized by the property
\[
\rP( \kappa^P_\L) \simeq \underline{\tilde \kappa}_W \otimes \underline \kappa_V,
\]
where~$\underline{\tilde \kappa}_W$ and~$\underline \kappa_V$ are the~$p$-primary beta-extensions of~$\tilde\eta_W$ and~$\eta_V$ respectively, as above.
(See~\cite[Lemme 1.16]{Bl}.)
\end{itemize}
We apply Jacquet restriction to~$\underline \kappa_{\L, \mathfrak M}$. The groups~$J_\mathfrak M \cap M$ and~$J_\L \cap M$ are both equal to~$\tilde J_W \times J_V$ and the representations~$\rP ( \underline \kappa_{\mathfrak M})$ and~$\rP ( \underline \kappa_{\L, \mathfrak M})$ both extend~$\tilde \eta_W \otimes \eta_V$. From~\cite[Proposition 1.20]{Bl}, the beta-extension~$\underline \kappa_{\L, \mathfrak M}$ is characterized by
\begin{equation}\label{eqn:fait2}
\rP ( \underline \kappa_{\L, \mathfrak M}) = \rP ( \underline \kappa_{\mathfrak M} ).
\end{equation}

\begin{Proposition}\label{prop:Phi}
For~$m \in J_\mathfrak M \cap M$, define~$\epsilon_\mathfrak M (m)$ as the signature of the permutation:
\[
\Ad m : \ u \mapsto m^{-1} u m , \quad u \in J^1_\mathfrak M \cap U^- / H^1_\mathfrak M \cap U^-.
\]
The~$p$-primary beta-extension~$\underline \kappa_{\mathfrak M}$ of~$\eta_{\mathfrak M}$ to~$J_\mathfrak M$ satisfies:
\begin{equation}\label{eqn:fait3}
\rP(\underline \kappa_{\mathfrak M}) \simeq \epsilon_\mathfrak M (\underline{\tilde \kappa}_W \otimes \underline \kappa_V).
\end{equation}
\end{Proposition}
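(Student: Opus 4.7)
The plan is to identify the $p$-primary beta-extension $\underline{\kappa}_\mathfrak{M}$ with another natural beta-extension of $\eta_\mathfrak{M}$, up to a character computed as a signature. First I would introduce the analogue of the $P$-compatible beta-extension from \ref{4.9new}: by \cite[Lemme 1.16]{Bl} applied to the stratum $[\mathfrak{M},-,0,\beta]$, there exists a unique beta-extension $\kappa_\mathfrak{M}^{P}$ of $\eta_\mathfrak{M}$ characterised by
\[
\rP(\kappa_\mathfrak{M}^{P}) \simeq \underline{\tilde\kappa}_W \otimes \underline{\kappa}_V.
\]
Writing $\kappa_\mathfrak{M}^{P} = \psi\,\underline{\kappa}_\mathfrak{M}$ for a character $\psi$ of $J_\mathfrak{M}/J^1_\mathfrak{M} \simeq \HP(\mathfrak{M}_\oE)$, applying $\rP$ reduces the identity $\rP(\underline{\kappa}_\mathfrak{M}) \simeq \epsilon_\mathfrak{M}(\underline{\tilde\kappa}_W \otimes \underline{\kappa}_V)$ to verifying $\psi|_{\tilde J_W \times J_V} = \epsilon_\mathfrak{M}$ (using that $\epsilon_\mathfrak{M}^2 = 1$).

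A crucial preliminary observation is that $\psi$ automatically has order coprime to $p$: it factors through the abelianisation of $\HP(\mathfrak{M}_\oE) \simeq \prod_i \HP(\mathfrak{M}^i_\oEi)$, whose order is coprime to $p$ (each factor being a classical group with connected centre over a residue field of characteristic $p$). In particular the map $\psi \mapsto \psi^{\dim\eta_\mathfrak{M}}$ on such characters is a bijection, since $\dim\eta_\mathfrak{M} = [J^1_\mathfrak{M}:H^1_\mathfrak{M}]^{1/2}$ is a power of $p$. It is therefore enough to compute $\psi^{\dim\eta_\mathfrak{M}}|_{\tilde J_W \times J_V}$, and this is controlled by the determinants of $\kappa_\mathfrak{M}^{P}$ and $\underline{\kappa}_\mathfrak{M}$, the latter being of $p$-power order by the $p$-primary characterisation in \ref{powerofp}~Lemma.

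The crux is then an explicit Schrödinger-model computation for $\kappa_\mathfrak{M}^{P}$. The symplectic $p$-group $J^1_\mathfrak{M}/H^1_\mathfrak{M}$, with the commutator pairing twisted by $\theta_\mathfrak{M}$, carries a Lagrangian decomposition induced by $P$, with complementary Lagrangians $(J^1_\mathfrak{M} \cap U^-)/(H^1_\mathfrak{M} \cap U^-)$ and $(J^1_\mathfrak{M} \cap U)/(H^1_\mathfrak{M} \cap U)$ and middle piece $(J^1_\mathfrak{M} \cap M)/(H^1_\mathfrak{M} \cap M)$. In the corresponding Schrödinger model, the space of $\eta_\mathfrak{M}$ decomposes as $\BC[J^1_\mathfrak{M} \cap U^-/H^1_\mathfrak{M} \cap U^-] \otimes V_{\tilde\eta_W \otimes \eta_V}$, and the natural extension to $J_\mathfrak{M}$ restricted to $m \in \tilde J_W \times J_V$ acts as conjugation-permutation on the first factor tensored with $\underline{\tilde\kappa}_W(m) \otimes \underline{\kappa}_V(m)$ on the second. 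By Jacquet-restricting and using the defining property of $\kappa_\mathfrak{M}^{P}$ together with the uniqueness in \cite[Lemme 1.16]{Bl}, this natural extension coincides with $\kappa_\mathfrak{M}^{P}$. The determinant on $m$ then factors as the determinant of the permutation (equal to $\epsilon_\mathfrak{M}(m)^{\dim(\tilde\eta_W \otimes \eta_V)}$) times a contribution from the second factor of $p$-power order. Since $\dim(\tilde\eta_W \otimes \eta_V)$ is a power of $p$ and $\epsilon_\mathfrak{M}^2 = 1$, the first factor simplifies to $\epsilon_\mathfrak{M}(m)$, which upon inverting the bijection $\psi \mapsto \psi^{\dim\eta_\mathfrak{M}}$ yields $\psi|_{\tilde J_W \times J_V} = \epsilon_\mathfrak{M}$.

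The main obstacle is verifying that the ``natural'' Heisenberg-theoretic extension constructed above really coincides with $\kappa_\mathfrak{M}^{P}$, which requires careful bookkeeping of the Weil-type action on $\BC[J^1_\mathfrak{M} \cap U^-/H^1_\mathfrak{M} \cap U^-]$ and of the compatibility of the $p$-primary normalisations $\underline{\tilde\kappa}_W$, $\underline{\kappa}_V$ with the chosen Schrödinger realisation. Once this matching is secured, the signature character appears organically as the determinant of a permutation representation on a Lagrangian, and the equality $\psi|_{\tilde J_W \times J_V} = \epsilon_\mathfrak{M}$ follows from the uniqueness in \ref{powerofp}~Lemma.
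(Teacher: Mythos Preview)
Your proposal is correct and follows essentially the same route as the paper. The core ingredients are identical: the Schr\"odinger/Iwahori model identifies the restriction to $J_\mathfrak{M}\cap M$ of any extension $\phi$ of $\eta_\mathfrak{M}$ with $\mathcal E\otimes\rP(\phi)$, where $\mathcal E$ is the permutation representation on $\mathbb C[J^1_\mathfrak{M}\cap U^-/H^1_\mathfrak{M}\cap U^-]$ with determinant $\epsilon_\mathfrak{M}$; one then reads off the result from the $p$-primary determinant condition on $\underline\kappa_\mathfrak{M}$, using that all relevant dimensions are odd $p$-powers.

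The only difference is one of packaging. You introduce $\kappa_\mathfrak{M}^P$ with $\rP(\kappa_\mathfrak{M}^P)=\underline{\tilde\kappa}_W\otimes\underline\kappa_V$, then compute its determinant in the model and compare with $\det\underline\kappa_\mathfrak{M}$ to extract $\psi=\epsilon_\mathfrak{M}$; this forces you to check that your ``natural'' Heisenberg extension really is $\kappa_\mathfrak{M}^P$. The paper bypasses this matching step: it proves the factorisation $\phi|_{J_\mathfrak{M}\cap M}\simeq\mathcal E\otimes\rP(\phi)$ for an \emph{arbitrary} extension $\phi$ (via the induced model of \cite[Lemme~1.10]{Bl}), specialises directly to $\phi=\underline\kappa_\mathfrak{M}$, writes $\rP(\underline\kappa_\mathfrak{M})=\tilde\kappa_W\otimes\kappa_V$ for some beta-extensions, and deduces from the $p$-primary condition that $\epsilon_\mathfrak{M}\tilde\kappa_W=\underline{\tilde\kappa}_W$ and $\epsilon_\mathfrak{M}\kappa_V=\underline\kappa_V$. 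This is marginally cleaner since the obstacle you single out simply does not arise.
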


This proposition and~\eqref{eqn:fait2} immediately imply:

\begin{Corollary}
The extensions~$\underline \kappa_{\L, \mathfrak M}$ and~$\kappa^P_\L$ of~$\eta_\L$ to~$J_\L$ are related by:
\[
\rP(\kappa^P_\L) = \epsilon_\mathfrak M \rP(\underline \kappa_{\L, \mathfrak M}).
\]
\end{Corollary}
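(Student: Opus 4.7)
The plan is to exploit the fact that both sides of the asserted isomorphism are extensions of the same representation of~$J^1_\mathfrak M\cap M$, so differ by a character, and then to pin down that character using the~$p$-primary normalization together with a determinant computation in which the signature~$\epsilon_\mathfrak M$ appears geometrically.

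First, following the setup of paragraph~\ref{4.5new}, the Iwahori decomposition gives~$J^1_\mathfrak M\cap M=\tilde J^1_W\times J^1_V$ and~$J_\mathfrak M\cap M=\tilde J_W\times J_V$, and one has~$\eta_\mathfrak M|_{J^1_\mathfrak M\cap M}$ containing~$\tilde\eta_W\otimes\eta_V$ (with appropriate multiplicity) once one passes to the~$J^1_\mathfrak M\cap U$-invariants, which is precisely how~$\rP$ is defined. Both~$\rP(\underline\kappa_\mathfrak M)$ and~$\underline{\tilde\kappa}_W\otimes\underline\kappa_V$ are thus irreducible extensions of~$\tilde\eta_W\otimes\eta_V$ from~$J^1_\mathfrak M\cap M$ to~$J_\mathfrak M\cap M$, and so differ by a character~$\chi$ of the finite reductive quotient~$\HP(\mathfrak M_{W,\oE})\times\HP(\mathfrak M_{V,\oE})$.

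Next, I would show that~$\chi$ is at worst quadratic and factors through~$\det_E$. The key input is the argument in the proof of~\ref{powerofp}~Lemma~\ref{powerofp.iii}: any character of~$\HP(\mathfrak M_{W,\oE})\times\HP(\mathfrak M_{V,\oE})$ that arises as the ratio of two different beta-extensions (or Jacquet restrictions thereof) is trivial on all~$p$-Sylow subgroups and hence factors through the product determinant, by~\cite[Lemma~3.10, Corollary~3.11, Theorem~4.1]{S5}. In parallel, one checks that~$\epsilon_\mathfrak M$, being the signature of a permutation of a set with~$p$ odd, has values in~$\{\pm 1\}$; moreover, since~$J^1_\mathfrak M\cap M$ is pro-$p$ and acts trivially on~$J^1_\mathfrak M\cap U^-/H^1_\mathfrak M\cap U^-$ modulo $p$-power permutations,~$\epsilon_\mathfrak M$ descends to a quadratic character of the reductive quotient and a standard eigenvalue computation shows that it too factors through~$\det_E$.

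Finally, to show that~$\chi=\epsilon_\mathfrak M$, I would compare the determinants of both sides. On the right, since~$\underline{\tilde\kappa}_W$ and~$\underline\kappa_V$ both have determinants of~$p$-power order by~\ref{powerofp}~Lemma, so does their tensor product. On the left, one computes~$\det\rP(\underline\kappa_\mathfrak M)$ by decomposing the space of~$\underline\kappa_\mathfrak M$ along the Iwahori decomposition~$J^1_\mathfrak M=(J^1_\mathfrak M\cap U^-)(J^1_\mathfrak M\cap M)(J^1_\mathfrak M\cap U)$: the Heisenberg structure on~$J^1_\mathfrak M/H^1_\mathfrak M$ realizes the space of~$\eta_\mathfrak M$ as an induced module from~$H^1_\mathfrak M(J^1_\mathfrak M\cap P)$, and the determinant of~$\underline\kappa_\mathfrak M$ restricted to~$J_\mathfrak M\cap M$ differs from~$\det\rP(\underline\kappa_\mathfrak M)$ precisely by the determinant of the conjugation action on the "lower half"~$J^1_\mathfrak M\cap U^-/H^1_\mathfrak M\cap U^-$, which is, up to a power of~$p$ that disappears since we are comparing characters of order dividing~$2$, the signature character~$\epsilon_\mathfrak M$. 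Since~$\det\underline\kappa_\mathfrak M$ has~$p$-power order, this forces~$\chi^{\dim(\tilde\eta_W\otimes\eta_V)}=\epsilon_\mathfrak M^{\dim(\tilde\eta_W\otimes\eta_V)}$ modulo~$p$-power characters, and quadraticity plus the parity of the dimension (which is~$[J^1_\mathfrak M\cap M:H^1_\mathfrak M\cap M]^{1/2}$, odd since residues have odd order) lets us cancel exponents and conclude~$\chi=\epsilon_\mathfrak M$.

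The main obstacle I foresee is the last step: the Mackey-type bookkeeping needed to relate~$\det\underline\kappa_\mathfrak M|_{J_\mathfrak M\cap M}$ to~$\det\rP(\underline\kappa_\mathfrak M)$ via the action on~$J^1_\mathfrak M\cap U^-/H^1_\mathfrak M\cap U^-$, and the verification that the resulting sign is exactly~$\epsilon_\mathfrak M$ rather than some twist of it, must be done carefully using the explicit Heisenberg model of~$\eta_\mathfrak M$; this is precisely where the choice of~$p$-primary normalization does the work, ensuring that no stray~$p$-power character contaminates the signature.
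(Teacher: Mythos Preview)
Your argument is aimed at the wrong target. What you have written is a proof of~\ref{prop:Phi}~Proposition, namely~$\rP(\underline\kappa_{\mathfrak M})\simeq\epsilon_{\mathfrak M}(\underline{\tilde\kappa}_W\otimes\underline\kappa_V)$, and your approach to that is the same as the paper's: realise~$\underline\kappa_{\mathfrak M}|_{J_{\mathfrak M}\cap M}$ via the Iwahori decomposition as~$\mathcal E\otimes\rP(\underline\kappa_{\mathfrak M})$, where~$\mathcal E$ is the permutation representation of~$J_{\mathfrak M}\cap M$ on functions on~$J^1_{\mathfrak M}\cap U^-/H^1_{\mathfrak M}\cap U^-$, and then compare determinants using that both tensor factors have~$p$-power dimension and that the~$p$-primary normalisation forces~$\det(\epsilon_{\mathfrak M}\tilde\kappa_W)$ and~$\det(\epsilon_{\mathfrak M}\kappa_V)$ to have~$p$-power order.

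The Corollary, however, concerns the~$\L$-level objects~$\underline\kappa_{\L,\mathfrak M}$ and~$\kappa^P_\L$, neither of which appears in your argument. In the paper the Corollary is literally a one-liner: by the defining property of~$\kappa^P_\L$ one has~$\rP(\kappa^P_\L)=\underline{\tilde\kappa}_W\otimes\underline\kappa_V$; by~\eqref{eqn:fait2} (which is~\cite[Proposition~1.20]{Bl}) one has~$\rP(\underline\kappa_{\L,\mathfrak M})=\rP(\underline\kappa_{\mathfrak M})$; combining with~\eqref{eqn:fait3} and the fact that~$\epsilon_{\mathfrak M}^2=1$ gives the statement. You need to supply these two bridges, which are immediate but not contained in what you wrote.
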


\begin{proof}[Proof of Proposition]
Let~$\phi$ be an arbitrary extension of~$\eta_\mathfrak M$ to~$J_\mathfrak M$. By~\cite[Lemme~1.10]{Bl} the restriction of~$\phi$ to~$(J_\mathfrak M \cap P) J^1_\mathfrak M$ is induced from the natural representation~$\phi_P$ of~$(J_\mathfrak M \cap P) H^1_\mathfrak M$ on the space of~$J^1_\mathfrak M \cap U$-invariants of~$\phi$. Hence we can realize~$\phi_{|J_\mathfrak M \cap M}$ as the action by right translation on functions
taking values in the space of~$\phi_P$. We also have the representation~$\eta_{\mathfrak M,P}$ of~$(J^1_\mathfrak M \cap P) H^1_\mathfrak M$ on the space of~$J^1_\mathfrak M \cap U$-invariants of~$\eta_{\mathfrak M}$.

Let~$\widetilde{\mathcal S}$ be the space of~$\tilde \eta_W$ and let~$\mathcal S$ be the space of~$\eta_V$, so that~$\widetilde{\mathcal S}\otimes \mathcal S$ is the space of~$\eta_{\mathfrak M,P}$. The representation~$\phi_P$ itself extends~$\eta_{\mathfrak M,P}$, so our representation~$\phi_{|J_\mathfrak M \cap M}$ acts by right translation on the space of functions
\[
f : \ ( J_\mathfrak M \cap P) J^1_\mathfrak M \ \longrightarrow\ \widetilde{\mathcal S} \otimes \mathcal S
\]
satisfying, for all~$x \in ( J_\mathfrak M \cap P) H^1_\mathfrak M$ and all~$g \in ( J_\mathfrak M \cap P) J^1_\mathfrak M$:
\[
f(xg) = \phi_P (x) f(g) .
\]
Using Iwahori decompositions as in~\cite[\S1.3]{Bl} we identify this space with the space~$\mathcal T$ of functions on~$J^1_\mathfrak M \cap U^- / H^1_\mathfrak M \cap U^-$ with values in~$\widetilde{\mathcal S} \otimes \mathcal S$. The action of~$m \in J_\mathfrak M \cap M$ on~$f \in \mathcal T$ is now given by:
\[
\phi (m) f(u) = f(um) = f(m. m^{-1}um)= \rP ( \phi )(m) f(m^{-1}um),
\]
for~$u \in J^1_\mathfrak M \cap U^- / H^1_\mathfrak M \cap U^-$.

Let~$\mathcal T_0$ be the space of complex functions on~$J^1_\mathfrak M \cap U^- / H^1_\mathfrak M \cap U^-$ and~$\mathcal E$ the permutation representation of~$J_\mathfrak M \cap M$ on~$\mathcal T_0$:
\[
\mathcal E(m) f (u) = f( m^{-1} u m), \qquad\text{for }f \in \mathcal T_0,~m \in J_\mathfrak M \cap M,~u \in J^1_\mathfrak M \cap U^- / H^1_\mathfrak M \cap U^-.
\]
We can further identify~$\mathcal T$ with~$\mathcal T_0 \otimes ( \widetilde{\mathcal S} \otimes \mathcal S)$ to obtain~$\phi_{|J_\mathfrak M \cap M} \ \simeq\ \mathcal E \otimes \rP ( \phi )$. 

All of this applies to~$\underline \kappa_\mathfrak M$, so
\[
{\underline \kappa_{\mathfrak M}}_{|J_\mathfrak M \cap M} \quad \simeq \mathcal E \otimes \rP ( \underline \kappa_{\mathfrak M}) .
\]
The determinant of this representation has order a power of~$p$, a property that is unchanged by taking~$p^k$-th powers. Recall that the determinant of some~$x \otimes y$ acting on~$X \otimes Y$ is~$(\det x)^{\dim Y} (\det y)^{\dim X}$. The two spaces here,~$\mathcal T_0$ and~$\widetilde{\mathcal S} \otimes \mathcal S$, have dimension a power of~$p$, which is odd, and the determinant of~$\mathcal E$ acting on~$\mathcal T_0$ is~$\epsilon_\mathfrak M$. 

We now write~$\rP ( \underline \kappa_{\mathfrak M})= \tilde \kappa_W \otimes \kappa_V$ where~$\tilde \kappa_W$ is a beta-extension of~$\tilde \eta_W$ and~$\kappa_V$ is a beta-extension of~$\eta_V$ (see~\eqref{eqn:fait2} and~\cite[Proposition~6.3]{S5}). It is enough to prove
\[
\epsilon_\mathfrak M \ \det \left(\tilde \kappa_W \otimes \kappa_V \right) \text{ has order a power of } p .
\]
Writing~$\epsilon_\mathfrak M$ for the restrictions of~$\epsilon_\mathfrak M$ to~$\tilde J_W$ and to~$J_V$, this condition transforms into
\[
\det \left( \epsilon_\mathfrak M \otimes \tilde \kappa_W \right) \text{ and } \det \left( \epsilon_\mathfrak M \otimes \kappa_V \right) \text{ have order a power of } p.
\]
The character~$\epsilon_\mathfrak M$ is trivial on pro-$p$-subgroups so~$\epsilon_\mathfrak M \otimes \tilde \kappa_W$ and~$\epsilon_\mathfrak M \otimes \kappa_V$ are beta-extensions of~$\tilde\eta_W$ and~$\eta_V$ respectively~\cite[Theorem~4.1]{S5}. This last condition actually means that they are the~$p$-primary beta-extensions of~$\tilde\eta_W$ and~$\eta_V$ respectively, and \eqref{eqn:fait3} follows.
\end{proof}

\subsection{}\label{4.9bnew}
Before returning to the implications on reducibility, we examine the character~$\epsilon_{\mathfrak M}$ a little further. We begin with a general lemma.

\begin{Lemma}\label{lem:sign} 
Let~$Z$ be a finite dimensional vector space over a finite field~$\mathbb F_q$ with odd cardinality~$q$ and let~$g \in \GL_{\mathbb F_q}(Z)$. The signature of the permutation~$g$ of~$Z$ is equal to~$(\det_{\mathbb F_q} g)^\frac{q-1}{2}$. 
\end{Lemma}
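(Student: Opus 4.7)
The plan is to interpret both sides of the claimed identity as characters of $\GL_{\mathbb F_q}(Z)$ with values in $\{\pm 1\}$, and then to identify them via a single explicit evaluation. Note that $g\mapsto\sign(g)$ is a homomorphism (by general properties of the signature of a permutation) and $g\mapsto(\det_{\mathbb F_q}g)^{(q-1)/2}$ is a character taking values in the order-two subgroup of $\mathbb F_q^\times\simeq\{\pm 1\}$ (well-defined since $q$ is odd, so $(q-1)/2\in\BZ$). Write $n=\dim Z$; the case $n=0$ is vacuous, so we may assume $n\ge 1$.

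First I would show that $\sign$ factors through $\det$. If $n\ge 2$, then $\SL_{\mathbb F_q}(Z)$ is generated by transvections, each of which has order dividing $p$; since $p$ is odd, their image under $\sign$ must be trivial, so $\sign$ kills $\SL_{\mathbb F_q}(Z)$ and factors as $\chi\circ\det$ for a unique character $\chi\colon\mathbb F_q^\times\to\{\pm 1\}$. When $n=1$, we have $\GL_{\mathbb F_q}(Z)=\mathbb F_q^\times$ and this factorization is automatic. Since $\mathbb F_q^\times$ is cyclic of even order $q-1$, it has a unique non-trivial character to $\{\pm 1\}$, namely $x\mapsto x^{(q-1)/2}$; hence it suffices to exhibit one $g$ for which $\sign(g)=-1$.

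For this I would fix a generator $\omega$ of $\mathbb F_q^\times$ and take $g$ acting by multiplication by $\omega$ on a chosen line of $Z$ and trivially on a chosen complement, so that $\det g=\omega$. Identifying $Z=\mathbb F_q\oplus\mathbb F_q^{n-1}$ compatibly, the permutation of $Z$ induced by $g$ decomposes as $q^{n-1}$ disjoint copies of multiplication by $\omega$ on $\mathbb F_q$. Each such copy fixes $0$ and acts as a single $(q-1)$-cycle on $\mathbb F_q^\times$, which has signature $(-1)^{q-2}=-1$ (again using that $q$ is odd). Since $q^{n-1}$ is odd, the overall signature of $g$ is $-1$, matching $\omega^{(q-1)/2}=-1$; this forces $\chi(x)=x^{(q-1)/2}$ and finishes the proof.

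The only real obstacle is the reduction step that $\sign$ factors through $\det$ when $n\ge 2$; the transvection argument above is the cleanest route, and everything else is a routine computation of the signature of a permutation of a finite field under scalar multiplication. (As a cross-check, one could equivalently argue that the abelianization of $\SL_n(\mathbb F_q)$ has no $2$-torsion when $q$ is odd, but this is less transparent.)
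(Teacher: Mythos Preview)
Your proof is correct and follows essentially the same strategy as the paper: reduce to showing that the signature factors through the determinant, then evaluate at one explicit element to identify the resulting character of~$\mathbb F_q^\times$. The only differences are cosmetic: you kill~$\SL_{\mathbb F_q}(Z)$ via the transvection generation (each transvection has order~$p$, hence trivial signature), whereas the paper simply invokes that~$\SL_{\mathbb F_q}(Z)$ is the derived subgroup of~$\GL_{\mathbb F_q}(Z)$ for~$q>2$; and you test with a primitive root~$\omega$ (one~$(q-1)$-cycle on~$\mathbb F_q^\times$), whereas the paper uses an element~$\zeta$ of order equal to the~$2$-part of~$q-1$ (an odd number of even-length cycles). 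Either explicit element does the job.
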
 

\begin{proof} 
As a character of~$\GL_{\mathbb F_q}(Z)$, the signature is trivial on the derived subgroup, which is~$\SL_{\mathbb F_q}(Z)$, as~$q>2$, hence factors through a character~$\chi$ of the determinant over~$\mathbb F_q$. We know~$\chi^2$ is trivial and it remains to show that~$\chi$ is not identically trivial on~$\GL_{\mathbb F_q}(Z)$. 

We define the following permutation of~$\mathbb F_q$: the multiplication by an element~$\zeta$ of~$\mathbb F_q^\times$ of order~$2^t$ such that~$\frac{q-1}{2^t}$ is an odd integer. This permutation fixes~$0$ and has~$\frac{q-1}{2^t}$ cycles, of length~$2^t$, in~$\mathbb F_q^\times$ so has odd signature. Then the element~$g=\diag(\zeta,1,\ldots,1)$ has odd signature so~$\chi$ is non-trivial.
\end{proof} 

\begin{Proposition}\label{prop:lemma3.8}
For~$m \in J_\mathfrak M \cap M$, the permutation~$\Ad m$ of~$J^1_\mathfrak M \cap U^- / H^1_\mathfrak M \cap U^-$ is an~$\mathbb F_p$-linear transformation of this~$\mathbb F_p$-vector space and 
\[
\epsilon_\mathfrak M (m) = [\det_{\mathbb F_p} \Ad m ]^{\frac{p-1}{2}}. 
\] 
Moreover, the permutation~$u \mapsto m^{-1} u m$ of the space~$J^1_\mathfrak M \cap U / H^1_ \mathfrak M \cap U$ also has signature~$\epsilon_\mathfrak M (m)$. 
\end{Proposition}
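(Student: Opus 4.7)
The plan is to proceed in three steps, using the fact that $J^1_\mathfrak M/H^1_\mathfrak M$ carries a canonical structure of $\mathbb F_p$-vector space together with a non-degenerate alternating $\mathbb F_p$-bilinear form, namely the commutator pairing at the heart of the Heisenberg construction of $\eta_\mathfrak M$.

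First I would establish the $\mathbb F_p$-linearity. Since the decomposition $X = V \perp (W \oplus W^\ast)$ is subordinate to the stratum $[\mathfrak M, -, 0, \beta]$, both $J^1_\mathfrak M$ and $H^1_\mathfrak M$ admit an Iwahori decomposition with respect to $(M; P)$; passing to the quotient yields
\[
J^1_\mathfrak M/H^1_\mathfrak M \;\simeq\; \frac{J^1_\mathfrak M \cap U^-}{H^1_\mathfrak M \cap U^-} \,\oplus\, \frac{J^1_\mathfrak M \cap M}{H^1_\mathfrak M \cap M} \,\oplus\, \frac{J^1_\mathfrak M \cap U}{H^1_\mathfrak M \cap U}
\]
as $\mathbb F_p$-vector spaces (each factor being an abelian group of exponent $p$, identified via $\log$ with an $\mathbb F_p$-subquotient of $\mathfrak J^1/\mathfrak H^1$). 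Since $m \in J_\mathfrak M \cap M$ normalises both $U$ and $U^-$, the conjugation action $\Ad m$ preserves each summand; being a group automorphism of a group of exponent $p$, it is automatically $\mathbb F_p$-linear. Applying \ref{lem:sign}~Lemma with $q=p$ to the restriction of $\Ad m$ to $J^1_\mathfrak M \cap U^-/H^1_\mathfrak M \cap U^-$ gives directly
\[
\epsilon_\mathfrak M(m) = [\det_{\mathbb F_p} \Ad m]^{(p-1)/2}.
\]

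For the final assertion, I would invoke the commutator pairing $\langle x, y \rangle = \theta_\mathfrak M([x,y])$ on $J^1_\mathfrak M$; this descends to a non-degenerate alternating $\mathbb F_p$-bilinear form on $J^1_\mathfrak M/H^1_\mathfrak M$, and its restriction to the cross product
\[
(J^1_\mathfrak M \cap U/H^1_\mathfrak M \cap U) \times (J^1_\mathfrak M \cap U^-/H^1_\mathfrak M \cap U^-)
\]
is a perfect pairing. Because $\theta_\mathfrak M$ is stable under conjugation by every element of $J_\mathfrak M$, this pairing is $\Ad m$-invariant, so $\Ad m$ on the $U$-quotient is the inverse-transpose of $\Ad m$ on the $U^-$-quotient under the duality. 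In particular the two determinants are mutually inverse in $\mathbb F_p^\times$; raising to $(p-1)/2$ and using that $x^{-(p-1)/2} = x^{(p-1)/2}$ for $x \in \mathbb F_p^\times$ (since $x^{p-1} = 1$), the two signatures coincide.

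The only non-formal point — essentially a matter of citing the construction of $\eta_\mathfrak M$ — is the perfect duality between the $U$- and $U^-$-parts of $J^1_\mathfrak M/H^1_\mathfrak M$ under the commutator pairing; this is a standard consequence of the compatibility of the filtrations $\mathfrak H^r(\beta,\mathfrak M), \mathfrak J^r(\beta,\mathfrak M)$ with the Iwahori decomposition attached to a subordinate splitting, but needs to be traced through the conventions on semisimple characters.
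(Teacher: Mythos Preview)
Your proposal is correct and follows essentially the same line as the paper's proof: apply \ref{lem:sign}~Lemma with~$q=p$ for the first assertion, then use the commutator pairing~$\theta_{\mathfrak M}([x,y])$ to put the~$U$- and~$U^-$-quotients in perfect duality and deduce that the two determinants are mutually inverse in~$\mathbb F_p^\times$. The paper dispatches the perfect duality you flag as ``the only non-formal point'' by citing~\cite[Lemma~5.6]{S5}, and your expanded justification of the~$\mathbb F_p$-linearity (which the paper leaves implicit) is a welcome addition.
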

 
\begin{proof} 
The first part follows from the previous lemma. Since the decomposition~$X = V \perp (W \oplus W^\ast)$ is subordinate to~$[\mathfrak M, -, 0, \beta]$, the pairing 
\[
\langle x,y\rangle = \theta_{\mathfrak M} ( [x,y]), \qquad\text{for } x \in J^1_\mathfrak M \cap U^- / H^1_\mathfrak M \cap U^-, \quad y \in J^1_\mathfrak M \cap U / H^1_ \mathfrak M \cap U, 
\] 
identifies each of those~$\mathbb F_p$-vector spaces to the dual of the other~\cite[Lemma~5.6]{S5}, in such a way that, for~$m \in J_\mathfrak M \cap M$, the transpose of the map~$x \mapsto m^{-1} x m$, for~$x \in J^1_\mathfrak M \cap U^- / H^1_\mathfrak M \cap U^-$, is~$y \mapsto m y m^{-1}$, for~$y \in J^1_\mathfrak M \cap U / H^1_ \mathfrak M \cap U$. The result follows. 
\end{proof} 

\subsection{}\label{4.10new}
We return to the notation of paragraphs~\ref{4.4new}--\ref{4.7new} and now put together the Hecke algebra homomorphisms~\eqref{eqn:injchi} with~\ref{prop:Phi}~Proposition. Let~$t=0$ or~$1$. We recall from \cite[(7.3)]{S5} (rephrased in the present framework in~\cite[Proposition~3.6]{Bl}) that if~$\kappa = \cInd_{J_{P}}^{J_{\L} } \kappa_P$ is a beta-extension of~$\eta_\L= \cInd_{ J_{P}^1 }^{J^1_{\L} } \eta_P$ \textbf{relative to~$\mathfrak M_{t}$}, then there is an injective morphism of algebras 
\[
\mathscr H(\HP (\mathfrak M_{t ,\oE} ), \tilde\tau_W \otimes\tau_V) \hookrightarrow \mathscr H(\Sp_F(X), \kappa_P \otimes (\tilde\tau_W \otimes\tau_V))
\]
that preserves support. We want to express this with the fixed representation~$\lambda_{P}=\kappa_{\Lambda,P}$ on the right, where~${\kappa_{\L,P}}_{|J\cap M} = \underline{\tilde \kappa}_W \otimes \underline{\kappa}_V$, as in paragraph~\ref{4.6new}. We thus plug in~\ref{prop:Phi}~Proposition above and get: 

\begin{Theorem}\label{thm:jt} 
Let~$t=0$ or~$1$. There is an injective morphism of algebras 
\[
j_t : \mathscr H(\HP ( \mathfrak M_{t ,\oE} ), \epsilon_{\mathfrak M_{t}} (\tilde\tau_W \otimes\tau_V )) \hookrightarrow \mathscr H(\Sp_F(X), \lambda_{P} )
\]
that preserves support, i.e.~$\Supp(j_t(\phi)) = J_P \Supp (\phi) J_P$, for all~$\phi\in\mathscr H(\HP ( \mathfrak M_{t ,\oE} ), \epsilon_{\mathfrak M_{t}} (\tilde\tau_W \otimes\tau_V ))$. 
\end{Theorem}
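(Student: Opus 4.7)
The plan is to transport the Hecke algebra embedding of~\cite[(7.3)]{S5} (rephrased as~\cite[Proposition~3.6]{Bl}) through the comparison of beta-extensions furnished by the Corollary to~\ref{prop:Phi}~Proposition. The essential observation is that the transition between the beta-extension~$\underline\kappa_{\L,\mathfrak M_t}$, to which the result of~\cite{S5} applies directly, and the extension~$\kappa_{\L,P}$ appearing in~$\lambda_P$, is precisely by the quadratic character~$\epsilon_{\mathfrak M_t}$, which can therefore be absorbed by twisting the cuspidal component.

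First, I would apply~\cite[Proposition~3.6]{Bl} to the pair~$(\underline\kappa_{\L,\mathfrak M_t},\ \epsilon_{\mathfrak M_t}(\tilde\tau_W\otimes\tau_V))$. The statement of~\cite{Bl} is valid with any cuspidal representation of the Levi~$\HP(\L_\oE)$ of~$\HP(\mathfrak M_{t,\oE})$ in place of~$\tilde\tau_W\otimes\tau_V$, and twisting by the character~$\epsilon_{\mathfrak M_t}$ (which factors through~$\HP(\L_\oE)$ since it is trivial on pro-$p$ subgroups) preserves cuspidality. This produces a support-preserving injection
\[
\mathscr H\bigl(\HP(\mathfrak M_{t,\oE}),\,\epsilon_{\mathfrak M_t}(\tilde\tau_W\otimes\tau_V)\bigr)\ \hookrightarrow\ \mathscr H\bigl(\Sp_F(X),\,(\underline\kappa_{\L,\mathfrak M_t})_P\otimes\epsilon_{\mathfrak M_t}(\tilde\tau_W\otimes\tau_V)\bigr).
\]

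Second, I would identify the target with~$\mathscr H(\Sp_F(X),\lambda_P)$. Both~$(\underline\kappa_{\L,\mathfrak M_t})_P$ and~$\kappa_{\L,P}$ are extensions to~$J_P$ of the same irreducible representation of~$J_P^1$ (the~$(J^1_\L\cap U)$-invariants of~$\eta_\L$), so they differ by a character of~$J_P/J_P^1\simeq\HP(\L_\oE)\simeq(J\cap M)/(J^1\cap M)$; such a character is determined by its restriction to~$J\cap M$. The Corollary to~\ref{prop:Phi}~Proposition pins this restriction down as~$\epsilon_{\mathfrak M_t}$, and hence~$(\underline\kappa_{\L,\mathfrak M_t})_P\simeq\epsilon_{\mathfrak M_t}\otimes\kappa_{\L,P}$ as representations of~$J_P$ (with~$\epsilon_{\mathfrak M_t}$ inflated through~$J_P\twoheadrightarrow\HP(\L_\oE)$). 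Since~$\epsilon_{\mathfrak M_t}$ takes values in~$\{\pm1\}$, tensoring yields
\[
(\underline\kappa_{\L,\mathfrak M_t})_P\otimes\epsilon_{\mathfrak M_t}(\tilde\tau_W\otimes\tau_V)\ \simeq\ \kappa_{\L,P}\otimes(\tilde\tau_W\otimes\tau_V)\ =\ \lambda_P,
\]
so the injection above becomes the desired morphism~$j_t$.

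I do not foresee a serious obstacle: the argument is essentially a substitution of twisted data into a known embedding. The only bookkeeping check is that~\cite[Proposition~3.6]{Bl} does vary correctly with its cuspidal argument and retains support preservation under the twist. This follows immediately from the construction in~\cite{Bl}, in which an explicit invertible basis element is attached to each double coset in the support, depending functorially on the cuspidal component. The support-preservation of~$j_t$ is then automatic, inherited through the isomorphism of Hecke algebras induced by the identification~$(\underline\kappa_{\L,\mathfrak M_t})_P\otimes\sigma\simeq\lambda_P$ with~$\sigma=\epsilon_{\mathfrak M_t}(\tilde\tau_W\otimes\tau_V)$.
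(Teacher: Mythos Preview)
Your proposal is correct and follows essentially the same approach as the paper: you apply the support-preserving embedding of~\cite[(7.3)]{S5}/\cite[Proposition~3.6]{Bl} with the beta-extension~$\underline\kappa_{\L,\mathfrak M_t}$ relative to~$\mathfrak M_t$ and the twisted cuspidal~$\epsilon_{\mathfrak M_t}(\tilde\tau_W\otimes\tau_V)$, then use the Corollary to~\ref{prop:Phi}~Proposition to identify~$(\underline\kappa_{\L,\mathfrak M_t})_P\otimes\epsilon_{\mathfrak M_t}(\tilde\tau_W\otimes\tau_V)$ with~$\lambda_P$. The paper condenses this to the phrase ``we thus plug in~\ref{prop:Phi}~Proposition'', but the content is the same.
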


\subsection{}\label{4.11new}
We now focus on the finite-dimensional algebra~$\mathscr H(\HP ( \mathfrak M_{t ,\oE} ), \epsilon_{\mathfrak M_{t}} (\tilde\tau_W \otimes\tau_V))$, a Hecke algebra on the finite reductive group~$\HP ( \mathfrak M_{t ,\oE} )$ relative to a cuspidal representation of the parabolic subgroup~$P(\L_{\oE}) / P^1(\mathfrak M_{t ,\oE})$. 

Let~$X=\perp_{j=0}^l X^j$ be the splitting associated to the skew semisimple stratum~$[\L,-,0,\beta]$. Since the stratum~$[\L_W,-,0,\beta_W]$ is simple, there is a unique index~$i$ such that~$W\subseteq X^i$, and then~$W^\ast\subseteq X^i$ also. \textbf{This index~$i$ will be fixed until the end of the section.}

Writing~$V^j=V\cap X^j$, the skew semisimple stratum~$[\L_V,-,0,\beta_V]$ then has splitting consisting of the non-zero spaces in~$V=\perp_{j=0}^l V^j$; the only spaces which may be zero here are~$V^0$ (since we have the convention that~$\beta^0=0$) and~$V^i$ (which is zero if and only if~$X^i=W\oplus W^\ast$).

The ambient finite group~$\HP( \mathfrak M_{t ,\oE})$ is a product over~$j$, for~$0 \le j \le l$, of analogous groups relative to~$X^j$, but in all of them except~$X^i$ the parabolic subgroup considered is the full group:
\begin{align*}
P ( \mathfrak M_{t ,\oE} )/ P^1 ( \mathfrak M_{t ,\oE} ) &\simeq P ( \mathfrak M^i_{t ,\oEi} )/P^1 ( \mathfrak M^i_{t ,\oEi} ) \times \prod_{j \ne i} P (\L^j_{\oEj} )/P^1 (\L^j_{\oEj} ) \\
P ( \L_{\oE} ) / P^1 ( \mathfrak M_{t ,\oE} ) &\simeq P ( \L^i_{\oEi} ) / P^1 ( \mathfrak M^i_{t ,\oEi} ) \times \prod_{j \ne i} P ( \L^j_{\oEj} )/P^1 ( \L^j_{\oEj} ).
\end{align*}
The representation~$\tilde \tau_W \otimes\tau_V$ decomposes accordingly using~$\tau_V = \otimes_{j=0}^l \tau_j$ and we finally get an isomorphism of algebras: 
\begin{equation}\label{eqn:thealgebra}
\mathscr H(\HP ( \mathfrak M_{t ,\oE} ), \epsilon_{\mathfrak M_{t}} (\tilde\tau_W \otimes\tau_V )) \simeq \mathscr H(\HP ( \mathfrak M^i_{t ,\oEi} ), \epsilon_{\mathfrak M_{t}} (\tilde\tau_W \otimes \tau_i) ) 
\end{equation}
where~$\tilde \tau_W\otimes \tau_i$ is a cuspidal representation of~$\tilde \HP(\L_{W, \oEi})\times \HP(\L^i_{V,\oEi})$, identified with a maximal Levi subgroup of each finite reductive group~$\HP ( \mathfrak M^i_{t ,\oEi} )$, for~$t=0,1$. 
 
It follows from Lusztig's work~\cite{Lbook}, as recalled in section~\ref{JBN6new}, that this algebra is two-dimensional, because~$\tilde \tau_W$ and~$\epsilon_{\mathfrak M_{t}}$ are self-dual. It has basis given by the identity element and an element~$\mathcal T_t$ supported on the double coset of a certain Weyl group element, called~$s_i$, if~$t=0$, or~$s_i^\varpi$~ if~$t=1$, in~\cite[\S7.2.2]{S5}; this only defines~$\mathcal T_t$ up to a non-zero scalar, which will not matter to us at first. Lusztig gives an algorithm permitting the actual computation of the quadratic relation satisfied by~$\mathcal T_t$. This relation always has the following shape, for some non-zero complex number~$\omega_t$: 
 \begin{equation}\label{eqn:emphasis}
(\mathcal T_t - \ q^{r_t} \ \omega_t) (\mathcal T_t + \ \omega_t ) = 0 , \qquad\text{where } r_t = r_t ( \epsilon_{\mathfrak M_{t}} (\tilde \tau_W\otimes \tau_i) ) \ge 0. 
 \end{equation}
We emphasise the dependency in the inducing cuspidal representation~$\epsilon_{\mathfrak M_{t}} (\tilde \tau_W\otimes \tau_i)$.

\subsection{}\label{4.12new}
Finally, we can restate~\cite[Proposition~3.12]{Bl}, describing the real parts of the reducibility points we wish to compute, in our notation. Recall that, for~$\rho$ a cuspidal representation of~$\GL_F(W)$ as above, we write~$t(\rho)$ for the number of unramified characters~$\chi$ of~$\GL_F(W)$ such that~$\rho \otimes\chi \simeq \rho$. Recall also that, if~$\rho$ is self-dual, then there are precisely two representations~$\rho,\rho'$ in the inertial class of~$\rho$ which are self-dual. 

Let~$\pi$ be a cuspidal representation of~$\SpFV$. Recall that there is a real number~$s_\pi(\rho)\ge 0$ such that, for \emph{real~$s$}, the normalised induced representation~$\nu^s \rho\times\pi$ of~$\Sp_F(X)$ is reducible if and only if~$s=\pm s_\pi(\rho)$, and similarly we have~$s_\pi(\rho')$. Then, for \emph{complex~$s$}, if~$\nu^s\rho\times\pi$ is reducible then the real part of~$s$ must be~$\pm s_\pi(\rho)$ or~$\pm s_\pi(\rho')$; we say that these are the real parts of the reducibility points of~$\nu^s \rho \times \pi$.

\begin{Proposition}[{\cite[Proposition~3.12]{Bl}}]\label{prop:realpartsprop}
Let~$\pi$ be a a cuspidal representation of~$\SpFV$, let~$\rho$ be an irreducible self-dual cuspidal representation of~$\GL_F(W)$, and take all the notation of the previous paragraphs. Then the real parts of the reducibility points of the normalized induced representation~$\nu^s \rho \times \pi$ are the elements of the set 
\begin{equation}\label{eqn:realparts}
\left\{ \pm \frac{r_0 + r_1}{2t(\rho)} , \pm \frac{r_0 - r_1}{2t(\rho)}\right\} , \quad\text{where } r_0= r_0 ( \epsilon_{\mathfrak M_{0}} (\tilde \tau_W\otimes \tau_i)) , \ r_1 = r_1 ( \epsilon_{\mathfrak M_{1}} (\tilde \tau_W\otimes \tau_i) ) . 
\end{equation}
\end{Proposition}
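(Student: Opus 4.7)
The plan is to transport the reducibility question for $\nu^s\rho\rtimes\pi$ across the equivalence of categories attached to the cover $(J_P,\lambda_P)$ furnished by \ref{thm:cover}~Theorem, and then to read off reducibility from the explicit dihedral Hecke algebra of \ref{thm:coverH}~Theorem. On the Levi side, $(\tilde J_W,\tilde\lambda_W)$ is a maximal simple type for $\rho$ and its Hecke algebra is isomorphic to $\mathbb{C}[Z,Z^{-1}]$, where $Z$ is supported on a single double coset of the form $\tilde J_W\,\varpi\,\tilde J_W$ for a suitable element $\varpi$; since $J_V$ is already a type for the cuspidal $\pi$, we have $\mathscr H(M,\tilde\lambda_W\otimes\lambda_V)\simeq\mathbb{C}[Z,Z^{-1}]$. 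Under the equivalence $\mathcal M_{\tilde\lambda_W\otimes\lambda_V}$, the representation $\nu^s\rho\otimes\pi$ becomes a one-dimensional module on which $Z$ acts by a scalar $z(s)$. The number $t(\rho)$ enters here precisely because $\varpi$ generates the unramified twist stabiliser of $\rho$ modulo the support of the finite part; this forces $z(s)$ to be of the form $cq^{2st(\rho)}$ for an $s$-independent constant $c$, and this is the step where the denominator $2t(\rho)$ in the statement originates.

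Next, via the injection $t:\mathscr H(M,\tilde\lambda_W\otimes\lambda_V)\hookrightarrow\mathscr H(\Sp_F(X),\lambda_P)$ coming from condition~(iii) of a cover (\ref{4.2new}), the parabolically induced representation $\nu^s\rho\rtimes\pi$ corresponds to the induced Hecke module $\mathrm{Ind}\,\mathbb{C}_{z(s)}$. Reducibility of the induced representation is therefore equivalent to reducibility of this induced two-dimensional module over the dihedral algebra $\mathscr H=\mathscr H(\Sp_F(X),\lambda_P)$. The element $t(Z)$ is, up to a nonzero scalar, the translation element $T_0T_1$ of the affine dihedral algebra with generators $T_0,T_1$ satisfying $(T_t-q^{r_t})(T_t+1)=0$; this identification rests on the support-preserving maps $j_0,j_1$ of \ref{thm:jt}~Theorem, which pin down the normalisations of $T_0,T_1$ so that $T_0T_1$ really does act as $t(Z)$ on the relevant induced module (modulo the $c$ above).

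It is then a purely algebraic finite calculation in the rank one affine Hecke algebra with two parameters: one writes down the induced module in the two-dimensional basis $\{1,T_0\}$ (or $\{1,T_1\}$), computes the action of $T_0$ and $T_1$ as explicit $2\times 2$ matrices depending on $z=z(s)$, and determines the values of $z$ for which this action is reducible. A direct diagonalisation shows reducibility occurs exactly when $z\in\{q^{r_0+r_1},q^{-(r_0+r_1)},q^{r_0-r_1},q^{-(r_0-r_1)}\}$, i.e.\ when $q^{2st(\rho)}$ equals (up to the constant $c$, which is $1$ for a self-dual $\rho$ by the symmetry argument below) one of $q^{\pm(r_0\pm r_1)}$. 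This yields the four real parts $\pm(r_0\pm r_1)/2t(\rho)$ in \eqref{eqn:realparts}.

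The step I expect to be the main obstacle is the precise normalisation match in paragraph two: verifying that $t(Z)$ really equals $T_0T_1$ up to a scalar that is compatible with the self-duality of $\rho$, so that the constant $c$ above is $1$ and the symmetry $s\mapsto-s$ on the automorphic side corresponds to $z\mapsto z^{-1}$ on the Hecke side. This requires chasing the support-preserving injections $j_t$ of \ref{thm:jt}~Theorem through the identifications of the maximal parahorics $\mathfrak b_0(\mathfrak M_0),\mathfrak b_0(\mathfrak M_1)$ and is the reason one must introduce the two finite Hecke subalgebras at $t=0,1$ rather than attempting a direct comparison. Once those normalisations are fixed, the computation of the induced module and its reducibility is completely mechanical and produces the set in \eqref{eqn:realparts}.
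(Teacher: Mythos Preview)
The paper does not prove this statement: it is quoted directly from \cite[Proposition~3.12]{Bl} and used as input. Your sketch is precisely the strategy carried out in \cite{Bl}: push reducibility through the category equivalence attached to the cover~$(J_P,\lambda_P)$, identify the Levi Hecke algebra with $\mathbb C[Z^{\pm 1}]$ and the full Hecke algebra with the affine dihedral algebra generated by~$T_0,T_1$, and then compute reducibility of the two-dimensional induced modules by elementary linear algebra.

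One point in your outline should be corrected, and it actually removes what you flagged as the main obstacle. You try to argue that the constant~$c$ in~$z(s)=c\,q^{\,\ast}$ must equal~$1$ because~$\rho$ is self-dual. This is neither needed nor true in general, and the statement is carefully phrased to avoid it: it asserts only what the \emph{real parts} of the complex reducibility points are. As the paragraph immediately preceding the Proposition spells out, this set is $\{\pm s_\pi(\rho),\pm s_\pi(\rho')\}$ where~$\rho'$ is the other self-dual representation in the inertial class~$[\rho]$. The Hecke-module calculation produces four reducibility values of~$z$ whose absolute values are~$q^{\pm(r_0+r_1)/2}$ and~$q^{\pm(r_0-r_1)/2}$; since~$\rho$ is cuspidal and self-dual it is unitarisable, so~$|c|=1$, and hence the real parts of the corresponding~$s$ are exactly the four numbers in~\eqref{eqn:realparts}, \emph{independently of~$c$}. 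The argument of~$c$ controls only the imaginary parts, i.e.\ which pair of real parts belongs to~$\rho$ and which to~$\rho'$. That ambiguity is deliberately left unresolved here --- it is the very ambiguity the paper repeatedly emphasises cannot be lifted by these Hecke-algebra methods alone (see paragraph~\ref{0.3new} and~\ref{rem:bothappear}~Remark). So your ``main obstacle'' of normalising $t(Z)$ against $T_0T_1$ is not an obstacle for the Proposition as stated; it would only arise if one tried to prove the sharper statement distinguishing~$\rho$ from~$\rho'$.
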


Note that, by~\cite[Lemma 6.2.5]{BK}, the unramified twist number~$t(\rho)$ can also be computed from the formula~$t(\rho)= {\frac{\dim_F W}{e(F[\beta_W]/F)}}$. 

\subsection{}\label{4.13new}
We can also apply the discussion of the previous paragraphs in the space~$X^i=W\oplus V^{i} \oplus W^\ast$. From the splitting of our strata, we have the lattice sequence~$\L_V^i=\L\cap V^i$ and the simple stratum~$[\L_V^i,-,0,\b_V^i]$ in~$V^i$. We write~$J_i=J(\b_V^i,\L_V^i)$, and similarly for~$J_i^1$ and~$H_i^1$, and let~$\underline\kappa_i$ be the~$p$-primary beta-extension of the simple character~$\theta_{|H_i^1}$. Then~$\tau_i$ is a representation of the reductive quotient~$J_i/J_i^1$ and, putting~$G_i=\Sp_F(V^i)$ we can define the cuspidal representation~$\pi_i = \cInd_{J_{\L^i}}^{G_i} \underline\kappa_i \otimes \tau_i$ of~$G_i$. (Note that, if~$V^i=\{0\}$, then~$\pi_i$ is the trivial representation of the trivial group.)

Applying the discussion above to the representation~$\pi_i$ and the space~$X^{i}$, we find that the real parts of the reducibility points of the normalized induced representation~$\nu^s \rho \times \pi_i$ of~$\Sp_F(X^i)$ are the elements of the set 
\begin{equation}\label{eqn:realparts2}
\left\{ \pm \frac{r'_0 + r'_1}{2t(\rho)} , \pm \frac{r'_0 - r'_1}{2t(\rho)}\right\} , \quad\text{where } r'_0= r_0 ( \epsilon_{\mathfrak M_{0}^i} (\tilde \tau_W\otimes \tau_i)) , \ r'_1 = r_1 ( \epsilon_{\mathfrak M_{1}^i} (\tilde \tau_W\otimes \tau_i) ) . 
 \end{equation}
The comparison between~\eqref{eqn:realparts} and~\eqref{eqn:realparts2} will be crucial.

\subsection{}\label{4.14new}
We end this section with the simplest example of the computation of the parameters~$r_0,r_1$ in~\eqref{eqn:realparts}, for positive depth representations. Continuing in the notation above, we assume that~$i>0$ and that~$\b^i$ is \emph{maximal} in the following sense: we have~$[F[\beta^i]:F] = \dim_F V^i$, so that (the image of)~$F[\beta^i]$ is a maximal extension of~$F$ in~$\End_F(V^i)$. In particular, this implies that~$V^i\ne \{0\}$. We assume moreover that~$\dim_F W=\dim_F V^i$, the smallest example of the situation above. (It will turn out that this is in fact the only situation of interest, in this context.) 

Let~$E^i_0$ be the fixed field of the adjoint involution acting on~$E^i=F[\b^i]$. The centralizer of~$\beta^i$ in~$\Sp_F(X^i)$ is thus isomorphic to the unitary group~$\U(2,1)(E^i/E^i_0)$. In the latter group, there are two conjugacy classes of maximal compact subgroups, the reductive quotients of which are, for some~$a$ and~$b$ with~$\{a,b\} = \{0,1\}$ depending on the initial lattice sequence~$\L^i$: 
\begin{itemize}
\item $\HP ( \mathfrak M_{a ,\oE} ) \simeq U(2,1)( k_{E^i} / k_{E^i_0})$ and~$\HP ( \mathfrak M_{b ,\oE} ) \simeq U(1,1)\times U(1)( k_{E^i} / k_{E^i_0})$, if~$E^i/ E^i_0$ is unramified;
\item $\HP ( \mathfrak M_{a ,\oE} ) \simeq \SL(2, k_{E^i}) \times \{\pm 1 \}$ and~$\HP ( \mathfrak M_{b ,\oE} )\simeq \text{O}(2,1)(k_{E^i})$, if~$E^i/ E^i_0$ is ramified. 
 \end{itemize}
We set~$f = f(E^i_0/F)$. From the calculations in section~\ref{JBN6new} the possible values for~$r_a$,~$r_b$ and the sets of real parts of reducibility points in~\eqref{eqn:realparts} are: 
\begin{itemize}
\item if~$E^i/ E^i_0$ is unramified:~$r_a = 3f$ or~$f$, and~$r_b = f$;~real parts~$ \{\pm 1, \pm \frac 12\}$ or~$\{\pm \frac 12, 0 \}$; 
\item if~$E^i/ E^i_0$ is ramified:~$r_a = f$ or~$0$, and~$r_b= f$;~real parts~$\{ \pm 1, 0\}$ or~$\{\pm \frac 12, \pm \frac 12\}$. 
\end{itemize}
In both cases the value of~$r_b= r_b (\epsilon_{\mathfrak M_{b}} \tilde \tau_W \otimes \tau_i)$ is independent of the representation. We choose~$\tilde \tau_W$ such that~$~r_a ( \epsilon_{\mathfrak M_{a}} (\tilde \tau_W \otimes \tau_i))= 3f$ if~$E^i/E^i_0$ is unramified, or~$r_a (\epsilon_{\mathfrak M_{a}} (\tilde \tau_W \otimes \tau_i))= f$ if~$E^i/E^i_0$ is ramified. This choice, which we denote by~$\underline{\tilde\tau}_W$, is unique and provides us with a reducibility with~real part~$1$. 

We conclude that there exists one and only one self-dual cuspidal representation~$\rho$ of~$\GL_F(W)$ containing the simple character~$\tilde \vartheta_W$ such that the parabolically induced representation~$\nu^1 \rho \otimes \pi$ is reducible. The representation~$\rho$ contains the type~$(\tilde J_W,\underline{\tilde \kappa}_W\otimes \underline{\tilde \tau}_W)$. However, as discussed previously, this does not give us a full description of the self-dual representation~$\rho$: we know its inertial class but this still leaves two possibilities. This situation is explored more fully in Section~\ref{JBN3new}.
 
\subsection{}\label{4.15new}
Applying the previous paragraph again to the representation~$\pi_i$ of~$G_i=\Sp_F(V^i)$ and comparing~\eqref{eqn:realparts} and~\eqref{eqn:realparts2}, we remark that the relevant choice of~$\tilde \rho_W$ for the situation in~$X$, with the cuspidal representation~$\pi$ of~$G=\Sp_F(V)$, differs from the analogous choice relative to the situation in~$X^{i}$, with the cuspidal representation~$\pi_i$ of~$G_i$, by a simple twist by the character~$\epsilon_{\mathfrak M_{a}} \epsilon_{\mathfrak M_{a}^i}$. Indeed, in our example, the value of~$r_b$ is independent of the representation. In the next section we will study the general case, when~$r_a$ and~$r_b$ may both depend on the representation.

\section{Reduction to the simple case}\label{JBN5new} 

In this section, we make the reduction to the simple case, proving~\ref{thm:reduction}~Theorem. As intimated at the end of the last chapter, the key point to prove is that the character~$\epsilon_{\mathfrak M_{t}} \epsilon_{\mathfrak M_{t}^i}$ is independent of~$t$ (see~\ref{prop:mainprop}~Proposition). Note that the character~$\epsilon_{\mathfrak M_{t}} \epsilon_{\mathfrak M_{t}^i}$ is the character~$\chi_i$ appearing in the statement of~\ref{thm:reduction}~Theorem. While we have a description of it as a permutation character and, through careful analysis of this permutation, give a recipe by which one could compute it, we do not here compute it precisely; we only check that it is independent of~$t$.

There is one further subtlety which should be remarked upon. In Section~\ref{JBN4new}, we began with a pair of cuspidal representations~$(\rho,\pi)$ and built from them a cover of a type, without starting from types for~$\rho$ and~$\pi$. In this section, we begin just with a cuspidal representation~$\pi$ of~$\SpFV$ and a cuspidal type~$\lambda$ for it, and use this to define certain cuspidal representations~$\rho$ of general linear groups, and maximal simple types~$\tilde\lambda$ for them. The cover obtained in Section~\ref{JBN4new} is then indeed a cover of~$\tilde\lambda\otimes\lambda$ but this is only clear because the (semi)simple characters in~$\lambda$ and~$\tilde\lambda$ are suitably related. Thus we take great care to set up the notation in this section.

\subsection{}\label{5.1new}
We first review the notation that we need. This is the notation as in paragraph~\ref{2.5new} so that it differs slightly from the notation of the previous chapter. In particular, objects in the symplectic space~$V$ do not have the subscript~$V$; instead, the corresponding objects in~$X$ (which we have yet to define) will have the subscript~$X$.

Throughout this and the following paragraphs, we fix a cuspidal representation~$\pi$ of~$\SpFV=\Sp_F(V)$. We have the following data.
\subsubsection*{In the symplectic space~$V$.}
\begin{itemize}
\item A maximal skew semisimple stratum~$[\L, -, 0, \beta]$ in~$\End_F(V)$ and a skew semisimple character~$\theta$ of~$H^1=H^1(\beta, \Lambda)$ such that~$\theta$ occurs in~$\pi$. 
\item The irreducible representation~$\eta$ of~$J^1=J^1(\beta, \Lambda)$ containing~$\theta$ and the~$p$-primary beta-extension~$\underline \kappa$ of~$\eta$ to~$J=J(\beta, \Lambda)$.
\item A cuspidal representation~$\tau$ of~$\HP(\Lambda_{\oE}) = P(\Lambda_{\oE})/ P^1(\Lambda_{\oE})$ such that~$\pi$ is induced from~$\lambda = \underline\kappa \otimes \tau$. 
\end{itemize}

The stratum~$[\L, -, 0, \beta]$ can be written (uniquely) as an orthogonal direct sum of skew simple strata~$[\L^j, -, 0, \beta^j]$ in~$\End_F(V^j)$, for~$j=0, \ldots, l$, with the convention that~$\beta^0=0$. The data above then give us the following data in the spaces~$V^j$.
\begin{itemize}
\item Skew simple characters~$\theta_j$ of~$H^1_j=H^1(\beta^j,\Lambda^j)$, which are the restriction of~$\theta$.
\item The irreducible representation~$\eta_j$ of~$J^1_j=J^1(\beta^j, \Lambda^j)$ containing~$\theta_j$ and the~$p$-primary beta-extension~$\underline{\kappa}_j$ of~$\eta_j$ to~$J_j=J(\beta^j, \Lambda^j)$.
\item The cuspidal representations~$\tau_j$ of~$\HP(\Lambda^j_{\oEj})$ such that, via the isomorphism~$\HP(\Lambda_{\oE})\simeq\prod_{j=0}^l \HP(\Lambda^j_{\oEj})$, we have~$\tau = \otimes_{j=0}^l \tau_j$. 
\item The representation~$\lambda_j = \underline{\kappa}_j \otimes \tau_j$ of~$J_j$. 
\end{itemize}

Note that, writing~$G_j=\Sp_F(V^j)$,  the representation~$\pi_j=\cInd_{J_j}^{G_j}\lambda_j$ is a cuspidal representation. A priori, it is not determined uniquely by the representation~$\pi$, but it is determined by our choice of data~$([\L,-,0,\beta],\theta)$ such that~$\pi$ contains~$\theta$.

\medskip

We now fix~$i \in \{0, \cdots, l\}$ and choose an~$F$-vector space~$W$ whose dimension is divisible by the degree~$[E^i:F]$. We then have the following data.
\subsubsection*{In the vector space~$W$.}
\begin{itemize}
\item A maximal simple stratum~$[\L_W, -, 0, \beta_W]$ in~$\End_F(W)$, together with a field isomorphism 
$E^i=F[\b^i] \rightarrow F[\b_W]=E_W$ fixing~$F$ and taking~$\b^i$ to~$\b_W$.
\item The simple character~$\tilde\vartheta_W$ of~$\tilde H^1_{W}=\tilde{H}^1(\beta_W,\L_W)$ which is the transfer of the square~$(\tilde\theta_i)^2$ of the unique self-dual simple character of~$\tilde H^1(\beta^i,\Lambda^i)$ restricting to~$\theta_i$. 
\item The~$p$-primary beta-extension~$\underline{\tilde \kappa}_W$ of~$\tilde\vartheta_W$ to~$\tilde J_{W}=\tilde{J}(\beta_W,\L_W)$ and an irreducible self-dual cuspidal representation~$\tilde \tau_W$ of~$\tilde \HP(\L_{W,\oEW})$, inflated to~$\tilde J_{W}$, where we have written~$\oEW$ for the ring of integers of~$E_W$.
\item A self-dual cuspidal representation~$\rho$ of~$\GL_F(W)$ containing~$\tilde \lambda_W= \underline{\tilde \kappa}_W\otimes \tilde \tau_W$. 
\end{itemize}

These data also induce data in the dual space~$W^\ast$ as follows. By duplicating if necessary, we assume that~$\Lambda_W$ has period divisible by~$4$ and that~$\Lambda_W(-1)\ne \Lambda_W(0)$. (The reason for doing this is to ensure that the self-dual lattice sequence we will obtain conforms to our standard normalization -- see~\ref{rem:normalizesequence}~Remark.) Writing~$\langle\cdot,\cdot\rangle$ for the pairing~$W\times W^\ast\to F$, we define~$\Lambda_W^\ast$ by
\[
\Lambda_W^\ast(r)=\{w^\ast\in W^\ast \mid \langle \Lambda_W(1-r),w^\ast\rangle\subseteq\pF\},\qquad\text{for }r\in\mathbb Z;
\]
then the lattice sequence~$\Lambda_W\oplus\Lambda_W^*$ is self-dual with respect to the natural symplectic structure on~$W\oplus W^*$. We also define~$\beta_W^\ast$ in~$\End_F(W^\ast)$ by
\[
\langle w,\beta_W^\ast(w^\ast)\rangle = - \langle \beta_W(w),w^\ast\rangle, 
\qquad\text{for all }w\in W,\ w^\ast\in W^\ast.
\]
Note that, by the fact that~$[\Lambda^i,-,0,\beta^i]$ is skew, there is a unique isomorphism~$E^i\to F[\beta_W^\ast]$ which takes~$\beta^i$ to~$\beta_W^\ast$.

\medskip

We now use these data to define corresponding data in the larger spaces on which we will have covers (as in Section~\ref{JBN4new}). We define the symplectic space~$X^i = (W \oplus W^{\ast}) \perp V^i$, for which we have the following.
\subsubsection*{In the symplectic space~$X^i$.}
\begin{itemize}
\item The maximal Levi subgroup~$M_i\simeq\GL_F(W) \times \Sp_F(V^i)$ of~$\Sp_F(X^i)$ which stabilizes the decomposition~$X^i = (W\oplus W^\ast) \perp V^i$, and the maximal parabolic subgroup~$P_i=M_iU_i$ which stabilizes the subspace~$W$ (so stabilizes the flag~$W\subseteq W \perp V^i \subset X^i$). 
\item The skew simple stratum~$[\Lambda_X^i , -, 0, \beta_X^i]$ in~$\End_F(X^i)$, where~$\Lambda_X^i=(\Lambda_W\oplus\Lambda_W^\ast)\perp \Lambda$ and~$\beta_X^i$ is the unique skew simple element which stabilizes the decomposition~$X^i = (W \oplus W^{\ast}) \perp V^i$ and acts as~$\beta^i$ on~$V^i$ and as~$\beta_W$ on~$W$; it then acts as~$\beta_W^\ast$ on~$W^\ast$. We identify~$E^i$ with~$F[\beta_X^i]$ via the isomorphism which takes~$\beta^i$ to~$\beta_X^i$.
\item Two further skew simple strata in~$\End_F(X^i)$,
\[
[\mathfrak M^{i}_0 , - , 0, \beta_X^i], \ [\mathfrak M^{i}_1 , -, 0, \beta_X^i],
\]
such that~$\mathfrak b_0(\mathfrak M^{i}_t)$, for~$t=0,1$, are the two maximal self-dual~$\oEi$-orders in the commuting algebra of~$\beta_X^i$ which contain~$\mathfrak b_0(\Lambda_X^i)$.
\item The unique skew simple character~$\theta_X^i$ of~$H^1_{X^i}=H^1(\beta_X^i,\Lambda_X^i)$ that restricts to~$\theta_i$ on~$H^1_{i}$ and to~$\tilde\vartheta_W$ on~$\tilde H^1_{W}$; this is the transfer to~$\Lambda_X^i$ of the skew simple character~$\theta_i$.
\item For~$t=0,1$, the skew simple character~$\theta_{\mathfrak M^{i}_t}$ of~$H^1_{\mathfrak M^{i}_t}$ that is transferred from~$\theta_X^i$; the corresponding irreducible representation~$\eta_{\mathfrak M^{i}_t}$ of~$J^1_{\mathfrak M^{i}_t}$; and the~$p$-primary beta-extension~$\underline\kappa_{\mathfrak M^{i}_t}$ of~$\eta_{\mathfrak M^{i}_t}$ to~$J_{\mathfrak M^{i}_t}$. 
\item An~$\Sp_F(X^i)$-cover~$(J_{P}^i,  \lambda_{P}^i)$ of the pair~$(\tilde J_{W}\times J_{i}, \tilde \lambda_W\otimes \lambda_i)$ in~$M_i$.
\end{itemize}

\medskip

Finally, we define the symplectic space~$X = (W\oplus W^\ast)\perp V= X^i\perp V^{\vee i}$, where~$V^{\vee i} = \mathop{\perp}\limits_{j \ne i} V^j$, for which we have the following.
\subsubsection*{In the symplectic space~$X$.}
\begin{itemize}
\item The maximal Levi subgroup~$M\simeq\GL_F(W) \times \Sp_F(V)$ of~$\Sp_F(X)$ which stabilizes the decomposition~$X = (W\oplus W^\ast)\perp V$, and the maximal parabolic subgroup~$P=MU$ which stabilizes the subspace~$W$.
\item The skew semisimple stratum~$[\Lambda_X,-,0,\beta_X]$, where~$\Lambda_X=\Lambda_X^i \perp\Lambda^{\vee i}$, with~$\L^{\vee i} = \perp_{j \ne i} \L^j$, and~$\beta$ the unique skew semisimple element which stabilises the decomposition~$X = (W\oplus W^\ast)\perp V$ and acts as~$\beta$ on~$V$ and~$\beta_W$ on~$W$ (or, equivalently, acts as~$\beta_X^i$ on~$X^i$ and as~$\beta^{\vee i}=\bigoplus_{j\ne i}\beta^j$ on~$V^{\vee i}$, from which it is clear that the resulting stratum is indeed semisimple). We identify~$E$ with~$F[\beta_X]$ via the isomorphism which takes~$\beta^i$ to~$\beta_X^i$ and~$\beta^j$ to itself, for~$j\ne i$.
\item Two further skew semisimple strata
\[
[\mathfrak M_0 , -, 0, \beta_X], \ [\mathfrak M_1 , -, 0, \beta_X],
\]
where~$\mathfrak M_t=  \mathfrak M_t^i\perp \Lambda^{\vee i}$, for~$t=0,1$; then~$\mathfrak b_0(\mathfrak M_t)$ are the two maximal self-dual~$\oE$-orders in the commuting algebra of~$\beta_X$ which contain
$\mathfrak b_0(\Lambda_X)$. 
\item The unique skew semisimple character~$\theta_X$ of~$H^1_X=H^1(\beta_X,\Lambda_X)$ which restricts to~$\theta$ on~$H^1$ and to~$\tilde\vartheta_W$ on~$\tilde H^1_W$; it is the transfer to~$H^1_X$ of the skew semisimple character~$\theta$, and restricts to~$\theta_X^i$ on~$H^1_{X^i}$.
\item For~$t=0,1$, the skew semisimple character~$\theta_{\mathfrak M_t}$ of~$H^1_{\mathfrak M_t}$ that is transferred from~$\theta_X$; the corresponding irreducible representation~$\eta_{\mathfrak M_t}$ of~$J^1_{\mathfrak M_t}$; and the~$p$-primary beta-extension~$\underline\kappa_{\mathfrak M_t}$ of~$\eta_{\mathfrak M_t}$ to~$J_{\mathfrak M_t}$. 
\item An~$\Sp_F(X)$-cover~$(J_P, \lambda_P)$ of the pair~$(\tilde J_W\times J, \tilde \lambda_W \otimes \lambda)$ in~$M$.
\end{itemize}

\subsection{}\label{5.2new} 
We use the setup in the previous paragraph and come back to the comparison of real parts of reducibility points, as in paragraph~\ref{4.15new}. The comparison of beta-extensions yields, as in~\ref{prop:Phi}~Proposition, characters~$\epsilon_{\mathfrak M_{t}^i}$ and~$\epsilon_{\mathfrak M_{t}}$ for~$t=0,1$. 

We fix~$t=0,1$ and temporarily drop the subscript~$t$. By definition~$\epsilon_{\mathfrak M}(m)$, for~$m \in J_{\mathfrak M}  \cap M$,  is the signature of the permutation~$\Ad m : u \mapsto  m^{-1} u m$ of the quotient~ $J^1_{\mathfrak M} \cap U/ H^1_{\mathfrak M}  \cap U$, isomorphic to the~$\mathbb F_p$-vector space~$\mathfrak J^1_{\mathfrak M}  \cap \mathbb U / \mathfrak H^1_{\mathfrak M}  \cap \mathbb U$, where~$\mathbb U$ is the Lie algebra of~$U$  (see~\ref{prop:Phi}~Proposition and~\ref{prop:lemma3.8}~Proposition). The same holds with~$\epsilon_{\mathfrak M^i} (m)$, for~$m \in J_{\mathfrak M^i} \cap M^i$: it is the signature of the same permutation on~$\mathfrak J^1_{\mathfrak M^i}  \cap \mathbb U_i / \mathfrak H^1_{\mathfrak M^i}  \cap \mathbb U_i$. On the other hand~$\mathbb U$ is isomorphic to~$\mathbb U_i \oplus \Hom_F(V^{\vee i},W)$ in an~$M_i$-equivariant way, and the action of~$(m,y) \in \GL_F(W) \times \Sp_F(V^i)$ on~$\phi \in \Hom_F(V^{\vee i}, W)$ is given by~$\phi \mapsto m \phi$. The associated decompositions of the lattices~$\mathfrak J^1$ and~$\mathfrak H^1$ (as in~\cite[Proposition~7.1.12]{BK}) lead to:

\begin{Lemma}\label{lem:com}  
Let~$(m,y) \in\tilde P(\L_{W,\oEW})\times  P(\L^i_{\oEi})$. Then~$(\epsilon_{\mathfrak M^i}   \epsilon_{\mathfrak M})((m,y) )$  is the signature of the permutation~$\phi \mapsto m \phi$ of  
\[
\mathfrak X  := \mathfrak J^1_{\mathfrak M}  \cap \Hom_F(V^{\vee i}, W )/ \mathfrak H^1_{\mathfrak M}  \cap \Hom_F(V^{\vee i}, W).
\]
\end{Lemma}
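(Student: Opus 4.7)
The plan is to reduce to a direct-sum decomposition of the quotient $\mathfrak{J}^1_{\mathfrak{M}} \cap \mathbb{U}/ \mathfrak{H}^1_{\mathfrak{M}} \cap \mathbb{U}$ compatible with both the action of $M_i$ and the signature computation, then use multiplicativity of signs across direct summands. First I would unravel the Lie algebra $\mathbb{U}$: using the decomposition $X = (W \oplus W^\ast) \perp V^i \perp V^{\vee i}$ and the fact that $U$ is the unipotent radical of the stabiliser of the Lagrangian $W$, one checks that $\mathbb{U}$ decomposes as $\mathbb{U}_i \oplus \Hom_F(V^{\vee i}, W)$ in a manner preserved by $M_i = \GL_F(W) \times \Sp_F(V^i)$; since $V^{\vee i}$ is pointwise fixed by $M_i$, the action on the second summand is indeed $\phi \mapsto m\phi$ for $(m,y) \in M_i$.

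Next I would promote this to a decomposition of the relevant $\oF$-lattices. Because $\mathfrak{M} = \mathfrak{M}^i \perp \Lambda^{\vee i}$ is orthogonal and $\beta_X$ is block-diagonal with respect to $X^i \oplus V^{\vee i}$, the standard splitting property of the orders/lattices attached to semisimple strata (in the form of \cite[Proposition~7.1.12]{BK}, adapted via the splitting of semisimple characters from~\S\ref{JBN1new}) yields
\[
\mathfrak{J}^1_{\mathfrak{M}} \cap \mathbb{U} \;=\; \bigl(\mathfrak{J}^1_{\mathfrak{M}^i} \cap \mathbb{U}_i\bigr) \oplus \bigl(\mathfrak{J}^1_{\mathfrak{M}} \cap \Hom_F(V^{\vee i}, W)\bigr),
\]
and the analogous identity with $\mathfrak{H}^1$ in place of $\mathfrak{J}^1$. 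Passing to quotients therefore gives an $M_i$-equivariant decomposition
\[
\mathfrak{J}^1_{\mathfrak{M}} \cap \mathbb{U} \,/\, \mathfrak{H}^1_{\mathfrak{M}} \cap \mathbb{U} \;=\; \bigl(\mathfrak{J}^1_{\mathfrak{M}^i} \cap \mathbb{U}_i \,/\, \mathfrak{H}^1_{\mathfrak{M}^i} \cap \mathbb{U}_i\bigr) \oplus \mathfrak{X}.
\]

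Finally I would invoke the defining description of $\epsilon_{\mathfrak{M}}$ and $\epsilon_{\mathfrak{M}^i}$ from~\ref{prop:Phi}~Proposition and~\ref{prop:lemma3.8}~Proposition: they are the signatures of the conjugation action on the first two quotients respectively. Since signatures are multiplicative over direct sums and the action of $(m,y)$ on $\mathfrak{X}$ is $\phi \mapsto m\phi$ (note that conjugation $u \mapsto (m,y)^{-1} u (m,y)$ restricts on this summand to left multiplication by $m^{-1}$, which has the same signature as $\phi \mapsto m\phi$), we obtain
\[
\epsilon_{\mathfrak{M}}(m,y) \;=\; \epsilon_{\mathfrak{M}^i}(m,y) \cdot \operatorname{sgn}\bigl(\phi \mapsto m\phi \text{ on } \mathfrak{X}\bigr).
\]
Multiplying both sides by $\epsilon_{\mathfrak{M}^i}(m,y) \in \{\pm 1\}$ then gives the claimed formula for $(\epsilon_{\mathfrak{M}^i}\epsilon_{\mathfrak{M}})(m,y)$.

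The main obstacle I expect is the lattice decomposition step: one must be confident that the $\oF$-lattices $\mathfrak{J}^1_{\mathfrak{M}}$ and $\mathfrak{H}^1_{\mathfrak{M}}$, built from the semisimple stratum $[\mathfrak{M},-,0,\beta_X]$, really do split along the decomposition $X = X^i \perp V^{\vee i}$ of the ambient space in a way compatible with intersecting with $\mathbb{U}$. This is a standard consequence of the orthogonal-splitting formalism for semisimple strata, but care must be taken because the splitting is \emph{orthogonal} in $X$ while $\mathbb{U}$ sits inside $\End_F(X)$ non-orthogonally; once the Iwahori-type decomposition of $\mathfrak{J}^1_{\mathfrak{M}}$ relative to $(M_i;P_i)$ is combined with the orthogonal splitting, the desired identity follows.
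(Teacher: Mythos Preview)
Your proof is correct and follows essentially the same route as the paper: decompose $\mathbb U$ as $\mathbb U_i \oplus \Hom_F(V^{\vee i},W)$ in an $M_i$-equivariant way, invoke \cite[Proposition~7.1.12]{BK} for the splitting of the lattices $\mathfrak J^1_{\mathfrak M}$ and $\mathfrak H^1_{\mathfrak M}$, and then use multiplicativity of signatures across the direct summands. Your extra remark that the conjugation action on the $\Hom_F(V^{\vee i},W)$ summand is literally $\phi\mapsto m^{-1}\phi$ (with the same signature as $\phi\mapsto m\phi$) is a welcome clarification that the paper elides.
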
 

Now the quotient group~$\tilde P(\L_{W,\oEW}) /  \tilde P^1(\L_{W,\oEW})$ is a general linear group~$ \GL_{m_W}(k_W)$ over the finite extension~$k_W=k_{E^i}$ of~$k_F$; this extension depends only on the endo-class of the simple character~$\tilde\vartheta_W$. The lemma actually asserts that the character~$\epsilon_\mathfrak M \epsilon_{\mathfrak M^i}$  is trivial on~$P(\L^i_{\oEi})$ and factors through the signature of the natural left action of~$ \GL_{m_W}(k_W)$ on~$\mathfrak X$.  

\subsection{}\label{5.2bnew} 
Retrieving the subscripts~$t$, our main tool is the following comparison of characters:

\begin{Proposition}\label{prop:mainprop} 
With notation as above, we have
\[
\epsilon_{\mathfrak M_{0}^i} \epsilon_{\mathfrak M_{0}} 
= \epsilon_{\mathfrak M_{1}^i} \epsilon_{\mathfrak M_{1}} . 
\]
This character, as a character of~$\GL_{m_W}(k_W)$, can be written as~$ \chi_i\circ\det_{k_W}$, where~$\chi_i$ is a quadratic or trivial character of~$k_W^\times$ which is independent of the choice of the space~$W$. 
\end{Proposition}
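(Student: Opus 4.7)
The plan is to interpret both characters, via Lemma~\ref{lem:com}, as signatures of natural actions on finite $k_W$-vector spaces, and then to show that the underlying modules are isomorphic. By Lemma~\ref{lem:com}, for each $t \in \{0,1\}$, the character $\epsilon_{\mathfrak M_t^i}\epsilon_{\mathfrak M_t}$ is trivial on $P(\Lambda^i_{\oEi})$ and equals the signature of the natural left action of $m \in \tilde P(\Lambda_{W,\oEW})$ on
\[
\mathfrak X_t := \mathfrak J^1_{\mathfrak M_t} \cap \Hom_F(V^{\vee i}, W) / \mathfrak H^1_{\mathfrak M_t} \cap \Hom_F(V^{\vee i}, W).
\]
Since the character has order at most two and $\tilde P^1(\Lambda_{W,\oEW})$ is a pro-$p$-group with $p$ odd, it descends to a character of $\tilde P(\Lambda_{W,\oEW})/\tilde P^1(\Lambda_{W,\oEW}) \simeq \GL_{m_W}(k_W)$ of order at most two. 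The $2$-torsion of the abelianisation of $\GL_{m_W}(k_W)$ lies in the image of $\det_{k_W}$ (for any odd $q_W$ and any $m_W \ge 1$), so such a character factors as $\chi_{i,t}\circ\det_{k_W}$ for a character $\chi_{i,t}$ of $k_W^\times$ of order dividing two.

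Next, the orthogonal decomposition $V^{\vee i} = \perp_{j\ne i} V^j$ induces a $\tilde P(\Lambda_{W,\oEW})$-equivariant decomposition $\mathfrak X_t = \bigoplus_{j\ne i} \mathfrak X_{t,j}$, where $\mathfrak X_{t,j}$ is defined in the same way using $\Hom_F(V^j, W)$. Each summand depends only on the restriction of $\mathfrak M_t$ to $V^j$ (which is the fixed sequence $\Lambda^j$, independent of $t$) and on the restriction of $\mathfrak M_t^i$ to $W$. From paragraph~\ref{4.7new}, the two self-dual $\oE$-lattice sequences $\mathfrak M_0^i$ and $\mathfrak M_1^i$ in $X^i$ lie on opposite sides of the parahoric determined by $\Lambda_X^i$; since in both cases $\mathfrak b_0(\mathfrak M_t^i)$ intersected with $\End_{E_W}(W)$ must be a maximal $\oEW$-order containing $\mathfrak b_0(\Lambda_W)$ -- hence equal to $\mathfrak b_0(\Lambda_W)$ itself -- the restrictions of $\mathfrak M_0^i$ and $\mathfrak M_1^i$ to $W$ agree as $\oEW$-lattice sequences up to an integer shift. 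Such a shift commutes with the action of $\tilde P(\Lambda_{W,\oEW})$, which preserves every term of the filtration $\Lambda_W$, and thus induces a $\tilde P(\Lambda_{W,\oEW})$-equivariant isomorphism $\mathfrak X_{0,j} \simeq \mathfrak X_{1,j}$. Summing over $j$ yields $\mathfrak X_0 \simeq \mathfrak X_1$ as $\tilde P(\Lambda_{W,\oEW})$-modules, whence $\chi_{i,0} = \chi_{i,1} =: \chi_i$, which is the first assertion.

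For independence of $W$, an explicit description of $\mathfrak X_{0,j}$ in terms of $\Hom$-spaces between the associated graded pieces of the filtrations $\Lambda_W$ and $\Lambda^j$ exhibits it, as a $\GL_{m_W}(k_W)$-module, as a direct sum of copies of the natural module $k_W^{m_W}$, with multiplicity depending only on the endo-classes $\bs\Theta_i$ and $\bs\Theta_j$ and on the sequence $\Lambda^j$, not on the specific choice of $W$; hence $\chi_i$ is determined purely by this endo-class data. The most delicate step is the comparison in the middle paragraph: one must verify that the two maximal self-dual $\oE$-orders $\mathfrak b_0(\mathfrak M_0)$ and $\mathfrak b_0(\mathfrak M_1)$ have $W$-restrictions differing only by an integer shift and, equally important, that this shift really induces an equivariant isomorphism on the subquotients $\mathfrak X_{t,j}$ (rather than introducing twisted actions from the passage through the $W$--$W^*$ pairing). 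This hinges on the rank-one structure of the building of the centraliser of $\beta_X^i$ in $\Sp_F(X^i)$ together with the explicit description of $\mathfrak J^1$ and $\mathfrak H^1$ in terms of lattice filtrations.
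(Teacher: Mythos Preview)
Your argument has a genuine gap at its central step. You claim that since the restrictions~$\Lambda_{W,0}=\mathfrak M_0\cap W$ and~$\Lambda_{W,1}=\mathfrak M_1\cap W$ differ by an integer shift, this shift induces a~$\tilde P(\Lambda_{W,\oEW})$-equivariant isomorphism~$\mathfrak X_{0,j}\simeq\mathfrak X_{1,j}$. But~$\mathfrak X_{t,j}$ is \emph{not} a single subquotient of the lattice filtration~$\mathcal C_t=\mathcal C(\Lambda^j,\Lambda_{W,t})$ in~$\Hom_F(V^j,W)$: by the inductive description of~$\mathfrak J^1_{\mathfrak M}$ and~$\mathfrak H^1_{\mathfrak M}$ (paragraph~\ref{5.4new}), it is an alternating sum of pieces~$\mathcal C_t(r/2)/\mathcal C_t((r/2)+)$ taken at \emph{fixed} levels~$r\in\{n,r_0,r_1,\dots\}$ coming from the defining sequence~$(\gamma_v,r_v)$. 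Shifting~$\Lambda_W$ by~$e/(2e_W)$ shifts the jump set of~$\mathcal C_t$ by the same amount, and \ref{lem:samejumps}~Lemma shows that the jump sets of~$\mathcal C_0$ and~$\mathcal C_1$ need \emph{not} coincide (they differ by a half-period precisely when~$\val_2(e_W)\ge\val_2(r_Y)$). In that case a given level~$r/2$ may be a jump of~$\mathcal C_0$ but not of~$\mathcal C_1$, so the individual terms are not isomorphic and your isomorphism fails.

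The paper's proof confronts exactly this obstruction. It breaks~$\mathfrak X_t$ into pieces~$Q_1,Q_2,Q_3$ along the inductive formula~\eqref{eqn:induction}, and shows term by term that the signature contributions for~$t=0$ and~$t=1$ agree: for~$Q_1$ by parity of~$\dim_F Y^{j,s}$, for~$Q_2$ because the term vanishes when~$j\ne i$, and for~$Q_3$ by checking that the \emph{difference}~$r_{v-1}/2-r_v/2$ of the two levels involved is always an integer multiple of the half-period of jumps, so that~$[\mathcal C_0(a)]-[\mathcal C_0(b)]=[\mathcal C_1(a)]-[\mathcal C_1(b)]$ even when the jump sets differ. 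This arithmetic with critical exponents (paragraph~\ref{5.9new}) is the actual content of the proposition; your shift argument bypasses it without justification. Similarly, the independence of~$W$ is not a generic ``multiplicity depends only on endo-classes'' statement but comes from \ref{lem:computed}~Lemma: the signature on each nonzero quotient depends only on~$e_W=e(E^i/F)$, which is fixed by~$i$.
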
 

The independence on the space~$W$ (for a fixed choice of~$i$) is particularly important. We postpone the proof of the Proposition for now and, taking it for granted, deduce~\ref{thm:reduction}~Theorem. 

\subsection{}\label{5.3new} 
Recall that we have written~$\pi=\cInd_J^\SpFV \lambda$ and, for~$j=0,\ldots,l$, we have the cuspidal representation~$\pi_j=\cInd_{J_j}^{G_j}\lambda_j$ of~$G_j=\Sp_F(V^j)$. We have~$\theta_j$, the simple character of~$H_j^1$ contained in~$\lambda_j$, and we write~$\tilde\theta_j$ for the self-dual simple character of~$\tilde H_j^1$ which restricts to~$\theta_j$. Let~$\bs\Theta_j$ be the endo-class of the simple character~$(\tilde\theta_j)^2$, which is a simple character for the stratum~$[\Lambda^j,-,0,2\beta^j]$, and~$k_{\bs\Theta_j}$ for the corresponding extension of~$k_F$. 

Recall that, for an endo-class~$\bs\Theta$ and a character~$\chi$ of the multiplicative group of the corresponding finite field~$k_{\bs\Theta}$, we have:
\begin{itemize} 
\item $\Jord(\pi)$, the Jordan set of~$\pi$ (see paragraph~\ref{2.1new});
\item $\IJord(\pi,\bs\Theta)$, the inertial Jordan set of~$\pi$ relative to~$\bs\Theta$, which is the multiset of pairs~$([\rho],m)$, for~$(\rho,m)\in\Jord(\pi)$ such that~$\rho$ has endo-class~$\bs\Theta$;
\item $\IJord(\pi,\bs\Theta)_\chi$, the $\chi$-twisted inertial Jordan set of $\pi$ relative to $\bs\Theta$, which is the multiset of pairs~$([\rho]_\chi, m)$ with~$(\rho, m) \in \Jord(\pi,\bs\Theta)$.
\end{itemize}
Recall here that, if~$\rho$ contains a maximal simple type~$(\tilde J,\tilde\lambda)$, then~$[\rho]_\chi$ denotes the inertial class of cuspidal representations containing~$(\tilde J,\tilde\lambda\otimes\chi\circ\det)$ (see paragraph~\ref{2.4new}). Also, when~$\chi$ is the trivial character we just write~$\IJord(\pi,\bs\Theta)$.

We restate~\ref{thm:reduction}~Theorem in a refined form:

\begin{Theorem}\label{thm:reductiondetail}
Fix~$i$ with~$0\le i\le l$, and let~$\chi_i$ be the character of~$k_{\bs\Theta_i}^\times$ 
such that~$\chi_i\circ \det_{k_{\bs\Theta_i}}$ is the twisting character in~\ref{prop:mainprop}~Proposition. We have an equality of multisets
\[
\IJord(\pi,\bs\Theta_i) = \IJord(\pi_i,\bs\Theta_i)_{\chi_i}.
\] 
\end{Theorem}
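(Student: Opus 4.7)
The plan is to translate the desired multiset identity into a comparison of real parts of the (complex) reducibility points of parabolically induced representations on the two sides, and to match the corresponding Hecke algebra parameters by means of~\ref{prop:realpartsprop}~Proposition (applied in~$X$) together with its~$\pi_i$-analogue of paragraph~\ref{4.13new} (applied in~$X^i$). The crucial bridge permitting this matching is~\ref{prop:mainprop}~Proposition.

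I would fix a self-dual inertial class~$[\rho]$ of cuspidal representations of some~$\GL_F(W)$ with associated endo-class~$\bs\Theta_i$. Choosing a maximal simple type in~$\rho$ whose underlying simple character is a transfer of~$\tilde\vartheta_W$ pins down the entire data of paragraph~\ref{5.1new} up to the cuspidal piece~$\tilde\tau_W$ of~$\tilde\HP(\L_{W,\oEW})$, and~$[\rho]$ corresponds to the~$\tilde\HP(\L_{W,\oEW})$-conjugacy class of~$\tilde\tau_W$. The set of real parts of reducibility points of~$\nu^s\rho\rtimes\pi$ depends only on~$[\rho]$, since twisting by an unramified character shifts~$s$ by a purely imaginary amount; it equals~$\{\pm s_\pi(\rho),\pm s_\pi(\rho')\}$, where~$\rho,\rho'$ are the two self-dual representatives of~$[\rho]$. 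By~\ref{prop:realpartsprop}~Proposition together with the isomorphism~\eqref{eqn:thealgebra}, this set is
\[
\left\{\pm\tfrac{r_0+r_1}{2t(\rho)},\ \pm\tfrac{r_0-r_1}{2t(\rho)}\right\}, \qquad r_t=r_t(\epsilon_{\mathfrak M_t}(\tilde\tau_W\otimes\tau_i)),
\]
with~$r_t$ the Hecke algebra parameter appearing in~\eqref{eqn:emphasis}. The same argument applied to~$\pi_i$ in~$X^i$ produces, for~$\nu^s\rho\rtimes\pi_i$, the same shape of formula but with~$r_t'=r_t(\epsilon_{\mathfrak M_t^i}(\tilde\tau_W\otimes\tau_i))$.

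Next, I would invoke~\ref{prop:mainprop}~Proposition: on the Levi~$\tilde\HP(\L_{W,\oEW})\times\HP(\L^i_{\oEi})$ the product~$\epsilon_{\mathfrak M_t^i}\epsilon_{\mathfrak M_t}$ equals~$(\chi_i\circ\det_{k_W})\otimes 1$ for \emph{both}~$t=0$ and~$t=1$. Letting~$\rho_{\chi_i}\in[\rho]_{\chi_i}$ denote the self-dual representation whose maximal simple type has cuspidal piece~$\tilde\tau_W\otimes(\chi_i\circ\det)$ (self-duality being preserved because~$\chi_i$ is quadratic or trivial), one then has, for~$t=0,1$,
\[
r_t(\epsilon_{\mathfrak M_t^i}((\tilde\tau_W\otimes\chi_i\circ\det)\otimes\tau_i))=r_t(\epsilon_{\mathfrak M_t}(\tilde\tau_W\otimes\tau_i))=r_t,
\]
so that the~$\pi_i$-parameters attached to~$\rho_{\chi_i}$ coincide with the~$\pi$-parameters attached to~$\rho$. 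Combined with the equality~$t(\rho)=t(\rho_{\chi_i})$, which follows from the stratum-theoretic formula for~$t(\cdot)$ recalled after~\ref{prop:realpartsprop}~Proposition, this yields the equality of unordered pairs
\[
\{s_\pi(\rho),s_\pi(\rho')\}=\{s_{\pi_i}(\rho_{\chi_i}),s_{\pi_i}(\rho'_{\chi_i})\}.
\]
By the definition of~$\Jord$ in paragraph~\ref{2.1new} and the fact that~$2s_\pi(\cdot)$ is always an integer by~\cite{MoTa}, the collection of pairs~$([\rho],m)$ contributed by~$[\rho]$ to~$\IJord(\pi,\bs\Theta_i)$ is entirely determined by this unordered pair, and similarly the contribution of~$[\rho]_{\chi_i}$ to~$\IJord(\pi_i,\bs\Theta_i)$. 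These two contributions therefore coincide. Running over all self-dual inertial classes of endo-class~$\bs\Theta_i$ and using that~$[\rho]\mapsto[\rho]_{\chi_i}$ is an involution preserving~$\bs\Theta_i$ then delivers the required multiset identity.

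The main obstacle in this scheme is not the reduction itself but~\ref{prop:mainprop}~Proposition, which must be established independently. Its content is that two a priori unrelated sign characters, one for each of the two maximal self-dual~$\oE$-orders~$\mathfrak b_0(\mathfrak M_0),\mathfrak b_0(\mathfrak M_1)$ above~$\mathfrak b_0(\L_X)$, assemble with the corresponding~$\epsilon_{\mathfrak M_t^i}$ into the \emph{same} character~$\chi_i\circ\det_{k_W}$ of~$\GL_{m_W}(k_W)$. Proving this requires a careful comparison of the permutations~$\Ad m$ acting on~$\mathfrak J^1_{\mathfrak M_t}\cap\Hom_F(V^{\vee i},W)/\mathfrak H^1_{\mathfrak M_t}\cap\Hom_F(V^{\vee i},W)$ for~$t=0$ and~$t=1$, exploiting the~$\mathbb F_p$-linear description of~\ref{prop:lemma3.8}~Proposition to reduce the sign comparison to a determinantal computation. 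Once this is in hand, the argument above is essentially formal.
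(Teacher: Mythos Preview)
Your proposal is correct and follows essentially the same route as the paper's own proof: both reduce the multiset identity to an equality of Hecke-algebra parameters via~\ref{prop:realpartsprop}~Proposition and its~$X^i$-analogue, and both invoke~\ref{prop:mainprop}~Proposition (together with~$\chi_i^2=1$) to match~$r_t(\epsilon_{\mathfrak M_t}(\cdots))$ with~$r_t(\epsilon_{\mathfrak M_t^i}(\cdots))$. The only cosmetic difference is a relabelling of which side carries the twist: the paper computes the~$\pi_i$-parameters for~$\rho$ and the~$\pi$-parameters for~$\rho_{\chi_i}$, whereas you compute the~$\pi$-parameters for~$\rho$ and the~$\pi_i$-parameters for~$\rho_{\chi_i}$; since~$\chi_i$ is an involution this is the same comparison read backwards.
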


\begin{proof}
This is now just a matter of putting together the previous results. Let~$\rho$ be a cuspidal representation with endo-class~$\bs\Theta_i$ and use the notation of paragraph~\ref{5.1new} so that~$\rho$ is a representation of~$\GL_F(W)$ containing the maximal simple type~$\tilde\lambda_W=\underline{\tilde \kappa}_W\otimes \tilde \tau_W$. The values of~$m$, if any, for which~$([\rho],m)\in\IJord(\pi_i,\bs\Theta_i)$ can then be computed from~\eqref{eqn:realparts2}: more precisely, they are
\begin{equation}\label{eqn:realpartswithpii}
\left| \frac{r_0 ( \epsilon_{\mathfrak M^i_{0}} (\tilde \tau_W\otimes \tau_i))\pm r_1 ( \epsilon_{\mathfrak M^i_{1}} (\tilde \tau_W\otimes \tau_i))}{t(\rho)}\right|-1,
\end{equation}
whenever these integers are strictly positive, together with positive integers less than this and of the same parity.

Now we consider the inertial class~$[\rho]_{\chi_i}$. The cuspidal representations in this class contain the maximal simple type~$\tilde\lambda_W\otimes\chi_i\circ\det=\underline{\tilde \kappa}_W\otimes (\tilde \tau_W\otimes\chi_i\circ\det)$. Then, using~\eqref{eqn:realparts}, we see that the values of~$m$ for which~$([\rho]_{\chi_i},m)\in\IJord(\pi,\bs\Theta_i)$ are
\[
\left| \frac{r_0 ( \epsilon_{\mathfrak M_{0}} ((\tilde \tau_W\otimes\chi_i\circ\det)\otimes \tau_i))\pm r_1 ( \epsilon_{\mathfrak M_{1}} ((\tilde \tau_W\otimes\chi_i\circ\det)\otimes \tau_i))}{t(\rho)}\right|-1,
\]
whenever these integers are strictly positive, together with positive integers less than this and of the same parity. But~\ref{prop:mainprop}~Proposition says that these are precisely the same integers as those in~\eqref{eqn:realpartswithpii} (recall that all the characters here are quadratic or trivial), and the result follows.
\end{proof} 

\begin{Remark}\label{rem:bothappear}
As we have seen in the proof, the pairs~$([\rho],m)$ which appear in~$\IJord(\pi)$ are determined by the values of~$r_t=r_t(\epsilon_{\mathfrak M^i} (\tilde \tau_W\otimes \tau_i))$. Denote by~$\rho'$ the other self-dual cuspidal in the inertial class~$[\rho]$. If~$\rho$ and~$\rho'$ are of opposite parity, say~$\rho$ is of symplectic type and~$\rho'$ is of orthogonal type, then we also recover this part of the full Jordan set~$\Jord(\pi)$: if~$m$ is even then it is~$(\rho,m)$ which appears in~$\Jord(\pi)$, while if~$m$ is odd then it is~$(\rho',m)$.
 
Suppose now that~$\rho,\rho'$ are of the same parity and~$([\rho],m)$ appears in~$\IJord(\pi)$. Then~$\rho$ and~$\rho'$ both appear \emph{with the same multiplicities} in~$\Jord(\pi)$ if and only if~$r_0r_1=0$. Thus in this case we also recover this part of the full Jordan set. Both~$\rho$ and~$\rho'$ appear with \emph{some} multiplicity in~$\Jord(\pi)$ if and only if~$|r_0-r_1|>t(\rho)$; when~$\rho,\rho'$ are both of orthogonal type, this condition simplifies to~$r_0\ne r_1$, since the reducibility points must be integers in this case.

The situations in which~$\rho,\rho'$ have the same parity are examined more closely from the Galois point of view in Section~\ref{JBN3new}.
\end{Remark}

It remains now to prove~\ref{prop:mainprop}~Proposition, which will take up the remainder of this section.

\subsection{}\label{5.5new} 
In this and the next few paragraphs, we define and study an auxiliary lattice sequence which will be needed for the calculations. Let~$\Lambda_W$ and~$\Lambda_Y$ be~$\oF$-lattice sequences in finite dimensional~$F$-vector spaces~$W$ and~$Y$ respectively, with the same~$\oF$-period~$e$. We define an~$\oF$-lattice sequence~$\mathcal C= \mathcal C ( \Lambda_Y, \Lambda_W)$ in the vector space~$C= \Hom_F(Y, W)$ by 
\[
\mathcal C (t) = \{g \in C \mid g \Lambda_Y (i) \subseteq \Lambda_W (i+t) \text{ for all }i\in\mathbb  Z\}, \qquad\text{for }t \in \mathbb Z. 
\]
We call the \emph{jumps} of~$\Lambda_Y$ those integers~$i$ such that~$\Lambda_Y(i) \ne \Lambda_Y(i+1)$ (and similarly for any lattice sequence). The set of jumps of~$\Lambda_Y$ is also the image of~$Y\setminus\{0\}$ by the valuation map attached to~$\Lambda_Y$, given by~$\val_Y(y) = \max \{ k \in \mathbb Z \mid y \in \Lambda_Y(k)\}$, for~$y \in Y\setminus\{0\}$. 

We make the following assumptions:
\begin{enumerate}
\item\label{assumption.i}
The set of jumps of~$\Lambda_W$ is equal to~$a_W + s_W\mathbb Z$ and the set of jumps of~$\Lambda_Y$ is equal to~$a_Y + s_Y\mathbb Z$. 
\item\label{assumption.ii}
The orders~$\mathfrak a(\Lambda_W)$ and~$\mathfrak a(\Lambda_Y)$ are principal orders, in other words non-zero quotients~$\Lambda_W(i) / \Lambda_W(i+1)$ are all isomorphic, and the same for~$\Lambda_Y$. In particular there is an element~$\Pi_W \in \mathfrak a (\Lambda_W)$ (resp.~$\Pi_Y \in \mathfrak a(\Lambda_Y)$) such that~$\Pi_W ( \Lambda_W(i)) = \Lambda_W (i+1)$ (resp.~$\Pi_Y (\Lambda_Y(i)) = \Lambda_Y(i+1)$) whenever~$i$ is a jump of~$\Lambda_W$ (resp. of~$\Lambda_Y$)~\cite[\S5.5]{BK}. 
\end{enumerate}

\begin{Lemma}\label{lem:jump} 
The set of jumps of~$\mathcal C$ is equal to~$(a_W - a_Y) + \gcd (s_W, s_Y )\mathbb Z$. Moreover, the quotient spaces~$\mathcal C(i) / \mathcal C(i+1)$ that are non-zero are all isomorphic as~$\kF$-vector spaces, and their common dimension is 
\[
c = c ( \Lambda_Y, \Lambda_W) = \frac{\gcd (s_W, s_Y)}{e} \, \dim_F W \dim_F Y.
\] 
\end{Lemma}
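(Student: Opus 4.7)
My plan is to exploit the principal-order hypothesis to choose a basis of $Y$ splitting $\Lambda_Y$, after which the lemma becomes a bookkeeping exercise in elementary number theory. Since $\mathfrak{a}(\Lambda_Y)$ is principal, I would first produce an $\oF$-basis $\{y_1,\dots,y_{d_Y}\}$ of $Y$ (with $d_Y = \dim_F Y$) such that $\Lambda_Y(i) = \bigoplus_\ell \pF^{\lceil(i-\val_Y(y_\ell))/e\rceil}\, \oF y_\ell$ for all $i$. By principality, the valuations $\val_Y(y_\ell)$ lie in $\{a_Y + js_Y : 0 \le j < m_Y\}$, with $m_Y = e/s_Y$, and each of these $m_Y$ values occurs for exactly $d_Y/m_Y$ indices $\ell$. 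The analogous statement holds for $\Lambda_W$; in particular every non-zero graded piece $\Lambda_W(n)/\Lambda_W(n+1)$ has constant $\kF$-dimension $\dim_F W/m_W$, with $m_W = e/s_W$.

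Next, since a splitting basis reduces membership in $\mathcal{C}(t)$ to a condition on each $y_\ell$, I obtain
\[
\mathcal{C}(t) \;=\; \bigoplus_{\ell=1}^{d_Y} \Lambda_W(\val_Y(y_\ell) + t)\otimes y_\ell^*,
\]
and therefore
\[
\mathcal{C}(t)/\mathcal{C}(t+1) \;=\; \bigoplus_{\ell=1}^{d_Y} \bigl(\Lambda_W(\val_Y(y_\ell) + t)/\Lambda_W(\val_Y(y_\ell) + t+1)\bigr)\otimes y_\ell^*.
\]
The $\ell$-th summand is non-zero precisely when $\val_Y(y_\ell) + t$ is a jump of $\Lambda_W$, i.e.\ $\val_Y(y_\ell) + t \in a_W + s_W\mathbb{Z}$. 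Combined with $\val_Y(y_\ell) \in a_Y + s_Y\mathbb{Z}$, this immediately gives that the jump set of $\mathcal{C}$ is $(a_W - a_Y) + (s_W\mathbb{Z} + s_Y\mathbb{Z}) = (a_W - a_Y) + \gcd(s_W, s_Y)\mathbb{Z}$, which is the first assertion.

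For the dimension, fix such a jump $t$ and set $d = \gcd(s_W, s_Y)$, $s_W = du$, $s_Y = dv$ with $\gcd(u,v) = 1$. Writing $\val_Y(y_\ell) = a_Y + js_Y$, the non-vanishing condition on the $\ell$-th summand becomes a congruence $jv \equiv r \pmod{u}$, where $r$ depends on $t$; this has a unique solution modulo $u$. Since $\lcm(s_W, s_Y) = s_W s_Y/d$ divides $e$, we have $u \mid m_Y$, so there are exactly $m_Y/u$ admissible values of $j$ in $\{0,\dots,m_Y-1\}$, and hence $(m_Y/u)(d_Y/m_Y) = d_Y d/s_W$ contributing indices $\ell$. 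Multiplying by the constant dimension $\dim_F W/m_W$ of each non-zero graded piece of $\Lambda_W$ gives
\[
\dim_{\kF} \mathcal{C}(t)/\mathcal{C}(t+1) \;=\; \frac{d_Y d}{s_W}\cdot\frac{\dim_F W}{m_W} \;=\; \frac{\gcd(s_W, s_Y)}{e}\,\dim_F W\dim_F Y,
\]
as required. The count is manifestly independent of the jump $t$, so all non-zero graded pieces of $\mathcal{C}$ have the same dimension. The only mild obstacle is the B\'ezout step at the end; everything else is forced by the existence of a splitting basis.
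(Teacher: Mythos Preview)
Your proof is correct but follows a different route from the paper's. The paper argues structurally: it uses the radical generators $\Pi_W$ and $\Pi_Y$ of the two principal orders to observe that left multiplication by $\Pi_W$ and right multiplication by $\Pi_Y$ give $\oF$-isomorphisms $\mathcal C(t)\simeq\mathcal C(t+s_W)$ and $\mathcal C(t)\simeq\mathcal C(t+s_Y)$, which immediately forces all non-zero quotients to be isomorphic and reduces the jump set to a single $\gcd(s_W,s_Y)$-coset; the dimension is then obtained by a generalized-index computation, writing $s=\gcd(s_W,s_Y)=ns_W+ms_Y$ via B\'ezout and expressing $[\mathcal C(k):\mathcal C(k+s)]$ as a fractional power of $[\mathcal C(k):\varpi_F\mathcal C(k)]$. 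By contrast, you pick a splitting basis of $\Lambda_Y$, decompose $\mathcal C(t)$ explicitly as $\bigoplus_\ell \Lambda_W(\val_Y(y_\ell)+t)\otimes y_\ell^*$, and count. Your approach is more hands-on and yields an explicit basis for each quotient, while the paper's is coordinate-free and makes the periodicity transparent via canonical shift maps; both ultimately rest on a B\'ezout step (yours in the congruence $jv\equiv r\pmod u$, theirs in writing $s=ns_W+ms_Y$).
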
 

\begin{proof}
Proving that the set of jumps is contained in the given~$\mathbb Z$-coset is straightforward using only assumption~\ref{assumption.i}. Now we use assumption~\ref{assumption.ii} and remark that~$\Pi_W$ and~$\Pi_Y$ satisfy~$\Pi_W (\Lambda_W(i)) = \Lambda_W(i+s_W)$ and~$\Pi_Y (\Lambda_Y(i)) = \Lambda_Y(i+s_Y)$, for any integer~$i$. For any~$\phi \in C$ we check that: 
\[
\val_\mathcal C ( \Pi_W \phi) = 
\val_\mathcal C ( \phi ) + s_W , 
\quad 
\val_\mathcal C (\phi \Pi_Y) = 
\val_\mathcal C ( \phi ) + s_Y.
\]
The left (resp. right) multiplication by~$\Pi_W$ (resp.~$\Pi_Y$) is thus an isomorphism of~$\oF$-modules from~$\mathcal C(t)$ onto~$\mathcal C(t +s_W )$ (resp.~$\mathcal C(t +s_Y)$) whence the isomorphy. 

To compute the dimension we use the generalized index notation~$[A:B]$ for two lattices~$A$ and~$B$ in a same finite dimensional vector space:~$[A:B]$ is just the ordinary quotient of~$[A:X]$ and~$[B:X]$ for any lattice~$X$ contained in~$A$ and~$B$. 

The common~$\oF$-period~$e$ is a multiple of~$s_W$ and~$s_Y$, say~$e = r_W s_W= r_Y s_Y$. Write~$s = \gcd (s_W, s_Y )$ and pick integers~$n,m$ such that~$s = n s_W+ m s_Y$. We have, for any integer~$k$, 
\begin{align*} 
[ \mathcal C(k) : \mathcal C(k+s) ]  &=
[\mathcal C(k) : \mathcal C(k+n s_W) ]\ [ \mathcal C(k+n s_W) : \mathcal C(k+n s_W+ m s_Y) ] \\
& = [\mathcal C(k) : \mathcal C(k+s_W) ]^n\ [\mathcal C(k) : \mathcal C(k+s_Y) ]^m \\
& =[\mathcal C(k) : \varpi_F \mathcal C(k ) ]^\frac{n}{r_W} \ [\mathcal C(k) : \varpi_F \mathcal C(k) ]^\frac{m}{r_Y} \\
&  = [\mathcal C(k) : \varpi_F \mathcal C(k )]^\frac{s}{e} 
\end{align*}
whence the result. 
\end{proof}

\subsection{}\label{5.6new} 
We will need to determine the effect on~$\mathcal C (\Lambda_Y,\Lambda_W)$ of a shift in indices on~$\Lambda_W$. We further assume the following.

\begin{Notation}\label{assumptionW}
\begin{enumerate}
\item\label{assumptionW.i}
The space~$W$ is an~$E_W$-vector space for some finite extension~$E_W$ of~$F$, with ramification index~$e_W$ and residue field~$k_W$ of cardinality~$q_W$.
\item\label{assumptionW.ii} 
We fix two~$\oEW$-lattice sequences~$\Lambda_{W,0}$ and~$\Lambda_{W,1}$ in~$W$ with the same underlying lattice chain of period~$1$ over~$E_W$ (so that~$s_{W,0} = s_{W,1}= e/e_W$) 
and with jumps at~$a_{W,0} =0$ and~$a_{W,1} = \frac{e}{2e_W}$ respectively. 
\end{enumerate}
\end{Notation}

We write~$s_Y = \frac{e}{r_Y}$ and put~$\mathcal C_t = \mathcal C (\Lambda_Y,\Lambda_{W,t})$, for~$t=0, 1$. The sets of jumps of~$\mathcal C_0$,~$\mathcal C_1$ are respectively 
\[
-a_Y + \gcd \left( \frac{e}{r_Y}, \frac{e}{e_W} \right) \mathbb Z 
\quad \text{ and } \quad 
\frac{e}{2e_W} -a_Y +\gcd\left( \frac{e}{r_Y}, \frac{e}{e_W} \right) \mathbb Z; 
\]
they are the same when~$\frac{e}{2e_W}$ divides~$\gcd\left(\frac{e}{r_Y}, \frac{e}{e_W}\right)$. We get the following, where~$\val_2$ is the~$2$-adic valuation of an integer.

\begin{Lemma}\label{lem:samejumps} 
$\mathcal C_0$ and~$\mathcal C_1$ have the same jumps if and only if~$\val_2(e_W) < \val_2(r_Y)$. Otherwise the jumps of~$\mathcal C_0$ and~$\mathcal C_1$ are shifted by~$\frac 1 2 \, \gcd ( \frac{e}{r_Y}, \frac{e}{e_W} )$. 
\end{Lemma}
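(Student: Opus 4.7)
The plan is to translate everything into divisibility conditions involving $d := \gcd(e/r_Y,\,e/e_W)$, which by the preceding Lemma is the common spacing between consecutive jumps of each~$\mathcal{C}_t$. From the formulas recalled in paragraph~\ref{5.6new}, the jumps of~$\mathcal{C}_0$ form the coset $-a_Y + d\BZ$ while those of~$\mathcal{C}_1$ form the coset $-a_Y + \tfrac{e}{2e_W} + d\BZ$. Thus the two sets of jumps coincide precisely when $d$ divides $\tfrac{e}{2e_W}$, and otherwise they are shifted by the residue $\tfrac{e}{2e_W} \bmod d$; the whole lemma amounts to comparing these two integers.

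First I would observe that, since $d$ divides $e/e_W$, it automatically divides $e/(2e_W)$ at every odd prime, so the divisibility condition reduces to its $2$-adic version. Using $\val_2(d) = \val_2(e) - \max(\val_2(r_Y),\,\val_2(e_W))$ and $\val_2\bigl(e/(2e_W)\bigr) = \val_2(e) - \val_2(e_W) - 1$, the inequality $\val_2(d) \le \val_2\bigl(e/(2e_W)\bigr)$ simplifies directly to $\val_2(r_Y) > \val_2(e_W)$, which gives the first assertion. Note that the standing assumption that $\Lambda_{W,0}$ and $\Lambda_{W,1}$ are genuine lattice sequences forces $e/(2e_W)\in\BZ$, so all these $2$-adic valuations are non-negative.

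For the ``otherwise'' direction I would set $k := (e/e_W)/d$, which by the standard identity $d = e/\lcm(r_Y, e_W)$ equals $r_Y/\gcd(r_Y,e_W)$. A direct valuation computation then gives $\val_2(k)=\val_2(r_Y)-\min(\val_2(r_Y),\val_2(e_W))$, which is $0$ under the assumption $\val_2(e_W)\ge\val_2(r_Y)$; in other words $k$ is odd, while the previous paragraph shows $d$ is even. Writing $\tfrac{e}{2e_W}=\tfrac{dk}{2}=\tfrac{d(k-1)}{2}+\tfrac{d}{2}$, the first summand is a multiple of $d$, so $\tfrac{e}{2e_W} \equiv \tfrac{d}{2} \pmod d$ and the jumps of $\mathcal{C}_0$ and $\mathcal{C}_1$ are shifted by $\tfrac12\gcd(e/r_Y,\,e/e_W)$, as claimed.

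There is no serious obstacle here: the entire lemma is an elementary identity about gcd's and $2$-adic valuations. The only subtlety to watch for is ensuring that $d/2$ is an integer in the ``otherwise'' case, which, as just noted, follows from the hypothesis that $e/(2e_W)$ is itself an integer.
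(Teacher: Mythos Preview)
Your proof is correct and follows exactly the route the paper intends: the paper simply records the two jump-cosets and says ``We get the following,'' leaving the elementary $2$-adic arithmetic to the reader, which you have supplied cleanly. Note incidentally that the divisibility you write, $d \mid \tfrac{e}{2e_W}$, is the correct one (the paper's phrasing just before the lemma has the divisibility reversed, but your version is what is actually needed and what yields the stated criterion).
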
 

\subsection{}\label{5.7new} 
We now observe that the group~$\GL_{m_W}(k_W)$ acts on the quotients~$\mathcal C_t(i) / \mathcal C_t(i+1)$ by left multiplication, where~$m_W = \dim_{E_W} W$. These actions commute with the left action of~$E_W^\times$ and with the right action of~$\Pi_Y$ so, on the non-zero quotients, they are all equivalent and the corresponding permutations of the non-zero sets~$\mathcal C_t(i) / \mathcal C_t(i+1)$ all have the same signature. 

In the same fashion the non-zero quotients~$\mathcal C_t(i) / \mathcal C_t(i+1)$ are isomorphic left modules over~$\mathfrak a_0(\Lambda_{W,t,\oEW})/ \mathfrak a_1(\Lambda_{W, t,\oEW}) \simeq M_{m_W}(k_W)$. The latter is a simple algebra hence those modules have composition series with~$d$ simple quotients all isomorphic to the natural module~$k_W^{m_W}$. The determinant of the action of~$g \in \GL_{m_W}(k_W)$ on any such module is thus~$\left(\det_{k_W} g \right)^d$ and the signature of the corresponding permutation is ~$\left((\det_{k_W} g)^{\frac{q_W-1}{2}} \right)^d$ by~\ref{lem:sign}~Lemma. The associated character of~$\GL_{m_W}(k_W)$ is then trivial if and only if~$d$ is even. Now~\ref{lem:jump}~Lemma 
gives us:
\[
d = \frac{1}{m_W [k_W:k_F]} \ \frac{\gcd \left(\frac{e}{r_Y}, \frac{e}{e_W}\right)}{e} \, \dim_F W \dim_F Y
\]
Since~$\dim_F W = e_W [k_W:k_F] m_W$, we conclude: 

\begin{Lemma}\label{lem:computed} 
The signature of the natural left action of~$\GL_{m_W}(k_W)$ on the non trivial quotients~$\mathcal C_t(i) / \mathcal C_t(i+1)$ is the trivial character if and only if 
\[
d = \dfrac{e_W}{\lcm ( r_Y , e_W )} \, \dim_F Y
\]
is \emph{even}; otherwise it is the unique character of~$\GL_{m_W}(k_W)$ of order two. In particular: 
\begin{itemize}
\item this signature only depends on~$e_W$, not on~$W$ itself; 
\item when~$\mathcal C_0$ and~$\mathcal C_1$ do not have the same jumps, we have~$d \equiv \dim_F Y \pmod{2}$.
\end{itemize}
\end{Lemma}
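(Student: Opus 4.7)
The proof is essentially a bookkeeping computation, because the paragraph immediately preceding the statement has already identified the signature character as $\bigl((\det_{k_W} g)^{(q_W-1)/2}\bigr)^d$, and the (unique) quadratic character of $\GL_{m_W}(k_W)$ is $g\mapsto (\det_{k_W} g)^{(q_W-1)/2}$. Thus the signature is trivial iff $d$ is even, and in the nontrivial case it is forced to be the unique order-two character. The plan is then to simplify the formula for $d$ given in that paragraph and to read off the two bullet points from the simplified expression.

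For the simplification I would substitute $\dim_F W = e_W[k_W:k_F]m_W$ together with the standard identity $\gcd(e/r_Y,\,e/e_W) = e/\lcm(r_Y,e_W)$ into the formula $d = \frac{1}{m_W[k_W:k_F]}\cdot\frac{\gcd(e/r_Y,e/e_W)}{e}\cdot \dim_F W\dim_F Y$. The factor $m_W[k_W:k_F]$ in the denominator cancels against the corresponding factor inside $\dim_F W$, and the explicit $e$ cancels as well, leaving precisely $d = (e_W/\lcm(r_Y,e_W))\dim_F Y$, as claimed. Observe that $m_W$ and $k_W$ have dropped out, and this already yields the first bullet: once $\Lambda_Y$ is fixed, the sign character depends on $W$ only through the ramification index $e_W$ of $E_W$.

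For the second bullet I would invoke~\ref{lem:samejumps}~Lemma: the jumps of $\mathcal C_0$ and $\mathcal C_1$ differ precisely when $\val_2(e_W)\ge \val_2(r_Y)$. I would then rewrite $d$ using $\lcm(r_Y,e_W) = r_Y e_W/\gcd(r_Y,e_W)$ as $d = \dim_F Y\cdot \gcd(r_Y,e_W)/r_Y$. In the regime $\val_2(e_W)\ge \val_2(r_Y)$ one has $\val_2(\gcd(r_Y,e_W)) = \val_2(r_Y)$, so $r_Y/\gcd(r_Y,e_W)$ is odd; the congruence $d\equiv \dim_F Y\pmod 2$ follows immediately.

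I do not anticipate any genuine obstacle here, so this is more a verification than a proof strategy. The only point that deserves a one-line sanity check is that $d$ as written really is a nonnegative integer (equivalently, that $r_Y/\gcd(r_Y,e_W)$ divides $\dim_F Y$); this is automatic from its original definition in the preceding paragraph as the common composition length of the simple $M_{m_W}(k_W)$-modules $\mathcal C_t(i)/\mathcal C_t(i+1)$.
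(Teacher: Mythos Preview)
Your proposal is correct and matches the paper's reasoning: the paper states the lemma as a direct conclusion of the preceding paragraph (after noting $\dim_F W = e_W[k_W:k_F]m_W$), leaving the simplification and the derivation of the two bullet points implicit. Your write-up supplies exactly those missing details, including the appeal to~\ref{lem:samejumps}~Lemma and the $2$-adic argument for the second bullet, in the way the paper intends.
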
 

\subsection{}\label{5.4new} 
We return to the notation of paragraphs~\ref{5.1new},~\ref{5.2new} but, for now, we drop the subscript~$t$ so that~$\mathfrak M$ denotes either of the orders~$\mathfrak M_0$ or~$\mathfrak M_1$. We first detail the structure of the~$\mathfrak b_0(\mathfrak M)$-bimodule~$\mathfrak J^1_{\mathfrak M} \cap \mathbb U / \mathfrak H^1_{\mathfrak M} \cap \mathbb U$, isomorphic to~$J^1_{\mathfrak M} \cap U / H^1_{\mathfrak M} \cap U$ by the Cayley map, or equivalently by~$Y \mapsto 1 + Y$. (Recall that~$\mathbb U$ denotes the Lie algebra of~$U$.) We use the inductive definition of the orders~$\mathfrak J_{\mathfrak M}$ and~$\mathfrak H_{\mathfrak M}$ given in~\cite[\S3.2]{S4}. 

We have, for some~$u \ge 1$, a sequence~$(\gamma_0 = \beta, \gamma_1, \cdots, \gamma_u =0)$ and a strictly increasing sequence of integers~$0 < r_0 < \cdots < r_{u-1}=n:=v_{\mathfrak M}(\beta)$ such that, for~$0\le v \le u-1$, the stratum~$[\mathfrak M, -, r_v-1, \gamma_v ]$ is semi-simple and the stratum~$[\mathfrak M, -, r_v , \gamma_v ]$ is equivalent to~$[\mathfrak M, -, r_v , \gamma_{v+1} ]$. Using the inductive definition and writing~$[Z]$ for the image in the Grothendieck group of a~$\mathfrak b_0(\mathfrak M)$-bimodule~$Z$, we find that: 
\begin{equation}\label{eqn:induction}
[\mathfrak J^1_{\mathfrak M} / \mathfrak H^1_{\mathfrak M} ]
= [\mathfrak a^{n/2}_{\mathfrak M} / \mathfrak a^{(n/2)+}_{\mathfrak M} ] 
- 
[\mathfrak b_{\gamma_0,\mathfrak M}^{r_0/2} / \mathfrak b_{\gamma_0,\mathfrak M}^{(r_0/2)+}] 
+ \sum_{v=1}^{u-1} \left( [
\mathfrak b_{\gamma_v,\mathfrak M}^{r_{v-1}/2} / \mathfrak b_{\gamma_v,\mathfrak M}^{(r_{v-1}/2)+}] 
- [\mathfrak b_{\gamma_v,\mathfrak M}^{r_{v }/2} / \mathfrak b_{\gamma_v,\mathfrak M}^{(r_{v }/2)+}]
\right),
\end{equation}
where~$\mathfrak b^{r/2}_{\gamma,\mathfrak M}$ is shorthand for the intersection of~$\mathfrak a_{r/2}(\mathfrak M)$ with the centraliser of~$\gamma$.

From~\cite[Proposition~3.4]{S4}, we may choose the elements~$\gamma_v$ so that the decomposition~$X = V \perp (W \oplus W^\ast)$ is subordinate to all strata considered above; in particular we can take intersections with~$\mathbb U$ in every term in the above equality. Then the value~$\epsilon_\mathfrak M (m)$ of the quadratic character~$\epsilon_\mathfrak M$ can be calculated as the product of the signatures of the permutation~$\Ad m$ on each resulting quotient. 

\subsection{}\label{5.8new} 
We now begin the proof of~\ref{prop:mainprop}~Proposition. Recall that, by~\ref{lem:com}~Lemma, the character~$\epsilon_{\mathfrak M^i} \epsilon_{\mathfrak M}$ is given by the signature of the permutation~$\phi\mapsto m\phi$ on 
\[
\mathfrak X  := \mathfrak J^1_{\mathfrak M}  \cap \Hom_F(V^{\vee i}, W )/ \mathfrak H^1_{\mathfrak M}  \cap \Hom_F(V^{\vee i}, W),
\]
for~$m\in\tilde P(\Lambda_{W,\oEW})$.

The space~$\Hom_F(V^{\vee i}, W)$ decomposes as a direct sum~$\oplus_{j \ne i} \Hom_F(V^j, W)$. Moreover, each~$V^j$ in turn decomposes as a direct sum~$V^j = \perp_{s=1}^{s_j} Y^{j,s}$ of subspaces~$Y^{j,s}$ for which the assumptions of \S \ref{5.5new}--\ref{5.7new} are satisfied, and such that the resulting decomposition of~$V$ is subordinate to~$[\Lambda, -, 0, \beta]$. Precisely: 
\begin{itemize}
\item if~$\beta_j$ is non-zero, we take a direct sum of lines over~$E_j$ that splits the lattice sequence~$\Lambda^j$ as in~\cite[\S5.3~Lemma]{BK2}. 
\item If~$\beta_j=0$, the reductive quotient of the maximal parahoric subgroup~$P(\Lambda^j)$ is isomorphic to the direct product of at most two symplectic groups over~$k_F$, whence a decomposition of~$V^j$ as an orthogonal sum of at most two symplectic spaces satisfying the conditions required. 
\end{itemize}
The action of~$\GL_{m_W}(k_W)$ on~$\mathfrak X$ then decomposes as a direct sum over~$j, s$ of actions on 
\[
\mathfrak X^{j,s} = \mathfrak J^1_{\mathfrak M} \cap \mathbf Y^{j,s} / \mathfrak H^1_{\mathfrak M} \cap \mathbf Y^{j,s},
\]
where~$\mathbf Y^{j,s} = \Hom_F( Y^{j,s}, W)$. 

Using~\cite[Proposition~3.4]{S4} and~\cite[\S5.3~Corollary]{BK2}, we may choose the elements~$\gamma_v$ for~\eqref{eqn:induction} so that the decomposition~$X = \perp_{j,s} Y^{j,s} \perp (W \oplus W^\ast)$ is subordinate to all strata considered. The action of~$\GL_{m_W}(k_W)$ then decomposes further along~\eqref{eqn:induction} into pieces that fit the hypotheses of~\ref{lem:computed}~Lemma, namely pieces of the following forms: 
\begin{align*}
Q_1 &= [\mathfrak a^{n/2}_{\mathfrak M} \cap \mathbf Y^{j,s} / \mathfrak a^{(n/2)+}_{\mathfrak M} \cap\mathbf Y^{j,s} ] , \\
Q_2 &=[\mathfrak b_{\gamma_0,_{\mathfrak M} }^{r_0/2} \cap \mathbf Y^{j,s} / \mathfrak b_{\gamma_0,_{\mathfrak M} }^{(r_0/2)+} \cap \mathbf Y^{j,s} ], \\
Q_3 &=[\mathfrak b_{\gamma_v,_{\mathfrak M} }^{r_{v-1}/2} \cap \mathbf Y^{j,s} / \mathfrak b_{\gamma_v,_{\mathfrak M} }^{(r_{v-1}/2)+}\cap \mathbf Y^{j,s} ] - [\mathfrak b_{\gamma_v,_{\mathfrak M} }^{r_{v }/2} \cap \mathbf Y^{j,s} / \mathfrak b_{\gamma_v,_{\mathfrak M} }^{(r_{v}/2)+}\cap \mathbf Y^{j,s} ] . 
\end{align*}

\subsection{}\label{5.9new} 
At last we come to the point, which is not actually to compute the character~$\epsilon_{\mathfrak M^i} \epsilon_{\mathfrak M}$, but rather to prove that this character does not depend on the maximal self-dual order~$\mathfrak M$. In our setting there are exactly two choices for~$\mathfrak M$, with a given period~$e$ and duality invariant~$d=1$. Indeed, the lattice chain underlying the self-dual lattice sequence~$\Lambda_X \cap (W \oplus W^{\ast})$ is the disjoint union of two self-dual lattice chains, one containing a self-dual lattice and its multiples, the other containing a non-self-dual lattice (whose dual is~$\pEW$ times it) and its multiples. Let~$\mathfrak M_0$ and~$\mathfrak M_1$ be the two possible choices and write~$ \Lambda_{W,0}=\mathfrak M_0\cap W$ and~$ \Lambda_{W,1}=\mathfrak M_1 \cap W$. According to~\cite[Lemma~6.7]{S5}, the sets of jumps of~$ \Lambda_{W,0}$ and~$ \Lambda_{W,1}$ are~$\frac{e}{e_W} \mathbb Z$ and~$\frac{e}{2e_W} + \frac{e}{e_W} \mathbb Z$ respectively, and all results in paragraphs~\ref{5.5new}--\ref{5.7new} apply. 

We can thus compare~$\epsilon_{\mathfrak M_{0}^i} \epsilon_{\mathfrak M_{0}}$ and 
$ \epsilon_{\mathfrak M_{1}^i} \epsilon_{\mathfrak M_{1}}$ term by term.

\subsubsection*{{\bf Term~$\bs Q_{\bs 1}$}} 
We apply paragraphs~\ref{5.5new}--\ref{5.7new}, replacing~$Y$ by~$Y^{j,s}$, $\Lambda_Y$ by~$\Lambda\cap Y^{j,s}$ and using~$\Lambda_{W,t}=\mathfrak M_t\cap W$ as above, for~$t=0,1$. We remark that~$\dim_F Y^{j,s}$ is always even. Hence, by~\ref{lem:computed}~Lemma, the signature on~$Q_1$ is trivial unless~$\mathcal C_0$ and~$\mathcal C_1$ have the same jumps, so give the same signature. 

\subsubsection*{{\bf Term~$\bs Q_{\bs 2}$}}
We actually have~$\gamma_0 = \beta$, hence this term is zero if the centralizer of~$\beta$ does not intersect~$\Hom_F(V^j, W)$. This condition holds under the assumptions of~\ref{prop:mainprop}~Proposition because~$j\ne i$. 

\subsubsection*{{\bf Term~$\bs Q_{\bs 3}$}}
Since~$\mathfrak M_0$ and~$\mathfrak M_1$ have the same intersection with~$V$, we may and do choose the same sequence~$(\gamma_v, r_v)$ for both. We may also scale all our lattice sequences to make the period big enough so that all numbers~$r_v / 2$ are integers. Now~$Q_3$ is zero unless the centralizer of~$\gamma_v$ intersects~$\Hom_F(V^j, W)$, which we now assume. We then apply paragraphs~\ref{5.5new}--\ref{5.7new} over~$F[\gamma_v]$. 

If the lattice sequences~$\mathcal C_0$ and~$\mathcal C_1$ have the same jumps we have the equality we want. Otherwise, they are shifted by half a period (\ref{lem:samejumps}~Lemma) and the integer~$d$ given by~\ref{lem:computed}~Lemma is equal to~$\dim_{F[\gamma_v]} Y^{j,s}$. If~$d$ is even we are also done. Otherwise we have~$\beta_j \ne 0$ and~$s_\L = e/e_j$, and the period of~$\mathcal C_0$ and~$\mathcal C_1$ is~${e}/{\lcm (e_W, e_j)}$. 

Since~$Q_3$ is the difference of two terms~$[\mathcal C_t(a)] -[\mathcal C_t(b)]$ in the lattice sequence~$\mathcal C_t$, for~$t=0,1$, over~$F[\gamma_v]$, the values of~$Q_3$ for~$t=0$ and~$t=1$ will be the same on condition that the difference~$a-b$ is a multiple of half the period. This is what we will now prove. 

In the notation of~\eqref{eqn:induction}, we let~$h\ge 1$ be the smallest integer such that the centralizer of~$\gamma_h$ intersects~$\Hom_F(V^j, W)$, so that we only need to consider terms with~$v\ge h$. If~$h = u$ there is nothing to do. Otherwise, we need to examine the values of~$r_h$ and~$r_{h-1}$ more closely, in terms of the \emph{normalized critical exponents}~$k_0^F[\gamma_v]$ (see~\cite[pp.~129,141--2]{S4}). We use~\cite[p.~141]{S4}, case~(ii) for~$r_{h-1}= -k_0(\gamma_{h-1}, \mathfrak M)$ (the unnormalized critical exponent relative to~$\mathfrak M$) and case~(i) for~$r_{h}= -k_0(\gamma_{h}, \mathfrak M)$, to get
\[
r_{h-1} = v_{F[\gamma_h ]}(c) \frac{e(\mathfrak M|\oF)}{e(F[\gamma_h]/F)} , 
\qquad 
r_{h}= -k_0^F(\gamma_h) \frac{e(\mathfrak M|\oF)}{e(F[\gamma_h]/F)} , 
\]
for some element~$c$ in~$F[\gamma_h]$, so that
\[
\frac{r_{h-1}}{2} - \frac{r_{h}}{2} =
\frac{e(\mathfrak M|{\mathfrak o}_{F[\gamma_h]})}{2} \left( 
v_{F[\gamma_h ]}(c) + k_0^F(\gamma_h ) 
\right).
\]
This is indeed an integer multiple of the half-period of jumps
\[
\frac{e(\mathfrak M|{\mathfrak o}_{F[\gamma_h]})}{2} 
\left(\frac{\lcm (e_W, e_j)}{e(F[\gamma_h ]/F)}\right)^{-1} 
\]
since the last term is the inverse of an integer. 

For~$v > h$ we use \cite[p.~141]{S4} case~(i) again and get: 
\[
\frac{r_{v-1}}{2} - \frac{r_{v}}{2} = 
\frac{e(\mathfrak M|{\mathfrak o}_{F[\gamma_v ]})}{2} 
\left( -k_0^F(\gamma_{v-1}) 
\frac{e(F[\gamma_v]/F)}{e(F[\gamma_{v-1} ]/F)} 
+ k_0^F(\gamma_{v}) 
\right)
\]
This is a multiple of the half-period of jumps if and only if 
\begin{align*}
& \frac{ -k_0^F(\gamma_{v-1}) e(F[\gamma_{v} ]/F) 
+ k_0^F(\gamma_{v})e(F[\gamma_{v-1} ]/F) }{
e(F[\gamma_{v-1} ]/F)} \ 
\frac{ \lcm (e_W, e_j) }{
e(F[\gamma_{v} ]/F) }
\\
& \qquad = \left( -k_0^F(\gamma_{v-1}) + k_0^F(\gamma_{v}) \frac{ 
e(F[\gamma_{v-1} ]/F) }{
e(F[\gamma_{v} ]/F)} \right) \ 
\frac{ \lcm (e_W, e_j) }{
e(F[\gamma_{v-1} ]/F) }
\end{align*}
is an integer, which is the case because~$e(F[\gamma_{v} ]/F)$ divides~$e(F[\gamma_{v-1} ]/F)$ (see~\cite[2.4.1]{BK1}). 

\medskip

Putting this together, we obtain the character~$\epsilon_{\mathfrak M^i} \epsilon_{\mathfrak M}$ as a product of signatures, each of them only depending on~$e_W$ by~\ref{lem:computed}~Lemma, hence our character only depends on~$e_W$, not on~$W$. Furthermore, the extension~$E_W$ is isomorphic to~$F[\beta_i]$, hence~$e_W$ is equal to~$e(F[\beta_i]/F)$, independent of the choice of~$W$. This completes the proof of~\ref{prop:mainprop}~Proposition, whence of~\ref{thm:reductiondetail}~Theorem.

\section{The simple case}\label{JBN6new}
 
In this section we prove~\ref{thm:simple}~Theorem. Recalling that, by~\ref{thm:jt}~Theorem, the parameters of the Hecke algebra of our cover are those in the Hecke algebra of a finite reductive group, we are required to analyse these Hecke algebras. Fortunately, these are described by the work of Lusztig~\cite{Lbook} and have been computed in our cases in~\cite{LS}. One subtlety is that the twisting characters~$\epsilon_{\mathfrak M_{t}}$ give rise to involutions which we have not computed explicitly so remain unknown. Fortunately, the numerics are such that an exact description of these involutions is not needed.

\subsection{}\label{6.1new} 
Let~$\pi$ be a simple cuspidal representation of~$\SpFV$ in the sense of paragraph~\ref{2.6new}. Since the case of depth zero representations is already dealt with in~\cite{LS}, we assume moreover that~$\pi$ has positive depth. Thus~$\pi$ contains a skew \emph{simple} character~$\theta$ of~$H^1=H^1(\beta,\Lambda)$, for some maximal skew simple stratum~$[\Lambda,-,0,\beta]$, with~$\beta\ne 0$, and~$E=F[\beta]$ is a field. We write~$\bs\Theta$ for the endo-class of the unique self-dual simple character~$\tilde\theta$ which restricts to~$\theta$. We retain all the notation of paragraph~\ref{5.1new} so interpret simplicity as meaning that~$l=1$ and drop the index~$1$ for notation. We will be considering the space~$X=X^1$, while varying the self-dual cuspidal representation~$\rho$ of~$\GL_F(W)$ (and the space~$W$). Note that we have~$E_W\simeq E$ so we will identify them.

For a self-dual cuspidal representation~$\rho$ of some~$\GL_F(W)$, recall that we write~$\deg(\rho)=\dim_F(W)$ and~$s_\pi(\rho)$ for the unique non-negative real number such that the normalized induced representation~$\nu^s \rho \times \pi$ is reducible. Then the description of the Jordan set in paragraph~\ref{2.1new} shows that, in order to prove~\ref{thm:simple}~Theorem, the equality we must prove is
\begin{equation}\label{eqn:needsum}
\sum_{\rho} \left\lfloor s_\pi(\rho)^2\right\rfloor \deg(\rho) = 2N,
\end{equation}
where the sum runs over all self-dual cuspidal representations~$\rho$ with endo-class~$\bs\Theta(\rho)=\bs\Theta^2$.

\subsection{}\label{6.2new} 
Recall that we have~$\pi=\cInd_J^G\lambda$, with~$\lambda=\underline\kappa \otimes \tau$ and that~$\rho$ contains the maximal simple type~$\tilde \lambda_W= \underline{\tilde \kappa}_W\otimes \tilde \tau_W$ and has unramified twist number~$t(\rho)={\frac{\dim_F W}{e(\bs\Theta)}}$, where we have written~$e(\bs\Theta)=e(\bs\Theta^2)=e(E/F)$ since it depends only on the endo-class. Moreover, by~\ref{prop:realpartsprop}~Proposition, we have that the real parts of the reducibility points of the normalized induced representation~$\nu^s \rho \times \pi$ are the elements of the set 
\[
\left\{ \pm \frac{r_0 + r_1}{2t(\rho)} , \pm \frac{r_0 - r_1}{2t(\rho)}\right\}, 
\]
where, for~$t=0,1$, the integers~$r_t= r_t (\epsilon_{\mathfrak M_{t}} (\tilde \tau_W\otimes \tau))$ comes from the quadratic relations in the finite Hecke algebra~$\mathscr H(\HP ( \mathfrak M_{t ,\oE} ), \epsilon_{\mathfrak M_{t}} (\tilde\tau_W \otimes\tau))$ as in~\eqref{eqn:emphasis}. 

\begin{Remark}
It will be crucial to note that the character~$\epsilon_{\mathfrak M_{t}}$ depends only on the dimension~$\deg(\rho)=\dim_F(W)$, and not on the representation~$\rho$ itself.
\end{Remark}

The contribution to the sum~\eqref{eqn:needsum} of the inertial class~$[\rho]$ (that is, writing~$\rho'=\nu^{\pi{\mathrm i}/t(\rho)\log(q)}\rho$ for the other self-dual representation in the inertial class, the combined contributions of~$\rho$ and~$\rho'$) is
\[
\left\lfloor \left(\frac{r_0 + r_1}{2t(\rho)}\right)^2\right\rfloor + \left\lfloor \left(\frac{r_0 - r_1}{2t(\rho)}\right)^2\right\rfloor.
\]
From results of Lusztig (see~\cite[\S8]{LS} and also paragraph~\ref{6.5new} below), the numbers~$r_t/t(\rho)$ are either both integers or both half-integers so that this simplifies to
\begin{equation}\label{eqn:suminertial}
\left\lfloor \frac{r_0^2 + r_1^2}{2t(\rho)^2}\right\rfloor.
\end{equation}

\subsection{}\label{6.2bnew} 
In order to prove~\eqref{eqn:needsum} we will need to recall Lusztig's parametrisation of cuspidal representations of classical groups, and the computation of the parameter~$r_t$ in the Hecke algebra~$\mathscr H(\HP(\mathfrak M_{t ,\oE} ), \epsilon_{\mathfrak M_{t}} (\tilde\tau_W \otimes\tau))$. We follow the description in~\cite[\S\S$2,3,6$ and, especially,~$7$]{LS}, to which we refer for details and references for the assertions made here.

In almost all cases, we have
\[
\mathscr H(\HP(\mathfrak M_{t ,\oE} ), \epsilon_{\mathfrak M_{t}} (\tilde\tau_W \otimes\tau)) \simeq
\mathscr H(\HP^\so(\mathfrak M_{t ,\oE} ), \epsilon_{\mathfrak M_{t}} (\tilde\tau_W \otimes\tau^\so)),
\]
where~$\tau^\so$ is an irreducible component of the restriction~$\tau_{|\HP^\so(\Lambda_{\oE})}$, and it is here that we will perform our calculations. In the exceptional cases we have~$r_t=0$ and it will turn out that this matches the formula one would obtain by following the recipe for computing the parameters in the connected component~$\HP^\so(\mathfrak M_{t ,\oE})$. Thus we will assume first that the calculation is to be done in~$\HP^\so(\mathfrak M_{t ,\oE})$ and then, in paragraph~\ref{6.5bnew}, we will treat the exceptional cases.

\subsection{}\label{6.3new} 
Since~$P(\Lambda_{\oE})$ is the normaliser of a maximal parahoric subgroup of the centraliser~$G_E$, we can decompose
\[
\HP^\so(\Lambda_{\oE}) = \HP^{(0)}(\Lambda_{\oE})\times \HP^{(1)}(\Lambda_{\oE})
\]
as a product of two connected classical groups over~$k_E^\so$ (the residue field of the fixed points~$E_\so$ in~$E$ under the involution on~$A$). We have a similar decomposition of~$\HP^\so(\mathfrak M_{t ,\oE})$ with, moreover,
\[
\HP^{(1)}(\mathfrak M_{0 ,\oE}) = \HP^{(1)}(\Lambda_{\oE}) \text{ and }
\HP^{(0)}(\mathfrak M_{1 ,\oE}) = \HP^{(0)}(\Lambda_{\oE}),
\]
and the Levi subgroup
\[
\tilde\HP(\Lambda_{W,\oE})\times \HP^{(t)}(\Lambda_{\oE}) \subseteq \HP^{(t)}(\mathfrak M_{t ,\oE}).
\]
We choose an irreducible component~$\tau^\so$ of the restriction~$\tau_{|\HP^\so(\Lambda_{\oE})}$ and write it as~$\tau^{(0)}\otimes\tau^{(1)}$. Writing the character~$\epsilon_{\mathfrak M_{t}}$ as~$\epsilon_{\mathfrak M_{t}}^W\otimes \epsilon_{\mathfrak M_{t}}^{(0)}\otimes\epsilon_{\mathfrak M_{t}}^{(1)}$, we have isomorphisms of Hecke algebras
\[
\mathscr H(\HP^\so(\mathfrak M_{t ,\oE} ), \epsilon_{\mathfrak M_{t}} (\tilde\tau_W \otimes\tau^\so)) \simeq
\mathscr H( \HP^{(t)}(\mathfrak M_{t ,\oE}), \epsilon_{\mathfrak M_{t}}^W\tilde\tau_W\otimes \epsilon_{\mathfrak M_{t}}^{(t)}\tau^{(t)}),
\]
and it is in this Hecke algebra that we compute the parameter~$r_t$.

\subsection{}\label{6.4new} 
We now fix~$t=0$ or~$1$ so drop the sub/superscript~$t$ from our notations for now. Thus we have:
\begin{itemize}
\item a connected classical group~$\HP^\so(\mathfrak M_{\oE})$ over~$k_E^\so$, with Levi subgroup~$\tilde\HP(\Lambda_{W,\oE})\times \HP(\Lambda_{\oE})$ and~$\tilde\HP(\Lambda_{W,\oE})\simeq\GL_m(k_E)$, where~$m=\dim_E(W)$;
\item a self-dual cuspidal representation~$\tilde\tau_W\otimes\tau$ of~$\tilde\HP(\Lambda_{W,\oE})\times \HP(\Lambda_{\oE})$;
\item a character~$\epsilon_{\mathfrak M}^W\otimes \epsilon_{\mathfrak M}$ of~$\tilde\HP(\Lambda_{W,\oE})\times \HP(\Lambda_{\oE})$ of order at most two, which depends on~$m=\dim_E(W)$ but not on~$\tilde\tau_W$.
\end{itemize}
By Green's parametrization (and after fixing an isomorphism~$\tilde\HP(\Lambda_{W,\oE})\simeq\GL_m(k_E)$), the cuspidal representation~$\tilde\tau_W$ corresponds to an irreducible monic polynomial~$Q\in k_E[X]$ of degree~$m$; moreover, this polynomial is~$k_E/k_E^\so$-self-dual, that is
\[
Q(X)=(\overline{Q(0)})^{-1} X^{\deg Q} \overline Q(1/X),
\]
where~$x\mapsto\overline x$ is the automorphism of~$k_E$ with fixed field~$k_E^\so$, extended to~$k_E[X]$ coefficient\-wise (see~\cite[\S7.1]{LS}). Since a cuspidal representation~$\tilde\tau_W$ of~$\tilde\HP(\Lambda_{W,\oE})$ is self-dual if and only if~$\tilde\tau_W\epsilon_{\mathfrak M}^W$ is cuspidal self-dual, twisting by~$\epsilon_{\mathfrak M}^W$ induces an involution on the set of irreducible~$k_E/k_E^\so$-self-dual monic polynomials of degree~$m$. We denote this involution by~$\sigma_{m,W}$; it is either trivial, or given by~$Q(X)\mapsto (-1)^{\deg Q} Q(-X)$. 

Similarly, by Lusztig's parametrization, the cuspidal representation~$\tau$ lies in a \emph{rational Lusztig series}~$\mathcal E(\mathbf{s})$ corresponding to (the rational conjugacy class of) a semisimple element~$\mathbf{s}$ of the dual group of~$\HP(\Lambda_{\oE})$. Since its series contains a cuspidal representation, this semisimple element~$\mathbf{s}$ has characteristic polynomial of a particular form, namely
\[
P_{\mathbf{s}}(X)=\prod_P P(X)^{a_P},
\]
where the product is over all irreducible~$k_E/k_E^\so$-self-dual monic polynomials and the integers~$a_P$ satisfy certain combinatorial constraints (see~\cite[(7.2) and~\S7.7]{LS}); more precisely, we have:
\begin{itemize}
\item $\sum_{P} a_P\deg(P)$ is the dimension of the space~$\mathcal V$ on which the dual group of~$\HP(\Lambda_{\oE})$ naturally acts;
\item if either~$k_E\ne k_E^\so$ or~$P(X)\ne (X\pm 1)$, then~$a_P=\frac 12(b_P^2+b_P)$, for some non-negative integers~$b_P$;
\item if~$P(X)=X\pm 1$ then, writing~$a_+:=a_{(X-1)}$ and~$a_-:=a_{(X+1)}$, there are integers~$b_+^{},b_-^{}\ge 0$ such that
\begin{enumerate}
\item\label{i} if~$\HP(\Lambda_{\oE})$ is odd special orthogonal then~$a_+=2(b_+^2+b_+^{})$
 and~$a_-=2(b_-^2+b_-^{})$,
\item\label{ii} if~$\HP(\Lambda_{\oE})$ is symplectic then~$a_+=2(b_+^2+b_+^{})+1$
 and~$a_-=2b_-^2$,
\item\label{iii} if~$\HP(\Lambda_{\oE})$ is even special orthogonal then~$a_+=2b_+^2$ and~$a_-=2b_-^2$,
\end{enumerate}
and, in case~\ref{iii}, the~$(\pm 1)$-eigenspace in~$\mathcal V$ is an even-dimensional orthogonal space of type~$(-1)^{b_\pm}$, and the same in case~\ref{ii} for the~$(-1)$-eigenspace only.
\end{itemize}
As above, twisting by the character~$\epsilon_{\mathfrak M}$ will induce a degree-preserving involution on the set of irreducible~$k_E/k_E^\so$-self-dual monic polynomials. If the character~$\epsilon_{\mathfrak M}$ is trivial then this involution is trivial. If the character~$\epsilon_{\mathfrak M}$ is quadratic then, by~\cite[Proposition~8.26]{CE}, twisting by~$\epsilon_{\mathfrak M}$ induces a bijection between rational Lusztig series
\[
\mathcal E(\mathbf{s}) \xrightarrow{\ \sim\ } \mathcal E(-\mathbf{s}),
\]
and the involution is given by~$P(X)\mapsto (-1)^{\deg P} P(-X)$. In either case, we denote by~$\sigma_{m,G}$ the involution induced by twisting by~$\epsilon_{\mathfrak M}$. (Note that this is a degree-preserving involution on the set of \emph{all} irreducible~$k_E/k_E^\so$-self-dual monic polynomials; the subscript~$m$ is included to indicate that the involution depends on~$m$.) The characteristic polynomial corresponding to the cuspidal representation~$\tau\epsilon_{\mathfrak M}$ is then
\[
\prod_P P(X)^{a_{\sigma_{m,G}(P)}}.
\]

Putting together our two involutions, we get an involution on the set of irreducible~$k_E/k_E^\so$-self-dual monic polynomials of degree~$m$ given by
\[
\sigma_m = \sigma_{m,G}\circ\sigma_{m,W}.
\]

\subsection{}\label{6.5new} 
Recall that the Hecke algebra~$\mathscr H( \HP(\mathfrak M_{\oE}), \epsilon_{\mathfrak M}^W\tilde\tau_W\otimes \epsilon_{\mathfrak M}\tau)$ is generated by an element~$\mathcal T$ satisfying a quadratic relation
\[
(\mathcal T-q^r\omega)(\mathcal T+\omega)=0,
\]
where~$q$ is the cardinality of the residue field of~$k_F$. The work of Lusztig, explicated in~\cite[\S7]{LS}, allows one to write down explicitly the parameter~$r$ in terms of the characteristic polynomials of the previous paragraph, as follows.

Let~$Q(X)$ be the irreducible~$k_E/k_E^\so$-self-dual monic polynomial of degree~$m$ corresponding to~$\tilde\tau_W$, and let~$P_s(X)=\prod_{P} P(X)^{a_P}$ be the monic polynomial corresponding to~$\tau$, where the~$a_P$ are as described in the previous paragraph. Writing~$f$ for the degree of the extension~$k_E/k_F$, one gets the following values:
\begin{itemize}
\item if~$k_E=k_E^\so$ and~$\sigma_1(Q)=X-1$ then 
\[
\frac rf = \begin{cases}  
2b_{+}&\text{ if~$\HP$ is even special orthogonal,} \\
2b_{+}+1 &\text{ otherwise;}
\end{cases}
\]
\item if~$k_E=k_E^\so$ and~$\sigma_1(Q)=X+1$ then 
\[
\frac rf = \begin{cases} 2b_{-}+1 &\text{ if~$\HP$ is odd special orthogonal,} \\ 
2b_{-}&\text{ otherwise;} 
\end{cases}
\]
\item if~$k_E\ne k_E^\so$ or~$m$ is even then
\[
\frac rf=(2b_{\sigma_m(Q)}+1)\frac{m}2.
\]
\end{itemize}
Note that, since~$t(\rho)=mf$, the number~$r/t(\rho)$ is a half-integer, as asserted above. Moreover,~$r/t(\rho)$ is an integer precisely when~$E/E_\so$ is ramified and~$E/F$ is a maximal extension (i.e. of degree~$\dim_F(W)$); in particular, this depends only on the polynomial~$Q$ (that is, on~$\tilde\tau_W$, so on the representation~$\rho$) and not on either the representation~$\tau$ or on the involution~$\sigma_1$.

\subsection{}\label{6.5bnew}
In this paragraph, we treat the exceptional cases, where we do \emph{not} have an isomorphism
\begin{equation} \label{eqn:isoH}
\mathscr H(\HP(\mathfrak M_{t ,\oE} ), \epsilon_{\mathfrak M_{t}} (\tilde\tau_W \otimes\tau)) \simeq
\mathscr H(\HP^\so(\mathfrak M_{t ,\oE} ), \epsilon_{\mathfrak M_{t}} (\tilde\tau_W \otimes\tau^\so)),
\end{equation}
According to the description in~\cite[\S6.3]{MS}, this occurs precisely when:
\begin{itemize}
\item $E/E_\so$ is ramified;
\item $\dim_E(W)=1$, so that~$\tilde\tau_W$ is a character of order at most~$2$;
\item and either~$\HP(\Lambda_{\oE})=\HP^\so(\Lambda_{\oE})$ or~$\epsilon_{\mathfrak M_{t}}\tau_{|\HP^\so(\Lambda_{\oE})}$ is reducible.
\end{itemize}
We remark that~$\epsilon_{\mathfrak M_{t}}\tau_{|\HP^\so(\Lambda_{\oE})}$ is reducible if and only if~$\tau_{|\HP^\so(\Lambda_{\oE})}$ is reducible.

In these cases, writing~$\HP^\so(\mathfrak M_{t ,\oE}) = \HP^{(0)}(\Lambda_{\oE})\times \HP^{(1)}(\Lambda_{\oE})$, there is one value of~$t$ for which~$\HP^{(t)}(\Lambda_{\oE})$ is an even special orthogonal group (for the other it is a symplectic group) and it is precisely for this value of~$t$ that we do not have an isomorphism~\eqref{eqn:isoH} and we get parameter~$r_t=1$.

As above, we write~$\tau^{(0)}\otimes\tau^{(1)}$ for an irreducible component of~$\tau_{|\HP^\so(\Lambda_{\oE})}$, and write~$\epsilon_{\mathfrak M_{t}}$ as~$\epsilon_{\mathfrak M_{t}}^W\otimes \epsilon_{\mathfrak M_{t}}^{(0)}\otimes\epsilon_{\mathfrak M_{t}}^{(1)}$. Writing~$P_s(X)=\prod P(X)^{a_P}$ for the polynomial corresponding to the cuspidal representation~$\tau^{(t)}$, the fact that it does \emph{not} extend to the full even orthogonal group implies, by~\cite[Proposition~7.9]{LS}, that~$\pm 1$ are \emph{not} roots of~$P_s$, that is,~$a_+=a_-=0$. 

Since~$\tilde\tau_W$ is a character of order at most~$2$, the corresponding polynomial is~$Q(X)=X\pm 1$. In particular, since we have~$b_+=b_-=0$, the formulae of paragraph~\ref{6.5new} are still valid, since they too give~$r_t=0$. Thus those formulae are valid in every case.

\subsection{}\label{6.6new} 
Finally, using the formulae of paragraph~\ref{6.5new}, we return to computing the contribution~\eqref{eqn:suminertial}, so we retrieve the sub/super\-scripts~$t$. We have:
\begin{itemize}
\item an irreducible~$k_E/k_E^\so$-self-dual monic polynomial~$Q(X)$, corresponding to the cuspidal representation~$\tilde\tau_W$;
\item for~$t=0,1$, a polynomial~$\prod_P P(X)^{a^{(t)}_P}$ corresponding to the cuspidal representation~$\tau^{(t)}$;
\item for~$t=0,1$, an involution~$\sigma_m^{(t)}$ on the set of irreducible~$k_E/k_E^\so$-self-dual monic polynomials of degree~$m$.
\end{itemize}
Suppose first that either~$k_E\ne k_E^\so$ or~$m$ is even; then we get
\begin{align*}
\left\lfloor \frac{r_0^2 + r_1^2}{2t(\rho)^2}\right\rfloor &= 
\left\lfloor \frac{(2b_{\sigma_m^{(0)}(Q)}^{(0)}+1)^2 + (2b_{\sigma_m^{(1)}(Q)}^{(1)}+1)^2}{8} \right\rfloor  \\
&= \left\lfloor \tfrac 12 b_{\sigma_m^{(0)}(Q)}^{(0)} \left(b_{\sigma_m^{(0)}(Q)}^{(0)}+1\right) + \tfrac 12 b_{\sigma_m^{(1)}(Q)}^{(1)} \left(b_{\sigma_m^{(1)}(Q)}^{(1)}+1\right) +\tfrac 12 \right\rfloor \\
&= a_{\sigma_m^{(0)}(Q)}^{(0)}+a_{\sigma_m^{(1)}(Q)}^{(1)}.
\end{align*}
If~$k_E=k_E^\so$ then one of the groups~$\HP^{(t)}(\mathfrak M_{t ,\oE})$ is symplectic while the other is orthogonal. Here we can treat each case, each polynomial~$X\pm 1$, and each possibility for the involutions~$\sigma_1^{(t)}$, separately. Up to permuting~$\{0,1\}$ we are in one of the following two cases:
 
\textbf{If~$\HP^{(0)}(\Lambda_{\oE})$ is odd special orthogonal and~$\HP^{(1)}(\Lambda_{\oE})$ is symplectic}, then the contribution of~$\sigma_1^{(1)}(X - 1)$ is
\[
\left\lfloor \frac{ \left(2b_\zeta^{(0)}+1\right)^2 +\left(2b_+^{(1)}+1\right)^2}2\right\rfloor 
= 2 b_\zeta^{(0)}\left(b_\zeta^{(0)}+1\right) + 2 b_+^{(1)} \left( b_+^{(1)}+1 \right)+1 
= a_\zeta^{(0)} + a_+^{(1)},
\]
where~$\zeta$ is the sign defined by~$\sigma_1^{(0)}\sigma_1^{(1)}(X-1)=X-\zeta$; and the contribution of~$\sigma_1^{(1)}(X + 1)$ is
\[
\left\lfloor \frac{ \left(2b_{-\zeta}^{(0)}+1\right)^2 +\left(2b_-^{(1)}\right)^2}2\right\rfloor 
= 2 b_{-\zeta}^{(0)}\left(b_{-\zeta}^{(0)}+1\right) + 2 \left( b_-^{(1)} \right)^2
= a_{-\zeta}^{(0)} + a_-^{(1)}.
\]
In particular, the sum of the contributions of~$X\pm 1$ is
\[
a_+^{(0)}+a_-^{(0)} + a_+^{(1)}+a_-^{(1)}.
\]

\textbf{If~$\HP^{(0)}(\Lambda_{\oE})$ is even special orthogonal and~$\HP^{(1)}(\Lambda_{\oE})$ is symplectic}, then the contribution of~$\sigma_1^{(1)}(X - 1)$ is
\[
\left\lfloor \frac{ \left(2b_\zeta^{(0)}\right)^2 +\left(2b_+^{(1)}+1\right)^2}2\right\rfloor 
= 2 \left(b_\zeta^{(0)}\right)^2 + 2 b_+^{(1)} \left( b_+^{(1)}+1 \right)
= a_\zeta^{(0)} + a_+^{(1)} - 1,
\]
where~$\zeta$ is again the sign defined by~$\sigma_1^{(0)}\sigma_1^{(1)}(X-1)=X-\zeta$; and the contribution of~$\sigma_1^{(1)}(X + 1)$ is
\[
\left\lfloor \frac{ \left(2b_{-\zeta}^{(0)}\right)^2 +\left(2b_-^{(1)}\right)^2}2\right\rfloor 
= 2 \left(b_{-\zeta}^{(0)}\right)^2 + 2 \left( b_-^{(1)} \right)^2
= a_{-\zeta}^{(0)} + a_-^{(1)}.
\]
In this second case, the sum of the contributions of~$X\pm 1$ is
\[
a_+^{(0)}+a_-^{(0)} + a_+^{(1)}+a_-^{(1)} - 1; 
\]
the term~$-1$ reflects the fact that the sum of the dimensions of the spaces on which the dual groups of~$\HP^{(t)}(\Lambda_{\oE})$ act naturally is~$1$ more than the sum of the dimensions of the spaces on which the groups~$\HP^{(t)}(\Lambda_{\oE})$ act naturally. Note also that this latter sum of dimensions is precisely~$\dim_E(V)$, where we recall that~$V$ is the symplectic space on which our group~$\SpFV$ acts.

\subsection{}\label{6.7new} 
Having computed all the contributions to the sum~\eqref{eqn:needsum} in the previous paragraph, we can now sum them over all possible~$Q$, noting that, if the cuspidal representation~$\rho$ corresponds to the polynomial~$Q$, then~$\deg(\rho)=[E:F]\deg(Q)$. If~$k_E\ne k_E^\so$ this is straightforward and we obtain
\begin{align*}
\sum_{\rho} \left\lfloor s_\pi(\rho)^2\right\rfloor \deg(\rho) &= 
[E:F]\sum_m \left( m\sum_{\deg(Q)=m} \left(a_{\sigma_m^{(0)}(Q)}^{(0)}+a_{\sigma_m^{(1)}(Q)}^{(1)}\right)\right) \\
& = [E:F]\left( \sum_P a_P^{(0)}\deg(P) + \sum_P a_P^{(1)}\deg(P)\right)
= [E:F]\dim_E(V) = 2N,
\end{align*}
as required. Here the penultimate equality occurs because each group~$\HP^{(t)}(\Lambda_{\oE})$ is a unitary group (whose dual group is then a unitary group acting naturally on a space of the same dimension), and the sum of the dimensions of the spaces on which they act is~$\dim_E(V)$.

If~$k_E=k_E^\so$ then we need to be a little more careful with the polynomials~$X\pm 1$ (that is, the~$k_E/k_E^\so$-self-dual monic polynomials of degree~$1$), as described at the end of the previous paragraph. If one of the~$\HP^{(t)}(\Lambda_{\oE})$ is \emph{even} special orthogonal (and the other symplectic) then we get that~$\sum_{\rho} \left\lfloor s_\pi(\rho)^2\right\rfloor \deg(\rho)$ is
\begin{align*}
[E:F]\left(\sum_{m\ge 2} \left( m\sum_{\deg(Q)=m} \left(a_{\sigma_m^{(0)}(Q)}^{(0)}+a_{\sigma_m^{(1)}(Q)}^{(1)}\right)\right) + a_+^{(0)}+a_-^{(0)} + a_+^{(1)}+a_-^{(1)} - 1 \right)
\\
 = [E:F]\left( \sum_P a_P^{(0)}\deg(P) + \sum_P a_P^{(1)}\deg(P)-1\right)
= [E:F]\dim_E(V) = 2N,
\end{align*}
where the penultimate equality uses the fact that the dual of a symplectic group acts naturally on a space of dimension~$1$ greater, while the dual of an even special orthogonal group acts naturally on a space of the same dimension.

On the other hand, if one of the~$\HP^{(t)}(\Lambda_{\oE})$ is \emph{odd} special orthogonal (and the other symplectic) then we get the same sum except without the term~$-1$, and the penultimate equality uses the fact that the dual of an odd special orthogonal group acts naturally on a space of dimension~$1$ smaller, while the dual of a symplectic group acts naturally on a space of dimension~$1$ greater.

This completes the proof of~\eqref{eqn:needsum}, whence of~\ref{thm:simple}~Theorem.

\subsection{}\label{6.8new} 
The results in this section not only prove~\ref{thm:simple}~Theorem but also give an algorithm to compute the inertial Jordan set of a positive depth simple cuspidal representation of~$G$. (The case of depth zero is treated already in~\cite{LS}.) Moreover,~\ref{cor:together}~Corollary then gives the inertial Jordan set for any cuspidal representation of~$G$.

Indeed, suppose~$\pi$ is a simple cuspidal representation of~$G$, induced from a cuspidal type~$\lambda=\underline\kappa\otimes\tau$. With the usual notation, let~$\tau^\so$ be any irreducible component of the restriction of~$\tau$ to the maximal parahoric subgroup~$P^\so(\Lambda_{\oE})$. Then~$\tau^\so$ is the inflation of a representation~$\tau^{(0)}\otimes\tau^{(1)}$, with each~$\tau^{(t)}$ a cuspidal representation of a finite reductive group over~$k_E^\so$. These each appear in some rational Lusztig series and we consider the set~$\mathcal Q^{(t)}$ of monic irreducible polynomials dividing the characteristic polynomial (over~$k_E$) of the corresponding semisimple conjugacy class, for~$t=0,1$, all of which are~$k_E/k_E^\so$-self-dual. For each~$m\in\deg(\mathcal Q^{(t)})$, we compute the signature character~$\epsilon_{\mathfrak M_t}$, and thus deduce the involution~$\sigma_m^{(t)}$ as in paragraph~\ref{6.4new}. We set
\[
\mathcal Q=\left\{\sigma_m^{(t)}(Q)\mid Q\in\mathcal Q^{(t)},\ \deg(Q)=m,\ t=0,1\right\}.
\]

Now let~$\bs\Theta$ be the endo-class of the self-dual simple character lifting any skew simple character in~$\pi$ and let~$Q\in\mathcal Q$. We put~$n=\deg(Q)\deg(\bs\Theta)$ and let~$\tilde\theta$ be the unique (up to conjugacy) \emph{m-simple} character in~$\GL_n(F)$ with endo-class~$\bs\Theta^2$ (in the language of~\cite{BHMemoir}, for example). Let~$\underline{\tilde\kappa}$ be the~$p$-primary extension of~$\tilde\theta$, a representation of a group~$\tilde J$. The group~$\tilde J/\tilde J^1$ is then a finite general linear group of rank~$\deg(Q)$ over~$k_E$, and we let~$\tilde\tau_Q$ be the unique cuspidal representation in the Lusztig series corresponding to a semisimple conjugacy class with characteristic polynomial~$Q$. Write~$[\rho_Q]$ for the inertial class of cuspidal representations of~$\GL_n(F)$ containing~$\underline{\tilde\kappa}\otimes\tilde\tau_Q$. 

The inertial classes in~$\left\{[\rho_Q]\mid Q\in\mathcal Q\right\}$ are precisely the inertial classes which will appear in~$\IJord(\pi)$. In order to compute the multiplicities with which~$[\rho_Q]$ appears, we follow the recipe of paragraph~\ref{6.5new} to compute the corresponding Hecke algebra parameters~$r_0$ and~$r_1$, whence the real parts of the reducibility points~$|r_0\pm r_1|/(\deg(Q)[k_E:k_F])$ and the multiplicities from M\oe glin's criterion. In the case that~$k_E\ne k_E^\so$ or~$m=\deg(Q)>1$, this is straightforward, with the real parts of the reducibility points given by
\[
\frac{b_{\sigma_m^{(0)}(Q)}^{(0)}+b_{\sigma_m^{(1)}(Q)}^{(1)}+1}2 \quad\text{ and }\quad 
\frac{b_{\sigma_m^{(0)}(Q)}^{(0)}-b_{\sigma_m^{(1)}(Q)}^{(1)}}2,
\]
where~$a_P^{(t)}=\frac 12b_P^{(t)}(b_P^{(t)}+1)$ is the power to which~$P$ divides the characteristic polynomial corresponding to~$\tau^{(t)}$. By construction of~$\mathcal Q$, the first of these is certainly greater than~$\frac 12$. In the case~$k_E=k_E^\so$ and~$\deg(Q)=1$ (so that~$Q$ is~$X\pm 1$) there is no such simple universal formula, and instead one must proceed in a case-by-case analysis as in paragraph~\ref{6.6new}. We leave this as an exercise to the reader; a similar calculation is done in~\cite[section~8]{LS}.

\section{Galois parameters}\label{JBN3new}

In this section we study self-duality in terms of Galois parameters with a view, in particular, to understanding the ambiguities in our results in terms of the local Langlands correspondence. 

\subsection{}\label{3.3new}
We denote by~$\bar F$ a fixed separable closure of~$F$ and by~$W_F$ the absolute Weil group of~$F$ (with similar notation for intermediate fields). We would like to explore the self-dual irreducible representations~$\sigma$ of~$W_F$, with a view to determining its parity (that is, whether it is symplectic or orthogonal); in particular, we would like to know when the self-dual irreducible representation~$\sigma'$ which is an unramified twist of (and not isomorphic to)~$\sigma$ has the same parity as~$\sigma$, since it is in this case that we have ambiguity. For now, we do not require~$p$ to be odd.

Let~$\chi$ be an unramified character of~$W_F$. Then~$(\chi \sigma)^\vee$ is isomorphic to~$\chi^{-1} \sigma^\vee$, so,~$\sigma$ being self-dual,~$\chi \sigma$ is self-dual if and only if~$\chi^2 \sigma \simeq \sigma$. 

We let~$t(\sigma)$ be the number of unramified characters~$\eta$ of~$W_F$ such that~$\eta \sigma \simeq \sigma$ -- such characters form a cyclic group. We deduce that the only unramified character twist~$\sigma'$ of~$\sigma$ which is self-dual but not isomorphic to~$\sigma$ is obtained as~$\chi \sigma$, where~$\chi$ is an unramified character of order~$2t(\sigma)$. (If~$r= \val_2(t(\sigma))$, any unramified character of order~$2^{r+1}$ would do equally well.) 

Let~$E$ be the unramified extension of~$F$ in~$\bar F$ of degree~$t(\sigma)$. Then~$\sigma$ is induced from a representation~$\tau$ of~$W_E$; the restriction of~$\sigma$ to~$W_E$ is the direct sum of the conjugates of~$\tau$ under~$\Gal(E/F)$, which are pairwise inequivalent. As~$\sigma$ is self-dual,~$\tau^\vee$ is one of those conjugates. 

Assume first that~$\tau$ is self-dual -- which, we remark, is necessarily true if~$t(\sigma)$ is odd. Since~$t(\tau)=1$, the unramified twist~$\tau'$ of~$\tau$ which is self-dual but not isomorphic to~$\tau$ has the form~$\chi \tau$, where~$\chi$ is the order~$2$ unramified character of~$W_E$, and it has the same parity as~$\tau$. Since induction for self-dual representations preserves the parity, we deduce that~$\sigma$ and~$\sigma'$ share the same parity too. 

Assume then that~$\tau$ is not self-dual. Then~$\tau^\vee$ is necessarily isomorphic to~$\tau^\gamma$, where~$\gamma$ is the order~$2$ element of~$\Gal(E/F)$. Let~$\tilde E = E^\gamma$, so that~$E/\tilde E$ is quadratic, and let~$T$ be the (irreducible) representation of~$W_{\tilde E}$ induced from~$\tau$. As~$\tau^\vee \simeq \tau^\gamma$, we see that~$T$ is self-dual. Its restriction to~$W_E$ is~$\tau \oplus \tau^\vee$, with~$\tau$ not isomorphic to~$\tau^\vee$, so the~$W_E$-invariant bilinear forms on the space of~$T$ form a space of dimension~$2$, with a line of alternating forms and a line of symmetric ones. Each of these lines is invariant under~$\Gal(E/F)$, one offering the trivial representation, the other the order~$2$ character~$\omega$ of~$\Gal(E/F)$. The self-dual unramified twist~$T'$ of~$T$ which is not isomorphic to~$T$ is~$T' = \eta T$ where~$\eta$ is unramified of order~$4$, so that~$T' \otimes T' \simeq \omega T \otimes T$. From the previous analysis, we deduce that if~$T$ is symplectic then~$T'$ is orthogonal and conversely:~$T$ and~$T'$ have different parities. By induction again we see that~$\sigma$ and~$\sigma'$ have different parities. 

\subsection{}\label{3.4new}
Let us look at some special cases. Assume first that~$\sigma$ is tame. Then~$t(\sigma) = \dim(\sigma)$. Introducing~$E$ and~$\tau$ as in paragraph~\ref{3.3new}, we have that~$\tau$ is a character, regular under the action of~$\Gal(E/F)$. If~$\tau$ were self-dual it would have order~$1$ or~$2$, but any character of~$E^\times$ of order~$1$ or~$2$ factors through~$N_{E/F}$, hence can be regular under the action of~$\Gal(E/F)$ only if~$t(\sigma) = \dim(\sigma)=1$, so~$E=F$. Thus, apart from quadratic characters of~$W_F$, \emph{tame} self-dual irreducible representations~$\sigma$ of~$W_F$ have even dimension, and we can apply the discussion of paragraph~\ref{3.3new} to them, concluding that~$\sigma$ and~$\sigma'$ have different parities. 

\subsection{}\label{3.5new}
We now assume that~$\sigma$ is \emph{not tame}, but we concentrate on our case of interest: that is, we assume from now on that~\emph{$p$ is odd}. We want in that case to spot when~$\sigma$ and~$\sigma'$ have the same parity, and then try to say whether they are orthogonal or symplectic. 

Let us first analyse~$\sigma$. Its restriction to the wild ramification subgroup~$\Pp_F$ of~$W_F$ is non-trivial, since~$\sigma$ is not tame. Let~$\gamma$ be an irreducible component of this restriction -- so that~$\gamma$ is not the trivial character of~$\Pp_F$ -- and~$S=S_\gamma$ its stabilizer in~$W_F$. Then by Clifford theory~$\sigma$ is induced from the representation of~$S$ on the isotypical component~$\V(\gamma)$ of~$\gamma$ in the space~$\V$ of~$\sigma$. 

Now by assumption~$\sigma$ is self-dual, and so is its restriction to~$\Pp_F$. But~$\Pp_F$ is a pro-$p$-group and~$p$ is odd, so no non-trivial irreducible representation of~$\Pp_F$ is self-dual, and we see that~$\gamma^\vee$ is not isomorphic to~$\gamma$. Thus there is~$g$ in~$W_F\setminus S$ with~$^g \gamma$ isomorphic to~$\gamma^\vee$; the coset~$gS$ is the same for all possible choices of~$g$, and~$g^2$ belongs to~$S$, so~$\tilde S = S \cup gS$ is a subgroup of~$W_F$ containing~$S$ as an index 2 subgroup. 

To get~$\sigma$, we can first induce~$\V(\gamma)$ from~$S$ to~$\tilde S$, and then from~$\tilde S$ to~$W_F$. We shall prove now that~$\Ind_S^{\tilde S} \V(\gamma)$ is self-dual; its parity is then inherited by~$\sigma$. This reduces the problem to understanding the parity of~$\Ind_S^{\tilde S} \V(\gamma)$.

\subsection{}\label{3.7new}
To prove that~$\Ind_S^{\tilde S} \V(\gamma)$ is self-dual, we take an abstract viewpoint: 

\begin{Proposition}\label{prop:IndHGrho}
Let~$\G$ be a group with a subgroup~$\H$ of index 2, and let~$g \in \G\setminus\H$. Let~$(\rho, \V)$ be an irreducible representation of~$\H$. Assume that~$\rho$ is not self-dual, but that~$\rho^\vee$ is equivalent to~${}^g \rho$. Then~$\Ind_\H^\G \rho$ is irreducible and self-dual. If~$\dim \V$ is odd, then~$\Ind_\H^\G \rho$ is symplectic if and only if its determinant is trivial. 
\end{Proposition}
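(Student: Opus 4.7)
The plan is to dispatch irreducibility and self-duality by standard manipulations and then concentrate on the parity/determinant criterion, which is the substantive statement.

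\emph{Irreducibility and self-duality.} Set $\sigma=\Ind_\H^\G\rho$. Mackey's criterion applies: since $[\G:\H]=2$ and $\H$ is normal, the only condition to verify is $\Hom_\H(\rho,{}^g\rho)=0$, which holds because ${}^g\rho\simeq\rho^\vee$ while $\rho\not\simeq\rho^\vee$ by hypothesis. Self-duality then follows from the compatibility of induction with duality and with inner conjugation by $\G$: $\sigma^\vee\simeq\Ind_\H^\G\rho^\vee\simeq\Ind_\H^\G({}^g\rho)\simeq\Ind_\H^\G\rho=\sigma$.

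\emph{Setup and the invariant form.} I realize $\sigma$ on $\V\oplus\V$, with $\H$ acting as $\rho\oplus{}^g\rho$ and $g$ acting by $(x,y)\mapsto(y,\rho(g^2)x)$; write $n=\dim\V$. First, $\det\sigma|_\H=(\det\rho)\cdot{}^g(\det\rho)=(\det\rho)\cdot(\det\rho)^{-1}=1$ using ${}^g\rho\simeq\rho^\vee$, so $\det\sigma$ factors through $\G/\H$ and is trivial precisely when $\det\sigma(g)=1$. Second, computing the determinant of the block matrix of $g$ gives $\det\sigma(g)=(-1)^n\det\rho(g^2)$. Next I fix an $\H$-isomorphism ${}^g\V\to\V^\vee$; it corresponds to a nondegenerate bilinear form $B$ on $\V$, unique up to scalar, with $B({}^g\rho(h)v,\rho(h)w)=B(v,w)$ for $h\in\H$. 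On $\sigma|_\H=\rho\oplus{}^g\rho$ two independent $\H$-invariant bilinear forms are
\[
F_1((x,y),(x',y'))=B(y,x'),\qquad F_2((x,y),(x',y'))=B(y',x).
\]
Using $g^{-1}\cdot(x,y)=(\rho(g^{-2})y,x)$, one checks $g\cdot F_1=\mu F_2$ and $g\cdot F_2=\mu F_1$ for a common scalar $\mu$ characterised by
\[
B(v,\rho(g^{-2})w)=\mu B(w,v)\quad\text{for all }v,w\in\V,
\]
with $\mu^2=1$ because $g^2\in\H$ acts trivially on the space of $\H$-invariant forms. The unique (up to scalar) $\G$-invariant form is therefore $\tilde F=F_1+\mu F_2$; comparing $\tilde F((x,y),(x',y'))$ with $\tilde F((x',y'),(x,y))$ shows that $\tilde F$ is symmetric iff $\mu=1$ and alternating iff $\mu=-1$. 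Hence $\sigma$ is orthogonal (resp.\ symplectic) iff $\mu=1$ (resp.\ $\mu=-1$).

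\emph{Conclusion.} Writing $B(v,w)=v^T M w$ for a matrix $M$, the defining identity for $\mu$ becomes $M\rho(g^{-2})=\mu M^T$; taking determinants and using nondegeneracy of $B$ yields $\det\rho(g^{-2})=\mu^n$, equivalently $\det\rho(g^2)=\mu^{-n}$. Substituting into $\det\sigma(g)=(-1)^n\det\rho(g^2)$ and using $\mu^{\pm n}=\mu$ when $n$ is odd (since $\mu=\pm 1$), I obtain $\det\sigma(g)=-\mu$. Combined with the triviality of $\det\sigma|_\H$ this shows $\det\sigma$ is trivial iff $\mu=-1$ iff $\sigma$ is symplectic, which is the claim.

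The main technical care point is the bookkeeping of the two distinct $\H$-invariance conventions for bilinear forms on $\V\otimes\V$ (invariance under $({}^g\rho,\rho)$ versus $(\rho,{}^g\rho)$) and the correct computation of how $g$ swaps the resulting two-dimensional space of $\H$-invariant forms on $\sigma$; once the identity $B(v,\rho(g^{-2})w)=\mu B(w,v)$ is in hand, the determinant manipulation and the symmetric/alternating dichotomy for $\tilde F$ are elementary.
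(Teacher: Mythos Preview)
Your proof is correct and follows essentially the same approach as the paper's: realize the induced representation on $\V\oplus\V$, identify a sign $\mu$ (the paper's $\lambda$) via the relation $B(v,\rho(g^{-2})w)=\mu B(w,v)$, show that this sign governs the parity of the invariant form, and relate it to $\det\rho(g^2)$ by taking determinants. The only notable difference is cosmetic: you compute $\det\sigma(g)=(-1)^n\det\rho(g^2)$ directly from the block matrix of $\sigma(g)$, whereas the paper invokes the transfer (Verlagerung) formula $\det(\Ind_\H^\G\rho)=(\det\rho\circ\Ver)\,\omega^{\dim\rho}$ to reach the same conclusion.
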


\begin{proof}
Since~${}^g \rho$ is not isomorphic to~$\rho$, the induced representation~$\Ind_\H^\G \rho$ is irreducible, and it is self-dual because~$(\Ind_\H^\G \rho)^\vee$ is isomorphic to~$\Ind_\H^\G \rho^\vee$ hence to~$\Ind_\H^\G { }^g\rho$, itself isomorphic to~$\Ind_\H^\G \rho$. If~$\Ind_\H^\G \rho$ is symplectic, then clearly its determinant is trivial. To prove the converse statement when~$\dim \V$ is odd, we need to analyse the situation carefully. 

Since~$\rho^\vee$ is equivalent to~${}^g\rho$, there is a non-degenerate bilinear form~$\Phi: \V \times \V \to \mathbb C$ such that 
\[
\Phi(hv, ghg^{-1} v') = \Phi(v,v') \ \text{ for all } h \in \H, v, v' \in \V. 
\]
It is unique up to scalar. We claim that the form~$\Psi$, defined by~$\Psi(v,v')= \Phi(v', g^2v)$ for~$v,v'$ in~$\V$, is proportional to~$\Phi$. Indeed, for~$v,v'\in\V$ and~$h\in\H$, we find
\[
\Psi(hv, ghg^{-1} v') = \Phi( ghg^{-1} v', g^2hv) = \Phi(v', g^2v) = \Psi(v,v').
\]
Writing~$\Psi= \lambda \Phi$ with~$\lambda \in \mathbb C^\times$, we compute 
\[
\Phi(v',v)=\Phi(g^2v',g^2v)= \Psi(v,g^2v') = \lambda\Phi(v,g^2v')= \lambda \Psi(v',v) = \lambda^2 \Phi(v',v) 
\]
for~$v, v'$ in~$\V$ so that~$\lambda^2=1$. We shall see that the parity of~$\Ind_\H^\G \rho$ is governed by the scalar~$\lambda$. 

On the space~$\V \oplus \V$ equipped with the representation~$\rho \oplus {}^g \rho$, there is an~$\H$-invariant symplectic form~$f$, unique up to scalar, which we can take to be 
\[ 
f : ((v_1, v_2), (w_1,w_2)) \longmapsto \Phi(v_1, w_2) - \Phi(w_1, v_2). 
\]
The space of~$\Ind_\H^\G \rho$ can be taken as~$\V \oplus \V$ where~$\H$ acts as~$\rho \oplus {}^g \rho$ and~$g$ acts via 
\[
g(v_1, v_2) = (v_2, g^2 v_1).
\]
Since~$\Phi (v_2, g^2 v_1) = \Psi (v_1, v_2) = \lambda \Phi (v_1, v_2)$, we get that~$g$ acts on~$f$ by multiplication by~$-\lambda$, so~$\Ind_\H^\G \rho$ is symplectic if and only if~$\lambda=-1$. 

Let us choose a basis~$(e_1, \dots, e_d)$ of~$\V$, where~$d=\dim\rho$. Then~$\Phi (v_1, v_2) = ({}^t \mathbf{x}_1) H \mathbf{x}_2$, for~$v_1, v_2\in\V$ with coordinates given by~$\mathbf{x}_1, \mathbf{x}_2\in\mathbb C^d$ respectively, and~$H$ the~$d \times d$ Gram matrix of~$\Phi$ in the basis. If~$M_{g^2}$ is the matrix of~$\rho(g^2)$ we get~$\Phi (v_2, g^2 v_1) = ({}^t \mathbf{x}_2) H M_{g^2} \mathbf{x}_1$, from which we deduce that~$H M_{g^2} = \lambda ({}^tH)$, which implies that~$\det \rho(g^2)= \lambda^d$. 

Now~$\det(\Ind_\H^\G \rho)$ is an order~$2$ character of~$\G$ which is trivial on~$\H$; in fact it is given by 
\[
(\det \rho \circ \Ver ) \ \omega^{\dim \rho} 
\]
where~$\Ver : \G \longmapsto \G^\text{ab} \longmapsto \H^\text{ab}$ is the transfer and~$\omega$ is the non-trivial character of~$\G$ trivial on~$\H$. In this special case where~$\H$ has index~$2$ in~$\G$, the transfer map~$\Ver$ is trivial on~$\H$ and sends~$g$ to~$g^2$, so~$\det(\Ind_\H^\G \rho (g)) = (-\lambda)^d$. 

When~$d$ is odd, we find that~$\Ind_\H^\G \rho$ is symplectic if and only if its determinant is trivial, as desired.
\end{proof}

\begin{Remark}
When~$d$ is even,~$\Ind_\H^\G \rho$ always has trivial determinant, regardless of its parity. Determining the parity amounts to computing the scalar~$\lambda$. 
\end{Remark} 

\subsection{}\label{3.8new} 
We revert to the context of paragraphs~\ref{3.3new}--\ref{3.5new}. We want to spot the cases where~$\sigma$ and~$\sigma'$ (in the notation of paragraph~\ref{3.3new}) have the same parity, and in those cases possibly apply~\ref{prop:IndHGrho}~Proposition to determine that parity. For that we have to analyse the situation further. 
 
It is known (see~\cite[1.3~Proposition]{BHMemoir}) that~$\gamma$ extends to a representation~$\Gamma$ of~$S=S_\gamma$, and we can even impose that~$\det \Gamma$ have order a power of~$p$; then~$\Gamma$ is unique up to twist by an unramified character of~$S$, of order a power of~$p$. Since~${}^g\gamma$ is equivalent to~$\gamma^\vee$, we see that~${}^g \Gamma$ is equivalent to~$\chi \Gamma^\vee$ where~$\chi$ is an unramified character of~$S$ of order a power of~$p$. Such a~$\chi$ has a unique square root~$\eta$ with order a power of~$p$ and replacing~$\Gamma$ with~$\eta^{-1} \Gamma$, we may -- and do -- assume that~${}^g \Gamma \simeq \Gamma^\vee$. This now specifies~$\Gamma$ completely. 
 
As a representation of~$S$, the space~$\V(\gamma)$ is a tensor product~$\Gamma \otimes \delta$, where~$\delta$ is an irreducible representation of~$S$ trivial on~$\Pp_F$, well-defined up to isomorphism. Since~${}^g \V(\gamma) \simeq \V(\gamma)^\vee$ as representations of~$S$, we get that~${}^g \delta \simeq \delta^\vee$. 
 
Let~$K$ be the fixed field of~$S$, and~$\tilde K$ that of~$\tilde S$; thus the extension~$K/ \tilde K$ is quadratic, in particular tame. Writing~$d=\dim \delta$, the representation~$\delta$ is induced from a character~$\alpha$ of the unramified degree~$d$ extension~$K_d$ of~$K$ in~$\bar F$, with~$\alpha$ tamely ramified and regular under the action of~$\Gal (K_d/ K)$; this character~$\alpha$ is determined up to the action of~$\Gal (K_d/ K)$. 

In those terms, we try to see when~$\sigma$ and~$\sigma'$ have the same parity; that is, writing~$\sigma = \Ind \tau$ as in paragraph~\ref{3.3new}, where~$\tau$ is a representation of~$W_E$ with~$E/F$ unramified of degree~$t(\sigma)$, we want to know if~$\tau$ is self-dual. Note that~$t(\V(\gamma))=d$, so~$t(\sigma)= d f(K/F)$, where~$f(K/F)$ is the inertia degree of~$K/F$. The extension~$K_d/E$ is totally tamely ramified, and we can take~$\tau$ to be~$\Ind (\Gamma\otimes \alpha)$ where the induction is from~$W_{K_d}$ to~$W_E$ (and we first restrict~$\Gamma$ from~$S$ to~$W_{K_d}$). 

\subsection{}\label{3.9new} 
The following result describes when~$\sigma,\sigma'$ have the same parity.
\begin{Proposition}\label{prop:indparity}
Let~$\sigma$ be a self-dual irreducible representation of~$W_F$. Assume~$\sigma$ is not tame, and adopt the above notation. Then the following are equivalent:
\begin{enumerate}
\item\label{prop:indparity.i} $\sigma$ and~$\sigma'$ have the same parity; 
\item\label{prop:indparity.ii} $K/\tilde K$ is ramified and~$d=1$. 
\end{enumerate}
When these conditions are satisfied,~$\sigma$ and~$\sigma'$ are symplectic if and only if the character~$\alpha$ is ramified. 
\end{Proposition}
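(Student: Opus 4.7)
By paragraph~\ref{3.3new}, the equivalence (i)$\Leftrightarrow$(ii) reduces to asking when the representation $\tau$ with $\sigma\simeq\Ind_{W_E}^{W_F}\tau$ is self-dual. Since $\tau|_{\Pp_F}$ decomposes as a sum of $W_E$-conjugates of $\gamma$, self-duality of $\tau$ already requires some $w\in W_E$ with ${}^w\gamma\simeq\gamma^\vee$, i.e.\ $gW_K\cap W_E\ne\emptyset$. Equivalently, $W_{\tilde K E}=W_{\tilde K}\cap W_E$ is not contained in $W_K$, i.e.\ $K\not\subset\tilde K E$. Since $E/F$ is unramified, so is $\tilde K E/\tilde K$, and thus $K\subset\tilde K E$ precisely when $K/\tilde K$ is unramified (one checks $[\tilde K E:\tilde K]=2d\ge 2$ in the unramified case). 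So if $K/\tilde K$ is unramified, $\tau$ is not self-dual and (i) fails.

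Assume henceforth $K/\tilde K$ is ramified, and write $\tau=\Ind_{W_{K_d}}^{W_E}(\Gamma|_{W_{K_d}}\otimes\alpha)$. A degree count gives $KE=K_d$, so $W_E\cap W_K=W_{K_d}$ and the intersection $gW_K\cap W_E$ is a single coset $g_0W_{K_d}$. Since $W_{K_d}$ acts trivially on the character $\alpha$, the character ${}^w\alpha$ is constant on this coset; thus $\tau\simeq\tau^\vee$ requires ${}^{g_0}\alpha=\alpha^{-1}$ exactly. By local class field theory this reads $\alpha|_{N_{K_d/\tilde KE}(K_d^\times)}=1$, so $\alpha|_{(\tilde KE)^\times}$ has order at most $2$. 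Now $K_d/\tilde KE$ is totally ramified quadratic so $\mu_{K_d}=\mu_{\tilde KE}$, giving $\alpha|_{\mu_{K_d}}$ of order at most~$2$. But $\alpha|_{\mu_{K_d}}=\alpha|_{k_{K_d}^\times}$ must be regular under $\Gal(K_d/K)\simeq\Gal(k_{K_d}/k_K)$, cyclic of order $d$ generated by the $q_K$-power Frobenius; since $q_K$ is odd, this Frobenius acts trivially on characters of $k_{K_d}^\times$ of order at most $2$, so regularity forces $d=1$. Conversely, if $d=1$ and $K/\tilde K$ is ramified, then $\delta=\alpha$ and a degree count gives $E\subset\tilde K$, whence $W_{\tilde K}\subset W_E$ and $\tau=\Ind_{W_{\tilde K}}^{W_E}T$ with $T=\Ind_{W_K}^{W_{\tilde K}}(\Gamma\otimes\alpha)$. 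Since ${}^g\Gamma\simeq\Gamma^\vee$ by normalisation and ${}^g\alpha=\alpha^{-1}$ automatically (as $\Gal(K_d/K)$ is trivial), Proposition~\ref{prop:IndHGrho} shows $T$ is irreducible self-dual, whence so is $\tau$.

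For the parity in case (ii), $\sigma=\Ind_{W_{\tilde K}}^{W_F}T$ inherits its parity from $T$. Since $\dim(\Gamma\otimes\alpha)=\dim\gamma$ is an odd power of $p$, Proposition~\ref{prop:IndHGrho} yields $T$ symplectic if and only if $\det T$ is trivial. The transfer formula from its proof gives $\det T(g)=-\det\Gamma(g^2)\,\alpha(g^2)^{\dim\Gamma}$. From ${}^g\Gamma\simeq\Gamma^\vee$ one derives $\det\Gamma|_{N_{K/\tilde K}(K^\times)}=1$, so $\det\Gamma|_{\tilde K^\times}$ has order both dividing $2$ and dividing a power of $p$, hence is trivial; in particular $\det\Gamma(g^2)=1$, since $g^2\in W_K$ corresponds under LCFT to a uniformiser $\pi_{\tilde K}\in\tilde K^\times\subset K^\times$. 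Similarly ${}^g\alpha=\alpha^{-1}$ forces $\alpha|_{\tilde K^\times}\in\{1,\omega\}$, where $\omega$ is the ramified quadratic character of $\tilde K^\times$ associated to the extension $K/\tilde K$. Because $K/\tilde K$ is ramified, $\mu_K=\mu_{\tilde K}$, so these two alternatives are exactly "$\alpha$ unramified" and "$\alpha$ ramified". As raising to $\dim\Gamma$ is a bijection on prime-to-$p$ roots of unity, the condition $\alpha(g^2)^{\dim\Gamma}=-1$ is equivalent to $\alpha(\pi_{\tilde K})=-1$, i.e.\ to $\alpha|_{\tilde K^\times}=\omega$, i.e.\ to $\alpha$ being ramified. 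This finishes the proof.

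The main obstacle is the middle paragraph: disentangling the condition ${}^{g_0}\alpha=\alpha^{-1}$ imposed by self-duality of $\tau$ from the regularity constraint on $\alpha$ carried by the irreducibility of $\delta=\Ind\alpha$. The key observation making $d=1$ drop out so cleanly is that, as $p$ is odd, the Frobenius acts trivially on order-$\le 2$ characters of any finite residue field extension, so the only regular such characters sit on a trivial Frobenius orbit.
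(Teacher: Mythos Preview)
Your argument is essentially correct and follows the same route as the paper's proof: reduce to self-duality of~$\tau$, show this forces~$K/\tilde K$ ramified, then use~${}^{g_0}\alpha=\alpha^{-1}$ together with the regularity of~$\alpha$ to force~$d=1$, and finally apply~\ref{prop:IndHGrho}~Proposition for the parity. There is one imprecision in the parity computation: you assert that~$g^2\in W_K$ corresponds under class field theory to a uniformiser~$\pi_{\tilde K}$, but this is neither justified nor generally true (for~$K/\tilde K$ ramified, the norm group~$N_{K/\tilde K}(K^\times)$ already contains uniformisers). What you actually have is that~$g\in W_{\tilde K}\setminus W_K$ corresponds to some~$a\in\tilde K^\times\setminus N_{K/\tilde K}(K^\times)$, and~$\Ver(g)=g^2$ corresponds to the same~$a$ viewed inside~$K^\times$. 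Your argument then goes through with~$a$ in place of~$\pi_{\tilde K}$: you have already shown~$\det\Gamma|_{\tilde K^\times}$ is trivial, so~$\det\Gamma(a)=1$; and since~$\alpha|_{\tilde K^\times}\in\{1,\omega_{K/\tilde K}\}$ with~$\omega_{K/\tilde K}(a)=-1$ (precisely because~$a\notin N_{K/\tilde K}(K^\times)$), you get~$\alpha(a)=-1$ if and only if~$\alpha|_{\tilde K^\times}=\omega_{K/\tilde K}$, i.e.~$\alpha$ is ramified. The paper sidesteps this by computing~$\det T$ directly as a character of~$\tilde K^\times$ via the transfer formula~$\det T=(\det\Gamma\cdot\alpha^{\dim\gamma})|_{\tilde K^\times}\cdot\omega_{K/\tilde K}^{\dim\gamma}$, rather than evaluating at a particular element.
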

 
\begin{Remark}
When~$d=1$, we see that~$\alpha$ is a tame character of~$K^\times$ which satisfies~$^g\alpha = \alpha^{-1}$. If~$K/\tilde K$ is ramified,~$g$ acts trivially on the residue field of~$K$, and~$\alpha_{|U_K}$ has order~$1$ or~$2$. In that case, let~$\varpi$ be a uniformizer of~$K$ with~$\varpi^2 \in \tilde K$; then the condition~$^g\alpha = \alpha^{-1}$ translates into~$\alpha(-\varpi^2)=1$: either~$\alpha$ is unramified of order~$1$ or~$2$ or~$\alpha_{|\tilde K^\times}$ is the quadratic character~$\omega_{K/\tilde K}$ defining~$K$. 
\end{Remark}
 
\begin{proof}
To prove the proposition, we need to see when~$\tau = \Ind_{W_{K_d}}^{W_E} (\Gamma\otimes \alpha)$ is self-dual. The restriction of~$\Gamma\otimes \alpha$ to~$\Pp_F$ is~$\gamma$, so~$\tau$ can be self-dual only if there is~$h$ in~$W_E$ such that~$^h \gamma \simeq {}^g \gamma$ -- that is~$h \in gS$, or equivalently~$W_E \cap S \ne W_E \cap \tilde S$. Recalling that~$E$ is the maximal unramified extension of~$F$ in~$K_d$, we see that the fixed field of~$W_E \cap S$ is~$K_d$; if~$K/\tilde K$ were unramified, the fixed field of~$W_E \cap \tilde S$ would also be~$K_d$, so that~$\tau$ could not be self-dual. 

Thus, if~$\tau$ is self-dual then~$K/\tilde K$ is ramified and we take~$g$ in~$W_E \cap \tilde S$. Reasoning as in paragraph~\ref{3.5new} and using~\ref{prop:IndHGrho}~Proposition, we see that~$\tau$ is self-dual if and only if~$\Gamma\otimes \alpha$ induces to a self-dual representation of~$W_{\tilde K_d}$, where~$\tilde K_d$ is the fixed field of~$g$ in~$K_d$ (so that~$K_d/ \tilde K_d$ is quadratic ramified); in particular we then have~$^g(\Gamma\otimes \alpha) \simeq ( \Gamma\otimes \alpha )^\vee$. Since~$^g \Gamma \simeq \Gamma ^\vee$ by construction, this implies~$^g \alpha = \alpha^{-1}$ and since~$K_d/ \tilde K_d$ is ramified,~$g$ acts trivially on the residue field of~$K_d$ so~$\alpha_{|U_{K_d}}$ has order 1 or 2 and regularity with respect to~$\Gal(K/ K_d)$ implies~$d=1$. Thus if~$\tau$ is self-dual then~$d=1$, which proves~\ref{prop:indparity.i}~$\Rightarrow$~\ref{prop:indparity.ii}. 
 
Conversely if~\ref{prop:indparity.ii} is satisfied then~$\tau$ is self-dual if and only if~$^g \alpha = \alpha^{-1}$ by the above analysis, which gives \ref{prop:indparity.ii}~$\Rightarrow$~\ref{prop:indparity.i}.
 
Assume finally that conditions~\ref{prop:indparity.i} and~\ref{prop:indparity.ii} are satisfied. Using again~\ref{prop:IndHGrho}~Proposition, we have to check whether the determinant of~$\Ind_{W_K}^{W_{\tilde K}} (\Gamma\otimes \alpha)$ -- which by self-duality has order~$1$ or~$2$ -- is trivial. Seeing that determinant as a character of~$\tilde K^\times$ (via class field theory), it is equal to 
\[
\nu = \det(\Gamma\otimes \alpha)_{| \tilde K^\times} (\omega_{K/\tilde K})^{\dim \gamma}.
\]
But~$\det \Gamma$ has order a power of~$p$ and~$p$ is odd, and~$\alpha$ has order at most~$4$ (cf. the remark above) so we find~$\nu = (\alpha_{|\tilde K^\times}\omega_{K/\tilde K})^{\dim \gamma}$. If~$\alpha$ is unramified then~$\nu = \omega_{K/\tilde K}$ (since~${\dim \gamma}$ is odd) is non-trivial; if~$\alpha$ is ramified then~$\alpha_{|\tilde K^\times} = \omega_{K/\tilde K}$ by the remark and~$\nu$ is trivial. The final claim of the proposition now follows from~\ref{prop:IndHGrho}~Proposition.
\end{proof} 
 
\subsection{}\label{3.10new} 
Now we interpret the conditions of~\ref{prop:IndHGrho}~Proposition in terms of the cuspidal representation~$\rho$ of~$\GL_n(F)$, with~$n=\dim\sigma$, which corresponds to~$\sigma$ under the Langlands correspondence. To describe this representation~$\rho$ we will use the machinery of the construction of cuspidal representations as in~\S\ref{JBN1new}.
 
Assume~$\sigma$ is not tame, i.e.~$\rho$ is not of depth zero. Then~$\rho$ contains a simple character~$\tilde\theta$, belonging to a set of simple characters built using an element~$\beta \in \GL_n(F)$ which generates a field~$F[\beta]$. We have~$n = d [F[\beta]:F]$, so that~$d$ is determined by~$\rho$. Moreover the extension~$K/F$ which appears above in the discussion on the construction of~$\sigma$ is isomorphic to the maximal tame subextension~$L/F$ of~$F[\beta]/F$ (see~\cite[Tame Parameter Theorem]{BHMemoir}). 
 
When~$\rho$ -- equivalently~$\sigma$ -- is self-dual, we can choose~$\beta$ such that the self duality comes from an automorphism~$x \mapsto \bar x$ of~$F[\beta]$, sending~$\beta$ to~$-\beta$, and~$\tilde\theta$ to~$\tilde\theta^{-1}$ (see~\cite[Theorem~1]{BlSp2N}). That automorphism induces an order~$2$ automorphism of~$L$; let~$\tilde L$ be its fixed field. 
 
\begin{Proposition}\label{prop:sameext}
The extensions~$K/\tilde K$ and~$L/ \tilde L$ are isomorphic. 
\end{Proposition}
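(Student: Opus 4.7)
The plan is to apply the Tame Parameter Theorem of Bushnell--Henniart~\cite{BHMemoir} in an equivariant form, identifying not just $K \simeq L$ as $F$-algebras but also matching their distinguished involutions. Recall that $L$ is the maximal tame subextension of $F[\beta]$, which depends only on the endo-class of $\tilde\theta$, while $K$ is the fixed field of the stabilizer $S_\gamma$, depending only on the $W_F$-orbit of $\gamma$. The Tame Parameter Theorem already supplies a canonical $F$-isomorphism $L \simeq K$, mediated by the local Langlands correspondence for $\GL_n$. What must be added is the compatibility of this isomorphism with the involutions on each side.

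Next I would identify those involutions as manifestations of the same abstract operation, namely the passage to the contragredient. On the automorphic side, $\beta$ is chosen so that $\bar\beta = -\beta$, and then $\tilde\theta \mapsto \tilde\theta^{-1}$ is the simple character contained in $\rho^\vee$; thus $x \mapsto \bar x$ is the involution on $L$ attached to $\rho \leftrightarrow \rho^\vee$. On the Galois side, $g$ was chosen precisely so that~$^g\gamma \simeq \gamma^\vee$, where $\gamma^\vee$ is the wild parameter of $\sigma^\vee$; thus the action of $g$ on $K$ is the involution attached to $\sigma \leftrightarrow \sigma^\vee$. Since the local Langlands correspondence intertwines contragredient on both sides, and the tame parameter is a functorial invariant, the isomorphism $L \simeq K$ must intertwine the two involutions. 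Consequently the fixed subfields correspond, $\tilde L \leftrightarrow \tilde K$, and one obtains the desired isomorphism of quadratic extensions $L/\tilde L \simeq K/\tilde K$.

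The principal obstacle is the verification of this duality-equivariance of the Tame Parameter Theorem, which is not stated as such in~\cite{BHMemoir}. I would argue it directly from the explicit constructions: on the~$\GL_n$ side, contragredient takes the simple character attached to $[\Lambda,-,0,\beta]$ to that attached to $[\Lambda,-,0,-\beta]$, and this operation on endo-classes restricts to the automorphism of $L$ given by $\beta \mapsto -\beta$; on the Galois side, contragredient replaces~$\gamma$ by~$\gamma^\vee$ and hence acts on $K$ via conjugation by~$g$. Both are order-two $F$-automorphisms of tame extensions of the same degree~$[L:F]=[K:F]$, and since the Tame Parameter Theorem determines them by the representations~$\rho$, respectively~$\sigma$, which correspond under Langlands, they must be identified by any isomorphism $L \simeq K$ coming from the correspondence.
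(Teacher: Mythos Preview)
Your overall strategy coincides with the paper's: both invoke the Tame Parameter Theorem to obtain an $F$-isomorphism $K\simeq L$, and both rely on the compatibility of the Langlands correspondence with contragredients. The difference lies in how the fixed subfields are matched. You attempt to show directly that the isomorphism $K\simeq L$ intertwines the two involutions, and you correctly flag this ``duality-equivariance'' as the principal obstacle; but your justification---that the involutions ``must be identified by any isomorphism $L\simeq K$ coming from the correspondence''---is not an argument. There may be several $F$-isomorphisms $K\to L$, and a tame extension can carry non-conjugate involutions, so knowing that both involutions arise from passage to the contragredient does not by itself pin down a matching. Something further is needed to tie a \emph{specific} $\iota$ to the duality operation on each side.

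The paper sidesteps this by never comparing involutions directly. Instead it characterises $\tilde K$ intrinsically as the \emph{maximal} intermediate field $K'$ between $F$ and $K$ for which $\sigma_{K'}=\Ind_{W_K}^{W_{K'}}\mathcal V(\gamma)$ is self-dual. It then uses the compatibility of tame lifting of simple characters with automorphic induction (from~\cite{BH,BH1,BHMemoir}): for each intermediate $K'$, setting $L'=\iota(K')$, the representation $\sigma_{K'}$ corresponds under Langlands over $L'$ to a cuspidal $\rho_{L'}$ containing the $L'/F$-lift $\tilde\theta_{L'}$ of $\tilde\theta$. Since Langlands commutes with contragredients, $\sigma_{K'}$ is self-dual iff $\rho_{L'}$ is, iff $\tilde\theta_{L'}$ is conjugate to its inverse. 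Hence $\iota(\tilde K)$ is the maximal $L'$ with $\tilde\theta_{L'}$ self-dual, and that field is $\tilde L$. This route trades your unproved equivariance statement for a direct appeal to results already in the literature; if you want to salvage your version, the missing ingredient is precisely this tame-lifting/automorphic-induction compatibility, applied to each intermediate field rather than asserted as a global equivariance.
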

 
Thus condition~\ref{prop:indparity.ii} in~\ref{prop:indparity}~Proposition can be translated in terms of~$\rho$. See below (paragraph~\ref{3.11new}) for a translation of the last assertion of \textit{loc.~cit.}. 
 
\begin{proof}
The proof relies on the compatibility of tame lifting of simple characters with the induction process for Weil group representations~\cite{BH,BH1,BHMemoir}. Choose an isomorphism~$\iota$ of~$K/F$ onto~$L/F$. 
 
The representation~$\sigma_K$ of~$W_K$ on~$\V(\sigma)$ corresponds to a (cuspidal) representation~$\rho_L$ of~$\GL_m(L)$, where~$m=d [F[\beta]:L]$; the simple character~$\tilde\theta_L$ appearing in~$\rho_L$ is an~$L/F$-lift of~$\tilde\theta$ and~$L/F$ is the maximal tame extension such that~$\tilde\theta$ has a lift to~$\GL_{[F[\beta]:L]}(L)$. 
 
If~$K'$ is intermediate between~$F$ and~$K$, and~$L' = \iota(K')$, then~$\sigma_{K'} = \Ind_{W_K}^{W_{K'}} \V(\sigma)$ corresponds to a (cuspidal) representation~$\rho_{L'}$ of~$\GL_{m'}(L')$, with~$m'=d [F[\beta]:L']$, and the simple character~$\tilde\theta_{L'}$ appearing in~$\rho_{L'}$ is an~$L'/F$-lift of~$\tilde\theta$ and lifts to~$\tilde\theta_L$ in~$L/L'$. But~$\tilde K$ is the maximal intermediate subfield~$K'$ such that~$\sigma_{K'}$ is self-dual. Because the Langlands correspondence is compatible with taking contragredients, the field~$\iota(\tilde K)$ is the maximal field~$L'$ intermediate between~$F$ and~$L$ such that~$\tilde\theta_{L'}$ is self-dual (i.e. conjugate to~$\tilde\theta_{L'}^{-1}$ in~$\GL_{[F[\beta]:L']}(L')$). Thus~$\iota(\tilde K) = \tilde L$. 
\end{proof}
 
\subsection{}\label{3.11new} 
Now assume that~$d=1$ and~$L/ \tilde L$ (or equivalently~$K/\tilde K$) is ramified. We want to express the condition that~$\alpha$ is ramified in~\ref{prop:indparity}~Proposition in terms of~$\rho$. For that we have to review a little bit the construction of~$\rho$ from~$\tilde\theta$ from Section~\ref{JBN1new}, whose notation we use.  
 
We also continue with the notation for~$\rho$ introduced in the previous paragraph. Recall that, since~$d=1$, we have~$n = [F[\beta]:F]$. The simple character~$\tilde\theta$ is a character of~$\tilde H^1$ and we have the open subgroups~$\tilde J^1$,~$\tilde J$ of~$\GL_n(F)$. We write~$\tilde\eta$ for the unique irreducible representation of~$\tilde J^1$ containing~$\tilde\theta$. Then~$\tilde J=U_{F[\beta]}\tilde J^1$ and, by the Types Theorem~\cite[7.6 Theorem]{BHMemoir}, there is a unique beta-extension~$\tilde\kappa$ such that~$\tr\tilde\kappa$ is constant on the roots of unity of~$F[\beta]$ of order prime to~$p$ which are regular for the action of~$\Gal(K_{nr}/F)$, where~$K_{nr}$ is the maximal unramified extension of~$F$ in~$K$. Moreover, the same result gives that~$\rho$ contains the representation~$\tilde\kappa\otimes\omega \alpha$ of~$J$, where~$\alpha$ is seen as a character of~$J/J^1=\simeq U_{F[\beta]}/U^1_{F[\beta]}\simeq U_K/U^1_K$ and~$\omega$ is the order 2 character of~$U_{F[\beta]}$. 
 
Thus we conclude that~$\rho$ is symplectic when~$\rho$ contains~$\tilde\kappa$, and is orthogonal when~$\rho$ contains~$\omega \tilde\kappa$. 

\subsection{}\label{3.12new} 
We have discussed at length above the ambiguity between~$\sigma$ and~$\sigma'$ inherent to our method -- of course when~$\sigma$ and~$\sigma'$ have different parities it is the orthogonal one that features. 

Let us now briefly mention a few favourable circumstances when our methods do allow us to determine completely the parameter of a cuspidal representation~$\pi$ of~$\Sp_{2N}(F)$. 

Since the parameter~$\phi$ of~$\pi$ is orthogonal of dimension~$2N+1$, one irreducible component must have odd dimension. But in our case where~$p$ is odd, the only irreducible orthogonal representations of~$W_F$ with odd dimension are the four quadratic characters of~$W_F$. Thus at least one of them, say~$\omega$, has to occur in the parameter, and if the Jordan block it belongs to is~$(\omega, m)$ then~$m$ has to be congruent to~$1\pmod 4$, to yield an odd-dimensional contribution to~$\phi$; the contribution to the determinant is then~$\omega$. We then see that if we know all other components, then we can decide between~$\omega$ and~$\omega'$ by taking into account the condition~$\det \phi = 1$. To know all the other components~$\sigma$, it is necessary that for each of them,~$\sigma$ and~$\sigma'$ have different parities. We conclude that it will be rather rare that we determine~$\phi$ without ambiguity. 

Let us give just a few examples in low dimension. See~\cite{LS} for a discussion of depth zero cases. 

\underline{$N=1$,~$\SL_2(F)$} 

The parameter is either~$\rho \oplus \omega$ with~$\rho$ irreducible orthogonal of dimension 2 and~$\omega = \det \rho$, or~$\omega_1 \oplus \omega_2 \oplus \omega_3$ where the~$\omega_i$'s are the non-trivial quadratic characters of~$W_F$. In terms of homomorphisms~$W_F \longrightarrow \SO_3(\mathbb C) \simeq \PGL_2(\mathbb C)$, the second case corresponds to a triply imprimitive representation of~$W_F$, the first case to a simply imprimitive one~\cite{BHbook}. In the first case, our methods allow us to determine~$\rho$ only if it is induced from the quadratic unramified extension of~$F$ (i.e., in fact, when~$\omega$ is unramified of order~$2$). 

\underline{$N=2$,~$\Sp_4(F)$} 

There has to be a quadratic character~$\omega$ of~$W_F$ occurring with Jordan block~$(\omega, 1)$ only. If another quadratic character~$\eta$ occurs, the Jordan block can be~$(\eta, 1)$ or~$(\eta, 3)$. In the latter case~$\phi= \omega \oplus \eta \oplus \eta\otimes \St_3$ and the determinant condition implies that~$\omega$ is trivial and consequently that~$\eta$ is not trivial. If our computation shows that both~$1$ and the non-trivial quadratic unramified character~$\omega_{nr}$ occur, then the parameter is necessarily~$\phi= 1 \oplus \omega_{nr} \oplus \omega_{nr}\otimes \St_3$; if, on the contrary, our method gives that a ramified quadratic character~$\eta$ occurs, then we cannot distinguish between~$\eta$ and~$\eta' = \eta \omega_{nr}$. 

Let us look at the case where two distinct characters~$\omega$,~$\eta$ occur with Jordan blocks~$(\omega,1)$ and~$(\eta, 1)$ only. Then a third character,~$\nu$ say, must also occur and~$\phi= \omega \oplus \eta \oplus \nu \oplus \rho$ where~$\rho$ is irreducible orthogonal of dimension two. The determinant of~$\rho$ is the quadratic character~$\omega_{E/F}$ defining the extension from which~$\rho$ is induced so that the determinant condition on~$\phi$ is~$\omega \eta \nu \omega_{E/F} =1$. 

When~$E/F$ is unramified, there is no ambiguity in~$\rho$ in our computation, and the parameter is 
\[
\phi= 1 \oplus \mu \oplus \mu' \oplus \rho
\]
where~$\mu$,~$\mu'$ are the two ramified quadratic characters of~$W_F$. 

When~$E/F$ is ramified, the parameter could be 
\[
\phi= 1 \oplus \omega_{nr} \oplus \omega_{nr}\omega_{E/F} \oplus \rho, 
\qquad \text{ or } \qquad
\phi= 1 \oplus \omega_{nr} \oplus \omega_{nr}\omega_{E/F} \oplus \rho'
\]
and we cannot resolve the ambiguity between~$\rho$ and~$\rho'$. 

Finally if there is only one quadratic character~$\omega$ of~$W_F$ occurring in~$\phi$, we can compute~$\omega$, and thus determine~$\phi$ completely, only if the other components (necessarily even-dimensional) offer no ambiguity. 

We hope to come back to the case of~$\Sp_4(F)$ in a sequel to this paper, where a refinement of our methods will allow a more complete determination of~$\phi$.

\section{Langlands correspondence and ramification}\label{JBN7new}

In this final section we interpret our results on the endoscopic transfer map in terms of the Langlands correspondence for~$\SpFV$. In particular, we prove a Ramification Theorem for the symplectic group~$\SpFV$, giving a bijection between self-dual endo-classes and self-dual orbits of irreducible representations of the wild inertia group~$\Pp_F$ which is simultaneously compatible (in a suitable sense) with the Langlands correspondence for symplectic groups over~$F$ in all dimensions.

\subsection{}\label{7.1new}
We first recall the Ramification Theorem for general linear groups, from~\cite[8.2~Theorem]{BH2} (see also~\cite[6.3~Theorem]{BHMemoir}). Recall that~$\CE(F)$ denotes the set of endo-classes over~$F$. We write~$W_F\backslash\Irr(\Pp_F)$ for the set of~$W_F$-orbits of irreducible representations of~$\Pp_F$. By abuse of notation, we will identify such an orbit with the direct sum of the inequivalent irreducible representations in the orbit; thus, for~$\gamma$ an irreducible representation of~$\Pp_F$ with stabiliser~$S$, we identify its~$W_F$-orbit~$[\gamma]$ with~$\bigoplus_{W_F/S}{}^g\gamma$. In particular, we can then talk of the dimension of an orbit.

Given an irreducible representation of~$W_F$, by Mackey theory its restriction to~$\Pp_F$ is a multiple of a single~$W_F$-orbit of irreducible representations, so we get a natural map~$\Irr(W_F)\to W_F\backslash\Irr(\Pp_F)$, which is surjective. 

\begin{Theorem}\label{thm:ramGL}
There is a unique bijection~$\CE(F)\to W_F\backslash\Irr(\Pp_F)$,~$\bs\Theta\mapsto[\gamma(\bs\Theta)]$, which is compatible with the local Langlands correspondence:
\[
\xymatrix {\displaystyle\smash{\bigcup_{n\ge 1}}\Cusp(\GL_n(F)) \ar^{\qquad\sim}[r]\ar@{->>}[d] & \Irr(W_F) \ar@{->>}[d] \\
\CE(F) \ar^{\sim\quad}[r]& \mathcal W_F\backslash\Irr(\Pp_F)
}
\]
Moreover we have~$\deg(\bs\Theta)=\dim[\gamma(\bs\Theta)]$.
\end{Theorem}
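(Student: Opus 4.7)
The plan is to separate the theorem into three tasks: uniqueness of the claimed bijection, its existence, and the degree/dimension match. I would treat them roughly in that order.

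For uniqueness, I would first observe that the left vertical arrow $\bigsqcup_n \Cusp(\GL_n(F))\twoheadrightarrow \CE(F)$ is surjective: by construction, every endo-class is realised by a simple character $\tilde\theta$ on some $\tilde H^1(\beta,\L)$, and Bushnell--Kutzko theory extends $\tilde\theta$ to a maximal simple type inducing to a cuspidal representation of the appropriate $\GL_n(F)$ (allowing the trivial endo-class and depth-zero cuspidals for $\bs\Theta_0^F$). Together with the bijectivity of the Langlands correspondence and the (obvious) surjectivity of $\Irr(W_F)\to W_F\backslash\Irr(\Pp_F)$, this forces the horizontal bottom arrow to be determined, if it exists, by the commutativity of the square.

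For existence, the content is to show that two cuspidal representations $\rho_1,\rho_2$ of (possibly different) general linear groups over $F$ having the same endo-class have Langlands parameters $\sigma_1,\sigma_2$ whose restrictions to $\Pp_F$ lie in the same $W_F$-orbit. My strategy is to reduce to a totally wild situation via tame lifting. An endo-class $\bs\Theta$ carries an intrinsic tame parameter field $T=T_{\bs\Theta}$ (the maximal tame subextension of $F[\beta]/F$ for any realising simple stratum; independence is proved in~\cite{BHMemoir}). By the Tame Parameter Theorem of~\cite{BHMemoir}, any cuspidal $\rho$ with endo-class $\bs\Theta$ tame-lifts to a cuspidal representation $\rho_T$ of some $\GL_m(T)$ whose endo-class $\bs\Theta_T$ is totally wildly ramified, and on the Galois side $\sigma(\rho)=\Ind_{W_T}^{W_F}\sigma(\rho_T)$ with $\sigma(\rho_T)|_{\Pp_F}$ remaining irreducible. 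The compatibility of the LLC with automorphic induction across tame extensions (which is the essentially tame correspondence of Bushnell--Henniart) then reduces the problem to establishing the bijection for totally wildly ramified endo-classes over $T$, matched with irreducible representations of $\Pp_T=\Pp_F$ that are stable under $W_T$.

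In the totally wild case, one attaches to $\bs\Theta_T$ an irreducible representation $\gamma(\bs\Theta_T)$ of $\Pp_T$ as follows: a simple character realising $\bs\Theta_T$ is built from a simple stratum $[\L,-,0,\beta]$ with $T[\beta]/T$ totally wild, and the interior structure of the character lets one read off a single Heisenberg/Weil-representation-theoretic datum over $\Pp_T$; this is the construction carried out in~\cite[\S\S2--4]{BHMemoir}. One then verifies, using the explicit description of $\sigma(\rho_T)|_{\Pp_T}$ given by the essentially tame LLC and the explicit form of its "rectifier", that $\sigma(\rho_T)|_{\Pp_T}$ is a multiple of $\gamma(\bs\Theta_T)$, and is independent of the particular $\rho_T$ chosen in the fibre over $\bs\Theta_T$. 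Finally, the degree/dimension identity $\deg(\bs\Theta)=\dim[\gamma(\bs\Theta)]$ follows by induction across the tame part ($[T:F]$) and a dimension count in the wild part, where $\dim\gamma(\bs\Theta_T)=[T[\beta]:T]=\deg(\bs\Theta_T)$ is built into the construction.

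I expect the main obstacle to be the totally wild step: matching the interior algebra datum of a simple character to the actual restriction of the Galois parameter to $\Pp_F$ is not a formal manipulation, and is precisely the hard combinatorial and cohomological heart of~\cite{BHMemoir,BH2}. Once that identification is in hand, the tame-induction reduction and the uniqueness/dimension bookkeeping are comparatively routine.
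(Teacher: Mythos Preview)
The paper does not give its own proof of this theorem: it is quoted verbatim as a known result, with the attribution ``from~\cite[8.2~Theorem]{BH2} (see also~\cite[6.3~Theorem]{BHMemoir})'' and no further argument. So there is no in-paper proof to compare your proposal against.

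That said, your sketch is broadly in the spirit of the Bushnell--Henniart programme you cite: the reduction via tame lifting to a totally wild situation, and the identification over the tame parameter field of the wild part of the Langlands parameter with the datum encoded by the simple character, is exactly the architecture of~\cite{BH2,BHMemoir}. Your uniqueness paragraph is sound and formal. Your existence paragraph correctly isolates the hard step (the totally wild identification) and honestly flags it as the nontrivial input; just be aware that what you describe as ``reading off a single Heisenberg/Weil-representation-theoretic datum over~$\Pp_T$'' is not a single clean construction but the cumulative output of several chapters of~\cite{BHMemoir}, and the compatibility of the LLC with tame automorphic induction that you invoke is itself one of the main theorems of that work rather than a background fact. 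The degree/dimension identity is indeed a bookkeeping consequence once the construction is in place.
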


\subsection{}\label{7.2new}
Now we consider how this bijection behaves with respect to duality. Recall that we write~$\CE^{\mathsf{sd}}(F)$ for the set of self-dual endo-classes; that is, those endo-classes~$\bs\Theta$ for which there is a self-dual simple character~$\tilde\theta$ with endo-class~$\bs\Theta$. If the endo-class is non-trivial then~$\tilde\theta$ is associated to a skew simple stratum~$[\Lambda,-,0,\beta]$ and the associated field~$E=F[\beta]$ has degree~$n$ over~$F$ and is equipped with a Galois involution with fixed field~$E_\so$. If~$\bs\Theta$ is the trivial endo-class then we have~$E=E_\so=F$.

It will be useful to have the following result, which guarantees the existence of self-dual cuspidal representations of general linear groups with given (self-dual) endo-class.

\begin{Lemma}\label{lem:existSD}
Let~$\bs\Theta$ be a self-dual endo-class and~$E/E_\so$ as above. Let~$m$ be an integer which is
\begin{enumerate}
\item odd, if~$E/E_\so$ is unramified quadratic,
\item $1$ or even, if~$E/E_\so$ is ramified quadratic,
\item even, if~$E=F$,
\end{enumerate}
and put~$n=m\deg(\bs\Theta)$. Then there are (at least) two inequivalent orthogonal self-dual cuspidal representations of~$\GL_n(F)$ with endo-class~$\bs\Theta$, and two inequivalent symplectic self-dual cuspidal representations of~$\GL_n(F)$ with endo-class~$\bs\Theta$.
\end{Lemma}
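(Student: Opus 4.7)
My plan is to prove the lemma by producing self-dual cuspidal representations explicitly through the maximal simple type construction and then distinguishing their parities using the analysis of Section~\ref{JBN3new}.

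First, I would fix a skew simple stratum $[\Lambda_W,-,0,\beta_W]$ in $\End_F(W)$ with $\dim_F W=n$ whose associated self-dual simple character $\tilde\theta_W$ has endo-class $\bs\Theta$, with $\beta_W=0$ in case~(iii). Such a stratum exists precisely because the three numerical conditions on~$m$ match the possibilities for the signature of a nondegenerate $\varepsilon$-hermitian form on an $E$-vector space of dimension $m$ fixed under the involution on~$E$ (with $E=E_\so=F$ in~(iii), forcing $m$ even for the form to exist). Take the $p$-primary beta-extension $\underline{\tilde\kappa}_W$, which is self-dual by \ref{powerofp}~Lemma~\ref{powerofp.ii}. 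By \ref{thm:cusptypeGL}~Theorem together with the self-duality of cuspidal representations (see~\cite[Theorem~1]{BlSp2N}), every self-dual cuspidal of $\GL_n(F)$ with endo-class $\bs\Theta$ and containing $\tilde\theta_W$ is obtained, up to equivalence, by compactly inducing an extension to the $\GL_n(F)$-normaliser of $\tilde J_W$ of a representation $\tilde\lambda_W=\underline{\tilde\kappa}_W\otimes\tilde\tau_W$, where $\tilde\tau_W$ is an irreducible cuspidal representation of $\tilde J_W/\tilde J_W^1\simeq\GL_m(k_E)$ self-dual with respect to the natural involution induced by duality.

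Next, I would count self-dual cuspidal $\tilde\tau_W$ in each case by Green's parametrisation. In~(i), the involution on $\GL_m(k_E)$ is the $k_E/k_{E_\so}$-unitary one and self-dual cuspidals correspond to $k_E/k_{E_\so}$-self-dual irreducible monic polynomials of degree~$m$; for $m$ odd these are abundant as soon as $|k_E|$ exceeds a small bound, and in fact there are infinitely many choices of~$\tilde\tau_W$ as $q_E\to\infty$ (and we only need to exhibit sufficiently many). In~(ii), the residue fields coincide, $k_E=k_{E_\so}$, and self-dual cuspidals correspond to monic irreducible polynomials $Q$ with $Q(X)=(Q(0))^{-1}X^{\deg Q}Q(1/X)$; for $m=1$ these are $X\pm 1$, and for $m$ even there are many. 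In~(iii), we are in depth zero and the self-dual cuspidals of $\GL_m(F)$ come directly from self-dual cuspidals of $\GL_m(k_F)$, which for $m$ even exist in abundance. Within each inertial class determined by such a choice of $\tilde\tau_W$, the two self-dual cuspidals are exchanged by an unramified quadratic twist, and they may or may not share their parity.

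Finally, I would identify parities and compare across inertial classes using \ref{prop:indparity}~Proposition and paragraph~\ref{3.11new}. For a cuspidal $\rho$ constructed from $\tilde\tau_W$, the associated Weil parameter has $d=\dim_E W=m$ and tame parameter field $K\simeq L$ isomorphic to the maximal tame subextension of~$E/F$; by \ref{prop:sameext}~Proposition, the ``same parity'' condition of~\ref{prop:indparity}~Proposition holds precisely when $m=1$ and $E/E_\so$ is ramified quadratic, i.e.\ only in the $m=1$ subcase of~(ii). In all other configurations, each inertial class supplies one orthogonal and one symplectic self-dual representation, so having more than one inertial class of self-dual $\tilde\tau_W$ immediately yields at least two of each parity. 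In the exceptional $m=1$ ramified case, paragraph~\ref{3.11new} identifies the parity in terms of whether $\rho$ contains $\underline{\tilde\kappa}_W$ or $\omega\otimes\underline{\tilde\kappa}_W$ for the order-$2$ character $\omega$ of $U_E/U_E^1$; varying the choice of $\tilde\tau_W\in\{X-1,X+1\}$ and of the self-dual extension to the normaliser supplies the required count of two of each parity.

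The main obstacle is the exceptional $m=1$ case in~(ii): here the parities within a single inertial class coincide, so the argument has to manufacture \emph{two} distinct inertial classes of each parity, which requires the explicit identification of the parity in terms of the ramification of the tame character $\alpha$ as in \ref{prop:indparity}~Proposition and its translation to types via paragraph~\ref{3.11new}. Handling this case carefully, and ensuring that the combinatorial data indeed produces representations of each parity (rather than only one), is the delicate part of the argument.
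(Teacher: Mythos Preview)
Your overall strategy—build self-dual maximal simple types on a skew stratum, then invoke the parity analysis of~\ref{prop:indparity}~Proposition, \ref{prop:sameext}~Proposition, and paragraph~\ref{3.11new}—is the same as the paper's. But there are two genuine gaps in the execution.

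First, your counting of self-dual~$\tilde\tau_W$ is a non-sequitur. The residue field~$k_E$ is determined by the endo-class~$\bs\Theta$ and is \emph{fixed}; the phrase ``abundant as soon as~$|k_E|$ exceeds a small bound'' and ``infinitely many as~$q_E\to\infty$'' is irrelevant, since you do not get to vary~$q_E$. In the non-exceptional cases your argument needs at least two inequivalent self-dual~$\tilde\tau_W$ for the given~$k_E$, and you have not verified this. The paper avoids the issue entirely: it fixes a \emph{single} self-dual cuspidal~$\tilde\tau$ of~$\GL_m(k_E)$ (existence cited from~\cite{Adler} and~\cite{Ka}) and then twists~$\tilde\tau$ by the four quadratic characters~$\omega$ of~$E^\times$, obtaining four self-dual cuspidals of~$\GL_n(F)$ falling into two inertial classes.

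Second, and more seriously, you never construct the self-dual extension to the normalizer. Citing the classification (``every self-dual cuspidal\ldots is obtained by compactly inducing an extension\ldots'') describes the \emph{shape} of a self-dual cuspidal but does not produce one: given a self-dual~$\tilde\tau_W$, an arbitrary extension of~$\underline{\tilde\kappa}_W\otimes\tilde\tau_W$ to~$E^\times\tilde J_W$ will in general not induce to a self-dual representation (the extensions differ by unramified twists, and only two of them are self-dual). The paper handles this explicitly: it first extends the~$p$-primary beta-extension to a self-dual~$\underline{\tilde\CK}_m$ on~$E^\times\tilde J_m$ via a square-root argument on unramified characters of~$p$-power order, and then extends~$\tilde\tau\omega$ to a self-dual~$\tilde\CT_\omega$ by the prescription~$\tilde\CT_\omega(\varpi_E)=\omega(\varpi_E)\Id$ for a uniformizer~$\varpi_E$ with~$\overline\varpi_E=(-1)^{e(E/E_\so)}\varpi_E$. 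Without this step your proposal does not actually exhibit any self-dual cuspidal representation, so the argument as written does not establish the lemma.
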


Note that, in the case that~$E=F$ (so~$\bs\Theta$ is trivial) and~$m=1$, there are four inequivalent self-dual (cuspidal) representations of~$\GL_1(F)$ with endo-class~$\bs\Theta$ but all four are orthogonal; they are the four quadratic characters.

\begin{proof}
Suppose first that~$\bs\Theta$ is non-trivial. Let~$\til\theta$ be a self-dual simple character with endo-class~$\bs\Theta$, as above, with associated skew simple stratum~$[\Lambda,-,0,\beta]$ and~$E=F[\beta]$. Then any transfer (in the sense of simple characters) of~$\tilde\theta$ is also self-dual, by~\cite[Corollary~2.13]{S4}.

Let~$m$ be an integer as in the hypotheses of the lemma and let~$f$ be a non-degenerate skew-hermitian form on an~$m$-dimensional~$E$-vector space~$V$ such that the associated unitary group (a group over~$E_\so$) is quasi-split. We write~$\oEo$ for the ring of integers of~$E_\so$ and~$\pE^\so$ for its unique maximal ideal, with~$k_E^\so$ the residue field. Fix~$\lambda_\so$ an~$F$-linear form on~$E_\so$ such that~$\{e\in E_\so\mid\lambda_\so(e\oEo)\subseteq\pF\}=\pE^\so$, and consider the form~$h=\lambda_\so\circ\tr_{E/E_\so}\circ f$ on~$V$. Thinking of~$V$ as an~$n$-dimensional~$F$-vector space, this is a nondegenerate alternating form. We take the transfer~$\tilde\theta_m$ of~$\tilde\theta$ to the unique (up to conjugacy) self-dual~$\oE$-lattice chain~$\Lambda_m$ on~$V$ such that~$\Lambda_m(0)\ne\Lambda_m(1)$. Thus~$\tilde\theta$ is a self-dual simple character of endo-class~$\bs\Theta$. 

Denote by~$\tilde{\underline\kappa}_m$ the unique~$p$-primary extension of~$\tilde\theta_m$, and denote by~$\tilde J_m$ the group on which it lives; then, by uniqueness,~$\tilde{\underline\kappa}_m$ is self-dual (that is, invariant under the involution~$\sigma$ defining the symplectic group~$\Sp_F(V)$). Now~$\tilde{\underline\kappa}_m$ extends to a representation~$\tilde{\underline\CK}_m$ of~$E^\times\tilde J_m$ with determinant a power of~$p$ and any two such extensions differ by an unramified character of order a power of~$p$. In particular,~$\tilde{\underline\CK}_m\circ\sigma$ is another such extension so has the form~$\tilde{\underline\CK}_m\otimes\chi$, for~$\chi$ unramified of order a power of~$p$. Since~$p$ is odd,~$\chi$ has a unique square root~$\chi'$ of order a power of~$p$, and then we can replace~$\tilde{\underline\CK}_m$ by~$\tilde{\underline\CK}_m\otimes\chi'$, which is self-dual.

Now we consider the quotient~$\tilde J_m/\tilde J^1_m \simeq \tilde P(\Lambda_{m,\oE})/\tilde P^1(\Lambda_{m,\oE})\simeq \GL_m(k_E)$. The involution~$\sigma$ also acts here, with fixed points a unitary group if~$E/E_\so$ is unramified and a symplectic group if~$E/E_\so$ is ramified (in the latter case, it is symplectic rather than orthogonal because~$\Lambda_m(0)\ne\Lambda_m(1)$); the action of~$\sigma$ is conjugate to the map transpose-inverse-$\Gal(k_E/k_E^\so)$-conjugate. The conditions on~$m$ are then precisely those required for the existence of a~$\Gal(k_E/k_E^\so)$-self-dual cuspidal representation~$\tilde\tau$ of~$\GL_m(k_E)$ (that is, such that the Galois conjugate of~$\tilde\tau$ is equivalent to~$\tilde\tau^\vee$) -- see~\cite[Theorem~7.1]{Adler} in the case~$k_E=k_E^\so$ and~\cite[Corollary~5.8]{Ka} in the case~$k_E\ne k_E^\so$. 

Let~$\omega$ be a quadratic character of~$E$, necessarily tame since~$p$ is odd. We also write~$\omega$ for the character of~$k_E^\times$ induced by restricting~$\omega$; then the representation~$\tilde\tau\omega$ is also~$\Gal(k_E/k_E^\so)$-self-dual. We inflate~$\tilde\tau\omega$ to~$\tilde J_m$ and extend to a representation~$\tilde\CT_\omega$ of~$E^\times\tilde J_m$ by setting~$\tilde\CT_\omega(\varpi_E)=\omega(\varpi_E)Id_{\tilde\tau\omega}$, for~$\varpi_E$ a fixed uniformizer of~$E$ such that~$\overline\varpi_E=(-1)^{e(E/E_\so)}\varpi_E$, where~$x\mapsto\overline x$ denotes the generator of~$\Gal(E/E_\so)$. This representation~$\tilde\CT_\omega$ is then self-dual, that is, equivalent to~$\tilde\CT_\omega\circ\sigma$.

Finally, the representation~$\rho_\omega=\cInd_{E^\times\tilde J}^{\GL_n(F)}\tilde{\underline\CK_m}\otimes\tilde\CT_\omega$ is then irreducible and cuspidal, and equivalent to~$\rho_\omega\circ\sigma$. Since the involution~$\sigma$ is a conjugate of the involution transpose-inverse, by a theorem of Gelfand--Kazhdan~\cite[Theorem~2]{GeK}, the representation~$\rho_\omega\circ\sigma$ is equivalent to~$\rho_\omega^\vee$. 

Thus we have constructed four self-dual cuspidal representations~$\rho_\omega$ of~$\GL_m(F)$ with endo-class~$\bs\Theta$, and it remains only to see that two are orthogonal and two symplectic. Note that~$\rho_\omega$ and~$\rho_{\omega'}$ are unramified twists of each other if and only if~$\omega^{-1}\omega'$ is the unramified quadratic character~$\omega_{nr}$. If either~$m>1$ or~$E/E_\so$ is unramified then, by~\ref{prop:indparity}~Proposition and~\ref{prop:sameext}~Proposition, the representations~$\rho_\omega$ and its self-dual unramified twist~$\rho_{\omega\omega_{nr}}$ have opposite parities so we are done. (Note that, writing~$L$ for the maximal tame subextension of~$E/F$ and~$L_\so$ for that of~$E_\so/F$, we have that~$L/L_{\so}$ is ramified if and only if~$E/E_{\so}$ is ramified, since~$p$ is odd.)

On the other hand, if~$m=1$ and~$E/E_\so$ is ramified then we are in the situation of paragraph~\ref{3.11new}, and the argument there explains that one pair~$\rho_\omega,\rho_{\omega\omega_{nr}}$ consists of two orthogonal representations, while the other pair consists of two symplectic representations, as required.

We are left with the case that~$\bs\Theta$ is the trivial endo-class and~$m$ is even. The existence of self-dual cuspidal depth zero representations is~\cite[Theorem~7.1]{Adler} and the argument that there are (at least) two orthogonal and two symplectic is formally exactly as in the previous case, with~$\tilde{\underline\CK}$ the trivial representation.
\end{proof}

We say that an orbit~$[\gamma]$ in~$W_F\backslash\Irr(\Pp_F)$ is self-dual if it is self-dual when considered as a representation of~$\Pp_F$; that is, if there is~$g\in W_F$ such that~$\gamma^\vee\simeq{}^g\gamma$. We write~$\left(W_F\backslash\Irr(\Pp_F)\right)^{\mathsf{sd}}$ for the set of self-dual orbits. Then we have:

\begin{Proposition}\label{prop:endosd}
The bijection of~\ref{thm:ramGL}~Theorem restricts to a bijection
\begin{equation}\label{eqn:endosd}
\CE^{\mathsf{sd}}(F)\to \left(W_F\backslash\Irr(\Pp_F)\right)^{\mathsf{sd}}.
\end{equation}
\end{Proposition}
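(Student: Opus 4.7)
The plan is to verify that the bijection $\bs\Theta \mapsto [\gamma(\bs\Theta)]$ of \ref{thm:ramGL}~Theorem matches the two notions of self-duality, treating each direction separately and exploiting the compatibility of the local Langlands correspondence with contragredients. For the forward direction, suppose $\bs\Theta \in \CE^{\mathsf{sd}}(F)$: by \ref{lem:existSD}~Lemma there exists a self-dual cuspidal representation $\rho$ of some $\GL_n(F)$ with endo-class $\bs\Theta$, its Langlands parameter $\phi$ satisfies $\phi^\vee \simeq \phi$, and Mackey theory shows that $\phi|_{\Pp_F}$ is isotypic of type $[\gamma(\bs\Theta)]$; taking contragredients then forces the orbit to be self-dual.

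For the reverse direction, let $[\gamma]$ be a self-dual orbit with preimage $\bs\Theta$. I claim that the bijection intertwines the duality involutions on each side. Indeed, if $\rho$ is cuspidal with endo-class $\bs\Theta$ and parameter $\phi$, then $\rho^\vee$ has endo-class $\bs\Theta^\vee$ (the endo-class of the inverse of any simple character in $\bs\Theta$) and parameter $\phi^\vee$, whose restriction to $\Pp_F$ is a multiple of $[\gamma^\vee]$; applying \ref{thm:ramGL}~Theorem to $\rho^\vee$ shows that $\bs\Theta^\vee$ maps to $[\gamma^\vee]$, so $[\gamma] = [\gamma^\vee]$ forces $\bs\Theta = \bs\Theta^\vee$.

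The main obstacle is then to upgrade this equality $\bs\Theta = \bs\Theta^\vee$ to the existence of a simple character in $\bs\Theta$ that is self-dual in the paper's stronger sense---namely, invariant under the adjoint involution $\sigma$ of some non-degenerate alternating form on $F^n$, or equivalently attached to a skew simple stratum $[\Lambda,-,0,\beta]$ with $\beta = -\bar\beta$ (cf.~paragraph~\ref{2.8new}). To close this gap, pick any $\tilde\theta$ in $\bs\Theta$. Since $\tilde\theta \circ \sigma$ has endo-class $\bs\Theta^\vee = \bs\Theta$, intertwining-implies-conjugacy for simple characters yields $g \in \GL_n(F)$ with $\tilde\theta \circ \sigma = \tilde\theta^g$; then $g\cdot\sigma(g)$ intertwines $\tilde\theta$ and lies in $E^\times J^1$, and a standard cocycle argument---in which $p$ odd ensures vanishing of the relevant $\mathbb Z/2$-cohomology on the pro-$p$ factor---produces $h$ with $g = h^{-1}\sigma(h)$, so that $\tilde\theta^{h^{-1}}$ is the desired self-dual simple character. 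Alternatively, one may short-circuit this by constructing a self-dual Weil group representation whose restriction to $\Pp_F$ is a multiple of $[\gamma]$ using the Galois-side analysis of Section~\ref{JBN3new}, and then invoking the local Langlands correspondence and intertwining-implies-conjugacy to extract a self-dual simple character from the resulting self-dual cuspidal.
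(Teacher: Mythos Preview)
Your forward direction (self-dual endo-class $\Rightarrow$ self-dual orbit, via \ref{lem:existSD}~Lemma) is exactly what the paper does.

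For the reverse direction, your ``alternative'' in the last sentence is precisely the paper's argument: given a self-dual orbit $[\gamma]$, the paper uses the canonical extension~$\Gamma$ of paragraph~\ref{3.8new} (normalised so that ${}^g\Gamma\simeq\Gamma^\vee$) to produce the irreducible self-dual representation $\Ind_S^{W_F}\Gamma$; the corresponding cuspidal~$\rho$ of~$\GL_n(F)$ is then self-dual, and the paper invokes \cite[2.2~Corollary]{BlSp2N} (see also~\cite{GKS}) to conclude that~$\rho$ contains a simple character whose transfer to~$\GL_{2n}(F)$ is self-dual, whence~$\bs\Theta(\rho)\in\CE^{\mathsf{sd}}(F)$. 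The paper does \emph{not} take the detour through the weaker statement $\bs\Theta=\bs\Theta^\vee$.

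Your primary route---the cocycle argument---is sketched too loosely to stand alone. The intertwining set of a simple character~$\tilde\theta$ is $\tilde J^1 B^\times \tilde J^1$, not $E^\times \tilde J^1$, so the element $g\,\sigma(g)$ lands in a larger set than you claim; moreover the descent step (producing~$h$ with $g=h^{-1}\sigma(h)$) genuinely requires the kind of skew-stratum and Glauberman-correspondence machinery underlying the cited results of~\cite{BlSp2N,GKS}, and is not a formal $H^1$ vanishing. Since you already identify the correct alternative, the cleanest fix is simply to drop the cocycle sketch and cite those references, as the paper does.
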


\begin{proof}
Let~$\gamma$ be an irreducible representation of~$\Pp_F$ and put~$n=\dim[\gamma]$. Suppose that~$[\gamma]$ is a self-dual orbit and let~$g\in W_F$ be such that~${}^g\gamma\simeq \gamma^\vee$. Then, as in paragraph~\ref{3.8new}, there is a unique irreducible representation~$\Gamma$ of the stabilizer~$S$ of~$\gamma$ such that~$\det\Gamma$ has order a power of~$p$ and~${}^g\Gamma\simeq\Gamma^\vee$. Then the representation~$\Ind_S^{W_F}\Gamma$ is irreducible self-dual so the corresponding cuspidal representation~$\rho$ of~$\GL_n(F)$ is also self-dual. By~\cite[2.2~Corollary]{BlSp2N} (see also~\cite[p.10]{GKS}),~$\rho$ contains a simple character with self-dual transfer to~$\GL_{2n}(F)$, so the endo-class~$\bs\Theta(\rho)$, which corresponds to~$[\gamma]$ by~\ref{thm:ramGL}~Theorem, is self-dual.

Conversely, let~$\bs\Theta$ be a self-dual endo-class and put~$n=\deg(\bs\Theta)$. By the lemma, there is a self-dual cuspidal representation~$\rho$ of~$\GL_n(F)$ with endo-class~$\bs\Theta$. Then the corresponding irreducible representation of~$W_F$ is self-dual so the orbit in its restriction to~$\Pp_F$ is also self-dual, as required. 
\end{proof}

\subsection{}\label{7.3new}
We now introduce the notion of \emph{wild parameter}.

\begin{Definition}
A \emph{wild parameter (over~$F$)} is a finite-dimensional semisimple complex representation~$\CV$ of~$\Pp_F$ such that~$^g\CV\simeq \CV$, for all~$g\in W_F$. We write~$\Psi(F)$ for the set of equivalence classes of wild parameters over~$F$, and~$\Psi_n(F)$ for the set of equivalence classes of~$n$-dimensional wild parameters over~$F$.

Equivalently, we can think of an element of~$\Psi_n(F)$ as the~$\GL_n(\BC)$-conjugacy class of a homomorphism~$\psi:\Pp_F\to\GL_n(\BC)$ for which there exists~$A\in\GL_n(\BC)$ such that~$\psi\circ\Ad g=\Ad A\circ\psi$, for all~$g\in W_F$.
\end{Definition}

Thus a finite-dimensional semisimple complex representation~$\CV$ of~$\Pp_F$ is a wild parameter if and only if, when we decompose it into its isotypic components~$\CV=\bigoplus_{\gamma\in \Irr(\Pp_F)}\CV(\gamma)$, we have
\[
\dim \CV(\gamma)=\dim \CV({}^g\gamma), \text{ for all }g\in W_F.
\]
Therefore a wild parameter is equivalent to
\[
\bigoplus_{W_F\backslash \Irr(\Pp_F)} m_{[\gamma]}[\gamma],
\]
where we are thinking of the orbit~$[\gamma]$ as the sum over the~$W_F$-conjugates of~$\gamma\in\Irr(\Pp_F)$, and~$m_{[\gamma]}\in\BZ_{\ge 0}$.

Equivalently, the~$n$-dimensional wild parameters are precisely the restrictions to~$\Pp_F$ of the Langlands parameters for~$\GL_n(F)$; that is, writing~$\Phi_n(F)$ for the set of admissible homomorphisms~$\phi:W_F\times\SL_2(\BC)\to\GL_n(\BC)$ up to conjugacy, and~$\Phi(F)=\bigcup_{n\ge 1}\Phi_n(F)$, the natural map
\[
\Phi(F)\to\Psi(F)
\]
induced by~$\phi\mapsto\phi_{|\Pp_F}$ is surjective. Indeed, by taking direct sums one need only check that, for any~$\gamma\in\Pp_F$, there is a Langlands parameter~$\phi$ whose restriction to~$\Pp_F$ is isomorphic to~$[\gamma]$. This, however, follows from the discussion in paragraph~\ref{3.8new}:~$\gamma$ extends to a representation~$\Gamma$ of its stabiliser~$S_\gamma$ by~\cite[1.3~Proposition]{BHMemoir}, and then~$\Ind_{S_\gamma}^{W_F}\Gamma$ is the required Langlands parameter (with trivial~$\SL_2(\BC)$ action).

Recall from paragraph~\ref{2.7new} that an endo-parameter of degree~$n$ over~$F$ is a formal sum
\[
\sum_{\bs\Theta\in\CE} m_{\bs\Theta}\bs\Theta, \quad m_{\bs\Theta}\in\BZ_{\ge 0},
\quad\text{such that}\quad
\sum_{\bs\Theta\in\CE} m_{\bs\Theta}\deg(\bs\Theta) = n.
\]
We write~$\CEE_n(F)$ for the set of endo-parameters of degree~$n$ over~$F$. Then the Ramification Theorem for~$\GL_n$ (\ref{thm:ramGL}~Theorem) together with the compatibility of the Langlands correspondence with parabolic induction immediately give:

\begin{Theorem}\label{thm:ramGLSS}
The bijection of~\ref{thm:ramGL}~Theorem induces, for each~$n$, a bijection~$\CEE_n(F)\to\Psi_n(F)$ which is compatible with the Langlands correspondence:
\[
\xymatrix{
\Irr(\GL_n(F)) \ar^{\quad\sim}[r]\ar@{->>}[d] & \Phi_n(F) \ar@{->>}[d] \\
\CEE_n(F) \ar^{\sim}[r]& \Psi_n(F)
}
\]
\end{Theorem}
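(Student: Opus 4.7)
The plan is to combine Theorem~\ref{thm:ramGL} with the compatibility of the local Langlands correspondence for~$\GL_n$ with parabolic induction. First I would define the candidate map
\[
\CEE_n(F) \longrightarrow \Psi_n(F), \qquad \sum_{\bs\Theta \in \CE(F)} m_{\bs\Theta}\bs\Theta \;\longmapsto\; \bigoplus_{\bs\Theta \in \CE(F)} m_{\bs\Theta}\,[\gamma(\bs\Theta)],
\]
using the~$W_F$-orbits~$[\gamma(\bs\Theta)]$ produced by Theorem~\ref{thm:ramGL}. Since that theorem gives a bijection~$\CE(F) \to W_F\backslash\Irr(\Pp_F)$ satisfying~$\deg(\bs\Theta)=\dim[\gamma(\bs\Theta)]$, and since a wild parameter in~$\Psi_n(F)$ is by definition a formal nonnegative integer combination of~$W_F$-orbits with total dimension~$n$, this formula is visibly a bijection of sets.

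To verify the diagram commutes, I would take~$\pi \in \Irr(\GL_n(F))$ with Langlands parameter~$\phi$, and let its cuspidal support be the multiset~$\{\rho_1,\dots,\rho_k\}$ with~$\rho_i \in \Cusp(\GL_{n_i}(F))$ and~$\bs\Theta_i := \bs\Theta(\rho_i)$. By the definition recalled in paragraph~\ref{2.7new}, one has~$\mathsf{e}_n(\pi) = \sum_i (n_i/\deg(\bs\Theta_i))\bs\Theta_i$. On the Galois side, writing~$\phi_{\rho_i}$ for the~$n_i$-dimensional irreducible Weil representation attached to~$\rho_i$, compatibility of the Langlands correspondence with parabolic induction identifies~$\phi|_{\Pp_F}$ with~$\bigoplus_i \phi_{\rho_i}|_{\Pp_F}$, because the~$\SL_2(\BC)$-factor of~$\phi$ acts trivially on~$\Pp_F$ and the unramified twists inherent to the cuspidal support leave restrictions to~$\Pp_F$ unchanged. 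Clifford--Mackey applied to the irreducible~$W_F$-representation~$\phi_{\rho_i}$ then shows that~$\phi_{\rho_i}|_{\Pp_F}$ is isotypic on a single~$W_F$-orbit, which by the compatibility asserted in Theorem~\ref{thm:ramGL} is~$[\gamma(\bs\Theta_i)]$, occurring with multiplicity~$n_i/\deg(\bs\Theta_i)$ for dimension reasons. Summing over~$i$ gives
\[
\phi|_{\Pp_F} = \bigoplus_{i=1}^k \frac{n_i}{\deg(\bs\Theta_i)}\,[\gamma(\bs\Theta_i)],
\]
which is precisely the image of~$\mathsf{e}_n(\pi)$ under the candidate bijection.

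There is no substantive arithmetic obstacle here: the whole theorem is a repackaging of Theorem~\ref{thm:ramGL} in the language of endo-parameters and wild parameters, and the only input beyond the cuspidal case is the behaviour of the Langlands correspondence under parabolic induction, which becomes trivial upon restriction to~$\Pp_F$. The only mild technical point to be explicit about is that the~$\SL_2(\BC)$-factor and the unramified twists wash out of the picture on restriction to wild inertia, so that the additivity of~$\mathsf{e}_n$ along the cuspidal support matches the direct-sum decomposition of the wild parameter term-by-term.
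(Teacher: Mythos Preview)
Your proposal is correct and follows exactly the approach the paper itself takes: the paper states the theorem as an immediate consequence of Theorem~\ref{thm:ramGL} together with the compatibility of the Langlands correspondence with parabolic induction, and your expansion spells out precisely those two ingredients. The only difference is that you make explicit the points the paper leaves implicit, namely that the~$\SL_2(\BC)$-factor and unramified twists are invisible upon restriction to~$\Pp_F$, and that Clifford theory identifies each cuspidal contribution with a single~$W_F$-orbit of the expected multiplicity.
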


\subsection{}\label{7.4new}
Now we turn to the case of the symplectic group~$\SpFV$ and recall Arthur's local Langlands correspondence in this case. 

We denote by~$\Phi(\SpFV)$ the set of Langlands parameters for~$\SpFV$, that is, the set of conjugacy classes of homomorphisms~$\phi:W_F\times\SL_2(\BC)\to\SO_{2N+1}(\BC)$ such that the representation obtained by composing with the natural inclusion map~$\iota:\SO_{2N+1}(\BC)\into\GL_{2N+1}(\BC)$ is semisimple. 

We denote by~$\Phi^{\mathsf{disc}}(\SpFV)$ the set of \emph{discrete} Langlands parameters, that is, those which cannot be conjugated into a proper parabolic subgroup of~$\SO_{2N+1}(\BC)$; equivalently,~$\iota\circ\phi$ is a direct sum of inequivalent irreducible orthogonal representations of~$W_F\times\SL_2(\BC)$ and has determinant~$1$. Thus, given~$\phi$ a discrete Langlands parameter, the representation~$\iota\circ\phi$ decomposes as a multiplicity-free direct sum
\begin{equation}\label{eqn:disc}
\bigoplus_{i\in I} \sigma_i\otimes\St_{m_i},
\end{equation}
where~$\St_{m}$ denotes the unique~$m$-dimensional irreducible algebraic representation of~$\SL_2(\BC)$, for~$m\ge 1$, and the~$\sigma_i$ are irreducible self-dual representations of~$W_F$, such that
\begin{itemize}
\item $\sum_{i\in I} m_i\dim(\sigma_i)=2N+1$,
\item $\sigma_i$ is symplectic if~$m_i$ is even and orthogonal if~$m_i$ is odd,
\item $\prod_{i\in I}\det(\sigma_i)^{m_i}=1$.
\end{itemize}
We say that a discrete Langlands parameter~$\phi$ is \emph{cuspidal} if, whenever~$\sigma\otimes\St_m$ is a subrepresentation of~$\iota\circ\phi$ and~$m>2$, the representation~$\sigma\otimes\St_{m-2}$ is also a subrepresentation of~$\iota\circ\phi$. We denote by~$\Phi^{\mathsf{cusp}}(\SpFV)$ the set of cuspidal Langlands parameters.

As usual, for~$\phi$ a discrete Langlands parameter, we denote by~$\Ss_\phi$ the group of connected components of the centralizer in~$\SO_{2N+1}(\BC)$ of the image of~$\phi$. This is a finite product of copies of the cyclic group of order~$2$; if~$\iota\circ\phi$ decomposes as in~\eqref{eqn:disc}, then~$\Ss_\phi$ has order~$2^{\#I-1}$.

\begin{Theorem}[{\cite[Theorems~1.5.1 and~2.2.1]{Arthur},~\cite[Theorem~1.5.1]{Mo5}}]\label{thm:LangSp}
Suppose that~$F$ is of characteristic zero. 
There is a natural surjective map from the set of discrete series representations of~$\SpFV)$ to~$\Phi^{\mathsf{disc}}(\SpFV)$ with finite fibres, characterised by an equality of stable distributions via transfer to~$\GL_{2N+1}(\BC)$. Moreover:
\begin{itemize}
\item the fibre of~$\phi\in\Phi^{\mathsf{disc}}(\SpFV)$ is in bijection with the set of characters of~$\Ss_\phi$;
\item the fibre~$\Pi_\phi$ of~$\phi\in\Phi^{\mathsf{disc}}(\SpFV)$ contains a cuspidal representation of~$\SpFV$ if and only if~$\phi$ is cuspidal, in which case~$\Pi_\phi\cap\Cusp(\SpFV)$ is in bijection with the set of \emph{alternating} characters of~$\Ss_\phi$.
\end{itemize}
\end{Theorem}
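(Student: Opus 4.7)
Since this is the deep endoscopic classification of discrete series for the symplectic group, the plan is to follow the Arthur--M\oe glin strategy via twisted endoscopy and the stabilisation of the twisted trace formula. The relevant structural input is that the Langlands dual group of~$\SpFV$ is~$\SO_{2N+1}(\BC)$, which sits naturally inside~$\GL_{2N+1}(\BC)$ as the group of fixed points of the involution~$g\mapsto J\,{}^tg^{-1}\!J^{-1}$ (for a suitable antidiagonal~$J$); thus~$\SpFV$ arises as a twisted endoscopic group of~$\GL_{2N+1}$, and the strategy is to import the known local Langlands correspondence for~$\GL_{2N+1}$ via transfer.

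First I would globalise: given a discrete series representation~$\pi$ of~$\SpFV$, realise~$F$ as a completion of a number field~$k$ and embed~$\pi$ as a local factor of a cuspidal automorphic representation~$\Pi$ of a global quasi-split symplectic group~$\mathbf G$, with controlled behaviour at the other places (using a simple trace formula and Shin--Sauvageot-type density arguments). The global endoscopic classification -- obtained by comparing the discrete spectral expansion of the stable trace formula of~$\mathbf G$ with the~$\theta$-twisted discrete expansion of~$\GL_{2N+1}$ -- attaches to~$\Pi$ an isobaric automorphic representation of~$\GL_{2N+1}(\mathbb{A}_k)$; its local component yields a local Langlands parameter~$\phi$, and this defines the map~$\pi\mapsto\phi$ together with the characterisation by stable character identities. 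The fibres of the map are then controlled by Arthur's multiplicity formula, which, via the local intertwining relation, identifies~$\Pi_\phi$ with~$\widehat{\Ss_\phi}$; this establishes the first bullet.

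For the cuspidality refinement due to M\oe glin, the plan is to combine this bijection with an analysis of Jacquet modules of discrete series. The key point is that a discrete series~$\pi\in\Pi_\phi$ fails to be cuspidal precisely when its Jacquet module along some maximal parabolic of~$\SpFV$ is non-zero, and by the compatibility of parabolic induction with Langlands parameters, this occurs exactly when some Jordan block~$(\sigma,m)$ of~$\phi$ admits an ``unfolding'' to~$(\sigma,m-2)$ within an admissible parameter -- that is, precisely when~$\phi$ is not cuspidal in the sense of paragraph~\ref{7.4new}. One then checks, using the explicit M\oe glin description of discrete series as unique irreducible quotients of standard modules, that: if~$\phi$ is cuspidal then a member of~$\Pi_\phi$ is cuspidal iff the corresponding character of~$\Ss_\phi$ is alternating, while if~$\phi$ is not cuspidal then every non-alternating character produces a discrete series that is visibly a proper subquotient of a parabolically induced representation, hence non-cuspidal.

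The main obstacle is the stabilisation of the twisted trace formula for~$\GL_{2N+1}$ and the proof of the local intertwining relation: these are Arthur's deep technical contributions, resting on the theory of weighted orbital integrals, transfer of twisted orbital integrals (Waldspurger, Mezo), and the machinery of endoscopic triples. The restriction to~$\mathrm{char}(F)=0$ is forced by this global-to-local passage, which relies on number-field inputs (Eisenstein series, Langlands--Shahidi normalisations, trace formula comparisons) that are not yet available in positive characteristic at the required level of generality. Once those inputs are granted, the local statement is extracted by specialisation, and M\oe glin's cuspidality refinement becomes an essentially combinatorial consequence of the structure of Jordan blocks together with the explicit description of~$R$-groups and reducibility of parabolic induction.
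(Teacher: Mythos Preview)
The paper does not prove this theorem at all: it is stated with attributions to \cite[Theorems~1.5.1 and~2.2.1]{Arthur} and \cite[Theorem~1.5.1]{Mo5}, and is used as a black box (the only follow-up is the remark ``We do not recall the definition of \emph{alternating} character\ldots''). So there is no proof in the paper to compare your proposal against.

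Your sketch is a reasonable high-level outline of the Arthur--M\oe glin strategy, and it correctly identifies the main ingredients (twisted endoscopy for~$\GL_{2N+1}$, stabilisation of the twisted trace formula, globalisation, the local intertwining relation for the parametrisation of~$\Pi_\phi$ by~$\widehat{\Ss_\phi}$, and M\oe glin's Jacquet-module analysis for the cuspidality statement). That said, what you have written is a road map rather than a proof: each of the steps you name (globalisation with prescribed local components, the stable trace formula comparison, the local intertwining relation, the precise combinatorics singling out alternating characters) is itself a substantial theorem requiring hundreds of pages in the cited references. For the purposes of this paper the appropriate response is exactly what the authors do: cite the result and move on.
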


We do not recall the definition of \emph{alternating} character (see~\cite[\S1.5]{Mo5}) but only recall that if, for~$\phi$ a cuspidal Langlands parameter as in~\eqref{eqn:disc}, we set~$I_0=\{\sigma_i\mid \sigma_i\text{ is orthogonal}\}$, then there are~$2^{\#I_0-1}$ alternating characters of~$\Ss_\phi$. (Note that~$I_0$ is non-empty, since one of the~$\sigma_i$ must be a quadratic character, so this makes sense.) In particular, the~$L$-packet of a cuspidal Langlands parameter~$\phi$ consists only of cuspidal representations if and only if~$m_i=1$, for all~$i\in I$ (in the description~\eqref{eqn:disc}; that is, each self-dual irreducible representation of~$W_F$ which appears in~$\phi$ is orthogonal and appears with multiplicity at most one. In this case, we say that~$\phi$ is \emph{regular}.

\subsection{}\label{7.5new}
We say that a wild parameter~$\CV$ is \emph{self-dual} if it is self-dual as a representation of~$\Pp_F$, in which case~$\det(\CV)$ is trivial (since~$p$ is odd).

Given a self-dual wild parameter, we would like to see that there is a unique choice of orthogonal structure on it. This is indeed a special case of the following result on the existence and uniqueness of orthogonal structures on self-dual representations of groups of odd order.

\begin{Proposition}\label{prop:uniqueorth}
Let~$\G$ be a finite group of odd order and let~$\CV$ be a finite dimensional complex representation of~$\G$. If~$\CV$ is self-dual, then~$\CV$ is orthogonal: there is on~$\CV$ a~$\G$-invariant non-degenerate symmetric bilinear form; moreover such a form is unique up to the action of~$\Aut_\G(\CV)$.
\end{Proposition}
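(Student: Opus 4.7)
The plan is to reduce the statement to the underlying structural fact that, in a finite group of odd order, the only irreducible complex representation which is self-dual is the trivial one. Once this is in hand, both existence and uniqueness follow from semisimple decomposition and elementary linear algebra.

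First I would establish the lemma via the Frobenius--Schur indicator: for an irreducible complex representation $\pi$ of a finite group, $\nu_\pi = \tfrac{1}{|\G|}\sum_{g\in\G}\chi_\pi(g^2)\in\{-1,0,1\}$, with $\nu_\pi=0$ if and only if $\pi$ is not self-dual. Since $|\G|$ is odd, the map $g\mapsto g^2$ is a bijection of $\G$, so $\sum_g \chi_\pi(g^2) = \sum_g \chi_\pi(g)$, which equals $|\G|$ if $\pi$ is trivial and $0$ otherwise by orthogonality of characters. Hence $\nu_\pi=1$ for $\pi$ trivial and $\nu_\pi=0$ for every non-trivial irreducible; in particular every non-trivial irreducible $\sigma$ of $\G$ satisfies $\sigma\not\simeq\sigma^\vee$.

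Next I would decompose $\CV$ into its $\G$-isotypic components $\CV=\bigoplus_\sigma \CV_\sigma$. Self-duality $\CV\simeq\CV^\vee$ forces $\dim\CV_\sigma=\dim\CV_{\sigma^\vee}$ for every irreducible $\sigma$, and the lemma allows us to group non-trivial summands into disjoint pairs and write
\[
\CV\;\simeq\; m_0\cdot\mathbf 1 \;\oplus\; \bigoplus_{\{\sigma,\sigma^\vee\},\ \sigma\neq\sigma^\vee} m_\sigma\,(\sigma\oplus\sigma^\vee).
\]
Existence of a $\G$-invariant non-degenerate symmetric bilinear form is then immediate by orthogonal direct sum: take the standard symmetric form on $m_0\cdot\mathbf 1\simeq\BC^{m_0}$, and on each paired block $\sigma\oplus\sigma^\vee$ the symmetrization of the natural evaluation pairing between $\sigma$ and its dual.

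For uniqueness, I would observe that $\Aut_\G(\CV) = \GL(\CV_{\mathbf 1}) \times \prod_{\{\sigma,\sigma^\vee\}}\bigl(\GL(\CV_\sigma)\times\GL(\CV_{\sigma^\vee})\bigr)$ preserves the block decomposition, and the space of $\G$-invariant bilinear forms on $\CV$ splits as a direct sum along it. On the trivial block, transitivity of $\GL_{m_0}(\BC)$ on non-degenerate symmetric bilinear forms on $\BC^{m_0}$ is the classical linear-algebra statement. On a paired block, Schur's lemma kills $\Hom_\G(\sigma\otimes\sigma,\BC)$ and $\Hom_\G(\sigma^\vee\otimes\sigma^\vee,\BC)$, leaving only the two cross-pairings, which contribute a copy of $M_{m_\sigma}(\BC)\oplus M_{m_\sigma}(\BC)$; the symmetry condition identifies the two matrices as transposes of each other, non-degeneracy becomes invertibility, and the $\GL_{m_\sigma}(\BC)\times\GL_{m_\sigma}(\BC)$-action $(g,h)\cdot A = g^{-\mathsf T}Ah^{-1}$ is transitive on $\GL_{m_\sigma}(\BC)$. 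The only step requiring any care is this last bookkeeping on a paired block --- isolating the symmetric subspace inside the two-dimensional space of $\G$-invariant bilinear forms on $\sigma\oplus\sigma^\vee$ and normalizing the two evaluation pairings compatibly --- but it is routine, and no ingredient beyond the Frobenius--Schur lemma for odd-order groups is needed.
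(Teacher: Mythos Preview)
Your proof is correct and follows essentially the same route as the paper: reduce to isotypic components, use that the only self-dual irreducible of an odd-order group is the trivial one, handle the trivial block by classical linear algebra, and on each paired block~$\sigma\oplus\sigma^\vee$ observe that a $\G$-invariant symmetric form is determined by a single invertible matrix on the multiplicity spaces, on which~$\GL_{m_\sigma}\times\GL_{m_\sigma}$ acts transitively. The only addition is that you supply the Frobenius--Schur argument for the lemma, which the paper simply asserts.
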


In other words, a self-dual representation of~$\G$ is underlying a unique (up to isomorphism) orthogonal representation.

\begin{proof}
As~$\G$ has odd order, the only self-dual irreducible representation of~$\G$ is the trivial representation~$1_\G$. For an irreducible representation~$\gamma$ of~$\G$, let~$\CV(\gamma)$ be the~$\gamma$-isotypic component of~$\CV$, and put~$\CV_\gamma=\Hom_\G(\gamma,\CV)$, so that~$\CV(\gamma)$ decomposes canonically as~$\gamma\otimes \CV_\gamma$. Then~$\CV$ is self-dual if and only if~$\CV_\gamma$ and~$\CV_{\gamma^\vee}$ have the same dimension for all~$\gamma$.

Assume~$\CV$ is self-dual. For any~$\G$-invariant non-degenerate symmetric bilinear form on~$\CV$, we can write~$\CV$ as the orthogonal direct sum of its subspaces~$\CV(1_\G)$ and~$\CV(\gamma)\oplus \CV(\gamma^\vee)$, for~$\gamma$ running through a set of representatives of the non-trivial irreducible representations up to contragredient. On~$\CV(1_\G)$, where~$\G$ acts trivially, there is a non-degenerate symmetric bilinear form, unique up to the action of~$\Aut(\CV(1_\G))$. 

Therefore, for existence and uniqueness, it is enough to consider the case where~$\CV= \CV(\gamma) \oplus \CV(\gamma^\vee)$, for some non-trivial~$\gamma$. Then the dual of~$\CV(\gamma)$ is~$\gamma^\vee \otimes (\CV_\gamma)^*$, whereas the dual of~$\CV({\gamma^\vee})$ is~$\gamma \otimes (\CV_{\gamma^\vee})^*$. An isomorphism~$j:\CV\to \CV^\vee$ (that is, a self-duality on~$\CV$) is the direct sum of~$Id_\gamma \otimes i$ and~$Id_{\gamma^\vee} \otimes i'$, where~$i$ is an isomorphism of~$\CV_\gamma$ onto~$(\CV_{\gamma^\vee})^*$, and~$i'$ an isomorphism of~$\CV_{\gamma^\vee}$ onto~$(\CV_\gamma)^*$. The self-duality is orthogonal if and only if~$i$ and~$i'$ are transpose to each other.

Obviously there exists then an orthogonal structure on~$\CV$, and moreover all such structures are given by the choice of~$i$ (with~$i'$ its transpose). Since~$\Aut_\G(\CV)$, which is the product~$\Aut(\CV_\gamma)\times\Aut(\CV_{\gamma^\vee})$, acts transitively on the set of~$i$, we have uniqueness too.
\end{proof}

\subsection{}\label{7.6new}
Now let~$\CV$ be an~$n$-dimensional self-dual wild parameter over~$F$. By~\ref{prop:uniqueorth}~Proposition,~$\CV$ then carries a~$\Pp_F$-invariant nondegenerate symmetric bilinear form, unique up to the action of~$\Aut_{\Pp_F}(\CV)$. Thus we can regard~$\CV$ as a homomorphism~$\psi:\Pp_F\to \SO(\CV)\simeq \SO_{n}(\mathbb C)$.

For~$\gamma\in\Irr(\Pp_F)$, we write~$\CV[\gamma]$ for the component of~$\CV$ corresponding to the orbit of~$\gamma$ under~$W_F$; that is~$\CV[\gamma]=\sum_{g\in W_F} \CV({}^g\gamma)$. We consider the stabiliser in~$\SO(\CV)$ of the self-dual decomposition
\[
\CV=\bigoplus_{W_F\backslash \Irr(\Pp_F)} \CV[\gamma]
\]
and say that~$\CV$ is \emph{discrete} if this stabiliser is contained in no proper Levi subgroup of~$SO(\CV)$. Equivalently, the self-dual parameter~$\CV$ is discrete if and only if every orbit~$[\gamma]$ in the support of~$\CV$ (that is, such that~$\CV[\gamma]$ is non-zero) is self-dual. 

We write~$\Psi_n^{\mathsf{sd}}(F)$ for the set of equivalence classes of discrete self-dual~$n$-dimensional wild parameters over~$F$. Note that the restriction to~$\Pp_F$ of any discrete Langlands parameter for~$G$ is a discrete self-dual wild parameter of dimension~$2N+1$, which explains the nomenclature.

Recall also that we have the set~$\CEE_n^{\mathsf{sd}}(F)$ of self-dual endo-parameters of degree~$n$ over~$F$, which consists of those endo-parameters of degree~$n$ with support in the set~$\CE^{\mathsf{sd}}(F)$ of self-dual endo-classes. Then we have the following Ramification Theorem for~$\SpFV$.

\begin{Theorem}\label{thm:ramG}
The bijection~\eqref{eqn:endosd} induces, for each~$N\ge 1$, a bijection~$\CEE_{2N}^{\mathsf{sd}}(F)\to\Psi_{2N+1}^{\mathsf{sd}}(F)$ which, when~$F$ is of characteristic zero, is compatible with the Langlands correspondence for cuspidal representations of~$\SpFV$:
\[
\xymatrix{
\Cusp(\SpFV) \ar@{->>}[r]\ar@{->>}[d] & \Phi^{\mathsf{cusp}}(G) \ar@{->>}[d] \\
\CEE_{2N}^{\mathsf{sd}}(F) \ar^{\sim\ }[r]& \Psi_{2N+1}^{\mathsf{sd}}(F)
}
\]
\end{Theorem}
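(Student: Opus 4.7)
The plan is to construct the bijection as the composition
\[
\CEE_{2N}^{\mathsf{sd}}(F) \xrightarrow{\ \iota_{2N}\ } \CEE^{\mathsf{sd}}_{2N+1}(F) \xrightarrow{\ \sim\ } \Psi_{2N+1}^{\mathsf{sd}}(F),
\]
where the second arrow is the restriction to self-dual endo-parameters of the Ramification Theorem for general linear groups (\ref{thm:ramGLSS}~Theorem). That this restriction is itself a bijection follows from \ref{prop:endosd}~Proposition — which identifies self-dual endo-classes with self-dual $W_F$-orbits of irreducible representations of $\Pp_F$ — together with the observation in paragraph~\ref{7.6new} that a self-dual wild parameter is discrete precisely when every orbit in its support is self-dual. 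That $\iota_{2N}$ is itself a bijection is a combinatorial check: squaring preserves self-duality of endo-classes (since $p$ is odd), the squaring map on endo-classes is a bijection (paragraph~\ref{2.4new}) restricting to a bijection on self-dual endo-classes, and any self-dual endo-parameter of odd degree $2N+1$ necessarily contains $\bs\Theta_0^F$ with positive odd multiplicity, since by paragraph~\ref{2.8new} the only self-dual endo-class of odd degree is $\bs\Theta_0^F$ itself.

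For commutativity of the diagram, I would combine \ref{thm:endoparameter}~Theorem with Arthur's endoscopic classification (\ref{thm:LangSp}~Theorem) and the Langlands correspondence for general linear groups. Given $\pi \in \Cusp(\SpFV)$ with cuspidal Langlands parameter $\phi_\pi$, the representation $\iota \circ \phi_\pi$ is, by construction of Arthur's correspondence, the Langlands parameter of the endoscopic transfer of $\pi$ to $\GL_{2N+1}(F)$. \ref{thm:endoparameter}~Theorem then gives $\mathsf{e}_{2N+1}(\mathrm{transfer}(\pi)) = \iota_{2N}(\mathsf{e}_G(\pi))$, while \ref{thm:ramGLSS}~Theorem asserts that the endo-parameter of an irreducible representation of $\GL_{2N+1}(F)$ corresponds under the Ramification for $\GL$ to the restriction of its Langlands parameter to $\Pp_F$. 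Combining, $\phi_\pi|_{\Pp_F}$ corresponds to $\iota_{2N}(\mathsf{e}_G(\pi))$ under the bijection just constructed, which is exactly the required commutativity.

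Finally, uniqueness of the bijection with the compatibility property reduces to surjectivity of $\mathsf{e}_G : \Cusp(\SpFV) \to \CEE_{2N}^{\mathsf{sd}}(F)$: given a self-dual endo-parameter $\mathcal{E} = \sum m_j \bs\Theta_j$ of degree $2N$, one must exhibit a cuspidal $\pi$ with $\mathsf{e}_G(\pi) = \mathcal{E}$. My approach would be via the theory of types of paragraph~\ref{1.6new}: realize each $\bs\Theta_j$ as the endo-class of a self-dual simple character on a symplectic subspace $V^j$ of dimension $m_j\deg(\bs\Theta_j)$ (guided by \ref{lem:existSD}~Lemma, noting that $m_0$ is automatically even when $\bs\Theta_j=\bs\Theta_0^F$ by the parity argument of paragraph~\ref{2.8new}), assemble them into a skew semisimple character $\theta$ on $V=\perp_j V^j$, take its $p$-primary beta-extension $\underline\kappa$, and pair it with any cuspidal representation $\tau$ of the reductive quotient $\HP(\Lambda_{\oE})\simeq\prod_j\HP(\Lambda^j_{\oEj})$, a product of finite classical and unitary groups each admitting cuspidal representations. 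The main obstacle is this surjectivity step: while the combinatorial pattern is transparent, verifying that a skew semisimple stratum with the prescribed splitting can be arranged together with a cuspidal representation of the finite reductive quotient at the required parahoric level demands careful bookkeeping. The remaining verifications are essentially formal consequences of results already established earlier in the paper.
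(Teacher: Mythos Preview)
Your construction of the bijection and your argument for commutativity match the paper's proof essentially verbatim: both reduce to~\ref{thm:endoparameter}~Theorem together with~\ref{thm:ramGLSS}~Theorem, and both observe that the squaring map on endo-classes is a bijection while the only self-dual endo-class of odd degree is the trivial one.

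The genuine divergence is in the surjectivity of the vertical maps. You propose to establish surjectivity of~$\mathsf{e}_G$ directly on the automorphic side, by assembling a skew semisimple stratum with prescribed splitting and then choosing a cuspidal representation of the finite reductive quotient. The paper instead proves surjectivity of the \emph{right} vertical map~$\Phi^{\mathsf{cusp}}(G)\to\Psi_{2N+1}^{\mathsf{sd}}(F)$: given a discrete self-dual wild parameter~$\CV=\bigoplus m_{[\gamma]}[\gamma]$, it uses~\ref{lem:existSD}~Lemma to pick, for each non-trivial self-dual orbit~$[\gamma]$, a pair of inequivalent \emph{orthogonal} self-dual cuspidals of~$\GL$ whose Langlands parameters sum to something restricting to~$m_{[\gamma]}[\gamma]$, and handles the trivial orbit with a depth-zero orthogonal cuspidal plus a corrective quadratic character to force determinant~$1$. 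This yields a \emph{regular} cuspidal parameter restricting to~$\CV$; surjectivity of~$\mathsf{e}_G$ then follows from commutativity and surjectivity of the top arrow (\ref{thm:LangSp}~Theorem).

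Your route is viable in principle, but the paper's is considerably cleaner. On the Galois side the construction is essentially linear algebra once~\ref{lem:existSD}~Lemma is in hand, and the determinant condition is handled by a single quadratic twist. Your route requires three non-trivial verifications that you correctly flag but do not carry out: that a skew simple stratum realising a given self-dual endo-class exists in a symplectic space of each admissible dimension; that the resulting semisimple stratum can be chosen \emph{maximal} (so that~$P^\so(\Lambda_{\oE})$ is a maximal parahoric of~$G_E$, as required for a cuspidal type in paragraph~\ref{1.6new}); and that each factor of the finite reductive quotient~$\HP(\Lambda_{\oE})$ admits a cuspidal representation. None of these is deep, but together they amount to more than ``bookkeeping''. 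The paper's approach also delivers the bonus that every discrete self-dual wild parameter arises from a regular cuspidal parameter, which your approach would not yield directly.

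One minor point: your phrase ``uniqueness of the bijection with the compatibility property reduces to surjectivity of~$\mathsf{e}_G$'' is slightly off-target for the theorem as stated in paragraph~\ref{7.6new}, where the bijection is specified explicitly rather than characterised by compatibility. You are presumably thinking of the formulation in the introduction; in either reading, what actually needs to be supplied is the surjectivity of the vertical arrows, and that is what both approaches address.
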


The induced bijection~$\CEE_{2N}^{\mathsf{sd}}(F)\to\Psi_{2N+1}^{\mathsf{sd}}(F)$ is not as obvious as in the case of general linear groups. If we denote the bijection~\eqref{eqn:endosd} by~$\bs\Theta\mapsto[\gamma(\bs\Theta)]$ then the induced map is
\begin{equation}\label{eqn:wildL}
\sum_{\bs\Theta}m_{\bs\Theta}\bs\Theta \mapsto 1_{\Pp_F}\oplus\bigoplus_{\bs\Theta} m_{\bs\Theta}[\gamma(\bs\Theta^2)].
\end{equation}
We remark also that this Theorem asserts that the restriction map~$\Phi^{\mathsf{cusp}}(G)\to\Psi_{2N+1}^{\mathsf{sd}}(F)$ is surjective, so that every discrete self-dual wild parameter of dimension~$2N+1$ occurs as the restriction of not only some discrete Langlands parameter for~$G$ but of some cuspidal parameter. In fact, we show that it occurs as the restriction of a \emph{regular} parameter (i.e. one whose~$L$-packet consists only of cuspidal representations).

\begin{proof}
Since the only irreducible self-dual representation of~$\Pp_F$ is the trivial representation (so the only odd-dimensional self-dual class~$[\gamma]$ is that of the trivial representation), while the squaring map on endo-classes is a bijection (since~$p$ is odd), it is clear that~\eqref{eqn:wildL} defines a bijection. Its compatibility with the Langlands correspondence is now just a reinterpretation of~\ref{thm:endoparameter}~Theorem, using~\ref{thm:ramGLSS}~Theorem.

It remains to prove that the vertical maps are surjective. We prove that the map on the right is surjective, and then surjectivity on the left follows. So let~$\CV=\bigoplus m_{[\gamma]}[\gamma]$ be a~$(2N+1)$-dimensional self-dual wild parameter (where the sum is over the~$W_F$ orbits in~$\Irr(\Pp_F)$ as usual). We will define a regular Langlands parameter~$\sigma=\bigoplus \sigma[\gamma]$ for~$\SpFV$ such that~$\sigma[\gamma]$ restricts to~$\CV[\gamma]=m_{[\gamma]}[\gamma]$.

Let~$\gamma\in\Irr(\Pp_F)$ be a non-trivial representation. If~$m_{[\gamma]}=0$ then we put~$\sigma[\gamma]=\{0\}$ so assume~$m_{[\gamma]}>0$, in which case the orbit~$[\gamma]$ is self-dual. Let~$\bs\Theta$ be corresponding (self-dual) endo-class and let~$E/E_\so$ be the quadratic extension associated to a skew simple stratum which has a simple character with endo-class~$\bs\Theta$. We pick non-negative integers~$m_1,m_2$ with~$m_1+m_2=m_{[\gamma]}$ such that:
\begin{enumerate}
\item $m_1,m_2$ are odd or~$0$ if~$E/E_\so$ is unramified;
\item $m_1,m_2$ are even or~$1$ if~$E/E_\so$ is ramified.
\end{enumerate}
For~$i=1,2$ we put~$n_i=m_i\deg(\bs\Theta)$. Then, by~\ref{lem:existSD}~Lemma, there exist inequivalent orthogonal self-dual cuspidal representations~$\rho_1,\rho_2$ of~$\GL_{n_1}(F),\GL_{n_2}(F)$ respectively, both with endo-class~$\bs\Theta$. Let~$\sigma_1,\sigma_2$ denote the corresponding Langlands parameters, which are orthogonal and, put~$\sigma[\gamma]=\sigma_1\oplus\sigma_2$; then the restriction of~$\sigma[\gamma]$ to~$\Pp_F$ is~$\CV[\gamma]$, as required, by~\ref{thm:ramGL}~Theorem.

Finally, put~$m=m_{[1_{\Pp_F}]}-1$, which is even. By~\ref{lem:existSD}~Lemma, there is an orthogonal self-dual depth zero cuspidal representation of~$\GL_m(F)$, and let~$\delta$ be the corresponding representation of~$W_F$. We put~$\sigma[1_{\Pp_F}]=\delta\oplus\omega$, where~$\omega=\det(\delta)\prod_{[\gamma]\ne[1_{\Pp_F}]}\det\sigma[\gamma]$.

Then~$\sigma=\bigoplus_{[\gamma]}\sigma[\gamma]$ is a regular cuspidal Langlands parameter for~$G$ which restricts to~$\CV$.
\end{proof}

\subsection{}\label{7.7new}
In the proof of~\ref{thm:ramG}~Theorem we saw that, for any self-dual wild parameter~$\CV$ of odd dimension, there is a regular Langlands parameter for~$\SpFV$ which restricts to~$\CV$. As well as this, one can (in general) cook up other examples of Langlands parameters which restrict to~$\CV$ and are highly irregular. Since we find it amusing, we include here a description of how to find a highly irregular Langlands parameter which restricts to~$\CV$.

We begin with the following observation, which is just the translation of~\ref{lem:existSD}~Lemma (with~$m=1$) to Galois representations. Suppose~$\gamma\in\Irr(\Pp_F)$ is non-trivial with self-dual~$W_F$-orbit. Then there are four self-dual representations of~$W_F$ whose restriction to~$\Pp_F$ is~$[\gamma]$, two of which are orthogonal and two of which are symplectic. We write~$\sigma_{\gamma,1},\sigma_{\gamma,2}$ for the two orthogonal ones, and~$\sigma_{\gamma,3},\sigma_{\gamma,4}$ for the two symplectic ones.

Now we decompose~$\CV=\bigoplus m_{[\gamma]}[\gamma]$ as above. As before, we will define a Langlands parameter~$\sigma=\bigoplus \sigma[\gamma]$, with~$\sigma[\gamma]_{|\Pp_F}=m_{[\gamma]}[\gamma]$. We will obtain a parameter which is not regular whenever either~$m_{[1_{\Pp_F}]}> 3$ or~$m_{[\gamma]}> 1$, for some non-trivial self-dual~$[\gamma]$. 

By Lagrange's~$4$-squares theorem, we can find non-negative integers such that
\[
4m_{[\gamma]} + 2 = a_1^2+a_2^2+a_3^2+a_4^2.
\]
Moreover, two of the~$a_i$ are even and the other two odd. We label them so that~$a_1,a_2$ are even and~$a_3,a_4$ are odd and, when~$m_{[\gamma]}=2$, we take the solution with~$a_1=a_2=0$. Then we set
\[
\sigma[\gamma]=\bigoplus_{i=1}^4 \sigma_{\gamma,i}\otimes\left(\St_{a_i-1}\oplus\St_{a_i-3}\oplus\cdots\right),
\]
where we understand that we ignore the terms on the right where~$a_i\le 1$.

Finally, write~$\omega_1=\prod_{[\gamma]\ne[1_{\Pp_F}]}\det\sigma[\gamma]$, which is a quadratic character, and let~$\omega_2,\omega_3,\omega_4$ denote the other three quadratic characters. Again, there are non-negative integers such that
\[
m_{[1_{\Pp_F}]} = a_1^2+a_2^2+a_3^2+a_4^2.
\]
Since~$m_{[1_{\Pp_F}]}$ is odd, there is exactly one~$a_i$ which has opposite parity to the other three, and we choose our numbering so that this is~$a_1$. Then we take
\[
\sigma[1_{\Pp_F}] = \bigoplus_{i=1}^4 \omega_i\otimes\left(\St_{2a_i-1}\oplus\St_{2a_i-3}\oplus\cdots\oplus\St_1\right),
\]
where, again, we ignore the terms for which~$a_i=0$.


\def\cprime{$'$}

\end{document}